\newtheorem{theorem}{Theorem}[section]
\newtheorem{lemma}[theorem]{Lemma}
\newtheorem{proposition}[theorem]{Proposition}
\newtheorem{corollary}[theorem]{Corollary}
\newtheorem{definition}[theorem]{Definition}
\newtheorem{assumption}[theorem]{Assumption}
\newtheorem{remark}[theorem]{Remark}
\newtheorem{notation}[theorem]{Notation}
\theoremstyle{definition}
\newcommand{\RR}{\mathbb{R}}
\newcommand{\NN}{\mathbb{N}}
\newcommand{\ZZ}{\mathbb{Z}}
\newcommand{\EE}{\mathbb{E}}
\newcommand{\PP}{\mathbb{P}}
\newcommand{\TT}{\mathbb{T}}
\newcommand{\DD}{\mathbb{D}}
\newcommand{\mL}{\mathcal{L}}
\newcommand{\mC}{\mathcal{C}}
\newcommand{\mM}{\mathcal{M}}
\newcommand{\mI}{\mathcal{I}}
\newcommand{\mF}{\mathcal{F}}
\newcommand{\mD}{\mathcal{D}}
\newcommand{\mE}{\mathcal{E}}
\newcommand{\mS}{\mathcal{S}}
\newcommand{\mB}{\mathcal{B}}
\newcommand{\mH}{\mathcal{H}}
\newcommand{\mf}[1]{\mathfrak{#1}}
\newcommand{\mfd}{\mathfrak{d}}
\newcommand{\mfL}{\mathfrak{L}}
\newcommand{\para}{\varolessthan}
\newcommand{\reso}{\varodot}
\newcommand{\hh}{\frac{1}{2}}
\newcommand{\ve}{\varepsilon}
\newcommand{\vr}{\varrho}
\newcommand{\vt}{\vartheta}
\newcommand{\lqm}{``}
\newcommand{\rqm}{'' }
\newcommand*{\ud}{\mathrm{\,d}}
\newcommand{\RD}{\mathbb{R}^d}
\newcommand{\weights}{\boldsymbol{\vr}(\omega)}
\newcommand{\znd}{\mathbb{Z}^d_n}
\newcommand{\supp}{\operatorname{supp}}
\newcommand{\fn}{\lfloor n^{\vr} \rfloor}
\begin{document}

\begin{frontmatter}

\title{A rough super-Brownian motion}
\runtitle{Rough super-Bm}

\begin{aug}
\author[A]{\fnms{Nicolas} \snm{Perkowski}\ead[label=e1]{perkowski@math.fu-berlin.de}},
\and
\author[A]{\fnms{Tommaso} \snm{Rosati}\ead[label=e2]{rosati.tom@fu-berlin.de}}
\address[A]{Freie Universit\"at Berlin, Institut f\"ur Mathematik, Arnimallee 7, 14195 Berlin, Germany \\ \printead{e1,e2}}
\end{aug}


\date{\today}

\begin{abstract} We study the scaling limit of a branching random walk in static 
  random environment in dimension $d=1,2$ and show that it is given by a
  super-Brownian motion in a white noise potential. In dimension $1$ we
  characterize the
  limit as the unique weak solution to the stochastic PDE: \[\partial_t \mu =
  (\Delta {+} \xi) \mu  {+} \sqrt{2\nu \mu} \tilde{\xi}\] for independent
  space white noise $\xi$ and space-time white noise $\tilde{\xi}$. In
  dimension $2$ the study requires paracontrolled theory and the limit process
  is described via a martingale problem. In both dimensions we prove persistence
of this rough version of the super-Brownian motion.  \end{abstract}

\begin{keyword}[class=MSC2020]
\kwd[Primary ]{60H17}
\kwd[; secondary ]{60H25, 60L40}
\end{keyword}

\begin{keyword}
\kwd{Super-Brownian Motion}
\kwd{Singular}
\kwd{Stochastic PDEs}
\kwd{Parabolic Anderson Model}
\end{keyword}

\end{frontmatter}

\maketitle 

\setcounter{tocdepth}{1} 
\tableofcontents

\section*{Introduction}

This work explores the large scale behavior of a branching random walk in a
random environment (BRWRE). Such process is a particular kind of spatial
branching process on $\ZZ^d$, in which the branching and killing rate of a
particle depends on the value of a potential $V$ in the position of the
particle.  In the model analyzed in this work, the dimension is restricted to
$d=1,2$ and the potential is chosen at random on the lattice: \[V(x) = \xi(x), \
  \text{ with } \ \{ \xi(x) \}_{x \in \ZZ^d} \ \text{ i.i.d., } \ \xi(x) \sim
\Phi\] for a given random variable $\Phi$ (normalized via $\EE \Phi =
0, \EE \Phi^2 = 1$).

A particle $X$ in this process at time \(t\) jumps to a nearest neighbor at rate
\(1\), gives birth to a particle at rate \(\xi(X(t))_+\) or dies at rate
\(\xi(X(t))_{-}\).  After branching, the new and the old particle follow the
same rule independently of each other.

The BRWRE is used as a model for chemical reactions or biological processes,
e.g. mutation, in a random medium. This model is especially interesting in
relation to intermittency and localization
\cite{ZeldocivhMolchanovRuzmauikinSokolov1987, GaertnerMolchanov1990,
AlbeverioBogachevEtAl2000, GunKonigSekulovic2013}, and other large times
properties such as survival \cite{BartschGantertKochler2009,
GantertMullerPopov2010}. 

Scaling limits of branching particle systems have been an active field of
research since the early results by Dawson et al. and gave rise to the study of
superprocesses, most prominently the so-called super-Brownian
motion (see \cite{Etheridge2000, DawsonMaisonneuve1993SaintFlour} for excellent
introductions). This work follows the original setting and studies the behavior
under diffusive scaling: Spatial increments $\Delta x \simeq 1/n$, temporal
increments $\Delta t \simeq 1/n^2$. The particular nature of our problem
requires us to couple the diffusive scaling with the scaling of the environment:
This is done via an \lqm averaging parameter\rqm $\vr \geq d/2$, while the noise
is assumed to scale to space white noise (i.e. $\xi^n(x) \simeq n^{d/2}$).

The diffusive scaling of spatial branching processes in a random environment has
already been studied, for example by Mytnik \cite{Mytnik1996}.  As opposed to
the current setting, the environment in Mytnik's work is white also in time.
This has the advantage that the model is amenable to probabilistic martingale
arguments, which are not available in the static noise case that we
investigate here. Therefore, we replace some of the probabilistic tools with
arguments of a more analytic flavor. Nonetheless, at a purely formal level our
limiting process is very similar to the one obtained by Mytnik: See for example
the SPDE representation~\eqref{eqn:SPDE_rsbm} below. Moreover, our approach is
reminiscent of the conditional duality appearing in later works by Cri\c{s}an
\cite{Crisan2004Superbrown}, Mytnik and Xiong
\cite{MytnikXiong2007LocalExtinction}. Notwithstanding these resemblances, we
shall see later that some statistical properties of the two processes differ
substantially.

At the heart of our study of the BRWRE lies the following observation. If
$u(t,x)$ indicates the numbers of particles in position $x$ at time $t$, then
the conditional expectation given the realization of the random environment,
$w(t,x) = \EE[u(t,x)|\xi]$, solves a linear PDE with stochastic coefficients
(SPDE), which is a discrete version of the parabolic Anderson model (PAM):
\begin{equation}\label{eqn:pam} \partial_t w(t,x)  = \Delta w(t,x) + \xi (x)
w(t,x), \ \  (t,x) \in \RR_{> 0} \times \RR^d, \ \  w(0, x) = w_0(x).
\end{equation}

The PAM has been studied both in the discrete and in the continuous
setting (see \cite{Konig2016} for an overview). In the latter case ($\xi$ is
space white noise) the SPDE is not solvable via It\^o integration theory, which
highlights once more the difference between the current setting and the work by
Mytnik. In particular, in dimension $d = 2,3$ the study of the continuous PAM
requires special analytical and stochastic techniques in the spirit of rough
paths, such as the theory of regularity structures \cite{Hairer2014} or of
paracontrolled distributions \cite{GubinelliImkellerPerkowski2015}. In dimension
$d = 1$ classical analytical techniques are sufficient. In dimension $d \geq 4$
no solution is expected to exist, because the equation is no longer
\textit{locally subcritical} in the sense of Hairer~\cite{Hairer2014}.  The dependence of the subcriticality
condition on the dimension is explained by the fact that white noise loses
regularity as the dimension increases.

Moreover, in dimension $d = 2,3$ certain functionals of the white noise need to
be tamed  with a technique called
\textit{renormalization,} with which we remove diverging singularities.
In this work, we restrict to dimensions $d = 1,2$ as this simplifies several
calculations.  At the level of the $2$-dimensional BRWRE, the renormalization has the
effect of slightly tilting the centered potential by considering instead an
effective potential: \[\xi^n_e(x) = \xi^n(x) {-} c_n, \qquad c_n \simeq \log
(n).\]
So if we take the average over the environment, the system is slightly out of criticality, in the biological sense, namely  births are less likely than deaths. This asymmetry is counterintuitive at first. Yet, as we will discuss later, the random environment has a strongly benign effect on the process, since it generates extremely favorable regions. These favorable regions are not seen upon averaging, and they have to be compensated for by subtracting the renormalization.

The special character of the noise and the analytic tools just highlighted will allow us, in a nutshell, to fix one realization of the environment - outside a
null set -  and derive the following scaling limits. For \lqm averaging parameter\rqm $\vr >d/2$ a law of large numbers holds: The
process converges to the continuous PAM. Instead, for $\vr =d/2$ one captures
 fluctuations from the branching mechanism. The limiting process can be
characterized via duality or a martingale problem (see Theorem \ref{thm:CLT})
and we call it \emph{rough super-Brownian motion (rSBM)}. In
dimension $d=1$, following the analogous results for SBM by
\cite{KonnoShiga1988, Reimers1989}, the rSBM admits a density which in turn
solves the SPDE:
\begin{equation}\label{eqn:SPDE_rsbm} \partial_t \mu(t,x) =
  \Delta \mu(t,x) {+} \xi(x)\mu(t,x){+} \sqrt{2\nu \mu(t,x)} \tilde{\xi}(t,x), \
  \ (t, x) \in \mathbb{R}_{\geq 0} \times \RR,
\end{equation}
with \(\mu(0, x) = \delta_0(x)\), where $\tilde{\xi}$ is space-time white noise
that is independent of the space white noise $\xi$, and where $\nu = \mathbb
E\Phi^+$. The solution is weak both in the probabilistic and in the analytic
sense (see Theorem \ref{thm:rsbm_SPDE} for a precise statement). This means
that the last product represents a stochastic integral in the sense of Walsh
\cite{Walsh1986} and the space-time noise is constructed from the solution.
Moreover, the product $\xi \cdot \mu$ is defined only upon testing with
functions in the random domain of the Anderson Hamiltonian $\mH = \Delta {+}
\xi$, a random operator that was introduced by
Fukushima-Nakao~\cite{Fukushima1976} in $d=1$ and by
Allez-Chouk~\cite{Allez2015} in $d=2$, see also \cite{Gubinelli2018Semilinear, Labbe2018} for $d=3$. 

One of the main motivations for this work was the aim to understand the
SPDE~\eqref{eqn:SPDE_rsbm} in $d=1$ and the corresponding martingale problem in
$d=2$. For $\tilde \xi = 0$, equation~\eqref{eqn:SPDE_rsbm} is just the PAM
which we can only solve with pathwise methods, while for $\xi = 0$ we obtain
the classical SBM, for which the existence of pathwise solutions is a long
standing open problem and for which only probabilistic martingale
techniques exist. (See however~\cite{Chakraborty2018} for some progress on
finite-dimensional rough path differential equations with square root
nonlinearities).
Here we combine these two approaches via a 
mild formulation of the martingale problem based on the Anderson Hamiltonian.
A similar point of view was recently taken by
Corwin-Tsai~\cite{Corwin2018}, and to a certain extent
also in~\cite{Gubinelli2018Semilinear}.

Coming back to the rSBM, we conclude this work with a proof of persistence of
the process in dimension $d=1,2$. More precisely we even show that with positive
probability we have $\mu(t,U) \to \infty$ for all open sets $U \subset \RR^d$. This is opposed to what happens for the classical SBM,
where persistence holds only in dimension $d \geq 3$, whereas in dimensions
$d=1,2$ the process dies out: See \cite[Section 2.7]{Etheridge2000} and the
references therein. Even more striking is the difference between our process and
the SBM in random, white in time, environment: Under the assumption of a
heavy-tailed spatial correlation function Mytnik and Xiong
\cite{MytnikXiong2007LocalExtinction} prove extinction in finite time in any
dimension. Note also that in \cite{Etheridge2000,
MytnikXiong2007LocalExtinction} the process is started in the Lebesgue measure,
whereas here we prove persistence if the initial value is a Dirac mass.
Intuitively, this phenomenon can be explained by the presence of ``very
favorable regions'' in the random environment.

\section*{Structure of the Work}

In Assumption \ref{assu:noise} we introduce the probabilistic requirements on the
random environment. These assumptions allow us to fix a null set outside of
which certain analytical conditions are satisfied, see
Lemma~\ref{lem:renormalisation} for details. We then introduce the model,
(a rigorous construction of the random Markov process is postponed to Section
\ref{sectn:construction_markov_process} of the Appendix). We also state the main
results in Section~\ref{sectn:model}, namely the law of large numbers (Theorem
\ref{thm:LLN}), the convergence to the rSBM (Theorem \ref{thm:CLT}), the
representation as an SPDE in dimension $d=1$ (Theorem \ref{thm:rsbm_SPDE}) and
the persistence of the process (Theorem \ref{thm:persistence}). We then proceed
to the proofs. In Section \ref{sectn:pam} we study the discrete and continuous
PAM. We recall the results from \cite{MartinPerkowski2017} and adapt
them to the current setting. 

We then prove the convergence in distribution of the BRWRE in
Section~\ref{sectn:rSBM}. First, we show tightness by using a mild martingale
problem (see Remark \ref{rem:mild_martingale_problem_discrete}) which fits well
with our analytical tools. We then show the duality of the process to the SPDE
\eqref{eqn:PDE_laplace_duality_continuous} and use it to deduce the uniqueness
of the limit points of the BRWRE.

In Section~\ref{sectn:properties} we derive some properties of the rough
super-Brownian motion: We show that in $d=1$ it is the weak solution to an SPDE,
where the key point is that the random measure admits a density w.r.t. the
Lebesgue measure,
as proven in Lemma \ref{lem:existence_of_density}. We also show that the process
survives with positive probability, which we do by relating it to the rSBM on a
finite box with Dirichlet boundary conditions and by applying the spectral
theory for the Anderson Hamiltonian on that box. For this we rely on
\cite{Chouk2019} and \cite{Rosati2019kRSBM}.

\section*{Acknowledgements}  

The authors would like to thank Adrian Martini for thoroughly reading this work and pointing
out some mistakes and improvements.

The main part of the work was done while N.P. was employed at Max-Planck-Institut f\"ur Mathematik in den Naturwissenschaften Leipzig \& Humboldt-Universit\"at zu Berlin, and while T.R. was employed at Humboldt-Universit\"at zu Berlin.

N.P. gratefully acknowledges financial support by the DFG via the Heisenberg program. 

T.R. gratefully acknowledges financial support by the DFG / FAPESP: this paper was developed within the scope of the IRTG 1740 / TRP 2015/50122-0.

\section{Notations}\label{sectn:notations}

We define $\NN = \{1, 2, \ldots \}$, $\NN_0 = \NN \cup \{ 0 \}$ and \(\iota =
\sqrt{{-} 1} \).  We write $\znd$ for the lattice $\frac{1}{n} \ZZ^d$, for $n
\in \NN$, and since it is convenient we also set $\ZZ^d_\infty = \RR^d$. Let us
recall the basic constructions from \cite{MartinPerkowski2017}, where
paracontrolled distributions on lattices were developed. Define the Fourier
transforms for $k, x \in \RR^d$
\[\mF_{\RR^d}(f)(k) = \int_{\RR^d} \ud x \ f(x) e^{-2\pi \iota \langle x, k
\rangle}, \ \mF_{\RR^d}^{-1}(f)(x) = \int_{\RR^d} \ud k \ f(k) e^{2\pi \iota
\langle x, k \rangle}\] 
as well as for $x \in \znd , k\in \TT^d_n$ (with
$\TT^d_n = (\RR/(n\ZZ))^d$ the $n$-dilatation of the
torus $\TT^d$): 
\begin{align*} \mF_n (f) (k) &= \frac{1}{n^d} \sum_{x \in \znd} f(x)e^{-2\pi \iota
\langle x, k \rangle}, \qquad k \in \TT^d_n, \\
 \mF^{-1}_n (f) (x) & = \int_{\TT^d_n} \ud
k \ f(k)e^{2\pi \iota \langle x, k \rangle}, \qquad x\in \znd.  
\end{align*}

Consider $\omega(x) = |x|^{\sigma}$ for some $\sigma \in (0,1)$. We then define
$\mS_\omega$ and $\mS_{\omega}'$ as in
\cite[Definition~2.8]{MartinPerkowski2017}. Roughly speaking $\mS_\omega$ is a
subset of the usual Schwartz functions, and $\mS_{\omega}'$ consists of
so-called \emph{ultradistributions}, with more permissive growth conditions at
infinity. Let $\weights$ be the space of admissible weights as in
\cite[Definition~2.7]{MartinPerkowski2017}. For our purposes it suffices to
know that for any $a \in \RR_{\ge 0}, l \in \RR$, the functions $p(a)$ and
$e(l)$ belong to $\weights$, where 
\[
p(a)(x) = (1+|x|)^{-a}, \qquad e(l)(x) = e^{-l|x|^{\sigma}}.  \]
Moreover, we fix
functions $\vr, \chi$ in $\mS_{\omega}$ supported in an annulus and a ball
respectively, such that for $\vr_{-1} = \chi$ and $\vr_{j}(\cdot) = \vr(2^{-j}
\cdot)$, $j \in \NN_0$, the sequence $\{ \vr_j \}_{j \ge {-}1}$ forms a dyadic
partition of the unity. We also assume that $\supp(\chi), \supp(\vr) \subset
({-}1/2, 1/2)^d$ and write $j_n \in \NN$ for the smallest index such that
$\supp(\vr_j) \not\subseteq n[{-}1/2, 1/2]^d$. For $j < j_n$ and $\varphi\colon \znd
\to \RR$ we define the \emph{Littlewood-Paley blocks}\[ \Delta_j^n \varphi = \mF_n^{-1}\big( \vr_j \mF_n(\varphi)
\big), \qquad \Delta^n_{j_n} \varphi = \mF_n^{-1}\big( (1 {-} \!\!\! \sum_{{-}1 \le j <
j_n}\vr_j) \mF_n(\varphi) \big). \] For $\alpha \in \RR, \ p,q \in [1,
\infty]$ and $z \in \weights$ we define the discrete weighted Besov spaces
$B^{\alpha}_{p,q}(\znd, z)$ via the norm: \[ \| \varphi
  \|_{B^{\alpha}_{p,q}(\znd, z)} = \big\| (2^{j\alpha} \| \Delta^n_j \varphi
\|_{L^p(\znd, z)} \big)_{j \le j_n}\|_{\ell^q(\le j_n)} \] where
$\|\varphi\|_{L^p(\znd, z)} = \big(\sum_{x \in \znd}
n^{-d}|z(x)\varphi(x)|^p\big)^{1/p}$ and $\| \cdot \|_{\ell^q(\le j_n)}$ is the
classical $\ell^q$ norm with the sum truncated at the $j_n$-th term. We write
$\mC^{\alpha}(\znd , z) := B^{\alpha}_{\infty, \infty}(\znd, z)$ and
$\mC^{\alpha}_p(\znd, z) := B^{\alpha}_{p, \infty}(\znd, z).$ The same
definitions and notations are assumed for the classical Besov spaces $B^{\alpha}_{p, q}(\RR^d, z)$, which are defined analogously (with $\Delta_j \varphi = \Delta^\infty_j \varphi = \mF_{\RR^d}^{-1}(\rho_j \mF_{\RR^d} \varphi)$ for all $j \ge -1$, and $j_\infty = \infty$). We
also consider the extension operator $\mE^n\colon B^{\alpha}_{p, q}(\znd, z) \to
B^{\alpha}_{p, q}(\RR^d,z)$ as in \cite[Lemma~2.24]{MartinPerkowski2017}. 

\begin{remark}
	In this setting we can decompose the (for $n = \infty$ a priori ill-posed) product of two distributions as \(\varphi \cdot \psi = \varphi \para \psi {+} \varphi \reso \psi
  {+} \psi \para \varphi\), with: 
  \[
    \varphi \para \psi = \sum_{ 1 \leq i \leq j_{n}} \Delta^n_{<i {-} 1} \varphi
    \Delta^{n}_{i} \psi, \ \ \varphi \reso \psi = \sum_{ \substack{|i {-} j|
    \leq 1 \\ {-} 1 \leq i, j \leq j_{n}}} \Delta^n_{i} \varphi \Delta^n_{j}
    \psi,
  \] 
  where \(\Delta^n_{<i-1} \varphi = \sum_{{-} 1 \leq j <  i {-} 1} \Delta^n_j
  \varphi\). Here we explicitly allow the case $n=\infty$. For simplicity, we do
  not include $n$ in the notation for $\para$ and $\reso$. We call $\varphi
  \para \psi$ the \emph{paraproduct}, and $\varphi \reso \psi$ the
  \emph{resonant product}.
\end{remark}

Now we consider time-dependent functions. Fix a time horizon $T>0$
and assume we are given an increasing family of normed spaces $X = (X(t))_{t \in
[0,T]}$ with decreasing norms ($X(t) \equiv X(0)$ is allowed). Usually we will
use this to deal with time-dependent weights and take $X(t) = \mC^\alpha(\znd,
e(l+t))$ for some $\alpha,l \in \RR$. We then write $CX$ for the space of
continuous functions $\varphi\colon [0,T] \to X(T)$ endowed with the supremum
norm $\| \varphi \|_{CX} = \sup_{ t \in [0, T]} \| \varphi(t) \|_{X(t)}$. For
$\alpha \in (0,1)$ we sometimes quantify the time regularity via $C^\alpha X =
\{f \in CX: \|f\|_{C^\alpha X} < \infty \}$, where
\[
    \|f\|_{C^\alpha X} = \|f\|_{CX} + \sup_{0 \le s < t \le T} \frac{\|f(t) {-}
    f(s)\|_{X(t)}}{|t{-}s|^\alpha}.
\]
To control a blowup of the norm of order $\gamma \in [0,1)$ as $t\to 0$ we also
define the spaces $\mM^{\gamma}X$ of functions $f \colon (0,T] \to X(T)$ with
norm $\| \varphi \|_{\mM^\gamma X} = \sup_{ t \in (0, T]} t^\gamma \| \varphi(t)
\|_{X(t)}$. Finally, we need the spaces \(\mL^{\gamma, \alpha}_p(\znd, e(l))\)
(see \cite[Definition~3.8]{MartinPerkowski2017}) of functions \(f \in C([0,T],
\mS_\omega') \) such that:
\[
  f \in \mM^\gamma \mC^\alpha_p(\znd, e(l+\cdot)) \ \ \text{ and } \ \  t\mapsto
  t^\gamma f(t) \in C^{\alpha/2} L^p(\znd, e(l+\cdot)).
\]
For simplicity, we will denote with $\mL^{\alpha}(\znd, e(l))$ the space $\mL^{0,\alpha}_\infty (\znd, e(l))$.
We will write \(\mfL^n = \partial_t {-} \Delta^n\), where \(\Delta^n\) is the
discrete Laplacian (for \(x,y \in \znd\) we say \(x \sim y\) if \(|x {-} y| =
n^{{-} 1}\)): \[ \Delta^n \varphi (x) = \frac{1}{n^2} \sum_{y \sim x}
(\varphi(y) {-} \varphi(x)), \] and $\Delta^\infty = \Delta$ is the usual
Laplacian. We stress that $\Delta^n$ without subscript always denotes the
discrete Laplacian, while $\Delta^n_j$ always denotes a Littlewood-Paley block.
The following estimates will be useful in the discussion ahead.

\begin{lemma}\label{lem:weighted_paraproduct_estimates} The estimates below hold
  uniformly over $n \in \NN \cup\{\infty\}$ (recall that $\ZZ^d_\infty = \RR^d$). Consider \(z, z_1,
    z_2,z_3 \in \weights\) and \(\alpha, \beta \in \mathbb{R}\). We find that:
    \begin{align*} & \| \varphi \para \psi \|_{\mC^{\alpha}_{p}(\znd; z_1 z_2)}
      \lesssim \| \varphi \|_{L^p(\znd ; z_1)} \| \psi \|_{\mC^{\alpha}(\znd ;
      z_2)}, \\ & \| \varphi \para \psi \|_{\mC^{\alpha {+} \beta}_{p}(\znd ;
      z_1 z_2)} \lesssim \| \varphi \|_{\mC^{\beta}_{p}(\znd ; z_1)} \| \psi
      \|_{\mC^{\alpha}(\znd ; z_2)}, & \text{ if } \beta < 0, \\ & \|
      \varphi\reso \psi \|_{\mC^{\alpha {+} \beta}_p (\znd; z_1 z_2)} \lesssim
    \| \varphi \|_{\mC^{\beta}_{p}(\znd ; z_1)} \| \psi \|_{\mC^{\alpha}(\znd ;
  z_2)} & \text{ if } \alpha {+} \beta >0 .  \end{align*}
  Similar bounds hold if we estimate $\psi$ in a $\mC_p$ Besov space and $\varphi$ in $\mC = \mC_\infty$. And for \(\gamma
    \in [0,1), \varepsilon \in [0, 2\gamma] \cap [0, \alpha), 0  < \alpha <2\) and \(\delta>0\) we can bound:
  \begin{equation}\label{eqn:loose_time_space_regularity}
	  \| \varphi \|_{\mL^{\gamma {-} \varepsilon/2, \alpha {-} \varepsilon }_{p}(\znd ; z) } \lesssim \| \varphi \|_{\mL^{ \gamma, \alpha} _{p} (\znd; z)}. 
	\end{equation} 
	Moreover, for the operator $C_1(\varphi,\psi,\zeta) = (\varphi\para \psi) \reso \zeta - \varphi ( \psi\reso \zeta)$ we have:
	\[
		\|
		C_1(\varphi,\psi,\zeta)\|_{\mC^{\beta+\gamma}_p(\znd;z_1z_2z_3)}
		\lesssim \|\varphi\|_{\mC^\alpha_p(\znd; z_1)}
		\|\psi\|_{\mC^\beta(\znd;z_2)} \|\zeta\|_{\mC^\gamma(\znd;z_3)},
	\]
	if $\beta{+}\gamma < 0$, $\alpha{+}\beta{+}\gamma>0$.
\end{lemma}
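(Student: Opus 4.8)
The plan is to derive all of these bounds by Littlewood--Paley analysis on the lattice, relying on the uniform-in-$n$ Bernstein and Littlewood--Paley inequalities of \cite{MartinPerkowski2017}, on the stability of the class $\weights$ of admissible weights under products (so that $z_1z_2$ and $z_1z_2z_3$ again belong to $\weights$), and on the weighted H\"older inequality $\|fg\|_{L^p(\znd;z_1z_2)}\le\|f\|_{L^p(\znd;z_1)}\|g\|_{L^\infty(\znd;z_2)}$. Throughout, the highest block $\Delta^n_{j_n}$ and the discrete Fourier-support conditions on the torus $\TT^d_n$ are treated exactly as in \cite{MartinPerkowski2017} and I shall not dwell on them.

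For the paraproduct estimates I would use that, for $i<j_n$, the term $\Delta^n_{<i-1}\varphi\,\Delta^n_i\psi$ has discrete Fourier support in a dyadic annulus of scale $2^i$, so that $\Delta^n_j(\varphi\para\psi)$ picks up only $O(1)$ summands with $i\sim j$. Bounding $\|\Delta^n_{<i-1}\varphi\|_{L^p(\znd;z_1)}$ by $\|\varphi\|_{L^p(\znd;z_1)}$ for the first inequality, and by $\|\Delta^n_{<i-1}\varphi\|_{L^p(\znd;z_1)}\lesssim 2^{-i\beta}\|\varphi\|_{\mC^\beta_p(\znd;z_1)}$ (summing the geometric series, using $\beta<0$) for the second, and combining with $\|\Delta^n_i\psi\|_{L^\infty(\znd;z_2)}\lesssim 2^{-i\alpha}\|\psi\|_{\mC^\alpha(\znd;z_2)}$ and the weighted H\"older inequality, the first two displayed estimates follow. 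For the resonant term, $\Delta^n_i\varphi\,\Delta^n_j\psi$ with $|i-j|\le1$ is Fourier-supported in a ball of scale $2^i$, so $\Delta^n_k(\varphi\reso\psi)$ vanishes unless $k\lesssim i$, and then $2^{k(\alpha+\beta)}\|\Delta^n_k(\varphi\reso\psi)\|_{L^p(\znd;z_1z_2)}\lesssim\sum_{i\gtrsim k}2^{(k-i)(\alpha+\beta)}\|\varphi\|_{\mC^\beta_p(\znd;z_1)}\|\psi\|_{\mC^\alpha(\znd;z_2)}$, which is summable precisely because $\alpha+\beta>0$. The variants with $\psi$ estimated in a $\mC_p$ space and $\varphi$ in $\mC=\mC_\infty$ are proved identically.

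For the embedding \eqref{eqn:loose_time_space_regularity} the plan is a space/time interpolation. Set $g(t)=t^\gamma\varphi(t)$; the hypotheses say $g$ is bounded in $\mC^\alpha_p(\znd;e(l+t))$ and lies in $C^{\alpha/2}L^p$, so for $\gamma>0$ (the case $\gamma=0$ forces $\varepsilon=0$ and is trivial) $g$ extends continuously with $g(0)=0$ and $\|g(t)\|_{L^p(\znd;e(l+t))}\lesssim t^{\alpha/2}\|\varphi\|_{\mL^{\gamma,\alpha}_p(\znd;e(l))}$. Interpolating $\mC^{\alpha-\varepsilon}_p$ between $\mC^\alpha_p$ and $\mC^0_p$ with weight $\varepsilon/\alpha\in[0,1)$ (using $\varepsilon<\alpha$), and noting $\|g(t)\|_{\mC^0_p}\lesssim\|g(t)\|_{L^p}$, yields $\|\varphi(t)\|_{\mC^{\alpha-\varepsilon}_p(\znd;e(l+t))}\lesssim (t^{-\gamma})^{1-\varepsilon/\alpha}(t^{\alpha/2-\gamma})^{\varepsilon/\alpha}\|\varphi\|_{\mL^{\gamma,\alpha}_p}=t^{-\gamma+\varepsilon/2}\|\varphi\|_{\mL^{\gamma,\alpha}_p}$, which is the $\mM^{\gamma-\varepsilon/2}\mC^{\alpha-\varepsilon}_p$ component (here $\gamma-\varepsilon/2\ge0$ by $\varepsilon\le2\gamma$). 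For the time-regularity component I would write, for $0<s<t$, $t^{-\varepsilon/2}g(t)-s^{-\varepsilon/2}g(s)=t^{-\varepsilon/2}(g(t)-g(s))+(t^{-\varepsilon/2}-s^{-\varepsilon/2})g(s)$ and split into the regimes $|t-s|\ge s$ and $|t-s|<s$, using $\|g(t)-g(s)\|_{L^p}\lesssim|t-s|^{\alpha/2}$, $\|g(s)\|_{L^p}\lesssim s^{\alpha/2}$, the mean value theorem for $r\mapsto r^{-\varepsilon/2}$ in the second regime, and the constraints $\varepsilon<\alpha$, $\alpha<2$ to conclude $\|t^{-\varepsilon/2}g(t)-s^{-\varepsilon/2}g(s)\|_{L^p}\lesssim|t-s|^{(\alpha-\varepsilon)/2}$, i.e.\ $t\mapsto t^{\gamma-\varepsilon/2}\varphi(t)\in C^{(\alpha-\varepsilon)/2}L^p$. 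This reproduces on the lattice the corresponding continuum statement of \cite{MartinPerkowski2017}.

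Finally, for the commutator $C_1$ I would follow the classical commutator-lemma argument. Inserting the expansions $\varphi\para\psi=\sum_i\Delta^n_{<i-1}\varphi\,\Delta^n_i\psi$, $\psi\reso\zeta=\sum_{|j-j'|\le1}\Delta^n_j\psi\,\Delta^n_{j'}\zeta$ and $\varphi=\sum_m\Delta^n_m\varphi$ into $(\varphi\para\psi)\reso\zeta-\varphi(\psi\reso\zeta)$: although under $\beta+\gamma<0$ neither term is separately well defined, the Fourier supports show that in the resonant product of $\varphi\para\psi$ with $\zeta$ only pairings with $k\sim i$ survive, and splitting $\varphi=\Delta^n_{<i-1}\varphi+\sum_{m\gtrsim i}\Delta^n_m\varphi$ produces a telescoping cancellation that leaves, for each frequency $i$, a genuine commutator $(\Delta^n_{<i-1}\varphi\,\Delta^n_i\psi)\reso\Delta^n_k\zeta-\Delta^n_{<i-1}\varphi\,(\Delta^n_i\psi\reso\Delta^n_k\zeta)$ with $k\sim i$, plus diagonal remainders already controlled by the bounds above. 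This commutator is estimated via the smoothness of the Littlewood--Paley kernels --- one trades a factor $2^{-i}$ for a discrete derivative of the slowly varying factor $\Delta^n_{<i-1}\varphi$ --- and summing over the frequencies of $\psi$ and $\zeta$ gives $\|\Delta^n_k C_1(\varphi,\psi,\zeta)\|_{L^p(\znd;z_1z_2z_3)}\lesssim\sum_{i\gtrsim k}2^{-i(\alpha+\beta+\gamma)}\|\varphi\|_{\mC^\alpha_p(\znd;z_1)}\|\psi\|_{\mC^\beta(\znd;z_2)}\|\zeta\|_{\mC^\gamma(\znd;z_3)}$, summable by $\alpha+\beta+\gamma>0$; multiplying by $2^{k(\beta+\gamma)}$ and using $\beta+\gamma<0$ closes the estimate (in fact with room to spare, the right-hand side being $\mC^{\alpha+\beta+\gamma}_p\hookrightarrow\mC^{\beta+\gamma}_p$). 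I expect this last step to be the main obstacle --- not conceptually, but because the bookkeeping around the cancellation must be done carefully and, above all, the kernel-smoothness (discrete commutator) estimates have to be checked uniformly in $n\in\NN\cup\{\infty\}$, including their interplay with the truncation at $j_n$ and with the discrete Fourier transform on $\TT^d_n$. This is exactly what the discrete paracontrolled toolbox of \cite{MartinPerkowski2017} is designed for: the argument follows their continuum commutator lemma almost verbatim, with lattice Littlewood--Paley and Bernstein inequalities in place of the continuum ones and the weights combined through stability of $\weights$ under products.
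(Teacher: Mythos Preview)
Your proposal is correct and sketches essentially the arguments that appear in the references the paper cites: the paper's own proof consists only of pointers to \cite[Lemma~4.2]{MartinPerkowski2017} for the three paraproduct/resonant estimates, to \cite[Lemma~3.11]{MartinPerkowski2017} for \eqref{eqn:loose_time_space_regularity} (with a remark that the borderline case $\varepsilon=2\gamma<\alpha$ follows by the same argument via \cite[Lemma~A.1]{Gubinelli2017KPZ}), and to \cite[Lemma~4.4]{MartinPerkowski2017} for the commutator. So you have effectively reproduced the proofs of those lemmata rather than citing them; your Littlewood--Paley arguments for the product estimates, the space--time interpolation for \eqref{eqn:loose_time_space_regularity}, and the frequency-cancellation/kernel-smoothness strategy for $C_1$ are exactly what is done there, and your caveats about handling $j_n$ and uniformity in $n$ are precisely the points that \cite{MartinPerkowski2017} takes care of.
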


\begin{proof} 
  The first three estimates are shown in
  \cite[Lemma~4.2]{MartinPerkowski2017} and the fourth estimate comes from
  \cite[Lemma~3.11]{MartinPerkowski2017}. In that lemma the case $\varepsilon =
  2\gamma < \alpha$ is not included, but it follows by the same arguments
  (since \cite[Lemma~A.1]{Gubinelli2017KPZ} still applies in that case). The
  last estimate is provided by \cite[Lemma 4.4]{MartinPerkowski2017}.
\end{proof}

For two functions $\psi, \varphi\colon \RR^d \to \RR$ we define $\langle \psi,
\varphi \rangle = \smallint \ud x \ \psi(x) \varphi(x)$ and \(\psi* \varphi(x)=
\langle \psi(x {-} \cdot), \varphi( \cdot) \rangle\) for \(x \in \RR^{d}\), whereas if $\psi,
\varphi\colon \znd \to \RR$ we write \( \langle \psi, \varphi \rangle_n =
\frac{1}{n^d} \sum_{x \in \znd}\psi(x)\varphi(x)\) and \(\psi*_n \varphi(x) = \langle \psi(x {-} \cdot),
\varphi( \cdot) \rangle_{n}\) for \(x \in \znd\). 

Finally, for a metric space $E$ we denote with $\DD([0, T]; E)$ and $\DD([0,+\infty); E)$ the Skorohod
space equipped with the Skorohod topology (cf. \cite[Section 3.5]{EthierKurtz1986}).
We will also write $\mM(\RR^d)$ for
the space of positive finite measures on $\RR^d$ with the weak topology, which is
a Polish space (cf. \cite[Section 3]{DawsonMaisonneuve1993SaintFlour}).

\section{The Model} \label{sectn:model}

We consider a branching random walk in a random environment
(BRWRE). This is a process on the lattice $\znd$, for $n \in \NN$ and $d = 1,2$, and we are interested in the limit $n \to \infty$. The
evolution of this process depends on the environment it lives in. Therefore, we
first discuss the environment before introducing the Markov process.

A \emph{deterministic environment} is a sequence $\{\xi^n\}_{n \in \NN}$ of potentials
on the lattice, i.e. functions $\xi^n\colon \znd \to \RR.$ 
A \emph{random environment} is a sequence of
probability spaces \( (\Omega^{p, n}, \mF^{p, n}, \PP^{p, n})\) together with a
sequence $\{\xi^n_p\}_{n \in \NN}$ of measurable maps $\xi^n_p \colon
\Omega^{p,n} \times \znd \to \RR$.

\begin{assumption}[Random Environment]\label{assu:noise} We assume that for
  every \(n \in \NN\),  $\{\xi^n_p(x)\}_{x \in \ZZ^d_n}$ is a set of i.i.d. random
  variables on a probability space \((\Omega^{p, n}, \mF^{p, n}, \PP^{p, n})\) which satisfy: 
  \begin{equation}\label{eqn:distr_xi} n^{-d/2}
    \xi^n_p(x) = \Phi \ \ \text{in distribution},
  \end{equation} for a random variable $\Phi$
 with finite moments of every order such that \[\EE[\Phi] = 0, \ \
\EE[\Phi^2] = 1.\]
\end{assumption}

\begin{remark} 
  It follows that $\xi_p^n$ converges
  in distribution to a white noise $\xi_p$ on $\RR^d$, in the sense that \(
  \langle \xi^n_p, f\rangle_n \to \xi_p(f) \) for all $f \in C_c(\RR^d)$.
\end{remark}

To separate the randomness coming from the potential from that of the branching
random walks it will be convenient to freeze the realization of $\xi^n_p$ and to
consider it as a deterministic environment. But we cannot expect to
obtain reasonable scaling limits
for all deterministic environments. Therefore, we need to identify properties that hold for typical realizations of random potentials
satisfying Assumption~\ref{assu:noise}. The reader only interested in random
environments may skip the following assumption and use it as a black box, since
by Lemma~\ref{lem:renormalisation} below it is satisfied under Assumption~\ref{assu:noise}.

\begin{assumption}[Deterministic environment]\label{assu:renormalisation}
  Let $\xi^n$ be a deterministic environment and let $X^n$ be the solution to the equation ${-}\Delta^n X^n =
\chi(D)\xi^n = \mF_n^{-1}(\chi \mF_n \xi^n)$ in the sense explained in
\cite[Section~5.1]{MartinPerkowski2017}, where $\chi$ is a smooth function equal
to $1$ outside of $(-1/4,1/4)^d$ and equal to zero on $(-1/8,1/8)^d$. Consider
a regularity parameter
	\[
		\alpha \in (1, \tfrac32) \text{ in } d = 1, \qquad \alpha \in (\tfrac23, 1) \text{ in } d=2.
	\]
 We assume that the following holds:
  \begin{itemize}
	\item[(i)] There exists $\xi \in \bigcap_{a>0} \mC^{\alpha{-}2}(\RR^d, p(a))$ such that for all
	  $a>0$: \[ \sup_n
	  \|\xi^n\|_{\mC^{\alpha{-}2}(\znd, p(a))}< {+}\infty \ \text{ and } \
	\mE^n \xi^n \to \xi \text{ in } \mC^{\alpha{-}2}(\RR^d, p(a)).\] 

	\item[(ii)]  For any $a, \ve >0$ we can bound: \[ \sup_n \|n^{-d/2} \xi^n_{+}
	  \|_{\mC^{-\ve}(\znd, p(a))} + \sup_n \|n^{-d/2} |\xi^n|
	\|_{\mC^{-\ve}(\znd, p(a))} < {+}\infty \] as well as for any $b> d/2$:
	\[ \sup_n \|n^{-d/2} \xi^n_{+} \|_{L^2(\znd, p(b))}  < {+}\infty. \]
	Moreover, there exists $\nu \ge 0$ such that the following convergences
	hold: \begin{align*} 	\mathcal{E}^n n^{-d/2}\xi^n_+  \to \nu,
	\qquad \mathcal{E}^n n^{-d/2}|\xi^n|  \to 2\nu \end{align*} in
	$\mC^{-\ve}(\RR^d, p(a))$.

	\item[(iii)] If $d = 2$ there exists a sequence $\{c_n\} \subset \RR$ with
	  $n^{-d/2}c_n \to 0$ and there exist $X \in \bigcap_{a > 0} \mC^\alpha(\RR^d, p(a))$ and $X\diamond\xi \in \bigcap_{a>0}
	  \mC^{2\alpha{-}2}(\RR^d, p(a))$ which satisfy for all $a>0$: \[\sup_n \|
	  X^n\|_{\mC^{\alpha}(\znd, p(a))} + \sup_n \| (X^n \reso \xi^n) {-}c_n
	\|_{\mC^{2\alpha{-}2}(\znd,p(a))} <{+} \infty \] and $\mE^n X^n \to X$
	in $\mC^{\alpha}(\RR^d, p(a))$ and $\mE^n\big( (X^n \reso\xi^n) {-}c_n
	\big) \to X\diamond \xi$ in $\mC^{2\alpha{-}2}(\RR^d, p(a))$.
\end{itemize} \end{assumption}

We say that $\xi \in \mS_\omega'(\RR^d)$ is a \emph{deterministic
environment satisfying Assumption~\ref{assu:renormalisation}} if there exists a
sequence $\{\xi^n\}_{n \in \NN}$ such that the conditions of
Assumption~\ref{assu:renormalisation} hold.

The next result establishes the connection between the probabilistic and the
analytical conditions. To formulate it we need the following
sequence of diverging \emph{renormalization} constants:
\begin{equation}\label{eqn:real_line_renormalisation_constant} 
  \kappa_n = \int_{\TT^2_n} \ud k \ \frac{\chi(k)}{l^n(k)} \sim \log(n), 
\end{equation} 
with $l^n$ being the Fourier multiplier associated to the discrete Laplacian $\Delta^n$ and $\chi$ as in Assumption~\ref{assu:renormalisation}.

\begin{lemma}\label{lem:renormalisation} 
  Given a random environment $\{ \bar{\xi}^n_p \}_{n \in \NN}$ satisfying
  Assumption \ref{assu:noise}, there exists a probability space $(\Omega^p,
  \mathcal{F}^p, \PP^p)$ supporting random variables $\{\xi_p^n\}_{n \in \NN}$
  such that $\bar{\xi}_p^n = \xi^n_p$ in distribution and such that $\{
  \xi^n_p(\omega^p, \cdot)\}_{n \in \NN}$ is a deterministic environment
  satisfying Assumption \ref{assu:renormalisation} for all $\omega^p \in
  \Omega^p.$ Moreover the sequence $c_n$ in Assumption
  \ref{assu:renormalisation} can be chosen equal to $\kappa_n$ (see Equation
  \eqref{eqn:real_line_renormalisation_constant}) outside of a null set.
  Similarly, $\nu$ is strictly positive and deterministic outside of a null set
and equals the expectation $\EE[ \Phi_+]$.  
\end{lemma}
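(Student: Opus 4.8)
The plan is to deduce the statement from a single convergence-in-law assertion for an enriched noise, and then to transfer it to a common probability space via Skorohod's representation theorem. Write $\eta^n := n^{-d/2}\bar\xi^n_p$, so that $\{\eta^n(x)\}_{x\in\znd}$ are i.i.d.\ copies of $\Phi$, let $X^n$ solve $-\Delta^n X^n=\chi(D)\bar\xi^n_p$ as in Assumption~\ref{assu:renormalisation}, and consider
\[
	\zeta^n := \Big(\mE^n\bar\xi^n_p,\ \mE^n X^n,\ \mE^n\big((X^n\reso\bar\xi^n_p){-}\kappa_n\big),\ \mE^n\eta^n_+,\ \mE^n|\eta^n|\Big),
\]
with the third entry dropped when $d=1$. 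I would view $\zeta^n$ as an element of the Polish space $E$ given by the countable product of the spaces $\mC^{\alpha{-}2}(\RR^d,p(a))$, $\mC^{\alpha}(\RR^d,p(a))$, $\mC^{2\alpha{-}2}(\RR^d,p(a))$ and $\mC^{{-}\ve}(\RR^d,p(a))$ (for a countable dense set of $a>0$, $\ve>0$), together with one copy of $\RR$ for each $b>d/2$ carrying the scalar $\|\eta^n_+\|_{L^2(\znd,p(b))}$; the reduction to a countable family is harmless since $\|\cdot\|_{p(a)}\le\|\cdot\|_{p(a')}$ whenever $a\ge a'$. The core claim is that the laws of $\zeta^n$ converge to the law of $\zeta := (\xi, X, X\diamond\xi, \nu, 2\nu)$ (together with deterministic scalar limits), where $\xi$ is a space white noise, $X := ({-}\Delta)^{{-}1}\chi(D)\xi$, $X\diamond\xi$ is the corresponding universal second-order object, and $\nu := \EE[\Phi_+]>0$. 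Granting this, Skorohod's representation theorem provides $(\Omega^p,\mF^p,\PP^p)$ and $E$-valued $\hat\zeta^n$ with $\hat\zeta^n=\zeta^n$ in law and $\hat\zeta^n\to\hat\zeta$ $\PP^p$-a.s.; since $\mE^n$ has a bounded left inverse (\cite[Lemma~2.24]{MartinPerkowski2017}), $\bar\xi^n_p$ is a measurable function of $\mE^n\bar\xi^n_p$, so a measurable selection yields $\xi^n_p=\bar\xi^n_p$ in law on $\Omega^p$ for which the entire tuple built from $\xi^n_p$ equals $\hat\zeta^n$ $\PP^p$-a.s. Then all the convergences required in Assumption~\ref{assu:renormalisation} are exactly the a.s.\ convergence $\hat\zeta^n\to\hat\zeta$ in $E$, and all the $\sup_n$-bounds follow from it, a convergent sequence in a normed space being bounded (and restriction to $\znd$ being bounded). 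Redefining $\xi^n_p$ on the null set where convergence fails, to a fixed deterministic environment satisfying Assumption~\ref{assu:renormalisation}, gives the lemma: one may take $c_n=\kappa_n$, the centering built into $\zeta^n$, and $\nu$ is deterministic, strictly positive and equal to $\EE[\Phi_+]$ because it is a deterministic component of the limit $\zeta$.

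For the first, second, fourth and fifth components the required convergence is routine. For $\bar\xi^n_p$: $\langle\bar\xi^n_p,f\rangle_n$ is a normalized sum of i.i.d.\ mean-zero variables with variance $\to\|f\|_{L^2}^2$, so by the Lindeberg central limit theorem (using the moments of $\Phi$) it converges to $N(0,\|f\|_{L^2}^2)$, i.e.\ to white noise tested against $f$; combined with uniform-in-$n$ moment bounds $\sup_n\EE\|\bar\xi^n_p\|^q_{\mC^{\alpha{-}2{+}\delta}(\znd,p(a{-}\delta))}<\infty$ (small $\delta>0$), which follow from the finite moments of $\Phi$ by the Littlewood--Paley estimates of \cite[Section~5.1]{MartinPerkowski2017}, and the compact embedding $\mC^{\alpha{-}2{+}\delta}(\RR^d,p(a{-}\delta))\hookrightarrow\mC^{\alpha{-}2}(\RR^d,p(a))$, this yields $\mE^n\bar\xi^n_p\to\xi$ in law. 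Since $X^n$ is obtained from $\bar\xi^n_p$ by solving a discrete Poisson problem, it is a continuous linear image of $\bar\xi^n_p$ gaining two derivatives away from the low frequencies removed by $\chi$, so $\mE^n X^n\to X$ jointly. For $\eta^n_+$ and $|\eta^n|$: these are i.i.d.\ fields with means $\EE[\Phi_+]=\nu$ and $\EE[|\Phi|]=\EE[\Phi_+]+\EE[\Phi_-]=2\nu$ (the last equality because $\EE\Phi=0$); writing $\eta^n_+=\nu+g^n$ with $g^n$ centered i.i.d.\ one has $\mathrm{Var}\langle g^n,f\rangle_n\lesssim n^{{-}d}\to0$, so $\langle\eta^n_+,f\rangle_n\to\nu\int_{\RR^d}f$, and together with uniform $\mC^{{-}\ve'}$-moment bounds ($\ve'<\ve$; again from the moments of $\Phi$, as in \cite[Section~5.1]{MartinPerkowski2017}) this upgrades to $\mE^n\eta^n_+\to\nu$ in $\mC^{{-}\ve}(\RR^d,p(a))$; likewise $\mE^n|\eta^n|\to2\nu$. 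Finally, for $b>d/2$ the scalar $\|\eta^n_+\|^2_{L^2(\znd,p(b))}=n^{{-}d}\sum_x p(b)(x)^2\eta^n_+(x)^2$ is a weighted sum of i.i.d.\ variables whose weights sum to $\|p(b)\|^2_{L^2(\znd)}\to\|p(b)\|^2_{L^2(\RR^d)}<\infty$ and whose variance tends to $0$, hence converges in probability to $\EE[\Phi_+^2]\|p(b)\|^2_{L^2(\RR^d)}$.

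The one genuinely delicate ingredient — and the step I expect to be the main obstacle — is the $d=2$ convergence $\mE^n\big((X^n\reso\bar\xi^n_p){-}\kappa_n\big)\to X\diamond\xi$ in $\mC^{2\alpha{-}2}(\RR^d,p(a))$, jointly with $\mE^n\bar\xi^n_p\to\xi$. For $\alpha\in(2/3,1)$ one has $2\alpha-2<0$, so the resonant product is ill-posed in the limit and the resonant estimate of Lemma~\ref{lem:weighted_paraproduct_estimates} (which needs $\alpha+(\alpha{-}2)>0$) does not apply: a renormalization is necessary. As $X^n$ is linear in $\bar\xi^n_p$, the object $X^n\reso\bar\xi^n_p$ is a quadratic functional of the i.i.d.\ field, and its expectation equals $\kappa_n$ (see~\eqref{eqn:real_line_renormalisation_constant}, of order $\log n$) up to a convergent remainder, so subtracting $\kappa_n$ Wick-orders it; one then bounds the centered object in $\mC^{2\alpha{-}2{+}\delta}(\znd,p(a{-}\delta))$ uniformly in $n$ by a moment (diagram) estimate in which the finite moments of $\Phi$ are essential to control the non-Gaussian contractions that do not occur in the Gaussian case, and identifies the limit with the universal $X\diamond\xi$ by comparison with the Gaussian setting. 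All of this is carried out, precisely for i.i.d.\ noise with finite moments, in \cite[Section~5.1]{MartinPerkowski2017} in the course of proving weak universality of the two-dimensional parabolic Anderson model, and I would simply transcribe it; the restrictions $\alpha\in(2/3,1)$ and $n^{{-}d/2}\kappa_n\to0$ are exactly what that argument requires. In $d=1$, where $\alpha\in(1,3/2)$ and hence $\alpha+(\alpha{-}2)=2\alpha-2>0$, the resonant estimate of Lemma~\ref{lem:weighted_paraproduct_estimates} already yields $X^n\reso\bar\xi^n_p\in\mC^{2\alpha{-}2}$ with uniform bounds and convergence, no renormalization is needed, and this is why part~(iii) of Assumption~\ref{assu:renormalisation} is stated only for $d=2$.
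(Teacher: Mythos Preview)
Your approach is essentially the same as the paper's: bundle the enriched noise into a single tuple, prove convergence in law (citing \cite[Section~5.1]{MartinPerkowski2017} for the delicate $d=2$ renormalized product, and doing the easy components by hand), apply Skorohod's representation theorem, and redefine on the exceptional null set. You are also right that in $d=1$ no renormalization is needed and this is why part~(iii) is only stated for $d=2$.

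There is one genuine gap, however: you assert that $E$ is Polish, but the H\"older--Zygmund spaces $\mC^{\gamma}(\RR^d,p(a)) = B^{\gamma}_{\infty,\infty}(\RR^d,p(a))$ are \emph{not} separable, so a countable product of them is not Polish either and the standard Skorohod theorem does not directly apply. The paper flags exactly this point and resolves it in one line: the random variables in question actually take values in the closure of $C_c^\infty(\RR^d)$ inside $\mC^{\gamma}(\RR^d,p(a))$, which is a closed separable subspace (equivalently, one may work in $\mC^{\gamma-\delta}$ and use that the embedding $\mC^{\gamma}\hookrightarrow\mC^{\gamma-\delta}$ has separable range on bounded sets). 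Once you insert this remark your argument goes through; without it the invocation of Skorohod is formally unjustified.
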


\begin{proof} 
  The existence of such a probability space is provided by the Skorohod
  representation theorem. Indeed it is a consequence of Assumption
  \ref{assu:noise} that all the convergences hold in the sense of distributions:
  The convergences in (i) and (iii) follow from
  Lemma~\ref{lem:convergence_1d_white_noise} if $d=1$ and from \cite[Lemmata~5.3
  and~5.5]{MartinPerkowski2017} if $d=2$ (where it is also shown that we can
  choose $c_n = \kappa_n$). The convergence in (ii) for $\nu = \EE[\Phi_+]$ is
  shown in Lemma~\ref{lem:mean_noise}. After changing the probability space the
  Skorohod representation theorem guarantees almost sure convergence, so setting
  $\xi^n, \xi,  c^n, \nu = 0$ on a null set we find the result for every
  $\omega^p$. (There is a small subtlety in the application of the Skorohod
  representation theorem because $\mC^{\gamma}(\RR^d,p(a))$ is not separable, but we can restrict our attention to the closure
of smooth compactly supported functions in $\mC^{\gamma}(\RR^d,p(a))$, which is
a closed separable subspace).
\end{proof}

\begin{notation}
  A sequence of random variables $\{\xi^n_p\}_{n \in \NN}$ defined on a common
  probability space \( (\Omega^{p}, \mF^{ p}, \PP^{p})\) which almost surely
  satisfies Assumption~\ref{assu:renormalisation} is called a \emph{controlled
  random environment}. By Lemma~\ref{lem:renormalisation}, for any random
  environment satisfying Assumption~\ref{assu:noise} we can find a controlled
  random environment with the same distribution. For a given controlled random
  environment we introduce the effective potential: 
  \[ 
    \xi^{n}_{p,e}(\omega^p, x) = \xi^n_p(\omega^p, x){-}c_n(\omega^p) 1_{\{d =
    2\}}.  
  \] 
  Given a controlled random environment we define
  $\mH^{\omega^p}$ as the random Anderson Hamiltonian and its
  domain $\mD_{\mH^{\omega^p}}$ (see Lemma~\ref{lem:Anderson-hamiltonian}). If
  the environment is deterministic we drop all indices \(p\).
\end{notation}

We pass to the description of the particle system. This will be a (random)
Markov process on the space $E = \big( \NN^{\znd}_0 \big)_0$ of compactly supported functions $\eta\colon
\znd \to \NN_0$, whose construction is discussed in
Appendix \ref{sectn:construction_markov_process}. We define $\eta^{x \mapsto
y}(z) = \eta(z) {+} (1_{\{y\}}(z) {-} 1_{\{x\}}(z)) 1_{ \{\eta(x) \geq 1\}}$ and
$\eta^{x \pm }(z) = (\eta(z) \pm 1_{\{ x \}}(z))_+$. Moreover, $C_b(E)$ is the
Banach space of continuous and bounded functions on $E$ endowed with the
discrete topology. For \(F \in C_{b}(E), x \in \znd\) we write:
\[ \Delta^{n}_xF (\eta) = n^{2} \sum_{y \sim x} (F( \eta^{x \mapsto y}) {-}
F( \eta)), \qquad d^{ \pm}_{x} F(\eta) = F(\eta^{x \pm}) {-} F( \eta).\] 

\begin{definition}\label{def:of_u_n_and_mu_n} 
  Fix an ``averaging parameter'' $\vr \ge 0$ and a controlled random environment
  $\xi^n_p$. Let $\PP^n$ be the measure on $\Omega^p \times \DD([0, {+}\infty);
  E)$ defined as the ``semidirect product measure'' (cf. \eqref{eqn:definition_semidirect_product_measure}) $\PP^{p} \ltimes
  \PP^{\omega^p, n}$, where for $\omega^p \in \Omega^p$ the measure
  $\PP^{\omega^p, n}$ on $\DD([0, {+}\infty); E)$ is the law under which the
  canonical process $u^n_p(\omega^p, \cdot)$ started in $u^n_p(\omega^p, 0) =
  \lfloor n^\vr  \rfloor 1_{\{ 0\}}(x)$ is the Markov process with
  generator $$\mL^{n, \omega^p} \colon \mD(\mL^{n, \omega^p}) \to C_b(E),$$
  where \(\mL^{n, \omega^{p}}(F)(\eta)\) is defined by: 
  \begin{equation}\label{eqn:defn_generator}
       \sum_{x \in \ZZ_n^d} \eta_x  \cdot \Big[ \Delta^{n}_{x}F (\eta) +
	 (\xi^n_{p, e})_{+}(\omega^p, x) d^{+}_x F( \eta)+
       (\xi^n_{p,e})_{-}(\omega^p, x) d^{-}_x F( \eta) \Big]
  \end{equation} 
  and the domain $\mD (\mL^{n, \omega^p})$ consists of all $F\in
  C_b(E)$ such that the right-hand side of \eqref{eqn:defn_generator}
  lies in $C_b(E)$. To $u^n_p$ we associate the process \(\mu^{n}_{p}\) with the
  pairing \[\mu^n_p(\omega^p, t)(\varphi):= \sum_{x \in  \mathbb{Z}_n^d}
  \lfloor n^{\vr} \rfloor^{-1} u^n_p(\omega^p, t,x) \varphi(x)\] for any
  function $\varphi\colon \znd \to \RR$. Hence $\mu^n_p$ is a stochastic
  process with values in $\DD([0, +\infty); \mM(\RR^d))$, with the law induced
  by $\PP^{n}$.
\end{definition}

\begin{remark}
	Although not explicitly stated, it is part of the definition that
	$\omega^p \mapsto \PP^{\omega^p,n}(A)$ is measurable for Borel sets $A
	\in \mB(\DD([0,+\infty);E))$.	
\end{remark}

Since all particles evolve independently, we expect that for
	$\vr \to \infty$ the law of large numbers applies. This is why we refer to $\vr$ as
	an averaging parameter.

\begin{notation} 
  In the terminology of stochastic processes in random media, we
  refer to $\PP^{\omega^p, n}$ as the \emph{quenched law} of the process
  $u^n_p$ (or $\mu^n_p$) given the noise $\xi^n_p$. We also call $\PP^{n}$ the
  \emph{total law}. As before, if the process is deterministic we drop the index
  \(p\) everywhere.
\end{notation}

We can now state the main convergence results of this work. We will first prove quenched
versions and the total versions are then easy
corollaries. We start with a law of large numbers.

\begin{theorem}\label{thm:LLN} 
  Let \(\xi\) be a deterministic environment satisfying
  Assumption \ref{assu:renormalisation} and let $\vr > d/2$.
  Let $w$ be the solution of PAM \eqref{eqn:pam} with initial condition $w(0,x)
  = \delta_0(x)$, as constructed in Proposition~\ref{prop:convergence_PAM} (cf.
  Remark \ref{rem:PAM_Green_function}). The measure-valued process $\mu^n$
  from Definition \ref{def:of_u_n_and_mu_n} converges to $w$ in probability in
  the space $\DD([0, +\infty); \mM(\RR^d))$ as $n \to +\infty$.
\end{theorem}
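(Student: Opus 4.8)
The plan is to exploit the fact that, conditionally on the environment, the branching mechanism is critical on average (after renormalization in $d=2$), so that the quenched first moment of $\mu^n$ solves exactly the discrete PAM, and then to show that the quenched variance vanishes as $\vr > d/2$. More precisely, fix a test function $\varphi\colon\znd\to\RR$ in a suitable weighted space, and define $w^n(t) = \EE^{\omega^p,n}[\mu^n(t)]$ as the quenched-mean measure. A direct computation with the generator $\mL^{n,\omega^p}$ shows that $t\mapsto \langle w^n(t),\varphi\rangle_n$ satisfies the (deterministic, environment-dependent) evolution equation associated with $\mfL^n - \xi^n_e$, i.e. $w^n$ is the discrete PAM started from $\lfloor n^\vr\rfloor^{-1}\cdot\lfloor n^\vr\rfloor\,\delta_0 = \delta_0$ on $\znd$ (the branching and killing contributions combine into the potential term $\xi^n_{e}w^n$, with the renormalization constant $c_n1_{\{d=2\}}$ absorbed precisely as in the effective potential). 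By Proposition~\ref{prop:convergence_PAM} this discrete PAM converges in probability, in the relevant function space and hence weakly as a measure in $\DD([0,+\infty);\mM(\RR^d))$, to the continuous PAM solution $w$. So the first step reduces the theorem to controlling the fluctuations of $\mu^n$ around its quenched mean.

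Second, I would bound the quenched second moment. Writing $v^n(t,x,y) = \EE^{\omega^p,n}[u^n(t,x)u^n(t,y)]$, a generator computation gives a closed linear evolution equation for $v^n$ driven by the two-particle operator $\Delta^n_x + \Delta^n_y + \xi^n_e(x) + \xi^n_e(y)$ plus a diagonal source term proportional to $|\xi^n|(x)\,\delta_{x=y}\,w^n(t,x)$ — the source coming from the pair-creation/annihilation at branching events, with coefficient $(\xi^n_{p,e})_+ + (\xi^n_{p,e})_- = |\xi^n_{p,e}|$. Estimating this Duhamel representation using the heat-kernel (Schauder) bounds for the discrete PAM semigroup from Section~\ref{sectn:pam}, together with the uniform bounds $\sup_n\|n^{-d/2}|\xi^n|\|_{\mC^{-\ve}(\znd,p(a))}<\infty$ from Assumption~\ref{assu:renormalisation}(ii), one obtains $\operatorname{Var}^{\omega^p,n}(\langle\mu^n(t),\varphi\rangle_n) \lesssim \lfloor n^\vr\rfloor^{-1}\, n^{d/2}\, C(t,\varphi) \lesssim n^{d/2-\vr}\to 0$, since $\vr>d/2$. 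The extra factor $n^{d/2}$ is exactly the size of $|\xi^n|$ as a measure, and the gain $\lfloor n^\vr\rfloor^{-1}$ comes from the normalization $u^n(0)=\lfloor n^\vr\rfloor\delta_0$ combined with the branching property (independent evolution of the $\lfloor n^\vr\rfloor$ initial particles makes the variance of the normalized sum decay like $1/\lfloor n^\vr\rfloor$ times a per-particle second moment of order $n^{d/2}$).

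Third, I would upgrade pointwise-in-$t$ convergence in probability to convergence in $\DD([0,+\infty);\mM(\RR^d))$. For this I would establish tightness of the laws of $\mu^n$ in the Skorohod space, using the mild martingale problem of Remark~\ref{rem:mild_martingale_problem_discrete} (the same tightness machinery that is used later for the rSBM in Section~\ref{sectn:rSBM}, here with the branching fluctuation term being negligible), so that $\{\mu^n\}$ is tight; any limit point has quenched mean $w$ and zero quenched variance by the first two steps, hence is $\delta_w$, which identifies the limit and gives convergence in probability. Finally the statement is for fixed deterministic environment satisfying Assumption~\ref{assu:renormalisation}, so the convergences of Proposition~\ref{prop:convergence_PAM} invoked here should be read as the deterministic-environment analogues; the "in probability" in the conclusion refers only to the branching randomness under $\PP^{\omega^p,n}$.

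The main obstacle I expect is the second moment estimate: one must set up the two-point function equation carefully (being precise about the diagonal source term and the role of the renormalization in $d=2$, where $|\xi^n_{e}|=|\xi^n-c_n|$ must be shown to still behave like $n^{d/2}$ times something of negative Hölder regularity, uniformly in $n$) and then run the Duhamel/Schauder estimate in the weighted spaces $\mL^{\gamma,\alpha}_p(\znd,e(l))$ so that the constant $C(t,\varphi)$ is genuinely uniform in $n$ while the prefactor $n^{d/2-\vr}$ does the work. Tightness is comparatively routine given the tools of Section~\ref{sectn:rSBM}, and the first-moment identification is a short computation.
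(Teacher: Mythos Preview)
Your approach is correct and close in spirit to the paper's, but the paper organizes the argument differently and avoids the two-point-function computation entirely. Rather than bounding $\operatorname{Var}^{\omega^p,n}(\langle\mu^n(t),\varphi\rangle)$ via an evolution equation for $v^n(t,x,y)=\EE[u^n(t,x)u^n(t,y)]$, the paper first establishes tightness (your third step, via Corollary~\ref{cor:tightness_of_mu_n}) and then shows in Lemma~\ref{lem:convergence_martingale_problem} that any limit point $\mu$ satisfies the martingale problem of Definition~\ref{def:rSBM}(ii) with parameter $\kappa=0$ when $\vr>d/2$; the continuous martingale $s\mapsto\mu(s)(T_{t-s}\varphi)-T_t\varphi(0)$ then has vanishing quadratic variation and is identically zero, giving $\mu(t)(\varphi)=T_t\varphi(0)=w(t)(\varphi)$. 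The key analytic input is the same in both routes: for $\vr>d/2$ one has $n^{-\vr}|\xi^n_e|\to 0$ in $\mC^{-\ve}(\znd,p(a))$, by Lemma~\ref{lem:conv_to_zero} applied to the uniform bound on $n^{-d/2}|\xi^n|$ from Assumption~\ref{assu:renormalisation}(ii) together with $n^{-d/2}c_n\to 0$ from (iii), which also settles your concern about $|\xi^n_e|=|\xi^n-c_n|$ in $d=2$. Your two-particle equation would work, but the variance bound is obtainable more directly by taking expectations in the quadratic-variation formula of Remark~\ref{rem:mild_martingale_problem_discrete}; the paper's route of passing to the limit first and arguing there is cleaner still, since it reuses Lemma~\ref{lem:convergence_martingale_problem} wholesale and reduces the $\vr>d/2$ case to a one-line observation.
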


\begin{proof} 
  The proof can be found in Section \ref{sectn:convergence}.
\end{proof}

If the averaging parameter takes the critical value $\vr = d/2$, we see random
fluctuations in the limit and we end up with the \emph{rough super-Brownian
motion} (rSBM). As in the case of the
classical SBM, the limiting
process can be characterized via duality with the following equation:
\begin{equation}\label{eqn:PDE_laplace_duality_continuous} \partial_t\varphi =
\mH \varphi {-} \frac{\kappa}{2} \varphi^2, \qquad \varphi(0) = \varphi_0,
\end{equation} for $\varphi_0 \in C_c^{\infty}(\RR^d)$, $\varphi_0 \ge 0$, where we recall that $\mH$ is the Anderson Hamiltonian. With
some abuse of notation (since the equation is not linear) we write $U_t
\varphi_0 = \varphi(t)$. 

\begin{definition}\label{def:rSBM}
   Let $\xi$ be a deterministic environment satisfying
   Assumption~\ref{assu:renormalisation}, let $\kappa>0$ and let $\mu$ be a
   process with values in the space $C([0, +\infty);\mM(\RR^d))$, such that
   $\mu(0) = \delta_0$.  Write $\mF = \{ \mF_t\}_{t \in [0, +\infty)}$ for the
   completed and right-continuous filtration generated by $\mu$. We call
   $\mu$ a \emph{rough super-Brownian motion (rSBM) with parameter $\kappa$} if it
   satisfies one of the three properties below:
  \begin{enumerate}[label=(\roman*)] 
    
    \item For any $t \ge 0$ and $\varphi_0 \in
      C_c^{\infty}(\RR^d), \varphi_0 \ge 0$ and for $U_{\cdot} \varphi_0$ the
      solution to Equation \eqref{eqn:PDE_laplace_duality_continuous} with
      initial condition $\varphi_0$, the process 
      \[ N^{\varphi_0}_t(s) = e^{-\langle \mu(s), U_{t {-} s} \varphi_0 \rangle
      }, \qquad s \in [0, t], \] 
	is a bounded continuous $\mF-$martingale.
		
	\item For any $t \ge 0$ and $\varphi_0 \in C^{\infty}_c(\RR^d)$ and $f
	  \in C([0, t];\mC^{\zeta}(\RR^d,e(l)))$ for some $\zeta >0$ and $l<
	  -t$, and for $\varphi_t$ solving \[ \partial_s \varphi_t + \mH
	  \varphi_t = f, \qquad s \in [0, t], \qquad \varphi_t(t) = \varphi_0,
	\] it holds that \begin{align*} s\mapsto M_t^{\varphi_0, f}(s) : = \langle \mu(s)
	, \varphi_t(s) \rangle{-} \langle \mu(0), \varphi_t(0) \rangle {-}
      \int_0^s \ud r \ \langle \mu(r), f(r) \rangle,
    \end{align*} defined for $s \in [0, t]$, is a continuous square-integrable $\mF-$martingale with
    quadratic variation \[ \langle M^{\varphi_0,f}_t\rangle_s = \kappa \int_0^s
    \ud r \ \langle \mu(r), (\varphi_t)^2(r) \rangle .  \] 
		 
	\item For any $\varphi \in \mD_{\mH}$ the process: \[ L^{\varphi}(t) =
	  \langle \mu(t), \varphi \rangle{-} \langle \mu(0), \varphi \rangle
	{-}\int_0^t \ud r \ \langle \mu(r), \mH \varphi \rangle, \qquad t \in
      [0,+\infty), \] is a continuous 	$\mF-$martingale, square-integrable on
      $[0,T]$ for all $T > 0$, with quadratic variation \[ \langle L^{\varphi}
      \rangle_t = \kappa  \int_0^t  \ud r \ \langle \mu(r), \varphi^2 \rangle.
    \]
	\end{enumerate} 
\end{definition}

Each of the three properties above characterizes
the process uniquely:

\begin{lemma}\label{lem:rSBM-uniqueness-equivalence}

  The three conditions of Definition \ref{def:rSBM} are equivalent. Moreover,
  if $\mu$ is a rSBM with parameter $\kappa$, then its
  law is unique.

\end{lemma}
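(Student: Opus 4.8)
The plan is to establish a cycle of implications $(i)\Rightarrow(ii)\Rightarrow(iii)\Rightarrow(i)$ among the three characterizations, and then to prove uniqueness of the law using the exponential-martingale formulation $(i)$ together with a duality argument against the semilinear PDE~\eqref{eqn:PDE_laplace_duality_continuous}.

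\textbf{Equivalence of the three conditions.} For $(ii)\Rightarrow(iii)$ I would simply specialize: given $\varphi\in\mD_\mH$, take $\varphi_0=\varphi$ and $f=0$ in $(ii)$, so that $\varphi_t(s)\equiv S^\mH_{t-s}\varphi$ is the evolution of $\mH$; then $M^{\varphi,0}_t(s)$ is a martingale on $[0,t]$, and a direct computation using $\partial_s\varphi_t=-\mH\varphi_t$ identifies the increment of $L^\varphi$ on $[0,t]$ with $M^{\varphi,0}_t(t)$ up to the integrated term $\int_0^t\langle\mu(r),\mH\varphi\rangle\,\ud r$, noting $\langle\mu(s),\mH\varphi_t(s)\rangle$ versus $\langle\mu(s),\mH\varphi\rangle$ requires care, so it may be cleaner to run the identification in the form $\langle\mu(s),\varphi_t(s)\rangle = \langle\mu(0),\varphi_t(0)\rangle + M + \int_0^s\langle\mu(r),\partial_r\varphi_t(r)+\mH\varphi_t(r)\rangle\,\ud r$ and use density of $t\mapsto$ such evolved test functions. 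The quadratic variation transfers verbatim since $(\varphi_t)^2(s)\to\varphi^2$ appropriately. For $(iii)\Rightarrow(ii)$ I would run the reverse: starting from the martingale $L^\varphi$ for $\varphi\in\mD_\mH$, apply it to a smooth approximation of $\varphi_t(s)$ and use the mild/integrated form of $\partial_s\varphi_t+\mH\varphi_t=f$ together with a Riemann-sum approximation in time, invoking that the stochastic integral $\int_0^s\langle\ud L, \cdot\rangle$ converges; the square-integrability on compacts from $(iii)$ plus the estimates of Lemma~\ref{lem:weighted_paraproduct_estimates} on $U_\cdot,S^\mH_\cdot$ acting on $C_c^\infty$ control the limit. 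For $(ii)\Rightarrow(i)$: fix $\varphi_0\ge0$, let $\psi_r=U_{t-r}\varphi_0$ solve~\eqref{eqn:PDE_laplace_duality_continuous} backward, so $\partial_r\psi_r+\mH\psi_r=\tfrac\kappa2\psi_r^2=:f(r)$; apply $(ii)$ with this $f$ (which lies in the required space since $U_\cdot\varphi_0$ is smooth enough, being a perturbation of $S^\mH$) to get that $\langle\mu(r),\psi_r\rangle$ is a semimartingale with the stated drift $\int_0^r\langle\mu,\tfrac\kappa2\psi^2\rangle$ and bracket $\kappa\int_0^r\langle\mu,\psi^2\rangle$; then Itô's formula applied to $\exp(-\langle\mu(r),\psi_r\rangle)$ makes the drift terms cancel exactly (the $-\tfrac\kappa2\psi^2$ drift against the $+\tfrac\kappa2\psi^2$ from the quadratic-variation term), yielding that $N^{\varphi_0}_t$ is a local martingale, and boundedness ($0\le N\le1$ since $\psi\ge0$) upgrades this to a true bounded martingale. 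The converse $(i)\Rightarrow(ii)$ is the delicate direction: one differentiates the exponential martingale in the test function, i.e. replaces $\varphi_0$ by $\lambda\varphi_0$ and extracts the first- and second-order terms in $\lambda$ as $\lambda\to0$, which produces~$(ii)$ for the special family $f=\tfrac\kappa2(\text{nonlinear})$; to get general $f$ one perturbs the semilinear equation or, more robustly, one uses the standard fact that a nonnegative Markov (or just adapted) process whose Laplace functionals satisfy the exponential-duality martingale property is characterized, and re-derives the linear martingale problem from it. I expect the cleanest route for $(i)\Rightarrow(ii)$ is actually $(i)\Rightarrow$ uniqueness of law $\Rightarrow$ identification with the particle-system limit, which by the tightness/mild-martingale-problem analysis of Section~\ref{sectn:rSBM} satisfies $(ii)$; alternatively differentiate as above.

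\textbf{Uniqueness of the law.} Given two processes $\mu,\tilde\mu$ satisfying $(i)$ with the same $\kappa$ and both started at $\delta_0$, fix $t>0$ and $\varphi_0\in C_c^\infty$, $\varphi_0\ge0$. The martingale property of $N^{\varphi_0}_t$ on $[0,t]$ gives
\[
\EE\big[e^{-\langle\mu(t),\varphi_0\rangle}\big] = \EE\big[N^{\varphi_0}_t(t)\big]=N^{\varphi_0}_t(0)=e^{-\langle\mu(0),U_t\varphi_0\rangle}=e^{-\langle\delta_0,U_t\varphi_0\rangle},
\]
and the same identity holds for $\tilde\mu$, so the one-dimensional Laplace functionals of $\mu(t)$ and $\tilde\mu(t)$ agree for all $\varphi_0$, hence $\mu(t)\overset{d}{=}\tilde\mu(t)$. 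To upgrade to equality of the laws on path space I would iterate: by the Markov property implicit in the martingale characterization (for $0\le s\le t$, conditioning $N^{\varphi_0}_t$ at time $s$ gives $\EE[e^{-\langle\mu(t),\varphi_0\rangle}\mid\mF_s]=e^{-\langle\mu(s),U_{t-s}\varphi_0\rangle}$), one obtains by induction on $0=t_0<t_1<\dots<t_k$ that the joint Laplace transform $\EE[\exp(-\sum_j\langle\mu(t_j),\varphi_j\rangle)]$ is determined — running the nonlinear semigroup $U$ backwards through the mesh, absorbing $\varphi_j$ at each node, exactly as for classical SBM. Since $\mu$ has continuous paths in $\mM(\RR^d)$, agreement of all finite-dimensional distributions forces equality of laws on $C([0,\infty);\mM(\RR^d))$.

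\textbf{Main obstacle.} The routine Laplace-duality argument for uniqueness is standard once we know $U_\cdot\varphi_0$ exists, is nonnegative, and is regular enough to be admissible in the martingale characterization; so the real work is (a) the well-posedness and regularity of~\eqref{eqn:PDE_laplace_duality_continuous} in the paracontrolled/Anderson-Hamiltonian sense — in particular that $\varphi\ge0$ is propagated and that $\varphi(t)$ lies in a space where $\langle\mu(t),\mH\varphi\rangle$ and $\langle\mu(t),\varphi^2\rangle$ make sense against the measure $\mu$, which is where Lemma~\ref{lem:weighted_paraproduct_estimates} and the construction of $\mH$ enter — and (b) the direction $(i)\Rightarrow(ii)$, i.e. recovering the full linear mild martingale problem (with arbitrary inhomogeneity $f$) from the single nonlinear exponential duality. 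The latter is the crux: I would handle it by the $\lambda$-differentiation of $e^{-\lambda\langle\mu,U^\lambda_\cdot\varphi_0\rangle}$ sketched above, being careful that the remainder terms in $\lambda$ are controlled uniformly — using the comparison $0\le U_t^\lambda\varphi_0\le\lambda S^\mH_t\varphi_0$ and the second-moment bound on $\mu$ that follows from $(i)$ itself via $-\partial_\lambda^2|_{\lambda=0}$ — so that the limit identifies $\langle\mu(s),\varphi_t(s)\rangle$ as a square-integrable martingale with the asserted bracket, first for the nonlinear family and then, by a further perturbation/linearization in the inhomogeneity, for general $f$.
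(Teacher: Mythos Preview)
Your proposal is essentially correct and matches the paper's argument closely. The paper proves $(ii)\Rightarrow(i)$ via It\^o's formula applied to $e^{-x}$ (as you do), establishes $(ii)\Leftrightarrow(iii)$ by partition/Riemann-sum arguments (which is what your sketches for these directions become once made precise), and handles the delicate direction $(i)\Rightarrow(ii)$ by exactly the indirect route you single out as ``cleanest'': first prove uniqueness in law from $(i)$ via iterated Laplace functionals (your uniqueness argument is the paper's), then use that Lemma~\ref{lem:convergence_martingale_problem} produces a process satisfying $(ii)$, hence $(i)$; since $(ii)$ depends only on the law, every process with property $(i)$ must also satisfy $(ii)$.

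The one place you go beyond the paper is the alternative direct attack on $(i)\Rightarrow(ii)$ by differentiating the exponential martingale in $\lambda$. The paper does not attempt this. Your sketch is plausible but has a circularity to watch: the second-moment bound you want to extract ``from $(i)$ itself via $-\partial_\lambda^2|_{\lambda=0}$'' already presupposes enough integrability to differentiate under the expectation, so you would need an a priori uniform-integrability argument before the differentiation is justified. The paper's indirect argument sidesteps this entirely, at the price of relying on the particle-system existence result.
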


\begin{proof}
	The proof can be found at the end of Section
	\ref{sectn:convergence}.	
\end{proof}

\begin{theorem}\label{thm:CLT} 

  Let $\{\xi^n\}_{n \in \NN}$ be a deterministic environment satisfying
  Assumption \ref{assu:renormalisation} and let $\vr =
  d/2$. Then the sequence $\{\mu^n\}_{n \in \NN}$ converges to the 
  rSBM $\mu$ with parameter $\kappa = 2\nu$  in distribution in $\DD([0, +\infty);
  \mM(\RR^d))$. 

\end{theorem}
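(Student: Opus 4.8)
The plan is to prove Theorem~\ref{thm:CLT} by the standard two-step scheme for convergence of measure-valued processes: first establish tightness of $\{\mu^n\}_{n\in\NN}$ in $\DD([0,+\infty);\mM(\RR^d))$, then identify every limit point as a rough super-Brownian motion with parameter $\kappa = 2\nu$, at which point Lemma~\ref{lem:rSBM-uniqueness-equivalence} (uniqueness of the rSBM) forces convergence of the full sequence. Since the limiting measure $\mu$ is characterized via a martingale problem, I expect the natural route to identification is through the \emph{mild martingale problem} (property (ii) of Definition~\ref{def:rSBM}), since it is formulated in terms of the parabolic equation $\partial_s\varphi_t + \mH\varphi_t = f$ run against the discrete PAM machinery from Section~\ref{sectn:pam}, rather than in terms of the unbounded operator $\mH$ directly.

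For tightness, I would work with the discrete mild martingale problem (Remark~\ref{rem:mild_martingale_problem_discrete}): testing $\mu^n$ against solutions of the discrete backward equation $\partial_s\varphi^n_t + \mH^n\varphi^n_t = f^n$ one obtains a decomposition $\langle \mu^n(s),\varphi^n_t(s)\rangle = \langle\mu^n(0),\varphi^n_t(0)\rangle + \int_0^s\langle\mu^n(r),f^n(r)\rangle\,\ud r + M^{n}(s)$ with $M^n$ a martingale whose predictable quadratic variation is (up to lower order terms coming from the discrete generator) of the form $\int_0^s \langle\mu^n(r), (\varphi^n_t)^2(r)\cdot n^{-d/2}|\xi^n|\rangle_n\,\ud r$. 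Using the uniform bounds on the discrete PAM solution operator from Proposition~\ref{prop:convergence_PAM} together with the bounds $\sup_n \|n^{-d/2}|\xi^n|\|_{\mC^{-\ve}(\znd,p(a))}<\infty$ and $\sup_n\|n^{-d/2}\xi^n_+\|_{L^2(\znd,p(b))}<\infty$ from Assumption~\ref{assu:renormalisation}(ii), one controls the quadratic variation and the drift uniformly in $n$, and a Kolmogorov-type criterion (or Aldous' criterion combined with compact containment, the latter following from a moment bound on total mass $\mu^n(t)(\RR^d) = \lfloor n^\vr\rfloor^{-1}\sum_x u^n(t,x)$ which is itself a martingale after subtracting the $\xi^n_e$ drift) yields tightness in $\DD([0,+\infty);\mM(\RR^d))$. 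One must also check that any limit point lives in $C([0,+\infty);\mM(\RR^d))$, which follows because the jumps of $\mu^n$ are of size $\lfloor n^\vr\rfloor^{-1}\to 0$.

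For the identification step, fix a limit point $\mu$ along a subsequence and fix $\varphi_0\in C_c^\infty$, $f\in C([0,t];\mC^\zeta(\RR^d,e(l)))$. Let $\varphi_t$ solve the continuous backward equation and $\varphi^n_t$ its discrete analogue; by the convergence of the discrete PAM solution operator to the continuous one (Section~\ref{sectn:pam}, relying on \cite{MartinPerkowski2017} and the Anderson Hamiltonian of \cite{Allez2015}), $\mE^n\varphi^n_t \to \varphi_t$ in the relevant $\mM^\gamma\mC^\alpha$ topology. Passing to the limit in the discrete identity then shows that $s\mapsto M^{\varphi_0,f}_t(s)$ of Definition~\ref{def:rSBM}(ii) is a continuous martingale; the key computation is that the discrete quadratic variation $\int_0^s\langle\mu^n(r),(\varphi^n_t)^2(r)\,n^{-d/2}|\xi^n|\rangle_n\,\ud r$ converges to $2\nu\int_0^s\langle\mu(r),\varphi_t^2(r)\rangle\,\ud r$, which uses precisely the convergence $\mE^n n^{-d/2}|\xi^n|\to 2\nu$ from Assumption~\ref{assu:renormalisation}(ii), together with the resonant-product estimates of Lemma~\ref{lem:weighted_paraproduct_estimates} to make sense of the product $(\varphi^n_t)^2\cdot n^{-d/2}|\xi^n|$ and its limit. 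One similarly checks the drift term converges: here the delicate point is that $\langle\mu^n(r),f^n(r)\rangle$ must be read off correctly and that the extra renormalization shift $c_n$ hidden in $\xi^n_e$ is exactly absorbed by the renormalization built into $\mH$ via Assumption~\ref{assu:renormalisation}(iii). Having verified (ii), Lemma~\ref{lem:rSBM-uniqueness-equivalence} identifies $\mu$ uniquely as the rSBM with $\kappa = 2\nu$, and since the limit point was arbitrary the whole sequence converges. Finally the total-law version follows by integrating the quenched statement against $\PP^p$ and using dominated convergence, as announced before Theorem~\ref{thm:LLN}.

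The main obstacle I anticipate is the passage to the limit in the martingale's quadratic variation: the term $(\varphi^n_t)^2 \cdot n^{-d/2}|\xi^n|$ pairs a function that is only Hölder of some positive but possibly small order (because $\varphi^n_t$ is controlled by the singular PAM solution near $t$, with a blow-up captured by the $\mM^\gamma$ scale) against a distribution of negative regularity $-\ve$, so the product is a priori borderline and one needs the paracontrolled structure — concretely the resonant-product bound in Lemma~\ref{lem:weighted_paraproduct_estimates} requiring $\alpha+\beta>0$ — to hold uniformly in $n$, and then stability of this bound under the extension $\mE^n$ and under $n\to\infty$. Managing the time-weights $e(l+\cdot)$ and the $t^\gamma$ blow-up simultaneously while keeping all estimates uniform in $n\in\NN\cup\{\infty\}$ is where the bulk of the technical work lies; everything else is a fairly mechanical adaptation of the classical superprocess tightness/identification argument.
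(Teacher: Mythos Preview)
Your proposal is correct and follows essentially the same route as the paper: tightness via the discrete mild martingale of Remark~\ref{rem:mild_martingale_problem_discrete} (the paper uses Chentsov's criterion, which is the ``Kolmogorov-type'' criterion you allude to) and Jakubowski's compact containment, identification of limit points through property~(ii) of Definition~\ref{def:rSBM} by passing the discrete backward solutions $\varphi^n_t$ to the continuous ones via Proposition~\ref{prop:convergence_PAM}, and then uniqueness via Lemma~\ref{lem:rSBM-uniqueness-equivalence}. You have also correctly located the main technical difficulty in the convergence of the quadratic variation; the paper handles this (Step~3 of Lemma~\ref{lem:convergence_martingale_problem}) not by a direct paraproduct bound on $(\varphi^n_t)^2 \cdot n^{-d/2}|\xi^n|$ paired with $\mu^n$, but by showing that $\EE\big[\sup_{s\le t}|\int_0^s \langle\mu^n(r),\psi^n(\varphi^n_t(r))^2\rangle\,\ud r|^2\big]\to 0$ whenever $\psi^n\to 0$ in $\mC^{-\ve}$, which requires reapplying the mild martingale representation to control second moments of $\mu^n$ --- a point your sketch glosses over but which fits naturally into your framework.
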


\begin{proof} 
  The proof can be found at the end of Section \ref{sectn:convergence}.
\end{proof}

\begin{remark}
	Lemma~\ref{lem:rSBM-uniqueness-equivalence} gives the uniqueness of the
	rSBM for all parameters $\kappa > 0$, but Theorem \ref{thm:CLT} only shows the
	existence conditionally on the existence of an environment which satisfies
	Assumption~\ref{assu:renormalisation}, which leads to the constraint $\nu
	\in (0,\frac12]$ because we should think of $\nu = \EE[\Phi_+]$
	for a centered random variable $\Phi$ with $\EE[\Phi^2]=1$.  But we 
	establish the existence of the rSBM for general $\kappa > 0$
	in Section~\ref{sectn:mixing_dawson_watanabe}.
\end{remark}

\begin{remark}
  We restrict our attention to the Dirac delta initial condition for
	simplicity, but most of our arguments extend to initial
	conditions $\mu \in
	\mM(\RR^d)$ that satisfy $\langle \mu, e(l)\rangle < \infty$ for all $l
	< 0$. In this case only the construction of the initial value sequence
	\(\{\mu^{n}(0)\}_{n \in \NN}\) is more technical, because we
	need to come up with an approximation in terms of integer valued point
      measures (which we need as initial condition for the particle system). This can be achieved by discretizing the initial measure on a coarser grid.
\end{remark}

The previous results describe the scaling behavior of the BRWRE
conditionally on the environment, and we now pass to the unconditional statements. To
a given random environment $\xi^n_p$ satisfying Assumption \ref{assu:noise} (not
necessarily a \textit{controlled} random environment) we associate a sequence
of random variables in $\mS_{\omega}^\prime(\RR^d)$ by defining $\xi^n_p(f) = n^{-d}
\sum_x \xi^n_p(x) f(x)$. The sequence of measures $\overline{\PP}^n =
\PP^{p, n} \ltimes \PP^{\omega^p, n}$ on $\mS_{\omega}^\prime(\RR^d) \times
\DD([0, +\infty); \mM(\RR^d))$ is then such that $\PP^{p, n}$ is the law of $\xi^n_p$
and $\PP^{\omega^p, n}$ is the quenched law of the branching process $\mu^n_p$
given $\xi^n_p$ (cf. Appendix \ref{sectn:construction_markov_process}).

\begin{corollary}\label{cor:convergence_total_law} 
  The sequence of measures $\overline{\PP}^n$ converges weakly to 
  $\overline{\PP} = \PP^p \ltimes \PP^{\omega^p}$ on
  $\mS_{\omega}^\prime(\RR^d)\times \DD([0, +\infty); \mM(\RR^d))$, where $
  \PP^p$ is the law of the space white noise on $\mS_{\omega}^\prime(\RR^d)$,
  and $\PP^{\omega^p}$ is the quenched law of $\mu_p$ given $\xi_p$ which is
  described by Theorem \ref{thm:LLN} if $\vr>d/2$ or by Theorem \ref{thm:CLT} if
  $\vr = d/2$.  
\end{corollary}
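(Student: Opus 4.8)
The plan is to derive Corollary~\ref{cor:convergence_total_law} from the quenched convergence results (Theorems~\ref{thm:LLN} and~\ref{thm:CLT}) together with the coupling furnished by Lemma~\ref{lem:renormalisation}. First I would reduce to a controlled random environment: by Lemma~\ref{lem:renormalisation}, given $\{\xi^n_p\}_{n \in \NN}$ satisfying Assumption~\ref{assu:noise} there is, for each $n$, a probability space $(\Omega^p,\mF^p,\PP^p)$ carrying random variables $\{\tilde\xi^n_p\}_{n\in\NN}$ with $\tilde\xi^n_p = \xi^n_p$ in distribution and such that $\tilde\xi^n_p$ almost surely satisfies Assumption~\ref{assu:renormalisation}. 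Since the quenched law $\PP^{\omega^p,n}$ depends on $\omega^p$ only through the realization of the potential, the semidirect product measure $\overline{\PP}^n$ is unchanged (in law on $\mS_\omega'(\RR^d)\times\DD([0,+\infty);\mM(\RR^d))$) if we replace $\xi^n_p$ by $\tilde\xi^n_p$. Hence it suffices to prove the statement for a controlled random environment, which I assume henceforth.

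Next I would establish the joint convergence. The marginal law $\PP^{p,n}$ of $\xi^n_p$ (viewed in $\mS_\omega'(\RR^d)$ via $\xi^n_p(f) = n^{-d}\sum_x \xi^n_p(x)f(x)$) converges weakly to the law $\PP^p$ of space white noise, by the remark following Assumption~\ref{assu:noise} and standard CLT arguments for the linear functionals (tightness in $\mS_\omega'$ following from the uniform bounds in Assumption~\ref{assu:renormalisation}(i), which hold $\PP^p$-a.s. and in expectation). For the joint convergence I would invoke the Skorohod representation theorem: on a common probability space realize versions $\hat\xi^n_p \to \hat\xi_p$ almost surely in $\bigcap_{a>0}\mC^{\alpha-2}(\RR^d,p(a))$, together with the almost sure convergences of $\mE^n X^n$ and $\mE^n((X^n\reso\xi^n)-c_n)$ required by Assumption~\ref{assu:renormalisation}(iii) and the convergences in (ii). Then for $\PP^p$-almost every $\omega^p$, the deterministic environment $\hat\xi^n_p(\omega^p,\cdot)$ satisfies Assumption~\ref{assu:renormalisation}, so by Theorem~\ref{thm:LLN} (if $\vr > d/2$) or Theorem~\ref{thm:CLT} (if $\vr = d/2$) the quenched laws $\PP^{\omega^p,n}$ converge weakly on $\DD([0,+\infty);\mM(\RR^d))$ to $\PP^{\omega^p}$ — in the LLN case even in probability, which is stronger. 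For a bounded continuous test function $F$ on $\mS_\omega'(\RR^d)\times\DD([0,+\infty);\mM(\RR^d))$, I would then write $\int F\, \ud\overline{\PP}^n = \EE^p\big[\int F(\hat\xi^n_p,\cdot)\, \ud\PP^{\hat\xi^n_p,n}\big]$, observe that the inner integral converges $\PP^p$-a.s. to $\int F(\hat\xi_p,\cdot)\,\ud\PP^{\hat\xi_p}$ by joint continuity of $F$ and the quenched weak convergence (here one uses that $\hat\xi^n_p\to\hat\xi_p$ a.s. to handle the first coordinate simultaneously with the process coordinate — a standard "converging-together" argument, e.g. via Skorohod coupling of the process coordinate on top), and conclude by dominated convergence since $\|F\|_\infty < \infty$. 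This identifies the limit as $\overline{\PP} = \PP^p \ltimes \PP^{\omega^p}$, which is measurable in $\omega^p$ by the corresponding measurability built into Definition~\ref{def:of_u_n_and_mu_n} and Lemma~\ref{lem:renormalisation}.

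The main obstacle I anticipate is the bookkeeping around the joint continuity in the two coordinates: weak convergence of $\overline{\PP}^n$ is not an immediate consequence of convergence of the first marginal plus quenched convergence of the second, because the quenched law is itself a (measurable) function of the first coordinate, so one genuinely needs the almost sure coupling of the environments (supplied by Skorohod) rather than mere convergence in distribution. Concretely, one has to argue that $\omega^p \mapsto \int F(\hat\xi^n_p(\omega^p),\cdot)\,\ud\PP^{\hat\xi^n_p(\omega^p),n}$ converges pointwise a.s.; this requires that the map sending a deterministic controlled environment to the law of the associated rSBM (or PAM solution) is continuous along the specific approximating sequences appearing here, which is essentially the content of Theorems~\ref{thm:LLN} and~\ref{thm:CLT} themselves once one notes that their proofs are stable under replacing a fixed environment by an a.s.-convergent sequence of environments satisfying Assumption~\ref{assu:renormalisation} uniformly. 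A minor technical point, already flagged in the proof of Lemma~\ref{lem:renormalisation}, is the non-separability of $\mC^{\gamma}(\RR^d,p(a))$; as there, I would work in the separable closed subspace given by the closure of $C_c^\infty$, on which the Skorohod representation theorem applies without issue.
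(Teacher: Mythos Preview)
Your proposal is correct and follows essentially the same route as the paper: reduce to a controlled random environment via Lemma~\ref{lem:renormalisation}, condition on the environment, apply the quenched convergence from Theorem~\ref{thm:LLN} or~\ref{thm:CLT} pointwise in $\omega^p$, and conclude by dominated convergence. Your second invocation of Skorohod is unnecessary, however: by definition a controlled random environment already lives on a common probability space with $\mE^n\xi^n_p(\omega^p)\to\xi_p(\omega^p)$ for every $\omega^p$ (Assumption~\ref{assu:renormalisation}(i)), so the joint convergence $(\xi^n_p(\omega^p),\mu^n)\Rightarrow(\xi_p(\omega^p),\mu)$ under $\PP^{\omega^p,n}$ is immediate from Slutsky's theorem, and the obstacle you anticipate dissolves.
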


\begin{proof} 
  Consider a function $F$ on $\mS_{\omega}^\prime(\RR^d) \times \DD([0,
  +\infty); \mM(\RR^d))$ which is continuous and bounded. We
  need the convergence \(\lim_n \EE \big[ F(\xi^n_p, \mu^n) \big] \to
  \EE \big[ F(\xi_p, \mu)\big]\). Up to changing the probability space (which
    does not affect the law) we may assume that $\xi^n_p$ is a controlled random
    environment. We condition on the noise, rewriting the left-hand side as
  \[
	  \EE \big[ F(\xi^n_p, \mu^n) \big] = \int  \EE^{\omega^p,n}\big[F(\xi^n_p(\omega^p),\mu^n)\big] \PP^p(\operatorname{d} \omega^p) .
  \]
  Under the additional property of being a controlled random environment and for
  fixed \(\omega^p \in \Omega^p\), the conditional law $\PP^{\omega^p, n}$ on
  the space $\DD([0, +\infty); \mM(\RR^d))$ converges weakly to the measure
  $\PP^{\omega^p}$ given by Theorem~\ref{thm:LLN} respectively
  Theorem~\ref{thm:CLT}, according to the value of $\vr$.  We can thus deduce
  the result by dominated convergence.  
\end{proof}

For $\vr > d/2$ the process of Corollary~\ref{cor:convergence_total_law} is
simply the continuous parabolic Anderson model. For $\vr = d/2$ it is a new
process.

\begin{definition}
	For $\vr = d/2$ we call the process $\mu$ of
	Corollary~\ref{cor:convergence_total_law} an \emph{SBM in static random
	environment} (of parameter $\kappa > 0$).
\end{definition}

In dimension $d = 1$ we characterize the process $\mu$ as the solution to the
SPDE \eqref{eqn:SPDE_rsbm}. First, we rigorously define solutions to such an
equation.

\begin{definition}\label{def:solutions_to_SPDE_1D}
  
  Let $d = 1$, $\kappa>0$, and $\pi \in \mathcal M
  (\RR)$. A weak solution to
  \begin{align}\label{eqn:spde_rsbm_formal_with_kappa}
    \partial_t \mu_p(t,x) = \mH^{\omega^p} \mu_p(t,x) {+} \sqrt{\kappa \mu_p(t,x)} \tilde{\xi}(t,x),\qquad \mu_p(0) = \pi,
  \end{align}
  is a couple formed by a probability space $(\Omega, \mF, \PP)$ and a random
  process $$\mu_p\colon \Omega \to C([0, +\infty); \mM(\RR))$$ such that $\Omega
  = \Omega^p \times \bar{\Omega}$ and $\PP$ is of the form $\PP^p \ltimes
  \PP^{\omega^p}$ with $(\Omega^p, \PP^p)$ supporting a space white noise
  $\xi_p$ and $(\Omega, \PP)$ supporting an independent space-time white noise $\tilde{\xi}$, such that the following properties are
  fulfilled for almost all $\omega^p \in \Omega^p$: 
  \begin{itemize} 
    \item There exists a filtration $\{\mF^{\omega^p}_t\}_{t \in
      [0, T]}$ on the space $(\bar{\Omega}, \PP^{\omega^p})$ which satisfies the
      usual conditions and such that $\mu_p(\omega^p, \cdot)$ is adapted and almost surely lies in $L^{p}([0, T]; L^2(\RR,e(l)))$ for
      all $p < 2$ and $l \in \RR$.  Moreover, under $\PP^{\omega^p}$ the process
      $\tilde{\xi}(\omega^p, \cdot)$ is a space-time white noise adapted to the
      same filtration.  
    \item The random process $\mu_p$ satisfies for all $\varphi \in
      \mD_{\mH^{\omega^p}}$ and for all \(t \geq 0\): 
      \begin{align*} \int_\RR  \ud x \ & \mu_p(\omega^p, t, x) \varphi(x)  = 
	\int_0^t \int_\RR \ud s \ud x \
	\mu_p(\omega^p, s, x) (\mH^{\omega^p} \varphi)(x) \\ & {+}  \int_0^t
	\int_\RR \tilde{\xi}(\omega^p, \ud s, \ud x)\sqrt{\kappa \mu_p(\omega^p,
	s, x)}\varphi(x) + \int_\RR \varphi(x) \pi(dx),
      \end{align*} 
      with the last integral understood in the sense of Walsh
\cite{Walsh1986}.  \end{itemize}
  
  \end{definition}


\begin{theorem}
  
  \label{thm:rsbm_SPDE} For $\pi = \delta_0$ and any $\kappa >0$ there exists a weak
  solution $\mu_p$ to the SPDE \eqref{eqn:spde_rsbm_formal_with_kappa} in the
  sense of Definition~\ref{def:solutions_to_SPDE_1D}. The law of $\mu_p$ as a random process on
  $C([0, +\infty); \mM(\RR))$ is unique and it corresponds to an SBM in static
random environment of parameter \(\kappa\).  

\end{theorem}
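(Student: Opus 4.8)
The plan is to realize the \emph{SBM in static random environment of parameter $\kappa$} as a weak solution of \eqref{eqn:spde_rsbm_formal_with_kappa} by extracting a space-time white noise from its martingale part, in the spirit of the classical Konno--Shiga--Reimers representation of super-Brownian motion~\cite{KonnoShiga1988,Reimers1989}, with the Laplacian replaced by the Anderson Hamiltonian $\mH^{\omega^p}$ and using Walsh's theory of martingale measures~\cite{Walsh1986}. The process exists for every $\kappa > 0$: it is the limit in Theorem~\ref{thm:CLT} when $\kappa = 2\nu$, and it is constructed for general $\kappa$ in Section~\ref{sectn:mixing_dawson_watanabe}. Uniqueness will be obtained in the other direction: any weak solution of \eqref{eqn:spde_rsbm_formal_with_kappa} will be shown to satisfy the martingale problem of Definition~\ref{def:rSBM}, so that Lemma~\ref{lem:rSBM-uniqueness-equivalence} identifies its law.

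For existence, fix $\kappa > 0$ and let $\mu$ be a rSBM of parameter $\kappa$ on a probability space of product form with law $\PP^p \ltimes \PP^{\omega^p}_0$, where $\PP^p$ is the law of the space white noise $\xi_p$. By Lemma~\ref{lem:existence_of_density}, for almost every $\omega^p$ the measure $\mu(\omega^p, t, \cdot)$ admits, for $t > 0$, a density $\mu(\omega^p, t, x)$ with $\mu(\omega^p, \cdot) \in L^{p}([0,T]; L^2(\RR, e(l)))$ almost surely, for all $p < 2$, $l \in \RR$, $T > 0$. Working under $\PP^{\omega^p}_0$ for such a fixed $\omega^p$, I would take the continuous square-integrable martingales $L^\varphi$, $\varphi \in \mD_{\mH^{\omega^p}}$, provided by characterization (iii) of Definition~\ref{def:rSBM}, check by polarization that $\langle L^\varphi, L^\psi \rangle_t = \kappa \int_0^t \ud r\, \langle \mu(r), \varphi \psi \rangle$, and use this bilinear, positive-definite covariance together with the $L^2$-density bound to extend $\varphi \mapsto L^\varphi$ to an orthogonal martingale measure $M$ on $[0,+\infty) \times \RR$ with covariance measure $\kappa\, \mu(\omega^p, s, x)\, \ud x\, \ud s$.

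Next I would enlarge the space by an independent space-time white noise $\theta$ to $(\bar\Omega, \PP^{\omega^p})$ and set
\[
  \tilde\xi(\ud s, \ud x) = \mathbf 1_{\{\mu(\omega^p,s,x) > 0\}}\, (\kappa\mu(\omega^p,s,x))^{-1/2}\, M(\ud s, \ud x) + \mathbf 1_{\{\mu(\omega^p,s,x) = 0\}}\, \theta(\ud s, \ud x),
\]
the first term being a Walsh integral against $M$. A direct covariance computation shows $\tilde\xi$ is an orthogonal martingale measure with covariance measure $\ud x\, \ud s$, hence a space-time white noise, adapted to the completed right-continuous filtration $\mF^{\omega^p}$ generated jointly by $\mu(\omega^p, \cdot)$ and $\theta$, to which $\mu(\omega^p, \cdot)$ is also adapted; since the law of $\tilde\xi(\omega^p, \cdot)$ under $\PP^{\omega^p}$ does not depend on $\omega^p$, the total noise $\tilde\xi$ is a space-time white noise independent of $\xi_p$. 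Finally, for $\varphi \in \mD_{\mH^{\omega^p}}$ the Walsh integral $\int_0^t \int_\RR \tilde\xi(\ud s, \ud x) \sqrt{\kappa\mu(\omega^p,s,x)}\varphi(x)$ equals $\int_0^t \int_\RR M(\ud s, \ud x)\varphi(x)\mathbf 1_{\{\mu > 0\}}$, and the $\{\mu = 0\}$-part of any $M$-integral has vanishing variance, so this equals $\int_0^t \int_\RR M(\ud s, \ud x)\varphi(x) = L^\varphi(t)$; unwinding the definition of $L^\varphi$ yields exactly the identity of Definition~\ref{def:solutions_to_SPDE_1D}, with the required regularity of $\mu_p(\omega^p, \cdot)$ supplied again by Lemma~\ref{lem:existence_of_density}.

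For uniqueness, let $\mu_p$ be any weak solution with its filtration $\{\mF^{\omega^p}_t\}$ and noise $\tilde\xi$. For a.e.\ $\omega^p$ and all $\varphi \in \mD_{\mH^{\omega^p}}$ the process $t \mapsto \langle \mu_p(\omega^p, t), \varphi \rangle - \langle \delta_0, \varphi \rangle - \int_0^t \ud r\, \langle \mu_p(\omega^p, r), \mH^{\omega^p} \varphi \rangle$ equals, by \eqref{eqn:spde_rsbm_formal_with_kappa}, the Walsh integral $\int_0^t \int_\RR \tilde\xi(\ud s, \ud x) \sqrt{\kappa\mu_p(\omega^p,s,x)}\varphi(x)$, which is a continuous $\mF^{\omega^p}$-martingale, square-integrable on every $[0,T]$, with quadratic variation $\kappa \int_0^t \ud r\, \langle \mu_p(\omega^p, r), \varphi^2 \rangle$. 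Since this process and its quadratic variation are adapted to the smaller filtration generated by $\mu_p(\omega^p, \cdot)$, it is also a martingale with the same quadratic variation for that filtration, so $\mu_p(\omega^p, \cdot)$ is a rSBM of parameter $\kappa$ in the sense of Definition~\ref{def:rSBM}(iii); by Lemma~\ref{lem:rSBM-uniqueness-equivalence} its quenched law is determined, and as $\xi_p$ has the fixed space white noise law, the total law of $\mu_p$ is the unique law of the SBM in static random environment of parameter $\kappa$. The main obstacle is the martingale-measure step: extending $\varphi \mapsto L^\varphi$ from the random domain $\mD_{\mH^{\omega^p}}$ to a genuine (worthy) martingale measure on all of $[0,+\infty) \times \RR$ — requiring enough admissible test functions and a domination estimate, both of which should follow from the explicit covariance $\kappa\mu\, \ud x\, \ud s$ and the $L^2$-density control — and the degeneracy on $\{\mu = 0\}$, where $\sqrt{\kappa\mu}$ vanishes, handled through the auxiliary noise $\theta$ exactly as in the classical super-Brownian case.
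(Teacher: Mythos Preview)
Your proposal is correct and follows essentially the same Konno--Shiga route as the paper: build the worthy orthogonal martingale measure $M$ from the martingales $L^{\varphi}$ of Definition~\ref{def:rSBM}(iii) (using Lemma~\ref{lem:existence_of_density} to pass from $\mD_{\mH^{\omega^p}}$ to Borel sets), enlarge by an independent auxiliary space-time white noise on $\{\mu=0\}$, and recover $\tilde{\xi}$ by dividing by $\sqrt{\kappa\mu}$ where $\mu>0$; the uniqueness direction via Definition~\ref{def:rSBM}(iii) and Lemma~\ref{lem:rSBM-uniqueness-equivalence} also matches. The only cosmetic differences are notation and that the paper phrases the independence of $\tilde{\xi}$ from $\xi_p$ via conditional expectations rather than via the $\omega^p$-independence of the quenched law, which amounts to the same thing.
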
 

\begin{proof} 
  The proof can be found at the end of Section \ref{secnt:SPDE_derivation}.  
\end{proof}

As a last result, we show that the rSBM is persistent in dimension $d=1,2$.

\begin{definition}\label{def:persistency} 
  
  We say that a random process $\mu \in C([0, {+}\infty); \mM(\RR^d))$ is
  \emph{super-exponentially persistent} if for any nonzero positive function
  $\varphi \in C_c^{\infty}(\RR^d)$ and for all $\lambda > 0$ it holds that: \[
    \PP \big( \lim_{t \to \infty} e^{-t \lambda} \langle \mu(t) , \varphi
  \rangle = \infty\big) > 0.\] 

\end{definition}

\begin{theorem}\label{thm:persistence}  
  Let $\mu_p$ be an SBM in static random environment. Then for almost all
  $\omega^p \in \Omega^p$ the process $\mu_p(\omega^p, \cdot)$ is
  super-exponentially persistent.
\end{theorem}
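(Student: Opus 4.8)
The plan is to reduce the persistence statement for the SBM in static random environment on $\RR^d$ to a statement about the rSBM confined to a large box with Dirichlet boundary conditions, where one can exploit the spectral theory of the Anderson Hamiltonian developed in \cite{Chouk2019, Rosati2019kRSBM}. First I would use the martingale characterization of Definition~\ref{def:rSBM}, in particular property (iii), to write down the evolution of $\langle \mu(t), \varphi\rangle$ for eigenfunctions $\varphi$ of the Anderson Hamiltonian. The key structural observation is that by Lemma~\ref{lem:renormalisation} the environment $\xi$ almost surely satisfies Assumption~\ref{assu:renormalisation}, so for almost every $\omega^p$ the operator $\mH^{\omega^p}$ on a box $Q = [-R,R]^d$ with Dirichlet boundary conditions has a discrete spectrum with a top eigenvalue $\lambda_1^{Q} = \lambda_1^{Q}(\omega^p)$ and associated strictly positive eigenfunction $e_1^{Q} \in \mD_{\mH^{\omega^p}}$. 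Crucially, as $R \to \infty$ one has $\lambda_1^{Q} \to +\infty$ almost surely: this is the manifestation of the ``very favorable regions'' in the random environment, and it is exactly the input supplied by \cite{Chouk2019} (in $d=2$) and \cite{Rosati2019kRSBM}.

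Next I would compare the rSBM on $\RR^d$ with the rSBM killed upon exiting the box $Q$. Testing against the first Dirichlet eigenfunction $e_1^{Q}$ (extended by zero, or localized via a cutoff that one controls using the paracontrolled structure), property (iii) of Definition~\ref{def:rSBM} shows that, at least up to the exit time of the box, $Z^{Q}(t) := \langle \mu(t), e_1^{Q}\rangle$ is a continuous semimartingale with drift $\lambda_1^{Q} Z^{Q}(t)\,\ud t$ and quadratic variation $\kappa \int_0^t \langle \mu(r), (e_1^{Q})^2\rangle\,\ud r$. Writing $Y^{Q}(t) = e^{-\lambda_1^{Q} t} Z^{Q}(t)$ gives a nonnegative local martingale, hence a supermartingale, which converges almost surely to a limit $Y^{Q}(\infty) \ge 0$. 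One then needs a non-degeneracy statement: $\PP(Y^{Q}(\infty) > 0) > 0$. For this I would bound the quadratic variation of $Y^{Q}$ — using $(e_1^{Q})^2 \lesssim \|e_1^{Q}\|_\infty e_1^{Q}$ and the exponential growth of $Z^{Q}$ — to show that $Y^{Q}$ is an $L^2$-bounded martingale on the event that the process has not been killed, so that it converges in $L^2$ and $\EE[Y^{Q}(\infty)] = Y^{Q}(0) = e_1^{Q}(0) > 0$ (recall $\mu(0) = \delta_0$, so one should take $R$ large enough that $0 \in Q$ and $e_1^{Q}(0) > 0$). On the event $\{Y^{Q}(\infty)>0\}$ we get $\langle \mu(t), e_1^{Q}\rangle \sim Y^{Q}(\infty) e^{\lambda_1^{Q} t}$, and since $e_1^{Q} \le \|e_1^{Q}\|_\infty \mathbf 1_Q$ and $e_1^{Q} \gtrsim \varphi$ on the support of any fixed test function $\varphi$ (for $R$ large), this yields $e^{-t\lambda}\langle \mu(t), \varphi\rangle \to \infty$ whenever $\lambda_1^{Q} > \lambda$, which holds for $R$ large enough by the divergence of the principal eigenvalue.

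The main obstacle I expect is the killing/restriction step: making rigorous the comparison between the free rSBM on $\RR^d$ and the rSBM on the box $Q$ with Dirichlet conditions, since the martingale problem of Definition~\ref{def:rSBM} is posed on $\RR^d$ with domain $\mD_{\mH^{\omega^p}}$, and $e_1^{Q}$ extended by zero need not lie in that domain nor be smooth across $\partial Q$. Two routes are available: either (a) develop directly, as in \cite{Rosati2019kRSBM}, a martingale characterization of an rSBM on the box with Dirichlet boundary conditions and show that the restriction of the free rSBM to $Q$, stopped at the exit time, is dominated from below by such a process (a coupling/comparison argument for the branching particle systems before passing to the limit would be cleanest); or (b) use a smooth nonnegative function $\psi$ compactly supported in $Q$ with $\psi \le e_1^{Q}$ and $\mH^{\omega^p}\psi \ge (\lambda_1^{Q} - \varepsilon)\psi$ outside a thin boundary layer, accepting an $\varepsilon$ loss in the exponential rate, which is harmless because $\lambda_1^{Q} \to \infty$. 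I would pursue (a) by invoking the construction of \cite{Rosati2019kRSBM} as a black box, reducing the theorem to: (1) the principal Dirichlet eigenvalue of $\mH^{\omega^p}$ on $[-R,R]^d$ diverges a.s. as $R \to \infty$, and (2) the $L^2$-martingale convergence argument above. Finally, the passage from ``$\PP(\,\cdot\,) > 0$ for some large but fixed $R$'' to the stated super-exponential persistence for arbitrary $\lambda$ and arbitrary $\varphi$ is immediate by choosing $R = R(\lambda,\varphi)$ large enough.
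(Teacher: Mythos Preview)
Your overall strategy matches the paper's exactly: invoke the coupled killed rSBM $\mu^L_p \le \mu_p$ from \cite{Rosati2019kRSBM} (your route~(a)), use the almost sure divergence of the principal Dirichlet eigenvalue $\lambda(\omega^p,L)\to\infty$ (supplied by \cite{Chen2014} in $d=1$ and \cite{Chouk2019} in $d=2$), and run the $L^2$-bounded martingale argument for $E(t)=e^{-\lambda_1 t}\langle \mu^L(t), e_1\rangle$ via $(e_1)^2 \le \|e_1\|_\infty\, e_1$, just as you describe.

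There is, however, a genuine gap in your last step, the transfer from $e_1$ to a general nonnegative $\varphi\in C_c^\infty$. Your pointwise comparison ``$e_1^Q \gtrsim \varphi$ on $\supp\varphi$'' points the wrong way: it gives $\langle \mu(t),\varphi\rangle \lesssim \langle \mu(t), e_1^Q\rangle$, an upper bound, whereas you need a lower bound. The reverse inequality $\varphi \ge c\, e_1^Q$ on $Q$ is unavailable because $\varphi$ vanishes near $\partial Q$ while $e_1^Q>0$ throughout the interior; and shrinking $Q$ to force $e_1^Q$ to be supported inside $\{\varphi>0\}$ destroys the eigenvalue divergence. The paper closes this gap with an additional spectral-gap argument (its Lemma~\ref{lem:persistence-auxiliary}): one decomposes $\varphi = \langle e_1,\varphi\rangle\, e_1 + \psi$ with $\psi\perp e_1$ and proves $\EE\big[|e^{-\lambda_1 t}\langle \mu^L(t),\psi\rangle|^2\big]\to 0$, using $\|T^{\mfd}_t \psi\|_{L^2}\le e^{\lambda_2 t}\|\psi\|_{L^2}$ with $\lambda_2<\lambda_1$ (strict, by Krein--Rutman) together with the second-moment formula coming from the martingale problem and the $L^2\to \mC^\vartheta$ smoothing of $T^{\mfd}_1$. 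This yields $e^{-\lambda_1 t}\langle \mu^L(t),\varphi\rangle \to \langle e_1,\varphi\rangle\, E(\infty)$ in $L^2$, from which the conclusion follows. Your outline is otherwise correct, but this spectral-gap step is a real missing ingredient.
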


The result follows from Corollary \ref{cor:persistence_rsbm} and the preceding
discussion.

\section{Discrete and Continuous PAM \& Anderson Hamiltonian}\label{sectn:pam}

 Here we review the solution theory for the PAM \eqref{eqn:pam} in the discrete
 and continuous setting and the interplay between the two. 

 Recall that the regularity parameter $\alpha$ from
 Assumption~\ref{assu:renormalisation} satisfies:
\begin{equation}\label{eqn:alpha_bounds}
	\alpha \in (1, \tfrac32) \text{ in } d = 1, \qquad \alpha \in (\tfrac23, 1) \text{ in } d=2. 
\end{equation}

We recall some results from \cite{MartinPerkowski2017} regarding the
solution of the PAM on the whole space (see also~\cite{Hairer2015Simple}), and
regarding the convergence of lattice models to the PAM. We take an initial
condition $w_0 \in \mC^\zeta_p(\RR^d,e(l))$ and a forcing $f \in
\mM^{\gamma_0}\mC^{\alpha_0}_p(\RR^d, e(l))$, and we consider the 
equation
\begin{equation}\label{eqn:pam-rigorous-continuous}
	\partial_t w = \Delta w + \xi w + f,\qquad w(0) = w_0
\end{equation}
and its discrete counterpart
\begin{equation}\label{eqn:pam-rigorous-discrete}
	\partial_t w^n = (\Delta^n + \xi^n_e) w^n + f^n, \qquad w^n(0) = w^n_0.
\end{equation}

To motivate the constraints on the parameters appearing in the proposition
below, let us first formally discuss the solution theory in $d=1$. Under
Assumption~\ref{assu:renormalisation} it follows from the Schauder estimates in
\cite[Lemma~3.10]{MartinPerkowski2017} that the best regularity we can expect at
a fixed time is $w(t) \in \mC^{\alpha \wedge (\zeta {+} 2) \wedge
(\alpha_0{+}2)}_p(\RR,e(k))$ for some $k \in \RR$. In fact we lose a bit of
regularity, so let $\vartheta < \alpha$ be ``large enough'' (we will see soon
what we need from $\vartheta$) and assume that $\zeta + 2 \ge \vartheta$ and
$\alpha_0 + 2 \ge \vartheta$. Then we expect $w(t) \in \mC^\vartheta_p(\RR,
e(k))$, and the Schauder estimates suggest the blow-up $\gamma =
\max\{(\vartheta+\varepsilon - \zeta)_+/2, \gamma_0\}$ for some $\varepsilon >
0$, which has to be in $[0,1)$ to be locally integrable, so in particular
$\gamma_0 \in [0,1)$. If $\vartheta + \alpha - 2 > 0$ (which is possible because
in $d=1$ we have $2\alpha - 2 > 0$), then the product $w(t)\xi$ is well defined
and in $\mC^{\alpha-2}_p(\RR, e(k)p(a))$, so we can set up a Picard iteration. The
loss of control in the weight (going from $e(k)$ to $e(k)p(a)$) is handled by
introducing time-dependent weights so that $w(t) \in \mC^\vartheta_p(\RR^d,
e(l+t))$. In the setting of singular SPDEs this idea was introduced by
Hairer-Labb\'e~\cite{Hairer2015Simple}, and it induces a small loss of
regularity which explains why we only obtain regularity $\vartheta < \alpha$ for
the solution and the additional $+\varepsilon/2$ in the blow-up $\gamma$.

In two dimensions the white noise is less regular, we no longer have $2\alpha
- 2 >0$, and we need paracontrolled analysis to solve the
equation. The solution lives in a space of \emph{paracontrolled distributions},
and now we take $\vartheta > 0$ such that $\vartheta + 2\alpha - 2 >0$. We now need additional regularity
requirements for the initial condition $w_0$ and for the forcing $f$. More
precisely, we need to be able to multiply $(P_t w_0)\xi$ and $\big(\int_0^t
P_{t{-}s} f(s) \ud s\big) \xi$, and therefore we require now also $\zeta {+} 2
{+}(\alpha {-} 2) > 0$ and $\alpha_0 {+} 2 {+} (\alpha {-} 2) > 0$, i.e. $\zeta,
\alpha_0 > {-} \alpha$. 

We do not provide the details of the construction and refer
to~\cite{MartinPerkowski2017} instead, where the two-dimensional case is worked
out (the one-dimensional case follows from similar, but much easier arguments).

\begin{proposition}\label{prop:convergence_PAM} Consider \(\alpha\) as in
  \eqref{eqn:alpha_bounds}, any \(T >0\), $p \in [1, {+}
  \infty]$, $l \in \RR, \gamma_{0} \in [0, 1)$ and $\vartheta, \zeta, \alpha_0$ satisfying:
  \begin{equation}\label{eqn:condition_parameters_PAM}
	  \vartheta \in \begin{cases} (2{-}\alpha, \alpha), & d=1, \\ (2{-}2\alpha, \alpha), & d = 2, \end{cases}
    \ \ \zeta >(\vartheta{-}2)\vee ({-}\alpha), \ \ \alpha_0 >(\vartheta{-}2)\vee ({-}\alpha),
  \end{equation}
  and let $w^n_0 \in \mC^{\zeta}_p(\znd, e(l))$ and $f^n \in
  \mM^{\gamma_0}\mC^{\alpha_0}_p(\znd, e(l))$ be such that \[	\mE^n w^n_0 \to
    w_0, \text{ in } \mC^{\zeta}_p(\RR^d, e(l)), \qquad \mE^n f^n \to f \text{
  in } \mM^{\gamma_0}\mC^{\alpha_0}_p(\RR^d, e(l)).  \] Then under Assumption
  \ref{assu:renormalisation} there exist unique (paracontrolled) solutions $w^n,
  w$ to Equation \eqref{eqn:pam-rigorous-discrete} and
  \eqref{eqn:pam-rigorous-continuous}.  Moreover, for all $\gamma > (\vartheta{-}\zeta)_+ / 2 \vee
  \gamma_0$ and for all \(\hat{l} \geq l {+} T\), the sequence $w^n$ is uniformly
  bounded in $\mL_p^{\gamma, \vartheta}(\znd, e(\hat{l}))$:
  \begin{equation}\label{eqn:prop_convergence_pam_a_priori_bound}
    \sup_{n} \| w^{n} \|_{\mL^{\gamma, \vt}_{p} (\znd, e( \hat{l}))} \lesssim
    \sup_{n} \| w^{n}_{0} \|_{ \mC^{\zeta}_{p}( \znd, e(l))} + \sup_{n}\|
    f^{n} \|_{ \mM^{\gamma_{0}} \mC^{\alpha_{0}}_{p}(\znd, e(l)) },
  \end{equation}
  where the proportionality constant depends on the time horizon \(T\) and the
  norms of the objects in Assumption \ref{assu:renormalisation}. Moreover 
  \[
  \mE^n w^n \to w \text{ in } \mL_p^{\gamma, \vartheta}(\RR^d, e(\hat{l})).
\]
\end{proposition}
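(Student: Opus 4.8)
The plan is to solve the discrete ($n < \infty$) and continuous ($n = \infty$) equations \emph{simultaneously}, running one and the same Picard iteration in the scale $\mL^{\gamma, \vt}_p(\znd, e(l{+}\cdot))$ of time-dependent weighted Besov spaces, so that every estimate is automatically uniform in $n \in \NN \cup \{\infty\}$. First I would pass to the mild formulation
\[
  w^n(t) = P^n_t w^n_0 + \int_0^t P^n_{t-s}\big( \xi^n_e\, w^n(s) + f^n(s) \big)\,\ud s,
\]
with $P^n_t$ the semigroup generated by $\Delta^n$, and construct $w^n$ as a fixed point of the associated map, closing the estimates with the lattice Schauder bounds of \cite[Lemma~3.10]{MartinPerkowski2017} and the paraproduct, resonant and commutator bounds of Lemma~\ref{lem:weighted_paraproduct_estimates}.

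In $d = 1$ I would use that $2\alpha - 2 > 0$: for $\vt > 2 - \alpha$ the product $\xi^n_e \cdot \varphi$ is well defined for $\varphi \in \mC^\vt_p(\znd, z)$ and lies in $\mC^{\alpha-2}_p$, the resonant term $\varphi \reso \xi^n_e$ being controlled precisely because $\vt + \alpha - 2 > 0$, at the cost of the polynomial weight $p(a)$ carried by $\xi^n$. Feeding this into the Schauder estimates makes the solution map a contraction on a short time interval in $\mM^\gamma \mC^\vt_p(\znd, e(l{+}\cdot))$ for any $\gamma \in [0,1)$ with $\gamma > (\vt{-}\zeta)_+/2 \vee \gamma_0$, the time-decaying weight $e(l{+}\cdot)$ absorbing the spatial loss from the product in the spirit of Hairer--Labb\'e. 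Concatenating the local solutions over a time horizon $T$, during which the admissible weight degrades by $T$ (whence the condition $\hat l \ge l + T$), yields existence, uniqueness and the a priori bound~\eqref{eqn:prop_convergence_pam_a_priori_bound}.

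In $d = 2$ one no longer has $2\alpha - 2 > 0$, so I would make the paracontrolled ansatz $w^n = w^n \para X^n + w^{n,\sharp}$ with $X^n$ as in Assumption~\ref{assu:renormalisation} and a remainder $w^{n,\sharp}$ of higher regularity. The only dangerous term, the resonant product $w^n \reso \xi^n_e$, is then expanded: the contribution of $w^{n,\sharp}$ is harmless, while $(w^n \para X^n) \reso \xi^n_e$ is rewritten, via the commutator $C_1$ of Lemma~\ref{lem:weighted_paraproduct_estimates}, in terms of $w^n \big( X^n \reso \xi^n - c_n \big)$ — which converges by Assumption~\ref{assu:renormalisation}(iii) — together with $C_1(w^n, X^n, \xi^n)$ and $c_n$-corrections, the constant $c_n$ being precisely the renormalisation built into $\xi^n_e$. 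This turns the equation into a fixed point for the pair $(w^n, w^{n,\sharp})$. The conditions $\zeta, \alpha_0 > -\alpha$ in~\eqref{eqn:condition_parameters_PAM} are exactly what allows $P^n_t w^n_0$ and $\int_0^t P^n_{t-s} f^n(s)\,\ud s$ to be paracontrolled by $X^n$; contractivity on short times and concatenation then proceed as in $d = 1$.

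Finally, for the convergence $\mE^n w^n \to w$ in $\mL^{\gamma, \vt}_p(\RD, e(\hat l))$ I would use that the extension operator $\mE^n$ intertwines the discrete and continuous Littlewood--Paley blocks, semigroups and products up to errors that vanish as $n \to \infty$ (\cite[Sections~2 and~5]{MartinPerkowski2017}); together with the hypotheses $\mE^n w^n_0 \to w_0$, $\mE^n f^n \to f$ and the convergence of the enhanced noise in Assumption~\ref{assu:renormalisation}, the uniform contractivity of the solution maps upgrades to convergence of their fixed points, for instance by running the two Picard schemes side by side. I expect the main obstacle — indeed the only genuine work beyond quoting \cite{MartinPerkowski2017} — to be the bookkeeping that makes the Schauder, paraproduct and commutator estimates uniform in $n \in \NN \cup \{\infty\}$ while tracking the time-dependent weights and the blow-up $\gamma$ as $t \to 0$ (in particular the extra $\ve/2$ loss in $\gamma$ and the margin $\hat l \ge l + T$); in $d = 2$ this is precisely what is carried out in \cite{MartinPerkowski2017}, and in $d = 1$ it is strictly easier.
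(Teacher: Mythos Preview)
Your proposal is correct and matches the paper's approach exactly: the paper does not give a self-contained proof but refers to \cite{MartinPerkowski2017} for the two-dimensional paracontrolled construction (with time-dependent weights in the spirit of Hairer--Labb\'e) and remarks that $d=1$ follows by the same but easier arguments, which is precisely the outline you give. Your identification of the roles of the conditions in~\eqref{eqn:condition_parameters_PAM}, of the commutator/renormalisation mechanism in $d=2$, and of the uniformity in $n$ as the only bookkeeping issue is accurate.
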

 
\begin{remark}\label{rem:PAM_Green_function} 
  We consider the case \(p <
  \infty\) to start the equation in the Dirac measure
  \(\delta_0\). Indeed, \(\delta_{0}\) lies in \(\mC^{{-} d}
  (\RR^d, e(l))\) for any $l\in \RR$. This means that $\zeta=-d$, and in $d=1$
  we can choose $\vartheta$ small enough such
  that~\eqref{eqn:condition_parameters_PAM} holds. But in \(d = 2\) this is not sufficient, so we use
  instead that \(\delta_0 \in \mC^{d(1 {-} p)/p }_{p}(\RR^d, e(l))\) for \(p \in [1,
  \infty]\) and any \(l \in \RR\), so that for \(p \in [1, 2)\) the conditions
  in~\eqref{eqn:condition_parameters_PAM} are satisfied.
\end{remark}

\begin{notation}
  We write $$ t \mapsto T_t^n w_0^n + \int_0^t \ud s \ T^n_{t{-}s} f^n_s, \qquad
  t \mapsto T_t w_0 + \int_0^t \ud s \ T_{t{-}s} f_s $$ for the solution to
  Equation \eqref{eqn:pam-rigorous-discrete} and
  \eqref{eqn:pam-rigorous-continuous}, respectively.
\end{notation}

Proposition~\ref{prop:convergence_PAM} provides us with the tools to make sense of Property~(ii) in the definition of the rSBM, Definition~\ref{def:rSBM}. To make sense of the last Property~(iii), we need to construct the Anderson Hamiltonian. In finite volume this was done in~\cite{Fukushima1976, Allez2015, Gubinelli2018Semilinear, Labbe2018}, respectively, but the construction in infinite volume is more complicated, for example because the spectrum of $\mH$ is unbounded from above and thus resolvent methods fail. Hairer-Labb\'e~\cite{Hairer2018Multiplicative} suggest a construction based on
spectral calculus, setting $\mH = t^{-1} \log T_t$, but this gives
insufficient information about the domain. Therefore, we use an ad-hoc approach which is sufficient for our purpose. We define the operator in terms of the solution map $(T_t)_{t\ge 0}$ to the parabolic equation. Strictly speaking, $(T_t)_{t\ge 0}$ does not define a semigroup, since due to the presence of the time-dependent weights it does not act on a fixed Banach space. But we simply ignore that and are still able to use standard arguments for semigroups on Banach spaces to identify a dense subset of the domain (compare the discussion below to~\cite[Proposition~1.1.5]{EthierKurtz1986}). However, in that way we do not learn anything about the spectrum of $\mH$. In finite volume, $(T_t)_{t\ge 0}$ is a strongly continuous semigroup of compact operators and we can simply define $\mH$ as its infinitesimal generator. It seems that this would be equivalent to the construction of \cite{Allez2015} through the resolvent equation.

We first discuss the case $d=1$. Then $\xi \in \mC^{\alpha{-}2}(\RR, p(a))$
for all $a > 0$ by assumption, where $\alpha \in (1,\frac32)$. In particular,
$\mH u = (\Delta{+}\xi)u$ is well defined for all $u \in
\mC^\vartheta(\RR,e(l))$ with $\vartheta > 2{-}\alpha$ and $l \in \RR$, and $\mH
u \in \mC^{\alpha{-}2}(\RR,e(l)p(a))$. Our aim is to identify a subset of
$\mC^\vartheta(\RR,e(l))$ on which $\mH u$ is even a continuous function. We can
do this by defining for $t > 0$
\[
	A_t u = \int_0^t T_s u \ud s.
\]
Then $A_t u \in \mC^\vartheta(\RR, e(l{+}t))$, and by definition
\[
	\mH A_t u = \int_0^t \mH T_s u \ud s = \int_0^t \partial_s T_s u \ud s =
	T_t u {-} u \in \mC^\vartheta(\RR, e(l+t)).
\]
Moreover, the following convergence holds in $\mC^\vartheta(\RR, e(l{+}t{+}\varepsilon))$ for
all $\varepsilon > 0$:
\[
	\lim_{n \to \infty} n (T_{1/n} {-} \operatorname{id}) A_t u = \lim_{n
	\to \infty} n \bigg( \int_t^{t+1/n} T_s u \ud s - \int_0^{1/n} T_s u \ud s
	\bigg) = \mH A_t u.
\]
Therefore, we define
\[
	\mD_\mH = \{ A_t u: u \in \mC^\vartheta(\RR, e(l)), l \in \RR, t \in [0,T] \}.
\]
Since for $u \in \mC^\vartheta(\RR, e(l))$ the map $(t \mapsto T_t u)_{t \in
[0,\varepsilon]}$ is continuous in the space $\mC^\vartheta(\RR,
e(l{+}\varepsilon))$ we can find for all $u \in \mC^\vartheta(\RR, e(l))$ a
sequence $\{u^m\}_{m \in \NN} \subset \mD_\mH$ such that $\|u^m {-}
u\|_{\mC^\vartheta(\RR, e(l+\varepsilon))} \to 0$ for all $\varepsilon > 0$.
Indeed, it suffices to set $u^m = m^{-1} A_{m^{-1}} u$. The same construction
also works for $\mH^n$ instead of $\mH$.

In the two-dimensional case $(\Delta {+} \xi)u$ would be well defined whenever
$u \in \mC^\beta(\RR^2,e(l))$ with $\beta > 2{-}\alpha$ for $\alpha \in
(\frac23, 1)$. But in this space it seems impossible to find a domain that is
mapped to continuous functions. And also $(\Delta {+} \xi)u$ is not the right
object to look at, we have to take the renormalization into account and should
think of $\mH = \Delta {+} \xi {-} \infty$. So we first need an appropriate
notion of paracontrolled distributions $u$ for which can define $\mH u$ as a
distribution. As in Proposition~\ref{prop:convergence_PAM} we let $\vartheta \in
(2 {-} 2\alpha, \alpha)$.
\begin{definition}
	Consider $X = (-\Delta)^{-1}\chi(D) \xi$ and $X \diamond \xi$ defined as in
	Assumption~\ref{assu:renormalisation}. We say that $u$ (resp. $u^n$) is \emph{paracontrolled} if $u \in
	\mC^\vartheta(\RR^2, e(l))$ for some $l \in \RR$, and
	\[
		u^\sharp = u {-} u\para X \in \mC^{\alpha + \vartheta}(\RR^2,
		e(l)).
	\]
	 Then set
	\[
		\mH u = \Delta u + \xi \para u + u \para \xi + u^\sharp \reso
		\xi + C_1(u,X,\xi) + u (X\diamond \xi),
	\]
	where $C_1$ is defined in
	Lemma~\ref{lem:weighted_paraproduct_estimates}. The same lemma also
	shows that $\mH u$ is a well defined distribution in
	$\mC^{\alpha{-}2}(\RR^2, e(l) p(a))$.
\end{definition}

The operator $T_t$ leaves the space of paracontrolled distributions invariant, and therefore the same arguments as in $d=1$ give us a domain $\mD_\mH$ such that for all paracontrolled $u$ there exists a sequence $\{u^m\}_{m \in \NN} \subset
\mD_\mH$ with $\|u^m {-} u\|_{\mC^\vartheta(\RR^2, e(l{+}\varepsilon))} \to 0$
for all $\varepsilon > 0$. For general $u \in \mC^\vartheta(\RR^2, e(l))$ and
$\varepsilon > 0$ we can find a paracontrolled $v \in \mC^\vartheta(\RR^2,
e(l))$ with $\| u {-} v \|_{\mC^\vartheta(\RR^2,
e(l{+}\varepsilon))}<\varepsilon$, because $T_t u$ is paracontrolled for all $t
> 0$ and converges to $u$ in $\mC^\vartheta(\RR^2, e(l{+}\varepsilon))$ as $t
\to 0$. Thus, we have established the following result:

\begin{lemma}\label{lem:Anderson-hamiltonian}
	Under Assumption~\ref{assu:renormalisation} let $\vartheta$ be as in
	Proposition~\ref{prop:convergence_PAM}. There exists a domain $\mD_\mH
	\subset \bigcup_{l \in \RR} \mC^\vartheta(\RR^d, e(l))$ such that $\mH u
	= \lim_n n(T_{1/n} {-} \operatorname{id})u$ in $\mC^\vartheta(\RR^d,
	e(l{+}\varepsilon))$ for all $u \in \mD_\mH\cap  \mC^\vartheta(\RR^d,
	e(l))$ and $\varepsilon > 0$ and such that for all $u \in
	\mC^\vartheta(\RR^d, e(l))$ there is  a sequence $\{u^m\}_{m \in
	\NN} \subset \mD_\mH$ with $\|u^m {-} u\|_{\mC^\vartheta(\RR^2,
	e(l{+}\varepsilon))} \to 0$ for all $\varepsilon > 0$. The same is true
	for the discrete operator $\mH^n$ (with $\RR^d$ replaced by $\znd$).
\end{lemma}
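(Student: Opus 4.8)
The plan is to turn the computations carried out just before the statement into a complete argument, treating $d=1$ and $d=2$ in parallel and then transferring everything to the lattice by the uniform estimates of Proposition~\ref{prop:convergence_PAM}.

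First, for $d=1$ I would fix $\vartheta \in (2-\alpha,\alpha)$ and, for $u \in \mC^\vartheta(\RR,e(l))$, set $A_t u = \int_0^t T_s u\,\ud s$. Applying Proposition~\ref{prop:convergence_PAM} with $f=0$ and $\zeta=\vartheta$ (so that the admissibility conditions \eqref{eqn:condition_parameters_PAM} hold and there is no blow-up, $\gamma=0$), the orbit $s\mapsto T_s u$ is continuous with values in $\mC^\vartheta(\RR,e(l+\varepsilon))$ for every $\varepsilon>0$; hence the Bochner integral $A_t u$ is well defined, lies in $\mC^\vartheta(\RR,e(l+t))$, and $\mH A_t u = \int_0^t \partial_s T_s u\,\ud s = T_t u - u$ is again in $\mC^\vartheta(\RR,e(l+t))$, in particular a continuous function. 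Using $T_{1/n}(T_s u) = T_{s+1/n}u$ (well-posedness of the linear equation) together with linearity and boundedness of $T_{1/n}$ between the relevant weighted Besov spaces I would exchange $T_{1/n}$ with the integral to get
\[
	n(T_{1/n}-\operatorname{id})A_t u = n\Bigl(\int_t^{t+1/n}T_s u\,\ud s - \int_0^{1/n}T_s u\,\ud s\Bigr)\longrightarrow T_t u - u = \mH A_t u
\]
in $\mC^\vartheta(\RR,e(l+t+\varepsilon))$, by the fundamental theorem of calculus for Bochner integrals and continuity of the orbit at $s=0$ and $s=t$. Taking $\mD_\mH$ to be the set of all such $A_t u$ then settles the first claim, and the choice $u^m = m\int_0^{1/m}T_s u\,\ud s = A_{1/m}(mu) \in \mD_\mH$, which converges to $u$ in $\mC^\vartheta(\RR,e(l+\varepsilon))$ by continuity of the orbit at $s=0$, gives the density statement.

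For $d=2$ I would first make rigorous the paracontrolled ansatz: with $\vartheta\in(2-2\alpha,\alpha)$, for $u$ paracontrolled each of the terms $\xi\para u$, $u\para\xi$, $u^\sharp\reso\xi$, $C_1(u,X,\xi)$ and $u(X\diamond\xi)$ is controlled by Lemma~\ref{lem:weighted_paraproduct_estimates} (using $\vartheta+2\alpha-2>0$ for the resonant product and the $C_1$ bound), so $\mH u\in\mC^{\alpha-2}(\RR^2,e(l)p(a))$ is a well-defined distribution. The hard part is then the invariance statement: $T_t$ maps paracontrolled distributions to paracontrolled distributions and $\partial_t (T_t u) = \mH(T_t u)$ holds in $\mS_\omega'$. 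This is precisely how the solution is produced in the paracontrolled solution theory of the PAM in \cite{MartinPerkowski2017} — the solution $w(t)=T_t u$ is built with $w^\sharp(t) = w(t) - w(t)\para X \in \mC^{\alpha+\vartheta}$ and satisfies the equation in the renormalized sense — so I would invoke that construction rather than redo it. Granted this, the one-dimensional argument carries over verbatim: $A_t u$ is paracontrolled, $\mH A_t u = T_t u - u \in \mC^\vartheta$, and $n(T_{1/n}-\operatorname{id})A_t u\to \mH A_t u$; this gives $\mD_\mH$ and density within the paracontrolled class. To reach a general $u\in\mC^\vartheta(\RR^2,e(l))$ I would use that $T_t u$ is paracontrolled for every $t>0$ and $T_t u\to u$ in $\mC^\vartheta(\RR^2,e(l+\varepsilon))$ as $t\to0$, so a diagonal argument combining this with the previous density produces an approximating sequence in $\mD_\mH$.

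Finally, the discrete statement is identical with $\RR^d,\Delta,\xi,X,X\diamond\xi$ replaced by $\znd,\Delta^n,\xi^n_e,X^n,X^n\reso\xi^n-c_n$: Proposition~\ref{prop:convergence_PAM} furnishes the discrete solution maps $T^n_t$ with Schauder and a priori bounds uniform in $n$, and the same construction yields $\mD_{\mH^n}$ with $\mH^n u = \lim_n n(T^n_{1/n}-\operatorname{id})u$ and the same density property. Two minor points deserve care: first, $(T_t)_{t\ge0}$ is not a semigroup on a fixed Banach space because of the moving weights $e(l+t)$, which is why all statements are phrased in $\mC^\vartheta(\cdot,e(l+\varepsilon))$ for arbitrary $\varepsilon>0$ rather than in $\mC^\vartheta(\cdot,e(l))$ — but this costs only an arbitrarily small loss in the weight and is harmless for every limiting argument above; second, the interchange of $n(T_{1/n}-\operatorname{id})$ with the Bochner integral defining $A_t$, which is justified by linearity and boundedness of $T_{1/n}$ on the relevant weighted Besov space.
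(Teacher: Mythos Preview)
Your proposal is correct and follows essentially the same approach as the paper: the paper's proof of this lemma is precisely the discussion preceding the statement, where $\mD_\mH$ is defined as the set of $A_t u = \int_0^t T_s u\,\ud s$, the limit $n(T_{1/n}-\operatorname{id})A_t u \to T_t u - u$ is computed via the same telescoping of the integral, density is obtained via $u^m = m A_{1/m} u$, and in $d=2$ the paracontrolled invariance of $T_t$ together with $T_t u \to u$ handles general $u$. Your write-up adds useful technical care (Bochner-integral exchange, the moving-weight caveat) but is otherwise the same argument.
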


\section{The Rough Super-Brownian Motion}\label{sectn:rSBM}
\subsection{Scaling Limit of Branching Random Walks in Random Environment}\label{sectn:convergence}

In this section we consider a deterministic environment, that is a sequence
$\{\xi^n\}_{n \in \NN}$ satisfying Assumption \ref{assu:renormalisation}, to
which we associate the Markov process $\mu^n$ as in Definition
\ref{def:of_u_n_and_mu_n}: Our aim is to prove that the sequence $\mu^n$ converges weakly, with a limit depending on the value of $\vr$. First, we prove tightness  for the
sequence $\mu^n$ in $\DD([0, T] ; \mM(\RR^d))$ for $\vr \ge d/2$. Then, we
prove uniqueness in law of the limit points and thus deduce the weak convergence
of the sequence. Recall that for $\mu \in \mM(\RR^d)$ and $\varphi \in
C_b(\RR^d)$ we use both the notation $\langle \mu, \varphi \rangle$ and
$\mu(\varphi)$ for the integration of $\varphi$ against the measure $\mu$.

\begin{remark}\label{rem:mild_martingale_problem_discrete} 
  Fix $t>0$. For any $\varphi \in L^{\infty}(\znd; e(l))$, for some $l \in \RR$:
  \begin{equation}\label{eqn:mild-martingale-pb} 
	  [0, t] \ni s \mapsto M^{n, \varphi}_t(s) = \mu^n_s (T^n_{t{-}s}\varphi){-}T^n_t\varphi(0) 
  \end{equation}
  is a centered martingale on $[0, t]$ with predictable quadratic variation 
  \[ \langle M^{n,
  \varphi}_t \rangle_s = \int_0^s   \mu^n_r \big( n^{-\vr} |\nabla^n
T^n_{t{-}r}\varphi|^2 + n^{-\vr}|\xi^n_e| (T^n_{t{-}r}\varphi)^2 \big) \ud r.  \]  
\end{remark}

\begin{proof}[Sketch of proof]
Consider a time-dependent function $\psi$. We use Dynkin's formula and an approximation argument applied to the function $(s, \mu)\mapsto F_\psi^t (s, \mu^n) = \mu^n (\psi(s))$: By truncating $F^t_\psi$ and discretizing time and then passing to the limit, we obtain for suitable $\psi$ that
\[
	\mu^n_s(\psi(s)) {-} \mu^n_0(\psi(0)) {-} \int_0^s
	\mu^n_r(\partial_r \psi(r) {+} \mH^n \psi(r)) \ud r
\]
is a martingale with the correct quadratic variation. Now it suffices to note that for $r \in [0, t]: \ \partial_r T^n_{t-r} \varphi = - \mH^n T_{t-r}^n \varphi$.
\end{proof}

For the remainder of this section we assume that $\vr \ge d/2$. To prove the
tightness of the measure-valued process we use the following auxiliary result,
which gives the tightness of the real-valued processes $\{ t\mapsto
\mu^n_t(\varphi)\}_{n \in \NN}$.

The main difficulty in the proof lies in handling the irregularity of the spatial environment. For this reason we consider first the martingale $[0,t]\ni s\mapsto \mu^n_s(T^n_{t-s}\varphi)$ (cf. \eqref{eqn:mild-martingale-pb}) instead of the more natural process $s\mapsto \mu_s^n(\varphi)$. We then exploit the martingale to prove tightness for $\mu^n(\varphi)$. Here we cannot apply the classical Kolmogorov continuity test, since we are considering a pure jump process. Instead we will use a slight variation, due to Chentsov \cite{Chentsov1956Weak} and conveniently exposed in \cite[Theorem 3.8.8]{EthierKurtz1986}.

\begin{lemma}\label{lem:one_dimensional_tightness} For any $l \in \RR$
  and $\varphi \in C^{\infty}(\RR^d, e(l))$ the processes $\{ t\mapsto
  \mu^n(t)(\varphi)\}_{n \in \NN}$ form a tight sequence in $\DD([0,+\infty); \RR).$
\end{lemma}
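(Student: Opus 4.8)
The strategy is to prove tightness of the jump process $t \mapsto \mu^n_t(\varphi)$ by relating it to the mild-form martingale $M^{n,\varphi}_t$ from Remark~\ref{rem:mild_martingale_problem_discrete} and applying the Chentsov criterion \cite[Theorem~3.8.8]{EthierKurtz1986}. First I would record a uniform moment bound: by the first (quenched) statement in Proposition~\ref{prop:convergence_PAM} with $f^n = 0$, $w^n_0 = \lfloor n^\vr\rfloor^{-1}\lfloor n^\vr\rfloor 1_{\{0\}}$ (suitably rescaled so that $\mE^n w^n_0 \to \delta_0$ in some $\mC^{-d}_p$) one gets $\sup_n \sup_{s \le T} \|T^n_s \varphi\|_{\mC^\vartheta_p(\znd, e(\hat l))} < \infty$, hence $\EE[\mu^n_s(T^n_{t-s}\varphi)] = T^n_t\varphi(0)$ is uniformly bounded and, more importantly, the expected total mass $\EE[\mu^n_s(e(l))] = T^n_s e(l)(0)$ stays bounded on $[0,T]$ uniformly in $n$ because of the exponential weights absorbing the growth of the Anderson Hamiltonian's semigroup. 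This is the quenched analogue of the linearity of expectation for branching processes: conditional expectations solve the PAM.

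\textbf{Main steps.} With moment control in hand, the second step is to estimate the increments of $s \mapsto \mu^n_s(\varphi)$. Writing $\mu^n_t(\varphi) = \mu^n_t(T^n_{t-s}\varphi) - \mu^n_t((T^n_{t-s} - \mathrm{id})\varphi)$ is not quite the decomposition I want; instead, for $s < t$ I would write
\[
\mu^n_t(\varphi) - \mu^n_s(\varphi) = \big(M^{n,\varphi}_t(t) - M^{n,\varphi}_t(s)\big) + \mu^n_s\big((T^n_{t-s} - \mathrm{id})\varphi\big),
\]
using that $M^{n,\varphi}_t(t) = \mu^n_t(\varphi) - T^n_t\varphi(0)$ and $M^{n,\varphi}_t(s) = \mu^n_s(T^n_{t-s}\varphi) - T^n_t\varphi(0)$. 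The drift term $\mu^n_s((T^n_{t-s} - \mathrm{id})\varphi)$ is controlled by the time-regularity built into the space $\mL^{\gamma,\vartheta}_p$: since $T^n_\cdot \varphi \in C^{\vartheta/2} L^p(\znd, e(\hat l + \cdot))$ uniformly (again Proposition~\ref{prop:convergence_PAM}), one has $\|(T^n_{t-s} - \mathrm{id})\varphi\|_{L^p(\znd,e(\hat l))} \lesssim |t-s|^{\vartheta/2}$, and pairing against $\mu^n_s$ (whose mass against $e(-\hat l)$-type weights is moment-bounded by step one, using H\"older if $p < \infty$) gives $\EE|\mu^n_s((T^n_{t-s}-\mathrm{id})\varphi)|^2 \lesssim |t-s|^{\vartheta}$. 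For the martingale term, the Burkholder–Davis–Gundy inequality together with the explicit quadratic variation
\[
\langle M^{n,\varphi}_t\rangle_t - \langle M^{n,\varphi}_t\rangle_s = \int_s^t \mu^n_r\big(n^{-\vr}|\nabla^n T^n_{t-r}\varphi|^2 + n^{-\vr}|\xi^n_e|(T^n_{t-r}\varphi)^2\big)\,\ud r
\]
reduces matters to bounding the integrand: here the assumption $\vr \ge d/2$ is essential, as $n^{-\vr}|\xi^n_e| \le n^{-d/2}|\xi^n| + n^{-\vr}c_n$ is uniformly bounded in $\mC^{-\ve}(\znd, p(a))$ by Assumption~\ref{assu:renormalisation}(ii), so that $n^{-\vr}|\xi^n_e|(T^n_{t-r}\varphi)^2$ is uniformly bounded in $L^1(\znd, e(\cdot))$ (product of a $\mC^{-\ve}$ distribution with a sufficiently regular squared function, using $2\vartheta - \ve > 0$ and the paraproduct/resonant estimates in Lemma~\ref{lem:weighted_paraproduct_estimates}), and the gradient term is $n^{-\vr}$ times something bounded, hence negligible. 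Pairing against $\mu^n_r$ and using the uniform mass bound gives $\EE\big[\langle M^{n,\varphi}_t\rangle_t - \langle M^{n,\varphi}_t\rangle_s\big] \lesssim |t-s|$, and thus by BDG (and a second moment of the mass, obtained e.g. from a second-moment PAM-type bound or by noting the martingale is $L^2$) one gets a bound of the form $\EE|M^{n,\varphi}_t(t) - M^{n,\varphi}_t(s)|^{2} \lesssim |t-s|$ — to reach the Chentsov exponent strictly greater than $1$ one interpolates: control the fourth moment via BDG plus the fact that jumps have size $\lesssim n^{-\vr}\|\varphi\|_\infty \to 0$, or more simply apply Chentsov's criterion in the form that also allows a jump-size term, as in \cite[Theorem~3.8.8]{EthierKurtz1986}, which only requires $\EE[|\mu^n_t(\varphi) - \mu^n_s(\varphi)|\,|\mu^n_r(\varphi) - \mu^n_t(\varphi)|] \lesssim |t-s|^{1+\delta}$ for $s \le t \le r$ together with vanishing jump sizes.

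\textbf{Main obstacle.} The delicate point is the quadratic variation term involving the irregular potential: $n^{-\vr}|\xi^n_e|$ converges only to a distribution (in fact to the constant $2\nu$, by Assumption~\ref{assu:renormalisation}(ii), but the approximants are genuinely negative-regularity objects), so $\mu^n_r(n^{-\vr}|\xi^n_e|(T^n_{t-r}\varphi)^2)$ is an honestly singular pairing. I would handle it by never integrating $\xi^n_e$ against $\mu^n_r$ directly, but rather bounding $\|n^{-\vr}|\xi^n_e|(T^n_{t-r}\varphi)^2\|_{L^1(\znd, e(l))} \le \|n^{-\vr}|\xi^n_e|\|_{\mC^{-\ve}(\znd, p(a))}\|(T^n_{t-r}\varphi)^2\|_{B^\ve_{1,1}(\znd, e(l)p(-a))}$ via the duality $|\langle f,g\rangle_n| \lesssim \|f\|_{\mC^{-\ve}}\|g\|_{B^\ve_{1,1}}$, and then noting this is a \emph{deterministic} bound uniform in $n$ and $r \in [0,t]$ (using the uniform $\mC^\vartheta_p$-bound on $T^n_\cdot\varphi$ and the product estimates, with the constraint $2\vartheta > \ve$, freely available since $\vartheta > 2 - \alpha > 1/2$ in $d=1$ and $\vartheta > 2 - 2\alpha$ in $d=2$). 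Then only the total mass $\mu^n_r(e(-l))$ needs a stochastic bound, which is the step-one moment estimate. Everything else — the drift term, the gradient part of the quadratic variation, the jump sizes — is routine and decays or is Hölder-controlled by the $\mL^{\gamma,\vartheta}_p$ regularity of the discrete PAM semigroup.
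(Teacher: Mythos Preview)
Your overall structure --- decompose the increment into a martingale part and a drift part via Remark~\ref{rem:mild_martingale_problem_discrete}, then apply Chentsov --- is exactly the paper's. The gap is in how you handle the singular part of the quadratic variation. You correctly identify that $n^{-\vr}|\xi^n_e|(T^n_{t-r}\varphi)^2$ is only controlled in a negative-regularity norm uniformly in $n$, but your proposed fix --- bounding its weighted $L^1$ norm via the duality $|\langle f,g\rangle_n| \lesssim \|f\|_{\mC^{-\ve}}\|g\|_{B^\ve_{1,1}}$ and then pairing against $\mu^n_r(e(-l))$ --- does not work. The random measure $\mu^n_r$ is a sum of point masses with no uniform density bound, so $\mu^n_r(g)$ is controlled by a weighted $L^\infty$ norm of $g$ (times the mass $\mu^n_r(\text{weight})$), not by its $L^1$ norm; the inequality $\mu^n_r(g) \le \|g\|_{L^1(e(l))}\cdot \mu^n_r(e(-l))$ that your argument implicitly uses is simply false. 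And there is no uniform weighted $L^\infty$ bound on $n^{-\vr}|\xi^n_e|(T^n_{t-r}\varphi)^2$, since $n^{-d/2}|\xi^n|$ is only bounded in $\mC^{-\ve}$.

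The paper's resolution is to take the conditional expectation first: by the Markov property,
\[
\EE\bigg[\int_t^{t+h}\!\! \mu^n_r\big(n^{-\vr}|\xi^n_e|(T^n_{t+h-r}\varphi)^2\big)\,\ud r\,\bigg|\,\mF^n_t\bigg]
= \int_t^{t+h} \!\!\mu^n_t\Big(T^n_{r-t}\big(n^{-\vr}|\xi^n_e|(T^n_{t+h-r}\varphi)^2\big)\Big)\,\ud r,
\]
and now one estimates $T^n_{r-t}(\,\cdot\,)$ in a weighted $L^\infty$ (indeed $\mC^\ve$) norm using the smoothing of the PAM semigroup from Proposition~\ref{prop:convergence_PAM}: starting from data in $\mC^{-\ve}$ one lands in $\mC^\ve$ at the price of an integrable blow-up $(r-t)^{-2\ve}$. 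This is the key trick you are missing --- the semigroup applied to the irregular integrand, not a direct product estimate. It yields a \emph{conditional} second-moment bound of the form $h^\vartheta\big[\mu^n_t(e^{k|x|^\sigma}) + |\mu^n_t(e^{k|x|^\sigma})|^2\big]$. The Chentsov step is then carried out by bounding $\EE\big[(|\Delta_+|\wedge 1)^2(|\Delta_-|\wedge 1)^2\big]$ (with $\Delta_\pm$ the increments on $[t,t{+}h]$ and $[t{-}h,t]$): condition on $\mF^n_t$, apply the conditional bound to the forward increment, then Cauchy--Schwarz together with the fourth-moment estimate of Lemma~\ref{lem:moments_estimate} on $|\mu^n_t(e^{k|x|^\sigma})|^4$, to reach an exponent $\tfrac32\vartheta > 1$ (recall $\vartheta > \tfrac23$). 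Your sketch of this step is also too loose: a plain second-moment bound $\lesssim |t-s|$ does not give exponent $>1$ after multiplying two increments, and you do need the higher moments from Lemma~\ref{lem:moments_estimate} to close the argument.
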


\begin{proof} 

It is sufficient to prove that for arbitrary $T>0$ the given sequence is tight
in $\DD([0,T]; \RR)$. Hence fix $T>0$ and consider \(0 < \vartheta < 1\) as in
Proposition~\ref{prop:convergence_PAM}. In the following computation $k \in \RR$
may change from line to line, but it is uniformly bounded for $l \in \RR$ and
$T>0$ varying in a bounded set.

\textit{Step 1.} Here the aim is to establish a second moment bound for the increment of the process. Let
$(\mF^n_t)_{t\ge 0}$ be the filtration generated by $\mu^n$. We will prove that the following conditional expectation can be estimated uniformly over $0 \le t \le t{+}h \le T$:
\begin{equation}\label{eqn:one_dimensional_tightness-pr1}
\EE \big[ |\mu^n_{t+h}(\varphi) {-} \mu^n_t(\varphi)|^2 \vert
	\mF^n_t \big] \lesssim h^{\vartheta} \Big[ \mu^n_t(e^{k|x|^{\sigma}})  + |\mu^n_t(e^{k|x|^{\sigma}})|^2 \Big],
\end{equation}
In fact, via the martingales defined in \eqref{eqn:mild-martingale-pb}, one can start by observing that:
\begin{align*} 
	\EE \big[ |\mu^n_{t+h} & (\varphi) {-} \mu^n_t(\varphi)|^2 \vert
	\mF^n_t \big]  \\ 
	& =  \EE \big[ |M^{n, \varphi}_{t{+}h}(t{+}h) {-}
	M^{n,\varphi}_{t{+}h}(t) {+} \mu^n_t(T^n_{h}\varphi {-} \varphi)|^2
      \vert \mF^n_t \big] \\
	&  \lesssim_{\varphi, T} \EE \bigg[ \int_t^{t{+}h} 
	\mu^n_r \big(n^{-\vr} |\nabla^n T^n_{t{+}h{-}r}\varphi|^2 +
      n^{-\vr}|\xi^n_e|(T^n_{t{+}h{-}r}\varphi)^2 \big) \ud r \ \bigg\vert \mF^n_t
    \bigg] \\ 
	& \qquad \qquad \qquad \qquad \qquad \qquad \qquad \qquad \qquad \qquad
	\ \ \  + h^{\vartheta}|\mu^n_t(e^{k|x|^{\sigma}})|^2, 
\end{align*}
where the last term appears since $h\mapsto T^n_h \varphi \in \mL^\vartheta (\znd, e(k))$. 
The first term on the right hand side can be bounded for any $\ve>0$ by:
\begin{equation}\label{eqn:proof_tightness_intermediate_bound}
\begin{aligned}
	\int_t^{t{+}h} & \mu^n_t\bigg( T^n_{r{-}t} \big( n^{-\vr}
	|\nabla^n T^n_{t{+}h{-}r}\varphi|^2 + n^{-\vr}|\xi^n_e|
    (T^n_{t{+}h{-}r}\varphi)^2 \big) \bigg)\ud r \\ 
	& \lesssim \int_t^{t{+}h} \mu^n_t\big(e^{k|x|^{\sigma}} +
	(r{-}t)^{{-}2\varepsilon}e^{k|x|^{\sigma}}\big)\ud r.
\end{aligned}
\end{equation}
Here we have used Lemma \ref{lem:restriction_to_lattice} to ensure that
$\varphi \vert_{\znd}$ is smooth on the lattice together with the a-priori
bound \eqref{eqn:prop_convergence_pam_a_priori_bound} of
Proposition~\ref{prop:convergence_PAM} and with Lemmata~\ref{lem:conv_to_zero}
and \ref{lem:discrete_derivative}, which show respectively a gain of regularity
via the factor $n^{-\vr}$ and a loss of regularity via the discrete derivative
$\nabla^n$, to obtain:
\[ \lim_{n \to \infty} \sup_{r \in [0,T]}\|n^{-\vr} |\nabla^n
T^n_{r}\varphi|^2 \|_{\mC^{\tilde{\vartheta}}(\znd, e(2(l+r)))} = 0,\]
for $0 < \tilde \vartheta < \vartheta {-}1{+}\vr/2$ (we can choose $\vartheta$
sufficiently large so that the latter quantity is positive). Since $\vartheta >0$ and the term is positive, one has by comparison:
$$T^n_{r{-}t} \big( n^{-\vr} |\nabla^n T^n_{t{+}h{-}r}\varphi|^2 \big) \lesssim e^{k |x|^\sigma} \| n^{-\vr} |\nabla^n T^n_{t{+}h{-}r}\varphi|^2 \|_{\mC^{\tilde{\vartheta}}(\znd, e(2(l+T)))}.$$
Moreover, according to Assumption \ref{assu:renormalisation} for $\vr \ge d/2$ the term
$n^{-\vr}|\xi^n_e|$ is bounded in $\mC^{{-}\ve}(\znd, p(a))$ whenever $\ve>0.$ It then follows from the uniform bounds \eqref{eqn:prop_convergence_pam_a_priori_bound} from Proposition~\ref{prop:convergence_PAM} and by applying \eqref{eqn:loose_time_space_regularity} from Lemma~\ref{lem:weighted_paraproduct_estimates}, together with similar arguments to the ones just presented, that:
\begin{align*}
\sup_{n \in \NN}\sup_{r \in [0,T]} &  \|s \mapsto T^n_{s}(n^{-\vr} |\xi^n_e| (T^n_{r}\varphi)^2) \|_{\mM^{2\ve} \mC^{\varepsilon}(\znd, e(k))}\\
& \lesssim  \sup_{n \in \NN}\sup_{r \in [0,T]} \|s \mapsto T^n_{s}(n^{-\vr} |\xi^n_e| (T^n_{r}\varphi)^2) \|_{\mL^{\frac{\vartheta+\ve}{2} +\ve , \vartheta}(\znd, e(k))} \\
& \lesssim  \sup_{n \in \NN}\sup_{r \in [0,T]} \| n^{-\vr} |\xi^n_e| (T^n_{r}\varphi)^2 \|_{\mC^{-\ve}(\znd, e(k))}< \infty.
\end{align*}
This completes the explanation of \eqref{eqn:proof_tightness_intermediate_bound}. So overall, integrating over $r$ we can bound the conditional expectation by:
\begin{equation*}
	h^{1{-}2\ve}\mu^n_t(e^{k|x|^{\sigma}})  +
	h^{\vartheta}|\mu^n_t(e^{k|x|^{\sigma}})|^2 \leq h^\vartheta \Big[ \mu^n_t(e^{k|x|^{\sigma}}) + |\mu^n_t(e^{k|x|^{\sigma}})|^2 \Big],
\end{equation*}
assuming $1-2\ve \geq \vartheta$. This completes the proof of
\eqref{eqn:one_dimensional_tightness-pr1}.

\textit{Step 2.} Now we are ready to apply Chentsov's criterion \cite[Theorem 3.8.8]{EthierKurtz1986}. We have to multiply two
increments of $\mu^n(\varphi)$ on $[t{-}h,h]$ and on $[t,t{+}h]$ and show that for some $\kappa>0$:
\begin{equation}\label{eqn:proof_tightness_final}
\EE \big[ (|\mu^n_{t+h} (\varphi) {-} \mu^n_t(\varphi)| \wedge
	1)^2(|\mu^n_t(\varphi){-}\mu^n_{t{-}h}(\varphi)| \wedge 1)^2 \big] \lesssim h^{1 + \kappa}
\end{equation}
We use \eqref{eqn:one_dimensional_tightness-pr1} to bound:
\begin{align*}\nonumber
	\EE \big[ & (|\mu^n_{t+h} (\varphi) {-} \mu^n_t (\varphi)| \wedge
	1)^2(|\mu^n_t(\varphi){-}\mu^n_{t-h}(\varphi)| \wedge 1)^2 \big] \\
	\nonumber
	& \leq \EE \big[ |\mu^n_{t+h}(\varphi) {-} \mu^n_t(\varphi)|^2
	|\mu^n_t(\varphi){-}\mu^n_{t-h}(\varphi)| \big]\\ 
	\nonumber
	& \lesssim h^\vartheta \EE \Big[ \big(\mu^n_t (e^{k|x|^{\sigma}})  + |\mu^n_t(e^{k|x|^{\sigma}})|^2
    \big)|\mu^n_t (\varphi){-}\mu^n_{t-h}(\varphi)| \Big]\\
	&\lesssim h^\vartheta \EE \Big[ \big(1 + |\mu^n_t(e^{k|x|^{\sigma}})|^2
    \big)|\mu^n_t (\varphi){-}\mu^n_{t-h}(\varphi)| \Big]. 
\end{align*}
By the Cauchy-Schwarz inequality together
with~\eqref{eqn:one_dimensional_tightness-pr1} and the moment bound for
$|\mu^n_t(e^{k|x|^\sigma})|^4$ from Lemma~\ref{lem:moments_estimate} one obtains:
\begin{align*}
	\EE & \bigg[(1+|\mu^n_t (e^{k|x|^{\sigma}})|^2)
	|\mu^n_t(\varphi){-}\mu^n_{t{-}h}(\varphi)| \bigg]\\ 
	& \lesssim \Big( 1+ \EE \big[ |\mu^n_t(e^{k|x|^{\sigma}})|^4 \big]^{1/2} \Big) \EE
	\big[|\mu^n_t(\varphi){-}\mu^n_{t{-}h}(\varphi)|^2 \big]^{1/2} \lesssim  h^{\vartheta/2}.
\end{align*}
Combining all the estimates one finds: 
\[
	\EE \big[ (|\mu^n_{t+h} (\varphi) {-} \mu^n_t(\varphi)| \wedge
	1)^2(|\mu^n_t(\varphi){-}\mu^n_{t-h}(\varphi)| \wedge 1)^2 \big]
	\lesssim h^{\frac 3 2 \vartheta}.
\]
Since $\vartheta > \frac23$, this proves
Equation~\eqref{eqn:proof_tightness_final} for some $\kappa>0$. In particular,
we can apply \cite[Theorem~3.8.8]{EthierKurtz1986} with $\beta = 4$, which
in turn implies that the tightness criterion of Theorem 3.8.6 (b) of the
same book is satisfied. This concludes the proof of tightness for $\{ t\mapsto
\mu^n(t)(\varphi)\}_{n \in \NN}$.
\end{proof}

Consequently, we find tightness of the process $\mu^n$ in the space of
measures.

\begin{corollary}\label{cor:tightness_of_mu_n}
  The processes $\{t \mapsto \mu^n(t)\}_{n \in \NN}$ form a tight sequence in
  $\DD([0, \infty); \mM(\RR^d)).$ 
\end{corollary}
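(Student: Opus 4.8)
The plan is to deduce tightness in $\DD([0,\infty);\mM(\RR^d))$ from the one-dimensional tightness of Lemma~\ref{lem:one_dimensional_tightness} by means of Jakubowski's criterion for tightness of measure-valued (more generally, conuclear-space-valued) c\`adl\`ag processes; see~\cite[Theorem~3.6.4]{EthierKurtz1986} or Jakubowski's original formulation. This criterion says that a sequence of laws on $\DD([0,T];\mM(\RR^d))$ is tight provided (a) a \emph{compact containment} condition holds: for every $T>0$ and $\eta>0$ there is a compact $K_{T,\eta}\subset\mM(\RR^d)$ with $\inf_n\PP(\mu^n(t)\in K_{T,\eta}\ \forall t\le T)\ge 1-\eta$; and (b) there is a family of functions $F$ on $\mM(\RR^d)$ that separates points and is closed under addition, such that $\{F(\mu^n)\}_n$ is tight in $\DD([0,T];\RR)$ for every $F$ in the family. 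For (b) we take $F = F_\varphi$ with $F_\varphi(\mu) = \langle\mu,\varphi\rangle$ for $\varphi$ ranging over, say, nonnegative $\varphi\in C_c^\infty(\RR^d)$ together with constants; this family separates points of $\mM(\RR^d)$ and is additive, and tightness of $\{\langle\mu^n(\cdot),\varphi\rangle\}_n$ in $\DD([0,T];\RR)$ is exactly Lemma~\ref{lem:one_dimensional_tightness} (note $C_c^\infty\subset C^\infty(\RR^d,e(l))$ for every $l$).

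The real work is therefore condition (a), the compact containment. Recall that by Prokhorov's theorem a set $K\subset\mM(\RR^d)$ is relatively compact (in the weak topology on finite positive measures) if and only if $\sup_{\mu\in K}\mu(\RR^d)<\infty$ and $K$ is tight as a family of measures, i.e. for every $\delta>0$ there is a compact (ball) $B_R\subset\RR^d$ with $\sup_{\mu\in K}\mu(B_R^c)<\delta$. The natural way to encode both bounds simultaneously is via the weight $e(k)(x) = e^{-k|x|^\sigma}$ appearing throughout: the set $\{\mu: \langle\mu,e(k)^{-1}\rangle\le C\} = \{\mu: \langle\mu, e^{k|x|^\sigma}\rangle \le C\}$ is relatively compact in $\mM(\RR^d)$ for any $k>0$, since it bounds total mass and forces mass to escape to infinity at rate at most $C e^{-k R^\sigma}$. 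So it suffices to produce, for each $T$, a single $k>0$ and a finite constant such that
\[
	\sup_n \PP\Big(\sup_{t\le T}\langle\mu^n(t), e^{k|x|^\sigma}\rangle > \Lambda\Big) \xrightarrow[\Lambda\to\infty]{} 0,
\]
uniformly in $n$. This is a supremum-in-time moment bound for the real-valued process $t\mapsto\langle\mu^n(t),e^{k|x|^\sigma}\rangle$, and it is the natural companion of the fixed-time moment bounds from Lemma~\ref{lem:moments_estimate} already invoked in the proof of Lemma~\ref{lem:one_dimensional_tightness}. I would obtain it by the same mild-martingale machinery: using $M^{n,\psi}_t$ from Remark~\ref{rem:mild_martingale_problem_discrete} with $\psi$ a smooth approximation of $e^{k|x|^\sigma}$ (or directly the martingale problem for $\langle\mu^n,\psi\rangle$), together with the fact — from the Schauder/semigroup estimates and Assumption~\ref{assu:renormalisation} — that $T^n_t$ maps $e(k)$-type weights into (slightly worse) $e(k)$-type weights with a norm bounded uniformly in $n$ on $[0,T]$, so that the drift $\langle\mu^n_r, \mH^n\psi\rangle$ and the quadratic-variation density $n^{-\vr}(|\nabla^n T^n\psi|^2 + |\xi^n_e|(T^n\psi)^2)$ are controlled by $\langle\mu^n_r, e^{k'|x|^\sigma}\rangle$ for a possibly larger $k'$; then Gronwall plus Doob's $L^2$ inequality give the uniform supremum bound. (One chooses $k$ depending on $T$, which is harmless since tightness on $\DD([0,\infty);\mM(\RR^d))$ reduces to tightness on each $\DD([0,T];\mM(\RR^d))$.)

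The main obstacle is precisely this uniform-in-$n$, supremum-in-time control of the weighted mass: the weight $e^{k|x|^\sigma}$ is \emph{not} in any $L^p$ space and is not touched by the $n^{-\vr}$ gain uniformly, so one must juggle a slowly-growing-in-time family of weights $e(k+ct)$ so that the semigroup $T^n$ acts boundedly, exactly in the spirit of the time-dependent weights used for the PAM in Proposition~\ref{prop:convergence_PAM}; keeping the constants uniform in $n$ is where Assumption~\ref{assu:renormalisation} and the a priori bound~\eqref{eqn:prop_convergence_pam_a_priori_bound} are essential. Once the compact containment is in place, assembling Jakubowski's criterion is routine: (a) has just been verified, (b) is Lemma~\ref{lem:one_dimensional_tightness}, and $\mM(\RR^d)$ is Polish so the criterion applies verbatim, yielding tightness of $\{\mu^n\}_n$ in $\DD([0,T];\mM(\RR^d))$ for every $T$ and hence in $\DD([0,\infty);\mM(\RR^d))$.
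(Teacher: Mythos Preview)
Your overall strategy matches the paper's: apply Jakubowski's criterion, use the separating family $\{\mu\mapsto\langle\mu,\varphi\rangle:\varphi\in C_c^\infty(\RR^d)\}$, and invoke Lemma~\ref{lem:one_dimensional_tightness} for part~(b). The difference is in how you obtain the compact containment~(a). You propose a separate mild-martingale/Gronwall/Doob argument to control $\sup_{t\le T}\langle\mu^n(t),e^{k|x|^\sigma}\rangle$ uniformly in~$n$. The paper sidesteps this entirely by noting that Lemma~\ref{lem:one_dimensional_tightness} already applies to \emph{any} $\varphi\in C^\infty(\RR^d,e(l))$, not just compactly supported ones; in particular it applies to the coercive function $\varphi(x)=|x|^2$ (one should really take $1+|x|^2$ to also bound total mass, a harmless correction). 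Tightness of the real-valued processes $\{\mu^n(\cdot)(|\cdot|^2)\}_n$ in $\DD([0,T];\RR)$ then gives, for each $\varepsilon>0$, an $R(\varepsilon)$ with $\sup_n\PP(\sup_{t\le T}\mu^n(t)(|\cdot|^2)>R(\varepsilon))\le\varepsilon$, because compact subsets of $\DD([0,T];\RR)$ are uniformly bounded. This is exactly the compact containment condition, obtained for free from the same lemma you already cite for~(b). Your approach would work but re-derives information that Lemma~\ref{lem:one_dimensional_tightness} already contains; the paper's route is shorter precisely because that lemma was stated for growing test functions rather than only for $C_c^\infty$.
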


\begin{proof}

  We apply Jakubowski's criterion \cite[Theorem
  3.6.4]{DawsonMaisonneuve1993SaintFlour}. We first need
to verify the compact containment condition. For that purpose note that for all $R > 0$ the set
\(K_R = \{ \mu \in \mM(\RR^d) \ \vert \ \mu(| \cdot|^{2}) \le R \}\) is
compact in $\mM(\RR^d)$. Here \(\mu( | \cdot |^{2}) = \int_{\RR^{d}}
|x|^{2} \ud \mu(x)\). Since the sequence of processes
$\{ \mu^n( | \cdot |^{2})\}_{n \in \NN}$ are tight by
Lemma~\ref{lem:one_dimensional_tightness}, we find for all $T,\varepsilon > 0$
an $R(\varepsilon)$ such that
\begin{align*}
  \sup_n \PP\bigg( \sup_{t \in [0, T]} & \mu^n(t)( | \cdot |^{2} )  \ge
	R(\varepsilon) \bigg) \le \varepsilon,
\end{align*}
as required. Second we note that $C^{\infty}_c(\RR^d)$ is closed under addition and
the maps $\mu \mapsto \{\mu(\varphi)\}_{\varphi \in C^\infty_c(\RR^d)}$ separate
points in $\mM(\RR^d)$. Since Lemma~\ref{lem:one_dimensional_tightness} shows
that $t \mapsto \mu^n(t)(\varphi)$ is tight for any $\varphi \in
C_c^{\infty}(\RR^d)$, we can conclude.
\end{proof}

Next we show that any limit point is a solution to a martingale problem.

\begin{lemma}\label{lem:convergence_martingale_problem}
	Any limit point of the sequence $\{t \mapsto \mu^n(t)\}_{n \in \NN}$ is
	supported in the space of continuous function $C([0, +\infty);
	\mM(\RR^d))$, and it satisfies Property
	$(ii)$ of Definition \ref{def:rSBM} with
	\(\kappa = 0\) if \(\vr > d/2\), and \( \kappa = 2\nu\) if \(\vr = d/2\).
\end{lemma}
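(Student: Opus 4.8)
The strategy is to pass to the limit in the discrete mild martingale problem of Remark~\ref{rem:mild_martingale_problem_discrete} and identify the limit with Property~$(ii)$ of Definition~\ref{def:rSBM}. By Corollary~\ref{cor:tightness_of_mu_n} the sequence $\{\mu^n\}$ is tight in $\DD([0,+\infty);\mM(\RR^d))$; fix a subsequence converging in law to some $\mu$, and by Skorohod representation work on a common probability space so that the convergence is almost sure. The first point is to upgrade $\mu$ to a continuous process: since the jumps of $\mu^n$ have size $\lfloor n^\vr\rfloor^{-1}$ when tested against a fixed $\varphi$, the maximal jump size of $t\mapsto \mu^n_t(\varphi)$ tends to $0$, and by the standard criterion (e.g.\ \cite[Theorem~3.10.2]{EthierKurtz1986}) the limit is supported on $C([0,+\infty);\mM(\RR^d))$.

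\textbf{Identification of the martingale.} Fix $t\ge 0$, $\varphi_0\in C_c^\infty(\RR^d)$, and $f\in C([0,t];\mC^\zeta(\RR^d,e(l)))$ with $\zeta>0$, $l<-t$; let $\varphi_t$ solve the backward equation $\partial_s\varphi_t+\mH\varphi_t=f$, $\varphi_t(t)=\varphi_0$, which by Proposition~\ref{prop:convergence_PAM} (applied to the time-reversed equation) lies in $\mL^{0,\vartheta}_\infty$ and is the limit of the corresponding discrete solutions $\varphi_t^n$ solving $\partial_s\varphi_t^n+\mH^n\varphi_t^n=f^n$. Using Remark~\ref{rem:mild_martingale_problem_discrete} with the time-dependent test function $\psi(s)=\varphi_t^n(s)$ (this is exactly the computation sketched in the proof of that remark, now run with a genuinely time-dependent $\psi$ and forcing $f^n$), one gets that
\[
	M^{n}_t(s) = \langle\mu^n_s,\varphi_t^n(s)\rangle - \langle\mu^n_0,\varphi_t^n(0)\rangle - \int_0^s \langle\mu^n_r,f^n(r)\rangle\,\ud r
\]
is a martingale on $[0,t]$ with predictable quadratic variation
\[
	\langle M^{n}_t\rangle_s = \int_0^s \mu^n_r\big(n^{-\vr}|\nabla^n\varphi_t^n(r)|^2 + n^{-\vr}|\xi^n_e|(\varphi_t^n(r))^2\big)\,\ud r.
\]
Now I would pass to the limit term by term. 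The linear terms $\langle\mu^n_s,\varphi_t^n(s)\rangle$, $\langle\mu^n_0,\varphi_t^n(0)\rangle$ and $\int_0^s\langle\mu^n_r,f^n(r)\rangle\,\ud r$ converge a.s.\ to their continuous counterparts, using $\mE^n\varphi_t^n\to\varphi_t$ in $\mL^{0,\vartheta}_\infty$, the convergence $\mu^n\to\mu$ in $\DD$, and the uniform moment bounds of Lemma~\ref{lem:moments_estimate} to control the weighted pairings (the weight $e(l)$ with $l<-t$ makes $\varphi_t(s)$ decay, matching the growth allowed for $\mu$). For the bracket: by Lemma~\ref{lem:conv_to_zero} and Lemma~\ref{lem:discrete_derivative} (exactly as in Step~1 of the proof of Lemma~\ref{lem:one_dimensional_tightness}) the gradient term $n^{-\vr}|\nabla^n\varphi_t^n(r)|^2\to 0$ uniformly in the relevant Besov norm, so it drops out; and by Assumption~\ref{assu:renormalisation}(ii) one has $n^{-\vr}|\xi^n_e|\to 0$ if $\vr>d/2$ and $\mE^n(n^{-d/2}|\xi^n_e|)\to 2\nu$ in $\mC^{-\ve}(\RR^d,p(a))$ if $\vr=d/2$, whence $\langle M^n_t\rangle_s\to \kappa\int_0^s\langle\mu_r,\varphi_t^2(r)\rangle\,\ud r$ with $\kappa=0$, resp.\ $\kappa=2\nu$ (here one uses that $(\varphi_t^n(r))^2\to(\varphi_t(r))^2$ in a positive-regularity weighted space so that testing against $n^{-d/2}|\xi^n_e|$ is continuous). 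Finally, a uniform $L^2$-bound on $M^n_t(s)$ (from the uniform bound on $\EE\langle M^n_t\rangle_s$, itself controlled via the moment estimates) gives uniform integrability, so the martingale property and the identity for the quadratic variation pass to the limit: $M_t^{\varphi_0,f}(s)$ is a continuous square-integrable $\mF$-martingale with bracket $\kappa\int_0^s\langle\mu_r,\varphi_t^2(r)\rangle\,\ud r$, which is Property~$(ii)$.

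\textbf{Main obstacle.} The delicate point is the convergence of the quadratic-variation term $\int_0^s\mu^n_r\big(n^{-\vr}|\xi^n_e|(\varphi_t^n(r))^2\big)\ud r$: it pairs the \emph{irregular} object $n^{-\vr}|\xi^n_e|$ (which converges only as a distribution in negative H\"older spaces) against the random measure $\mu^n_r$ and the square of the discrete PAM-solution $\varphi_t^n(r)$, and one must make sense of the limit $\langle\mu_r,\varphi_t^2(r)\rangle$ — this is fine because $\varphi_t(r)\in\mC^\vartheta$ with $\vartheta>0$ and $\mu_r$ is a finite measure, but the passage requires the uniform-in-$n$ control $\sup_n\sup_r\|n^{-\vr}|\xi^n_e|(\varphi_t^n(r))^2\|_{\mC^{-\ve}(\znd,e(k))}<\infty$ from Lemma~\ref{lem:weighted_paraproduct_estimates} together with the product convergence, and then an argument (via the compact containment from Corollary~\ref{cor:tightness_of_mu_n}) that lets one replace the testing of $\mu^n_r$ against these slightly-negative-regularity functions by testing against their regularizations uniformly. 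A secondary subtlety is that $\varphi_t^n$ solves a \emph{backward} equation, so one must check that Proposition~\ref{prop:convergence_PAM} applies after time reversal with the forcing $f^n$ chosen so that $\mE^nf^n\to f$ in the appropriate $\mM^{\gamma_0}\mC^{\alpha_0}_p$-space — routine, but needed for the convergence $\mE^n\varphi_t^n\to\varphi_t$ that underlies every other step.
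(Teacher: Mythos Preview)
Your overall architecture is the same as the paper's: vanishing jump size gives continuity, the discrete mild martingale $M^{n}_t$ converges to $M^{\varphi_0,f}_t$ and uniform $L^2$-bounds pass the martingale property to the limit, and the quadratic variation is identified by showing the gradient contribution vanishes and $n^{-\vr}|\xi^n_e|$ converges to $0$ or $2\nu$. The first two parts are fine.

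The genuine gap is in the quadratic-variation step when $\vr=d/2$. You propose to handle the error term $\int_0^s\mu^n_r\big((n^{-\vr}|\xi^n_e|-2\nu)(\varphi_t^n(r))^2\big)\,\ud r$ by ``compact containment plus regularization''. This does not work: $(n^{-\vr}|\xi^n_e|-2\nu)(\varphi_t^n(r))^2$ converges to zero only in $\mC^{-\ve}$, not pointwise or uniformly, while $\mu^n_r$ is a sum of Dirac masses with no positive regularity whatsoever. A bounded (even compactly supported) positive measure tested against a sequence going to zero in $\mC^{-\ve}$ need not go to zero, so neither compact containment nor a regularization argument closes the estimate. (For $\vr>d/2$ the situation is different: the extra factor $n^{d/2-\vr}$ upgrades the convergence to $\mC^{\ve}$ for some $\ve>0$ via Lemma~\ref{lem:conv_to_zero}, and then your argument is fine.)

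What the paper does instead is to exploit the \emph{structure} of $\mu^n$: using the martingale of Remark~\ref{rem:mild_martingale_problem_discrete} one has the second-moment identity
\[
\EE\big[|\mu^n_r(g)|^2\big]\lesssim |T^n_r g|^2(0)+\int_0^r T^n_q\big(n^{-\vr}|\nabla^n T^n_{r-q}g|^2+n^{-\vr}|\xi^n_e|(T^n_{r-q}g)^2\big)(0)\,\ud q,
\]
applied with $g=\psi^n(\varphi_t^n(r))^2$ where $\psi^n=n^{-\vr}|\xi^n_e|-2\nu$. The point is that the PAM semigroup $T^n$ smooths $g$ from $\mC^{-\ve}$ into positive-regularity spaces (Proposition~\ref{prop:convergence_PAM}), so every term on the right is controlled by $\|\psi^n\|_{\mC^{-\ve}}^2\to 0$. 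This yields $\EE\big[\sup_{s\le t}|\int_0^s\mu^n_r(\psi^n(\varphi_t^n(r))^2)\,\ud r|^2\big]\to 0$, which is exactly the convergence you need. A secondary omission: to pass $(M^n_t)^2-\langle M^n_t\rangle$ to the limit you need $L^2$-boundedness of this martingale, i.e.\ an $L^4$-bound on $M^n_t$ and an $L^2$-bound on $\langle M^n_t\rangle$; both come from Lemma~\ref{lem:moments_estimate}, but you should say so.
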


\begin{proof}

First, we address the continuity of an arbitrary limit point $\mu$. Since
  $\mM(\TT^d)$ is endowed with the weak topology, it is sufficient to prove the
  continuity of $t\mapsto \langle \mu(t), \varphi \rangle$ for all $\varphi \in
  C_b(\RR^d)$. In view of Corollary~\ref{cor:tightness_of_mu_n}, up to a
  subsequence:
\begin{align*}
  \langle \mu^n, \varphi \rangle \to \langle \mu, \varphi\rangle \ \
  \mathrm{in} \ \ \DD ( [0, \infty); \RR).
\end{align*}
Then by \cite[Theorem 3.10.2]{EthierKurtz1986} in order to obtain the continuity of
  the limit point it is sufficient to observe that the maximal jump size is
  vanishing in $n$:
\begin{align*}
	\sup_{t \geq 0 } |\langle \mu^n_{t}, \varphi \rangle - \langle
	\mu^n_{t-}, \varphi \rangle| \lesssim n^{-\varrho} \| \varphi
	\|_{L^\infty}.
\end{align*}
Next, we study the limiting martingale problem. First we will prove that the
process $M^{\varphi_0, f}_t$ from Definition~\ref{def:rSBM} is a martingale.
Then we will compute its quadratic variation.

\textit{Step 1.}
We fix a limit point $\mu$ and study the required martingale property.	For
$f, \varphi_0$ as required, observe that $\varphi_0^n = \varphi_0 \vert_{\znd}$ is
uniformly bounded in $\mC^{\zeta_0}(\znd; e(l))$ for any $\zeta_0>0$ and $l \in
\RR$, and similarly $f^n = f\vert_{\znd}$ is uniformly bounded in $C([0, t];
\mC^{\zeta}(\znd))$, with an application of Lemma
\ref{lem:restriction_to_lattice}. Hence by Proposition
\ref{prop:convergence_PAM} the solutions $\varphi^n_t$ to the discrete equations \[ \partial_s
\varphi^n_t {+} \mH^n \varphi^n_t = f^n, \qquad \varphi^n_t(t) = \varphi^n_0 \]
converge in $\mL^\vartheta(\RR^d, e(l))$ to $\varphi_t$, up to choosing a
possibly larger $l$. At the discrete level we find, analogously to \eqref{eqn:mild-martingale-pb}, that 
\[ 
  M_t^{\varphi_0, f, n}(s) : = \langle \mu^n(s) , \varphi^n_t(s) \rangle 
  - \langle \mu^{n}(0), \varphi^{n}_{t}(0) \rangle + \int_0^s \ud r \ \langle
  \mu^n(r), f^n(r) \rangle,
\] 
  for \(s \in [0, t]\) is a centered square-integrable martingale. Moreover this martingale is bounded in $L^2$
uniformly over $n$, since the second moment can be bounded via the initial
  value
and the predictable quadratic variation by 
\begin{align*} 
  \EE \Big[ |M^{\varphi_0, f, n}_t|^2(s) \Big] \lesssim
  \int_0^t \! \! \! \ud r \ T^n_{r} \big(
n^{-\vr} |\nabla^n \varphi^n_t(r)|^2 {+} n^{-\vr}|\xi^n| (\varphi^n_t(r))^2 \big)
\end{align*}
and the latter quantity is uniformly bounded in $n$. To conclude that
\(M^{\varphi_{0}, f}_{t}\) is an \(\mF-\)martingale note that
by assumption $M^{\varphi_0, f, n}_{t}$ converges to the continuous process
$M^{\varphi_0, f}_t$. From \cite[Theorem~3.7.8]{EthierKurtz1986} we obtain that for
$0 \le s \le r \le t$ and for bounded and continuous $\Phi\colon \DD([0,s];\mM)
\to \RR$
  \begin{align*}
	      \EE[\Phi(\mu|_{[0,s]}) & (M^{\varphi_0, f}_t(r) - M^{\varphi^0,f}_t(s))] \\
	      &= \lim_n \EE[\Phi(\mu^n|_{[0,s]}) (M^{\varphi_0,
	      f,n}_t(r) - M^{\varphi^0,f,n}_t(s))]=0
  \end{align*}
  by the martingale property. From here we easily deduce
  the martingale property of $M^{\varphi_0, f}_t$.

  \textit{Step 2.} We show that $M^{\varphi_0, f}_t$ has the correct quadratic variation, which should be given as the limit of 
\[
	  \langle M^{\varphi_0, f, n}_t \rangle_s = \int_0^s \ud r \
	\mu^n(r) \big( n^{-\vr} |\nabla^n \varphi^n_t(r)|^2 +
	n^{-\vr}|\xi^n| (\varphi^n_t(r))^2 \big).
\] 
We only treat the case $\vr = d/2$, the case $\vr > d/2$ is similar but
easier because then we can use Lemma~\ref{lem:conv_to_zero} to gain
some regularity from the factor $n^{d/2{-}\vr}$, so that $\|
n^{-\vr}|\xi^n|\|_{\mC^{\ve}(\znd, p(a))} \to 0$ for some $\ve > 0$
and for all $a>0$. 


First we assume, leaving the proof for later, that for any sequence
$\{\psi^n \}_{n \in \NN}$ with $\lim_n \|\psi^n\|_{\mC^{-\ve}(\RR^d, p(a))}
= 0$ for some $a >0$ and all $\ve>0$: 
\begin{equation}\label{eqn:support_eqn_for_QV_convergence}
	\EE\bigg[ \sup_{s\le t} \bigg|\int_0^s \ud r \ \mu^n(r) \big( \psi^n
	\cdot (\varphi^n_t(r))^2 \big)\bigg|^2\bigg]  \longrightarrow 0.
\end{equation}
By Assumption~\ref{assu:renormalisation} we can apply this to $\psi^n = n^{-\vr}|\xi^n| {-} 2\nu$, and
deduce that along a subsequence we have the following weak convergence in $\DD([0,t]; \RR)$:
\[\big(M^{\varphi_0, f, n}_t \big)^2_{\cdot} - \langle M^{\varphi_0, f, n}_t
\rangle_{\cdot}  \longrightarrow \big(M^{\varphi_0, f}_t \big)^2_{\cdot} - \int_0^{ \cdot}
\ud r \ \mu(r)\big( 2\nu (\varphi_t)^2 (r) \big).\]
Note also that the limit lies in $C([0,t]; \RR)$.
If the martingales on the left-hand side are uniformly bounded in $L^2$ we can
deduce as before that the limit is a continuous $L^2-$martingale, and conclude that
\[
\langle M^{\varphi_0, f}_t \rangle_s = \int_0^s \ud r \ \mu(r)\big( 2\nu
(\varphi_t)^2 (r) \big).\] As for the uniform bound in $L^2$, note that it
follows from Lemma \ref{lem:moments_estimate} that
\[
  \sup_n \sup_{0 \le s \le t}\EE \big[ |M^{\varphi_0, f, n}_t (s)|^4\big] <{+}
  \infty. 
\]
For the quadratic variation term we estimate: 
\begin{align*} 
  \EE \big[ \vert
  \langle M^{\varphi_0, f, n}_t \rangle_s \vert^2 \big] \le s \int_0^s \ud
  r \  \EE \big[ \big\vert \mu^n(r)\big( n^{-\vr} |\nabla^n \varphi^n_t(r)|^2 +
      n^{-\vr}|\xi^n| (\varphi^n_t(r))^2 \big)\big\vert^2 \big], 
    \end{align*}
which can be bounded via the second estimate of Lemma
\ref{lem:moments_estimate}. 

\textit{Step 3.} Thus, we are left with the convergence in
\eqref{eqn:support_eqn_for_QV_convergence}. By introducing the martingale from
Equation~\eqref{eqn:mild-martingale-pb} we find that 
\begin{equation}\label{eqn:step-3-support-first}
\begin{aligned}
  \EE \big[ & |\mu^n(r) \big( \psi^n (\varphi^n_t(r))^2 \big)|^2\big] \\
    & \lesssim |T^n_r \big[ \psi^n (\varphi^n_t(r))^2 \big] |^2(0) + \int_0^r \ud q \ T^n_q \Big[ n^{-\vr} \big|\nabla^n
    \big[T^n_{r{-}q}[\psi^n (\varphi^n_t(r))^2] \big]
  \big|^2 \\
  & \qquad \qquad \qquad \qquad \qquad \qquad +  n^{-\vr}|\xi^n| (T^n_{r{-}q}[\psi^n (\varphi^n_t(r))^2])^2 \Big](0).
\end{aligned}
\end{equation}
We start with the first term. By Proposition~\ref{prop:convergence_PAM} we know
that for all $\ve>0$ and $0<\vartheta<1$ satisfying \(\vt + 3 \ve < 1\) and for
\(l>0\) sufficiently large:
\begin{equation}\label{eqn:step-3-support}
  \begin{aligned}
  \| r\mapsto T_r^n [\psi^n (\varphi^n_t(r))^2] & \|_{\mL^{\frac{\vartheta+\ve}{2}
  + \ve, \vartheta}(\znd; e(3l))} \\
  & \lesssim \|\psi^n\|_{\mC^{{-}\ve}(\znd;
  p(a))} \| \varphi^{n} \|_{\mL^{\vt}(\znd; e(l))}^{2}\\
  & \lesssim \|\psi^n\|_{\mC^{{-}\ve}(\znd; p(a))}.
\end{aligned}
\end{equation}
Together with Equation~\eqref{eqn:loose_time_space_regularity} from Lemma~\ref{lem:weighted_paraproduct_estimates} and
\eqref{eqn:step-3-support}, we thus bound: 
\begin{align*} 
|T^n_r \big[ \psi^n (\varphi^n_t(r))^2 \big] |^2(0) & \lesssim r^{-4\ve} \| r\mapsto |T^n_r \big[ \psi^n (\varphi^n_t(r))^2 \big]\|^2_{\mL^{2\ve, \ve}(\znd; e(l))}\\
& \lesssim  r^{{-} 4 \ve} \|
\psi^{n} \|_{\mC^{{-} \ve}(\znd ; p(a))}^{2}.
\end{align*} 
Now we can treat the first term in the integral in
\eqref{eqn:step-3-support-first}. We can choose \(0<\vt<1\) and
\(\ve>0\) with \(\vt + 3 \ve < 1\) such that \(0 < \tilde{\vt} =\vt -1  + d/4\). We then apply
Lemmata \ref{lem:conv_to_zero} and \ref{lem:discrete_derivative}, which guarantee us respectively a regularity gain from the factor $n^{-\frac d 4}$ and a regularity loss from the derivative $\nabla^n$, to obtain:
\begin{align*}
	\| | n^{{-}d/4} \nabla^{n} \big[T^n_{r{-}q}[\psi^n (\varphi^n_t(r))^2] \big]
  \big|^2 \|_{\mC^{\tilde{\vt}}( \znd; e(6l))} & \lesssim
  \| T^n_{r{-}q}[\psi^n (\varphi^n_t(r))^2\|_{\mC^{\vt}( \znd;
    e(3l))}^{2} \\
  & \lesssim (r {-} q)^{{-} (\vt {+} 3\ve)} \| \psi^{n} \|_{\mC^{{-} \ve}(
  \znd; p(a))}^{2},
\end{align*}
where the last step follows similarly to
\eqref{eqn:step-3-support}.
Overall we thus obtain the estimate: 
\begin{align*}
  \int_0^r \ud q \ T^n_q & \big( n^{-\vr} \big|\nabla^n \big[T^n_{r{-}q}[\psi^n
  (\varphi^n_t(r))^2] \big] \big|^2 )(0) \\
  &\lesssim \| \psi^{n} \|_{\mC^{-\ve} ( \znd; p(a))}^{2} \int_0^r \ud q \ (r {-}
  q)^{{-} (\vt {+} 3\ve)} \lesssim \| \psi^{n} \|_{\mC^{{-} \ve}(\znd;
  p(a))}^{2}.
\end{align*}
Following the same steps, one
can treat the second term in the integral in \eqref{eqn:step-3-support-first}.
We now use the same parameter \(\ve\) both for the regularity of \(n^{{-} \vr} |\xi^{n}|\) and of
\(\psi^{n}\), in view of Assumption \ref{assu:renormalisation}, and choose
\(\vt, \ve\) as above with the additional constraint \(\vt + 5 \ve < 1\). Then
we can argue as follows:
\begin{align*}
  \| n^{-\vr}|\xi^n| (T^n_{q}[\psi^n (\varphi^n_t(r))^2])^2 \|_{\mC^{ {-}
  \ve}(\znd; e(2l)p(a))} \lesssim q^{{-} (\vt {+}3\ve)} \| \psi^{n}
  \|_{\mC^{{-} \ve}( \znd; p(a))}^{2}
\end{align*}
and hence:
\begin{align*}
  \int_{0}^{r} & \ud q \  T^{n}_{q} ( n^{-\vr}|\xi^n| (T^n_{q}[\psi^n
  (\varphi^n_t(r))^2])^2)(0) \\
  & \lesssim \| \psi^{n} \|_{\mC^{{-} \ve}(\znd; p(a))}^{2} \int_0^r \ud q \ (r
  {-} q)^{{-} (\vt {+} 3\ve)} q^{ {-}2 \ve }  \\
  & \lesssim \|\psi^n\|_{\mC^{{-}\ve}(\znd; p(a))}^{2},
\end{align*}
where in the last step we used that \(\vt+ 5 \ve < 1\). This concludes the
proof.
\end{proof}

Our first main result, the law of large numbers, is now an easy consequence.

\begin{proof}[Proof of Theorem \ref{thm:LLN}]
Recall that now we assume $\vr > d/2.$ In view of Corollary
\ref{cor:tightness_of_mu_n} we can assume that along a subsequence $\mu^{n_k}
\Rightarrow \mu$ in distribution in $\DD([0, +\infty); \mM( \RR^d ))$.
It thus suffices to prove that $\mu = w$. The previous lemma shows that for $\varphi \in C^\infty_c(\RR^d)$ the
process $s \mapsto \mu(s)(T_{t{-}s}\varphi){-} T_{t}\varphi(0)$ is a continuous
square-integrable martingale with vanishing quadratic variation. Hence, it is
constantly zero and $\mu(t)(\varphi) = T_t\varphi(0) = (T_t
\delta_0)(\varphi)$ almost surely for each fixed $t\ge 0$. Note that \(T_{\cdot}
\delta_{0}\) is well-defined, as explained in Remark
\ref{rem:PAM_Green_function}. Since $\mu$ is continuous, the identity holds
almost surely for all $t > 0$. The identity $\mu(t) = T_t \delta_0$ then follows
by choosing a countable separating set of smooth functions in
$C^\infty_c(\RR^d)$.
\end{proof}

Now we pass to the case $\vr = d/2.$ To deduce the weak convergence of the sequence
$\mu^n$ we have to prove that the distribution of the limit points is unique.
For that purpose we first introduce a duality principle for the Laplace
transform of our measure-valued process, for which we have to study Equation
\eqref{eqn:PDE_laplace_duality_continuous}.  We will consider mild solutions,
i.e. $\varphi$ solves~\eqref{eqn:PDE_laplace_duality_continuous} if and only if
\[
\varphi(t) = T_t\varphi_0 - \frac{\kappa}{2} \int_0^t \ud s \ T_{t{-}s}(\varphi(s)^2).\]
We shall denote the solution by $\varphi(t) = U_t \varphi_0$, which is
justified by the following existence and uniqueness result:

\begin{proposition}\label{prop:existence_solutions_duality_PDE}
  Let $T, \kappa>0$, $l_0 < -T$ and $\varphi_0 \in C^{\infty}(\RR^d, e(l_0))$
  with $\varphi_0 \ge 0$. For $l = l_0 + T$ and $\vartheta$ as in Proposition
  \ref{prop:convergence_PAM} there is a unique mild solution $\varphi \in
  \mL^{\vartheta}(\RR^d, e(l))$ to Equation
  \eqref{eqn:PDE_laplace_duality_continuous}: \begin{align*} \partial_t\varphi =
  \mH \varphi {-} \frac{\kappa}{2} \varphi^2, \qquad \varphi(0) = \varphi_0.
\end{align*} We write $U_t\varphi_0 := \varphi(t)$ and we have the following
bounds:
\[
	0 \le U_t\varphi_0 \le T_t \varphi_0, \quad \| \{U_t\varphi_0\}_{t \in
	[0,T]} \|_{\mL^{\vartheta}(\RR^d, e(l))} \lesssim e^{C\|\{T_t
	\varphi_0\}_{t \in [0,T]}\|_{CL^{\infty}(\RR^d, e(l))}}.
\]
\end{proposition}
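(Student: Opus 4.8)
The plan is to set up a Picard iteration in the space $\mL^{\vartheta}(\RR^d, e(l))$, exploiting the linear solution theory from Proposition~\ref{prop:convergence_PAM} for the Anderson Hamiltonian together with the Schauder estimates encoded in Lemma~\ref{lem:weighted_paraproduct_estimates}. First I would fix $\vartheta$ as in Proposition~\ref{prop:convergence_PAM} and note that for $\varphi \in \mL^{\vartheta}(\RR^d, e(l))$ the square $\varphi^2$ lies in $\mM^{2\gamma}\mC^{\vartheta}_{p'}(\RR^d, e(2l))$ for suitable parameters (in particular, using that $\vartheta>0$ so that $\mC^\vartheta$ is a function space closed under multiplication, and tracking the time-blowup exponent coming from the definition of $\mL^\vartheta$). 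Since $l < -T$ already leaves room in the weight, the map $\varphi \mapsto T_t\varphi_0 - \tfrac{\kappa}{2}\int_0^t T_{t-s}(\varphi(s)^2)\,\ud s$ sends a ball in $\mL^{\vartheta}(\RR^d, e(l))$ into itself for $T$ small, and it is a contraction there after possibly shrinking $T$, by the bilinear estimate applied to the difference $\varphi_1^2 - \varphi_2^2 = (\varphi_1+\varphi_2)(\varphi_1-\varphi_2)$. This yields local existence and uniqueness; the a priori bound stated in the proposition is then used to iterate the local solution and reach any finite horizon $T$.

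The a priori bound and the two-sided inequality $0 \le U_t\varphi_0 \le T_t\varphi_0$ are obtained by a comparison/positivity argument at the level of the discrete approximations, passing to the limit via Proposition~\ref{prop:convergence_PAM}. At the discrete level the equation $\partial_t \varphi^n = (\Delta^n + \xi^n_e)\varphi^n - \tfrac{\kappa}{2}(\varphi^n)^2$ is a genuine semilinear ODE system on $\znd$ with locally Lipschitz, inward-pointing nonlinearity; since $\varphi^n_0 \ge 0$ and the nonlinearity $-\tfrac{\kappa}{2}u^2$ vanishes at $u=0$ while the linear part preserves positivity (the off-diagonal entries of $\Delta^n$ and the birth term are nonnegative, the death term is handled as usual), one gets $0 \le \varphi^n(t)$ by a maximum-principle argument, and $\varphi^n(t)\le w^n(t) := T^n_t\varphi^n_0$ by comparison, since $w^n$ solves the same equation without the (negative) quadratic sink. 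Taking $n\to\infty$ using the convergence in Proposition~\ref{prop:convergence_PAM} transfers these inequalities to the continuum. The bound $0\le U_t\varphi_0 \le T_t\varphi_0$ then immediately controls the $CL^\infty(\RR^d,e(l))$ norm of $U_t\varphi_0$ by that of $T_t\varphi_0$, and feeding this back into the mild formulation together with a Gronwall-type argument in $\mL^\vartheta$ produces the claimed exponential bound on $\|\{U_t\varphi_0\}_{t\in[0,T]}\|_{\mL^{\vartheta}(\RR^d, e(l))}$.

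The main obstacle I expect is bookkeeping of the time-weighted spaces: the nonlinear term $\int_0^t T_{t-s}(\varphi(s)^2)\,\ud s$ involves squaring a function whose $\mC^\vartheta$ norm may blow up like $s^{-\gamma}$ near $s=0$, producing a $s^{-2\gamma}$ singularity, and one must check that the Schauder smoothing of $T_{t-s}$ together with the integration in $s$ keeps the result in $\mL^{\vartheta}$ with a blowup exponent that is still $<1$ — this is exactly the kind of estimate recorded in \eqref{eqn:loose_time_space_regularity} of Lemma~\ref{lem:weighted_paraproduct_estimates} and in Proposition~\ref{prop:convergence_PAM}, but choosing the auxiliary exponents consistently (the integrability of the blowup, the weight loss $e(l)\to e(l)p(a)$ absorbed into time-dependent weights, and in $d=2$ the requirement that $\vartheta$ is large enough for $\varphi^2 \cdot \xi$ to make sense paracontrolled) requires care. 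Since $\delta_0$ does not enter here — $\varphi_0$ is a smooth weighted function — one can in fact work with $p=\infty$ throughout, which simplifies matters; the one-dimensional case needs no paracontrolled structure at all and follows from the classical Schauder estimates alone.
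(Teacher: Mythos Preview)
Your strategy is sound and could be made to work, but it differs from the paper's in a way that makes yours more laborious. The paper does \emph{not} run a Picard contraction on the nonlinear map $\varphi \mapsto T_t\varphi_0 - \tfrac{\kappa}{2}\int_0^t T_{t-s}(\varphi(s)^2)\,\ud s$. Instead it iterates a \emph{linearized} map: define $\mI(\psi)$ as the solution to the linear PAM-type equation $\partial_t\varphi = (\mH - \tfrac{\kappa}{2}\psi)\varphi$, $\varphi(0)=\varphi_0$, and set $\varphi^0 = T_\cdot\varphi_0$, $\varphi^{m} = \mI(\varphi^{m-1})$. For nonnegative $\psi$ the comparison $0 \le \mI(\psi) \le T_\cdot\varphi_0$ is immediate from the linear theory (adding a nonpositive continuous potential to $\mH$ decreases the semigroup, and positivity is preserved), so the iterates stay uniformly trapped between $0$ and $T_\cdot\varphi_0$, and the bound $\|\mI(\psi)\|_{\mL^\vartheta} \lesssim e^{C\|\psi\|_{CL^\infty(e(l))}}$ comes directly from the solution theory for PAM with a bounded extra potential. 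Existence on $[0,T]$ then follows by compactness (no small-time step, no gluing), and uniqueness from the observation that the difference $z=\varphi-\psi$ of two solutions solves the well-posed linear equation $\partial_t z = (\mH + \tfrac{\kappa}{2}(\varphi+\psi))z$ with zero initial data.

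Your route --- local contraction, then discrete maximum principle, then pass to the limit --- hides extra work: to transfer the pointwise bounds from $\znd$ to $\RR^d$ you would need convergence of the \emph{nonlinear} discrete approximations to the continuum solution, which Proposition~\ref{prop:convergence_PAM} does not provide (it is stated for linear forcing). Likewise, deriving the stated exponential $\mL^\vartheta$ bound from a Gronwall loop on the mild formulation is less direct than reading it off the linearized map: once $\varphi = \mI(\varphi)$ with $0 \le \varphi \le T\varphi_0$, the bound for $\mI$ gives it in one line. The paper's linearization trick thus delivers the comparison and the a priori estimate simultaneously, and sidesteps the weight-doubling bookkeeping you correctly identify as the main obstacle in the direct Picard scheme.
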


\begin{proof}
 We define the map
  $\mI(\psi) = \varphi$, where $\varphi$ is the solution to
  \[
	  \partial_t \varphi = \big(\mH {-} \frac\kappa2 \psi\big) \varphi, \qquad
	\varphi(0) = \varphi_0.
  \]
  If $l_0 < -T$, then $(T_t \varphi_0)_{t \in [0,T]} \in
  \mL^{\vartheta}(\RR^d, e(l))$ for $l = l_0 + T$, and thus a slight adaptation
of the arguments for Proposition~\ref{prop:convergence_PAM} shows that
$\mI$ satisfies
  \[
	\mI \colon \mL^{\vartheta}(\RR^d, e(l)) \to \mL^{\vartheta}(\RR^d,
	e(l)), \qquad \|\mI(\psi)\|_{\mL^{\vartheta}(\RR^d, e(l))} \lesssim
	e^{C\|\psi\|_{CL^{\infty}(\RR^d, e(l))}}
  \]
  for some $C>0$. Moreover, for positive $\psi$
this map satisfies the bound \( 0 \le \mI(\psi)(t) \le T_t\varphi_0, \)
so in particular we can bound $\|\mI(\psi)\|_{CL^{\infty}(\RR^d, e(l))} \le \|\{T_t
\varphi_0\}_{t \in [0,T]}\|_{CL^{\infty}(\RR^d, e(l))}$. Now, define $\varphi^0(t,x) =
T_t \varphi_0 (x)$ and then iteratively $\varphi^m =
\mI(\varphi^{m-1})$  for $m \ge 1.$
This means that $\varphi^m$ solves the equation:
\begin{align*}
\partial_t \varphi^m = \mH \varphi - \frac{\kappa}{2} \varphi^{m-1} \varphi^m.
\end{align*}
Hence our a priori bounds guarantee that \[
\sup_m \| \varphi^m \|_{\mL^{\vartheta}(\RR^d, e(l))}  \lesssim
e^{C\|\{T_t \varphi_0\}_{t \in [0,T]}\|_{CL^{\infty}(\RR^d, e(l))}}.  \] By compact embedding of
$\mL^{\vartheta}(\RR^d, e(l)) \subset \mL^{\zeta}(\RR^d, e(l^\prime))$ for
$\zeta < \vartheta$, $l^\prime < l$ we obtain convergence of a subsequence in the
latter space. The regularity ensures that the limit point is indeed a solution
to Equation \eqref{eqn:PDE_laplace_duality_continuous}. The uniqueness of such a
fixed-point follows from the fact that the difference $z=\varphi{-}\psi$ of two solutions
$\varphi$ and $\psi$ solves the well posed linear equation: $\partial_t z = \big(\mH {+}
\frac\kappa2(\varphi{+}\psi)\big) z$ with $z(0) = 0$, and thus $z = 0$.
\end{proof}

We proceed by proving some implications between Properties $(i)-(iii)$ of
Definition~\ref{def:rSBM}.

\begin{lemma}\label{lem:implications_rsbm_definition}
	In Definition \ref{def:rSBM} the following implications hold between the three properties:
	\[ (ii) \Rightarrow (i), \qquad \qquad (ii) \Leftrightarrow (iii).\]
\end{lemma}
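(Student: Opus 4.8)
The plan is to view Property~$(iii)$ as the special case of Property~$(ii)$ in which the time-dependent test function is frozen in time, and to deduce $(i)$ from $(ii)$ by It\^o's formula, the point being that the drift produced by the quadratic term in~\eqref{eqn:PDE_laplace_duality_continuous} is cancelled exactly by the It\^o correction coming from the quadratic variation in $(ii)$.

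For $(ii)\Rightarrow(iii)$: first apply $(ii)$ with $f=0$ and $\varphi_0\in C_c^\infty(\RR^d)$, for which the backward equation is solved by $\varphi_t(s)=T_{t-s}\varphi_0$; the martingale property gives $\EE\langle\mu(t),\varphi_0\rangle=T_t\varphi_0(0)$, and monotone convergence upgrades this to $\EE\langle\mu(t),e(l)\rangle=T_te(l)(0)<\infty$ for every $l\in\RR$, i.e.\ $\mu(t)$ has moments against all admissible weights. Using these bounds one extends $(ii)$, by truncation and dominated convergence, to terminal data $\varphi_0$ and forcings $f$ in arbitrary weighted H\"older spaces $\mC^\vt(\RR^d,e(l))$, $l\in\RR$. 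Given $\varphi\in\mD_\mH$, recall from Lemma~\ref{lem:Anderson-hamiltonian} that $\mH\varphi$ is a genuine function in such a space; choosing $f\equiv\mH\varphi$ and terminal condition $\varphi_0=\varphi$, uniqueness for the linear backward equation forces the solution to be the stationary one $\varphi_t(s)\equiv\varphi$, so the $(ii)$-martingale $M_t^{\varphi,\mH\varphi}$ coincides on $[0,t]$ with $L^\varphi$ and has quadratic variation $\kappa\int_0^s\langle\mu(r),\varphi^2\rangle\,\ud r$. Letting $t\to\infty$ yields $(iii)$.

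For $(iii)\Rightarrow(ii)$: this is a Dynkin-type argument. Let $\varphi_t$ solve $\partial_s\varphi_t+\mH\varphi_t=f$ with $\varphi_t(t)=\varphi_0$ on $[0,t]$ (well-posed, with the regularity supplied by Proposition~\ref{prop:convergence_PAM} and Lemma~\ref{lem:weighted_paraproduct_estimates}). Partition $[0,t]$ into intervals $[s_i,s_{i+1}]$, approximate $\varphi_t(\cdot)$ by the family equal to $\varphi_t(s_i)$ on $[s_i,s_{i+1})$, and approximate each $\varphi_t(s_i)$ in turn by elements of $\mD_\mH$ as in Lemma~\ref{lem:Anderson-hamiltonian}. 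Applying $(iii)$ on each subinterval and summing, the increment $\langle\mu(s),\varphi_t(s)\rangle-\langle\mu(0),\varphi_t(0)\rangle$ splits into a drift $\int_0^s\langle\mu(r),\mH\varphi_t(r)\rangle\,\ud r$ coming from the generator, a term $\int_0^s\langle\mu(r),\partial_r\varphi_t(r)\rangle\,\ud r=\int_0^s\langle\mu(r),f(r)-\mH\varphi_t(r)\rangle\,\ud r$ coming from the time dependence of the test function, and a sum of martingale increments $L^{\varphi_t(s_i)}(s_{i+1})-L^{\varphi_t(s_i)}(s_i)$. The two generator terms cancel, which identifies $M_t^{\varphi_0,f}$ as the limit of these martingale sums, hence as a square-integrable $\mF$-martingale, and its quadratic variation is the limit of $\sum_i\kappa\int_{s_i}^{s_{i+1}}\langle\mu(r),\varphi_t(s_i)^2\rangle\,\ud r=\kappa\int_0^s\langle\mu(r),\varphi_t(r)^2\rangle\,\ud r$. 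The passage to the limit uses the uniform $L^2$-bound coming from the quadratic variation together with the continuity of $r\mapsto\varphi_t(r)$ in the relevant weighted space.

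For $(ii)\Rightarrow(i)$: fix $t\ge0$ and $\varphi_0\in C_c^\infty(\RR^d)$ with $\varphi_0\ge0$, and set $\psi_t(s)=U_{t-s}\varphi_0$, which is well defined and, by Proposition~\ref{prop:existence_solutions_duality_PDE} (applied with a sufficiently negative weight parameter, allowed since $\varphi_0\in C_c^\infty$), satisfies $0\le\psi_t(s)\le T_{t-s}\varphi_0$, lies in $\mL^\vt(\RR^d,e(l))$ for a suitable $l$, and solves $\partial_s\psi_t+\mH\psi_t=\tfrac\kappa2\psi_t^2=:f$ with $\psi_t(t)=\varphi_0$. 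Since $\psi_t$ is a bounded continuous function, $f$ is an admissible forcing for $(ii)$, and by uniqueness of the backward linear equation $\psi_t$ is exactly the test function attached to $f$ in $(ii)$. Hence $Y(s):=M_t^{\varphi_0,f}(s)=\langle\mu(s),\psi_t(s)\rangle-\langle\mu(0),\psi_t(0)\rangle-\tfrac\kappa2\int_0^s\langle\mu(r),\psi_t(r)^2\rangle\,\ud r$ is a continuous square-integrable $\mF$-martingale with $\langle Y\rangle_s=\kappa\int_0^s\langle\mu(r),\psi_t(r)^2\rangle\,\ud r$, so $s\mapsto\langle\mu(s),\psi_t(s)\rangle$ is a continuous semimartingale and It\^o's formula for $x\mapsto e^{-x}$ gives
\[ N_t^{\varphi_0}(s)=N_t^{\varphi_0}(0)-\int_0^sN_t^{\varphi_0}(r)\,\ud Y(r)-\tfrac\kappa2\int_0^sN_t^{\varphi_0}(r)\langle\mu(r),\psi_t(r)^2\rangle\,\ud r+\tfrac12\int_0^sN_t^{\varphi_0}(r)\,\ud\langle Y\rangle_r, \]
where the last two integrals cancel identically because $\ud\langle Y\rangle_r=\kappa\langle\mu(r),\psi_t(r)^2\rangle\,\ud r$. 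Thus $N_t^{\varphi_0}$ is a local martingale, and since $0\le N_t^{\varphi_0}(s)=e^{-\langle\mu(s),\psi_t(s)\rangle}\le1$ it is a bounded continuous $\mF$-martingale, which is $(i)$.

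The statement has no single hard step; the real work is the bookkeeping of the weighted Besov and H\"older spaces, i.e.\ checking at each stage that the PAM solutions, the solutions of~\eqref{eqn:PDE_laplace_duality_continuous} and their squares live in the spaces where Propositions~\ref{prop:convergence_PAM} and~\ref{prop:existence_solutions_duality_PDE} and Lemma~\ref{lem:weighted_paraproduct_estimates} apply. The two points deserving care are, in $(ii)\Rightarrow(iii)$, the extension of $(ii)$ to growing test functions — precisely where the moment bounds $\EE\langle\mu(t),e(l)\rangle<\infty$ are used so that all pairings $\langle\mu(s),\varphi\rangle$ and $\langle\mu(s),\mH\varphi\rangle$ make sense for $\varphi\in\mD_\mH$ — and, in $(iii)\Rightarrow(ii)$, the double approximation (piecewise constant in time, then into $\mD_\mH$) together with the uniform $L^2$ control needed to pass to the limit in both the martingale and its quadratic variation.
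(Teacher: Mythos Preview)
Your argument is correct and, for $(ii)\Rightarrow(i)$ and $(iii)\Rightarrow(ii)$, follows the paper almost verbatim (It\^o's formula for $e^{-x}$ in the first case, a partition/Dynkin argument in the second). The one genuine difference is your treatment of $(ii)\Rightarrow(iii)$: after extending $(ii)$ to terminal data and forcings in weighted H\"older spaces, you plug in the stationary pair $(\varphi_0,f)=(\varphi,\mH\varphi)$ and read off $L^\varphi$ directly. The paper instead runs a partition argument with $f=0$, writing $\langle\mu(t),\varphi\rangle-\langle\mu(0),\varphi\rangle$ as a telescoping sum of increments $M^{\varphi,0}_{t^n_{k+1}}(t^n_{k+1})-M^{\varphi,0}_{t^n_{k+1}}(t^n_k)$ plus Riemann sums of $(T_{\Delta^n_k}\varphi-\varphi)/\Delta^n_k$, and only then lets the mesh vanish. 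Your shortcut is cleaner and avoids the partition entirely; the price is that you must carry out the extension of $(ii)$ to growing test data first, which the paper leaves implicit.

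One point in your $(iii)\Rightarrow(ii)$ sketch deserves tightening. You write the increment as a sum of a generator drift $\int_0^s\langle\mu(r),\mH\varphi_t(r)\rangle\,\ud r$ and a time-derivative term involving $f-\mH\varphi_t$, arguing that the two $\mH\varphi_t$ contributions cancel. But for general $\varphi_0\in C^\infty_c$ and $f\in C([0,t];\mC^\zeta)$ the function $\varphi_t(r)$ is not in $\mD_\mH$, so $\mH\varphi_t(r)$ is only a distribution and cannot be paired with the measure $\mu(r)$; the cancellation is therefore formal. The paper sidesteps this by first taking $\varphi_0\in\mD_\mH$ and $f$ piecewise constant with values in $\mD_\mH$, so that $\varphi_t(r)\in\mD_\mH$ for every $r$ and all pairings are honest; only afterwards does it extend by density. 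Your double approximation (freeze $\varphi_t$ at partition points, then approximate each frozen value by $\mD_\mH$ elements) can be made to work, but you should organise the limits so that the divergent pieces never appear separately---in practice this amounts to the paper's order of operations.
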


\begin{proof}
	$(ii) \Rightarrow (i)$: Consider \(U_{\cdot} \varphi_0\) as in point
	$(i)$ of Definition~\ref{def:rSBM}, which is well defined in view of
	Proposition \ref{prop:existence_solutions_duality_PDE}. An application
	of It\^o's formula and Property (ii) of Definition~\ref{def:rSBM} with $\varphi_t(s) = U_{t-s} \varphi_0$, guarantee that for any
	$F \in C^2(\RR)$, and for $f(r) = \frac\kappa2 (U_{t {-} r} \varphi_0
	)^2$:
\begin{align*} 
  &F( \langle \mu(t), \varphi_0 \rangle) = F(\langle \mu(s), U_{t
  {-} s} \varphi_0 \rangle) {+} \int_s^t \ud r \ F'(\langle \mu(r), U_{t {-} r}
  \varphi _0 \rangle) \langle \mu(r), f(r) \rangle \\ 
  & + \hh \int_s^t
  F''(\langle \mu(r),U_{t {-} r} \varphi_0\rangle) \ud \langle M^{\varphi_0,
f}_t \rangle_r {+} \int_s^t  \ F'(\langle \mu(r), U_{t {-} r} \varphi_0 \rangle)
\ud M^{\varphi_0,f}_t(r) , 
\end{align*} 
where $\ud \langle M^{\varphi_0,f}_t \rangle_r = \langle \mu(r), \kappa
(U_{t-r}\varphi_0)^2 \rangle \ud r = \langle \mu(r), 2
f(r) \rangle \ud r$. We apply this for $F(x) = e^{-x}$, so that $F'' = - F'$ and the two Lebesgue integrals cancel. Since $F'$ is
bounded for positive $x$ the stochastic integral is a true martingale and we deduce property $(i)$.
  
 $(ii) \Rightarrow (iii)$: Let $\varphi \in \mD_{\mH}$ and $t>0$ and let $0 =
 t^n_0 \le t^n_1 \le \ldots \le t^n_{n} = t$, $n \in \NN$, be a
sequence of partitions of $[0, t]$ with $\max_{k \le n-1} \Delta^n_k := \max_{k
\le n-1} (t^{n}_{k{+}1} {-} t^n_k) \to 0$. Then
	  \begin{align*}
		  \langle\mu(t),\varphi\rangle - & \langle\mu(0),\varphi\rangle\\
		  & = \sum_{k=0}^{n-1} \big[\big(\langle\mu(t^n_{k+1}),\varphi\rangle	 {-} \langle\mu(t^n_k),T_{\Delta^n_k} \varphi\rangle\big) +  \langle\mu(t^n_k), T_{\Delta^n_k}\varphi {-} \varphi \rangle \big] \\
		  & = \sum_{k=0}^{n-1} \big[\big( M^{\varphi, 0}_{t^n_{k+1}}(t^n_{k+1}) {-} M^{\varphi, 0}_{t^n_{k+1}}(t^n_k)\big) + \Delta_k^n \langle\mu(t^n_k), \frac{T_{\Delta^n_k}\varphi {-} \varphi}{\Delta^n_k} \rangle \big].
	  \end{align*}
	We start by studying the second term on the right hand side:
	\begin{align*}
		\sum_{k=0}^{n-1} \Delta_k^n \langle\mu(t^n_k), &
		\frac{T_{\Delta^n_k}\varphi {-} \varphi}{\Delta^n_k} \rangle \\
		& =
		\sum_{k=0}^{n-1} \Big[ \Delta_k^n \langle\mu(t^n_k),
		\frac{T_{\Delta^n_k}\varphi {-} \varphi}{\Delta^n_k}  {-} \mH
		\varphi \rangle +  \Delta_k^n \langle\mu(t^n_k),
		\mH \varphi \rangle \Big] \\
		& =: R_n + \sum_{k=0}^{n-1} \Delta_k^n \langle\mu(t^n_k), \mH
		\varphi \rangle.
      \end{align*}
      By continuity of $\mu$ the second term on the right hand side converges
      almost surely to the Riemann integral $\int_0^t \langle \mu(r), \mH
      \varphi \rangle \ud r$. Moreover, from the characterization $(ii)$ we get
	$\EE[\mu(s)(\psi)] = \langle \mu(0), T_s \psi \rangle$ and
	  \[
		\EE[\mu(s)(\mH \varphi)^2] \lesssim \langle\mu(0),
		(T_s(\mH\varphi))^2 \rangle + \int_0^s \ud r \, \langle \mu(0),  T_r \Big[(T_{s{-}r}\mH \varphi)^2 \Big] \rangle,
	  \]
	which is uniformly bounded in $s \in [0,t]$. So the sequence is
	uniformly integrable and converges also in $L^1$ and not just almost
	surely. Moreover,
	\[
		  \EE[|R_n|] \lesssim \sum_{k=0}^{n-1} \Delta^n_k \big\langle
		\mu_0, T_{t^n_k}(| (\Delta^n_k)^{-1}(T_{\Delta^n_k}\varphi {-}
		\varphi)  {-} \mH \varphi |)\big\rangle,
	  \]
	  and since Lemma~\ref{lem:Anderson-hamiltonian} implies that $\max_{k \le n-1} (\Delta^n_k)^{-1}(T_{\Delta^n_k}\varphi {-}
	\varphi)$ converges to $\mH \varphi$ in $\mC^\vartheta(\RR^d, e(l))$ for
	some $l \in \RR$ and $\vartheta > 0$ (so in particular uniformly), it
	follows from Proposition~\ref{prop:convergence_PAM} and the assumption
	$\langle \mu_0, e(l)\rangle < \infty$ for all $l \in \RR$ that
	$\EE[|R_n|] \to 0$. Thus, we showed that
	  \begin{align*}
		  L^\varphi_t &= \langle\mu(t),\varphi\rangle	 -
		\langle\mu(0),\varphi\rangle - \int_0^t \langle \mu(r), \mH
		\varphi\rangle \ud r \\
		& = \lim_{n \to \infty}  \sum_{k=0}^{n-1}
		\big( M^{\varphi, 0}_{t^n_{k+1}}(t^n_{k+1}) {-} M^{\varphi,
		0}_{t^n_{k+1}}(t^n_k)\big),
	  \end{align*}
	  and the convergence is in $L^1$. By taking partitions that contain $s
	\in [0,t)$ and using the martingale property of $M^{\varphi,0}_r$ we get
	$\EE[L^\varphi(t) | \mF_s] = L^\varphi(s)$, i.e. $L^\varphi$ is a
	martingale. By the same arguments that we used to show the uniform
	integrability above, $L^\varphi(t)$ is square integrable for all $t >
	0$. To derive the quadratic variation we use again a sequence of
	partitions containing $s \in [0,t)$ and obtain
	  \begin{align*}
		\EE\big[ L^\varphi(t)^2 {-} & L^\varphi(s)^2 \big|\mF_s \big] =
		\EE\big[ (L^\varphi(t) {-} L^\varphi(s))^2 \big|\mF_s \big] \\
		& = \lim_{n \to \infty}  \sum_{k: t^n_{k+1} > s} \EE\big[\big(
		M^{\varphi, 0}_{t^n_{k+1}}(t^n_{k+1}) {-} M^{\varphi,
	    0}_{t^n_{k+1}}(t^n_k)\big)^2 \big| \mF_s \big] \\
		& = \lim_{n \to \infty}  \sum_{k: t^n_{k+1} > s} \EE\Big[ \kappa
		\int_{t^n_k}^{t^n_{k+1}} \ud r \,\langle \mu(r),
	      (T_{t^n_{k+1}{-}r}\varphi)^2 \rangle \Big| \mF_s \Big] \\
		& = \EE \Big[ \kappa \int_s^t  \ud r \,\langle \mu(r), \varphi^2
		\rangle \Big| \mF_s \Big].
	  \end{align*}
	Since the process $\kappa \int_0^\cdot \ud r \,\langle \mu(r), \varphi^2
	\rangle$ is increasing and predictable, it must be equal to $\langle
	L^\varphi\rangle$.
  
  $(iii) \Rightarrow (ii)$: Let $t \ge 0$, $\varphi_0 \in \mD_{\mH}$, and let $f\colon [0, t] \to \mD_{\mH}$ be a
  piecewise constant function (in time; it might seem more natural to take $f$ continuous, but since we did not equip $\mD_\mH$ with a topology this has no clear meaning). We write $\varphi$ for the solution to
the backward equation
\[
	(\partial_s {+} \mH )\varphi = f, \qquad \varphi(t) = \varphi_0,
\]
which is given by $\varphi(s) = T_{t-s} \varphi_0 + \int_s^{t} T_{r-s} f(r) \ud
r$. Note that by assumption $\varphi(r) \in \mD_{\mH}$ for all $r \le
t$. For $0 \le s \le t$, let $0 = t^n_0 \le t^n_1 \le \ldots \le t^n_{n} = s$, $n \in \NN$, be a
sequence of partitions of $[0, s]$ with $\max_{k \le n-1} \Delta^n_k := \max_{k
\le n-1} (t^{n}_{k{+}1} {-} t^n_k) \to 0$.
Similarly to the computation in the step ``$(ii)\Rightarrow (iii)$'' we can decompose: 
\begin{align*} 
  \langle\mu(s), \varphi(s)\rangle & {-} \langle \mu(0),
  \varphi(0)\rangle = \\ 
  = & \sum_{k = 0}^{n{-}1}  \bigg[ L^{\varphi(t^n_{k{+}1})} (t^n_{k{+}1} ) {-}
  L^{\varphi(t^n_{k{+}1})} ( t^n_{k}) {+} \int_{t^n_k}^{t^n_{k+1}} \ud r \
  \langle \mu(r), f(r)\rangle \bigg] + R_n,
\end{align*}
with
\begin{align*}
	R_n & = \sum_{k = 0}^{n{-}1} \int_{t^n_k}^{t^n_{k{+}1}} \ud r \bigg[  \langle\mu(r),  \mH \varphi(t_{k+1}^n)\rangle {-} \langle \mu(t^n_k), (\Delta_k^n)^{-1} (T_{\Delta^n_k} {-} \operatorname{id})\varphi(t^n_{k+1})\rangle \\
		&\hspace{80pt} + \langle \mu(t^n_k), T_{r-t^n_k} f(r)\rangle {-} \langle \mu(r), f(r) \rangle \bigg].
\end{align*}
By similar arguments as in the step $(ii) \Rightarrow (iii)$ we see that $R_n$
converges to zero in $L^1$, and therefore $s \mapsto \langle \mu(s),
\varphi(s)\rangle {-} \langle \mu(0),\varphi(0)\rangle {-} \int_0^s \ud r \,
\langle \mu(r), f(r)\rangle$ is a martingale. Square integrability and the right
form of the quadratic variation are shown again by similar arguments as before.

By density of $\mD_{\mH}$ it follows that $M^{\varphi_0, f}_t$ is a martingale on
$[0, t]$ with the required quadratic variation for any $\varphi_0 \in
C_c^{\infty}(\RR^d)$ and $f \in C([0,t]; \mC^{\zeta}(\RR^d))$ for $\zeta>0$. This concludes the proof.
\end{proof}

Characterization $(i)$ of Definition~\ref{def:rSBM} enables us to deduce the
uniqueness in law and then to conclude the proof of the equivalence of the
different characterizations in Definition~\ref{def:rSBM}.

\begin{proof}[Proof of Lemma~\ref{lem:rSBM-uniqueness-equivalence}]
  First, we claim that uniqueness in law follows from Property $(i)$ of Definition~\ref{def:rSBM}. Indeed, we have for $0 \le
  s\le t$ and $\varphi \in C^\infty_c(\RR^d)$, $\varphi \ge 0$ that \( \EE \big[
  e^{- \langle  \mu(t), \varphi \rangle } \big\vert \mF_s\big] = e^{-\langle
\mu(s), U_{t {-} s} \varphi \rangle}\).
  For $s = 0$ we can use the Laplace transform and the linearity of $\varphi
    \mapsto \langle  \mu(t), \varphi \rangle$ to deduce that the law of $(
    \langle  \mu(t), \varphi_1 \rangle,\dots,  \langle  \mu(t), \varphi_n
  \rangle)$ is uniquely determined by $(i)$ whenever $\varphi_1,\dots, \varphi_n$
  are positive functions in $C^\infty_c(\RR^d)$. By a 
  monotone class argument (cf. \cite[Lemma
  3.2.5]{DawsonMaisonneuve1993SaintFlour}) the law of $\mu(t)$ is unique. We
  then see
  inductively that the finite-dimensional distributions of $\mu =
  \{\mu(t)\}_{t\ge 0}$ are unique, and thus that the law of $\mu$ is unique.
    
    It remains to show the implication $(i) \Rightarrow (ii)$ to conclude the
    proof of the equivalence of the characterizations in
    Definition~\ref{def:rSBM}. But we showed in
    Lemma~\ref{lem:convergence_martingale_problem} that there exists a process
    satisfying $(ii)$, and in Lemma~\ref{lem:implications_rsbm_definition} we
    showed that then it must also satisfy $(i)$. And since we just saw that
    there is uniqueness in law for processes satisfying $(i)$ and since Property
    $(ii)$ only depends on the law and it holds for one process satisfying
    $(i)$, it must hold for all processes satisfying $(i)$. (Strictly speaking
    Lemma~\ref{lem:convergence_martingale_problem} only gives the existence for
  $\kappa = 2\nu \in (0,1]$, but see Section~\ref{sectn:mixing_dawson_watanabe}
below for general $\kappa$.)
 \end{proof}

Now the convergence of the sequence \(\{\mu^n\}_{n \in \NN}\) is an easy consequence:

\begin{proof}[Proof of Theorem \ref{thm:CLT}]

  This follows from the characterization of the limit points from
	Lemma~\ref{lem:convergence_martingale_problem} together with the
	uniqueness result from Lemma \ref{lem:rSBM-uniqueness-equivalence}.
\end{proof}

\subsection{Mixing with a classical Superprocess}\label{sectn:mixing_dawson_watanabe} 

In Section~\ref{sectn:convergence} we constructed the rSBM of parameter
\(\kappa=2\nu\), for \(\nu\) defined via Assumption \ref{assu:noise} which leads
to the restriction \(\nu \in (0, \frac12]\). This section is devoted to
constructing the rSBM for arbitrary $\kappa >0$. We do so by means of an
interpolation between the rSBM and a Dawson-Watanabe superprocess (cf.
\cite[Chapter 1]{Etheridge2000}). Let $\Psi$ be the generating function of a
discrete finite positive measure $\Psi(s) = \sum_{k \ge 0}p_k s^k$ and
\(\xi^n_p\) a controlled random environment associated to a parameter \(\nu =
\EE [\Phi_+]\). We consider the quenched generator: 
\begin{align*} 
  \mL^{n , \omega^p}_{\psi}(F)(\eta) & = \sum_{x \in \ZZ_n^d} \eta_x  \cdot
  \bigg[ \Delta^{n} F (\eta) + (\xi^n_{p,e})_{+}(\omega^p, x) d^{1}_{x} F(
    \eta) \\
    & \qquad \qquad + (\xi^n_{p,e})_{-}( \omega^p, x) d^{{-} 1}_{x}F(\eta) +
  n^{\vr}\sum_{k \ge 0} p_k d^{ (k {-} 1)}_{x}F(\eta) \bigg] 
\end{align*} 
with the notation \(d^{k}_{x}F( \eta) = F( \eta^{x; k}) {-} F(\eta)\), where for \(k \geq {-} 1\) we write 
$\eta^{x; k }(y) = (\eta(y) {+}  k 1_{\{x\}}(y))_+$.

\begin{assumption}[On the Moment generating function] We assume that
$\Psi'(1) = 1$ (critical branching, i.e. the expected number of offsprings in
one branching/killing event is $1$) and we write $\sigma^2 = \Psi''(1)$ for the
variance of the offspring distribution.
\end{assumption}

Now we introduce the associated process. The construction of the process $\bar{u}^n$ is analogous to the case without
$\Psi$, which is treated in Appendix \ref{sectn:construction_markov_process}.

\begin{definition}\label{def:of_dawson_watanabe_discrete}
  
  Let $\vr \ge d/2$ and let $\Psi$ be a moment generating function satisfying the previous assumptions.
  Consider a controlled random environment $\xi^n_p$ associated to a parameter
  \(\nu \in (0,\frac12]\). Let $\PP^n = \PP^p \ltimes \PP^{n, \omega^p}$ be the measure on
  $\Omega^p \times \DD([0, {+}\infty); E)$ such that for fixed \(\omega^p \in
  \Omega^p\), under the measure \(\PP^{n, \omega^p}\) the canonical process on
  $\DD([0, {+}\infty); E)$ is the Markov process $\bar{u}^n_p(\omega^p, \cdot)$
  started in $\bar{u}^n_p(0) = \lfloor n^\vr  \rfloor 1_{\{ 0\}}(x)$ associated
  to the generator $\mL^{\omega^p, n}_{\Psi}$ defined as above. To $\bar{u}^n_p$
  we associate the measure valued process
	\[ 
		   \langle \bar{\mu}^n_p(\omega^p, t), \varphi \rangle = \sum_{x
		\in  \mathbb{Z}_n^d} \bar{u}^n_p(\omega^p, t,x) \varphi(x)
		\lfloor n^{\vr} \rfloor^{-1}
	   \]
  for any bounded $\varphi\colon \znd \to \RR$. With this definition $\bar{\mu}^n_p$
  takes values in $\Omega^p \times \DD([0, T]; \mM(\RR^d))$ with the law induced
  by $\PP^{n}$.  
  \end{definition}

\begin{remark}
  As in Remark~\ref{rem:mild_martingale_problem_discrete} we see that for
  $\varphi \in L^\infty(\znd,e(l))$ with $l\in \RR$ the process $\bar{M}^{n,
  \varphi}_t(s) :
  = \bar{\mu}^n(s)(T^n_{t{-}s} \varphi) {-} T^n_t\varphi (0)$ is a martingale
  with predictable quadratic variation: 
\[ \langle \bar{M}^{n, \varphi}_t \rangle_s = \int_0^s \ud r \ \bar{\mu}^n(r)
\big(n^{-\vr} |\nabla^n T^n_{t{-}r}\varphi|^2 + (n^{-\vr}|\xi^n_e|{+}  \sigma^2)
(T^n_{t{-}r}\varphi)^2 \big).  \]
\end{remark}

In view of this Remark, we can follow the discussion of Section
\ref{sectn:convergence} to deduce the following result (cf. Corollary
\ref{cor:convergence_total_law}).

\begin{proposition}\label{prop:convergence_mixiture_dawson} 
  
  The sequence of measures \(\PP^n\) as in Definition
  \ref{def:of_dawson_watanabe_discrete} converge weakly as measures on \(
  \Omega^p \times \DD([0, T]; \mM(\RR^d))\) to the measure \(\PP^{p} \times
  \PP^{\omega^p}\) associated to a rSBM of parameter \(\kappa = 1_{\{\vr = \frac{d}{2}\}} 2\nu {+}
  \sigma^2\), in the sense of Theorem \ref{thm:CLT} and Corollary
  \ref{cor:convergence_total_law}. In short, we write \( \overline{\mu}^n_p \to
  \overline{\mu}_p\). 
\end{proposition}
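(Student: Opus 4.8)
The plan is to retrace the arguments of Section~\ref{sectn:convergence} for the sequence $\bar\mu^n_p$, observing that the only new feature is the additional summand $\sigma^2 (T^n_{t-r}\varphi)^2$ in the predictable quadratic variation of the mild martingale $\bar M^{n,\varphi}_t$ from the Remark preceding the statement. Since $\sigma^2$ is a bounded constant --- hence trivially bounded in every $\mC^{\varepsilon}(\znd,p(a))$ and convergent there --- this term is strictly easier to handle than the genuinely singular term $n^{-\vr}|\xi^n_e|$, so no new obstacle arises.

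First I would establish tightness of $\{\bar\mu^n_p\}$ in $\DD([0,+\infty);\mM(\RR^d))$. Starting from $\bar M^{n,\varphi}_t$ one reproduces the second-moment bound for the increments exactly as in \eqref{eqn:one_dimensional_tightness-pr1}: the contribution of $\sigma^2(T^n_{t-r}\varphi)^2$ is controlled just like that of $n^{-\vr}|\xi^n_e|(T^n_{t-r}\varphi)^2$, using the a priori bounds of Proposition~\ref{prop:convergence_PAM} and the (straightforward) analogue of the fourth-moment bound of Lemma~\ref{lem:moments_estimate} for the perturbed process, which only requires checking that the extra bounded branching does not spoil the moment recursion. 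Chentsov's criterion then gives tightness of $t\mapsto\bar\mu^n(t)(\varphi)$ for all $\varphi\in C^{\infty}(\RR^d,e(l))$ as in Lemma~\ref{lem:one_dimensional_tightness}, and Jakubowski's criterion promotes this to tightness of $\bar\mu^n$ in the space of measures, as in Corollary~\ref{cor:tightness_of_mu_n}.

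Next I would identify every limit point with the rSBM of parameter $\kappa = 1_{\{\vr=d/2\}}2\nu + \sigma^2$. Continuity of limit points follows from the bound $\lesssim n^{-\vr}\|\varphi\|_{L^\infty}$ on the jump size together with \cite[Theorem~3.10.2]{EthierKurtz1986}. For the martingale problem one argues as in Lemma~\ref{lem:convergence_martingale_problem}: the discrete mild martingale $\bar M^{\varphi_0,f,n}_t$ has quadratic variation $\int_0^s\ud r\,\bar\mu^n(r)\big(n^{-\vr}|\nabla^n\varphi^n_t(r)|^2 + (n^{-\vr}|\xi^n_e|+\sigma^2)(\varphi^n_t(r))^2\big)$, the gradient term vanishes in the limit by the regularity gain from $n^{-\vr}$ with $\vr\ge d/2$ (Lemmata~\ref{lem:conv_to_zero} and~\ref{lem:discrete_derivative}), and $n^{-\vr}|\xi^n_e|+\sigma^2 \to \kappa$ in $\mC^{-\varepsilon}(\RR^d,p(a))$ by Assumption~\ref{assu:renormalisation} (in the case $\vr>d/2$ using Lemma~\ref{lem:conv_to_zero} to make $n^{-\vr}|\xi^n_e|$ vanish). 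Applying the key convergence \eqref{eqn:support_eqn_for_QV_convergence} with $\psi^n = n^{-\vr}|\xi^n_e|+\sigma^2-\kappa$ then shows that the limiting mild martingale has quadratic variation $\kappa\int_0^s\ud r\,\langle\mu(r),(\varphi_t)^2(r)\rangle$, i.e. any limit point satisfies Property~(ii) of Definition~\ref{def:rSBM} with parameter $\kappa$.

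Finally, Lemma~\ref{lem:rSBM-uniqueness-equivalence}, which gives uniqueness in law of the rSBM for every $\kappa>0$, upgrades tightness to convergence of $\bar\mu^n_p$ to the rSBM of parameter $\kappa$ in the quenched sense of Theorem~\ref{thm:CLT}; and the convergence of the total laws $\PP^n$ to $\PP^p\times\PP^{\omega^p}$ is deduced exactly as in the proof of Corollary~\ref{cor:convergence_total_law}, namely by passing via the Skorohod representation theorem to a controlled random environment, conditioning on $\omega^p$, applying the quenched convergence just established, and concluding by dominated convergence using the measurability of $\omega^p\mapsto\PP^{n,\omega^p}$. The only step that is not a literal transcription of Section~\ref{sectn:convergence} is the verification that the moment estimates and the discrete mild martingale problem survive the added branching, and this is routine precisely because the branching is critical ($\Psi'(1)=1$, so the conditional mean still solves the discrete PAM) and governed by the bounded constant $\sigma^2$.
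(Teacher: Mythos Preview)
Your proposal is correct and is exactly the approach the paper takes: the paper's entire justification is the single sentence that, in view of the Remark on the predictable quadratic variation of $\bar M^{n,\varphi}_t$, one can follow the discussion of Section~\ref{sectn:convergence} verbatim. Your detailed check that each step (tightness via Chentsov, continuity of limit points, identification of the quadratic variation via \eqref{eqn:support_eqn_for_QV_convergence} with $\psi^n=n^{-\vr}|\xi^n_e|+\sigma^2-\kappa$, uniqueness via Lemma~\ref{lem:rSBM-uniqueness-equivalence}, and passage to the total law as in Corollary~\ref{cor:convergence_total_law}) survives the extra term is precisely what the paper leaves implicit.

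One small correction: your deterministic jump-size bound $\lesssim n^{-\vr}\|\varphi\|_{L^\infty}$ is only valid if the offspring law $(p_k)$ has bounded support, since a $\Psi$-branching event at $x$ produces a jump of size $(k{-}1)\,n^{-\vr}|\varphi(x)|$ in $\langle\bar\mu^n,\varphi\rangle$. Under the paper's assumption $\sigma^2=\Psi''(1)<\infty$ you should instead argue that the maximal jump on $[0,T]$ tends to zero in probability, which follows from the moment bounds on the total mass (controlling the expected number of $\Psi$-events by $O(n^{\vr})$) together with the Chebyshev tail $\PP(|k{-}1|>M)\le\sigma^2/M^2$. The same caveat applies to your phrase ``extra bounded branching'' when discussing the moment recursion of Lemma~\ref{lem:moments_estimate}: the offspring are not bounded, and carrying the $q$-th moment through the backward equation for $\bar u^n$ tacitly uses finiteness of the $q$-th factorial moment $\Psi^{(q)}(1)$, a hypothesis the paper does not state but implicitly needs as well.
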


In particular the rSBM is also the scaling limit of critical branching random
walks whose branching rates are perturbed by small random potentials.

\section{Properties of the Rough Super-Brownian Motion}\label{sectn:properties}
\subsection{Scaling Limit as SPDE in d=1}\label{secnt:SPDE_derivation}

In this section we characterize the rSBM in dimension $d
= 1$ as the solution to the SPDE \eqref{eqn:spde_rsbm_formal_with_kappa} in the
sense of Definition \ref{def:solutions_to_SPDE_1D}. For that purpose we first show that the random measure $\mu_p$ admits a density with respect
to the Lebesgue measure.

\begin{lemma}\label{lem:existence_of_density}
	Let $\mu$ be a one-dimensional rSBM of parameter
	$\nu$. For any $\beta < 1/2$, $p \in [1, 2/(\beta{+}1))$ and $l \in
	\RR$, we have:
	\[
		\EE\big[\|\mu\|_{L^p ( [0, T]; B^{\beta}_{2,2}(\RR, e(l)))}^p\big] < \infty.
	\]
\end{lemma}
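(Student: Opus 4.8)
The plan is to show that the density $\mu_p(t,\cdot)$, which exists at fixed times by analogy with the classical one-dimensional SBM (Konno-Shiga, Reimers), has enough spatial regularity to lie in $B^\beta_{2,2}$, and that the blow-up as $t \to 0$ is integrable in $L^p$ for $p<2/(\beta+1)$. The natural approach is to work with the martingale problem characterization, Property $(ii)$ of Definition~\ref{def:rSBM}: for test functions of the form $\varphi_t(s) = T_{t-s}\varphi_0$ (i.e.\ $f=0$), the process $s \mapsto \langle\mu(s), T_{t-s}\varphi_0\rangle - \langle\mu(0),\varphi_0\rangle$ is a continuous square-integrable martingale with quadratic variation $\kappa\int_0^s \langle\mu(r),(T_{t-r}\varphi_0)^2\rangle\,\ud r$. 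Taking expectations gives $\EE[\langle\mu(t),\varphi_0\rangle] = (T_t\delta_0)(\varphi_0)$ and, via the $L^2$ isometry,
\[
	\EE\big[\langle\mu(t),\varphi_0\rangle^2\big] = (T_t\delta_0)(\varphi_0)^2 + \kappa\int_0^t \ud r \ \big\langle T_r\delta_0,\ (T_{t-r}\varphi_0)^2\big\rangle.
\]

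**Key steps.** First I would test against $\varphi_0 = \Delta_j$-type kernels: more precisely, pick a point $x$ and $j\ge -1$, and apply the second moment identity above with $\varphi_0$ replaced by $K^n_j(x-\cdot)$ (the kernel whose convolution realizes $\Delta_j$ evaluated at $x$), so that $\langle\mu(t),K^n_j(x-\cdot)\rangle = \Delta_j\mu(t)(x)$ when $\mu(t)$ has a density. The strategy is then to bound $\EE[|\Delta_j\mu(t,x)|^2]$ using the explicit second moment formula, the Gaussian bounds on the PAM semigroup $T_t$ (or rather its kernel $T_t(x,y)$, available via Proposition~\ref{prop:convergence_PAM} and the comparison $0\le T_t\delta_0\lesssim e^{k|\cdot|^\sigma}(\text{Gaussian-type decay})$), and the smoothing estimate $\|T_{t-r}K_j\|_{L^\infty}$-type bounds. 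The upshot should be a bound of the shape $\EE[|\Delta_j\mu(t,x)|^2] \lesssim e^{k|x|^\sigma}\,2^{2j}\big(t^{-1/2} \wedge 2^{j}\big) \wedge \ldots$, structured so that summing $2^{2j\beta}\EE\|\Delta_j\mu(t)\|_{L^2(e(l))}^2$ over $j$ converges for $\beta<1/2$ and produces a factor $t^{-(\beta+1/2)-\ve}$ (the exponent dictating the $L^p$-in-time integrability threshold $p(\beta+1/2)<1$, i.e.\ $p<2/(2\beta+1)$ — though note the statement claims $p<2/(\beta+1)$, which is weaker and hence also fine, or obtained by a slightly lossier estimate). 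I would then integrate in $x$ against the weight $e(l)^2$, use Fubini to pull the expectation inside, apply Jensen with $p/2\le 1$ to pass from $\EE[\,\cdot\,]^{p/2}$ type quantities, and conclude $\EE[\|\mu(t)\|_{B^\beta_{2,2}(e(l))}^p] \lesssim t^{-p(\beta+1/2)/2-\ve}$, which is integrable over $[0,T]$ precisely under the stated constraint on $p$.

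**Main obstacle.** The delicate point is controlling the second-moment quantity $\int_0^t \langle T_r\delta_0, (T_{t-r}\Delta_j\text{-kernel})^2\rangle\,\ud r$ uniformly in $j$ with the right power of $t$: one needs sharp heat-kernel-type estimates for the Anderson semigroup $T_t$ — in particular that $\|T_s g\|_{L^\infty} \lesssim s^{-1/2}\|g\|_{L^1}$ up to weights and an $\ve$-loss, and that $\|T_s \Delta_j\varphi\|$ enjoys the expected Littlewood-Paley smoothing — all of which must come from Proposition~\ref{prop:convergence_PAM} and the Besov-space machinery of \cite{MartinPerkowski2017} rather than from an explicit Gaussian kernel, since $\xi$ is only a distribution. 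Handling the interplay between the spatial weight $e(l)$ (which grows) and these kernel estimates, and making sure the $\ve$-losses from the time-dependent-weight Schauder theory do not destroy the integrability threshold, is where the real work lies; the rest is bookkeeping with Besov norms and Fubini.
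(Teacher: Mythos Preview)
Your approach is essentially the same as the paper's: use the second-moment identity from the martingale problem (Property~(ii) of Definition~\ref{def:rSBM}), test against Littlewood--Paley kernels $\varphi_0 = K_j(x-\cdot)$, control the resulting expressions via the Schauder estimates for the Anderson semigroup from Proposition~\ref{prop:convergence_PAM} (after first proving $\|K_j(x-\cdot)\|_{\mC^\zeta_1(\RR,e(k))} \lesssim 2^{j\zeta} e^{-k|x|^\sigma}$), then integrate in $x$, sum in $j$, apply Jensen with $p/2 \le 1$, and integrate in $t$.

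One correction on the numerics: your predicted blow-up $\EE\|\mu(t)\|^2_{B^\beta_{2,2}} \lesssim t^{-(\beta+1/2)-\ve}$ is too optimistic. The dominant contribution turns out to be the initial-condition term $(T_t K_j(x-\cdot))^2(0)$, not the quadratic-variation integral; the paper bounds it by $2^{2j\zeta} t^{\zeta-1-2\ve} e^{-2k|x|^\sigma}$, which after setting $\zeta = -\beta-\ve$ yields $\EE\|\mu(t)\|^2_{B^\beta_{2,2}(e(l))} \lesssim t^{-\beta-1-3\ve}$. Since $\EE\|\mu(t)\|^p \le (\EE\|\mu(t)\|^2)^{p/2} \lesssim t^{-p(\beta+1+3\ve)/2}$, integrability on $[0,T]$ requires $p(\beta+1)/2 < 1$, i.e.\ exactly $p < 2/(\beta+1)$. (Your arithmetic turning the exponent into a threshold also slipped: an exponent $a$ in $\EE\|\cdot\|^2 \lesssim t^{-a}$ gives $p < 2/a$, not $p < 1/a$.) None of this affects the strategy, only the bookkeeping.
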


\begin{proof}
	Let $t>0$ and $\varphi \in C^\infty_c(\RR)$. By Point $(ii)$ of
	Definition~\ref{def:rSBM} the process $M^{\varphi}_t(s) = \langle
	\mu(s), T_{t-s} \varphi \rangle - \langle \mu(0), T_t \varphi \rangle$,
	$s\in [0,t]$, is a continuous square-integrable martingale with
	quadratic variation $\langle M^{\varphi}_t \rangle_s = \int_0^s \langle
	\mu(r), (T_{t-r}\varphi)^2 \rangle$. Using the moment
	estimates of Lemma~\ref{lem:moments_estimate}, which by Fatou's lemma
	also hold for the limit $\mu$ of the $\{\mu^n\}$, this
	martingale property extends to \(\varphi \in
	\mC^{\vt}( \RR, e(k))\) for arbitrary \(k \in \RR\) and \(\vt>0\). In
	particular, for such \(\varphi\) we get
	\[
		\EE[\langle \mu(t), \varphi \rangle^2 ] \lesssim \int_0^t T_r(
		(T_{t-r} \varphi)^2) (0) \ud r + (T_t \varphi)^2(0).
	\]
	Now note that \(\EE\big[ \|\mu(t) \|_{B^\beta_{2,2}(e(l))}^2\big] = \sum_j
	2^{2j\beta} \int \EE[\langle \mu(t), K_j(x-\cdot)\rangle^2]
	e^{{-}2l|x|^\sigma} \ud x \), so we apply this estimate with $\varphi = K_j(\cdot - x)$:
	\begin{align}\label{eqn:existence_of_density-pr1}
	  \EE[\langle \mu(t), K_j(x{-}\cdot)\rangle^2] \lesssim \! \!
	  \int_0^t \! \! T_r( (T_{t-r} K_j(x {-} \cdot))^2) (0)\! \ud r
	  {+} (T_t K_j(x {-} \cdot))^2(0).
	\end{align}
	We start by proving that $\| K_j(x-\cdot)
	\|_{\mC^\alpha_1(\RR, e(k))} \lesssim 2^{j\alpha} e^{{-} k |x|^{\sigma}}
	$ for any $k>0$. Indeed, using that $K_i$ is an even function and writing $\tilde
	K_{i-j} = 2^{(i-j)d}K_0(2^{i-j}\cdot)\ast K_0$ if $i,j \ge 0$ and
	appropriately adapted if $i=-1$ or $j=-1$, we have:
	\begin{align*}
		\| \Delta_i & (K_j(x-\cdot))e(k)\|_{L^1(\RR)} = 1_{\{|i-j|\le
		1\}} \int_{\RR^d} |K_i \ast K_j(x-y)| e^{-k|y|^\sigma} \ud y \\
		& = 1_{\{|i-j|\le 1\}} \int_{\RR} |\tilde K_{i-j}(y)|
		e^{-k|x-2^{-j}y|^\sigma} \ud y \\
		& \lesssim 1_{\{|i-j|\le 1\}} \int_{\RR} |\tilde K_{i-j}(y)|
		e^{k|2^{-j}y|^\sigma-k|x|^\sigma} \ud y  \lesssim 1_{\{|i-j|\le
		1\}} e^{-k|x|^\sigma},
	 \end{align*}
	where in the last step we used that $|\tilde K_{i-j}(y)| \lesssim
	e^{-2k|y|^\sigma}$ and $2^{-j\sigma} \le 2^\sigma < 2$. 
	
	Now, for $\zeta < 0$ satisfying the assumptions of
	Proposition~\ref{prop:convergence_PAM} and for  $p \in [1,\infty]$ and
	sufficiently small  $\varepsilon > 0$:
	\begin{align*}
		\|T_s K_j(x-\cdot) \|_{\mC^\varepsilon_p(\RR,e(k+s))} 
		& \lesssim \|T_s K_j(x-\cdot) \|_{\mC^{1-\frac1p + \varepsilon}_1(\RR,e(k+s))}\\ 
		& \lesssim 2^{j\zeta} s^{(\zeta-1+\frac1p - 2\varepsilon)/2} e^{-k|x|^\sigma}.
	\end{align*}
	To control the first term on the right hand side
	of~\eqref{eqn:existence_of_density-pr1}, we apply this with $p=2$ and
	obtain for $t \in [0,T]$ and $\zeta > - 1/2$
	\begin{align*}
	  \int_0^t  T_r( (T_{t-r} & K_j(x {-} \cdot ))^2) (0) \ud r\\
	  &  \lesssim
	  \int_0^t \|T_r( (T_{t-r} K_j(x {-} \cdot
		))^2)\|_{\mC^\varepsilon_\infty(\RR,e(2k+T))} \ud r \\
		& \lesssim \! \int_0^t \!  \|T_r( (T_{t-r} K_j(x {-} \cdot
		))^2)\|_{\mC^{1+\varepsilon}_1(\RR,e(2k+T))} \! \ud r \\
		& \lesssim \!  \int_0^t \!  r^{-\frac{1+2\varepsilon}{2}} \|(T_{t-r}
		K_j(x {-} \cdot ))^2\|_{\mC^{\varepsilon}_1(\RR,e(2k))}  \ud r \\
		& \lesssim \! \! \int_0^t \! \! r^{-\frac{1+2\varepsilon}{2}} \|T_{t-r}
		K_j(x {-} \cdot )\|^2_{\mC^{\varepsilon}_2(\RR,e(k))} \! \ud r \\
		& \lesssim \! \! \int_0^t \! \! r^{-\frac{1+2\varepsilon}{2}} (2^{j\zeta}
		(t{-}r)^{\frac{\zeta-\frac12 - 2\varepsilon}{2}} e^{-k|x|^\sigma})^2
		\!  \ud r \\
		& \simeq 2^{2j\zeta} e^{-2k|x|^\sigma}
		t^{1-\frac{1+2\varepsilon}{2} + \zeta-\frac12 - 2\varepsilon} =
		2^{2j\zeta} e^{-2k|x|^\sigma}
		t^{\zeta-3\varepsilon}, 
	\end{align*}
	where we used that $\int_0^t r^{-\alpha} (t-r)^{-\beta} \ud r \simeq
	t^{1-\alpha-\beta}$ for $\alpha, \beta < 1$. The second term on the
	right hand side of~\eqref{eqn:existence_of_density-pr1} is bounded by
	\begin{align*}
		(T_t K_j(x - \cdot))^2(0) & \lesssim \|(T_t K_j(x - \cdot))^2\|_{\mC^\varepsilon_\infty(\RR, e(2k+2T))} \\
		& \lesssim \|T_t K_j(x - \cdot)\|^2_{\mC^\varepsilon_\infty(\RR, e(k+T))} 
		\lesssim 2^{2j\zeta} t^{\zeta-1 - 2\varepsilon} e^{-2k|x|^\sigma}.
	\end{align*}
	Note that this estimate is much worse than the first one (because $t \in
	[0,T]$ is bounded above). We plug both those estimates
	into~\eqref{eqn:existence_of_density-pr1} and set $\zeta = -\beta -
	\varepsilon$ and \(k> {-} l\) to obtain \( \EE\big[ \|\mu(t) \|_{B^\beta_{2,2}(e(l))}^2\big] \lesssim
	t^{-\beta-1 - 3\varepsilon}\) for $\beta < 1/2$ and for $l \in
	\RR$. So finally for $p \in [1,2)$
	\[
		\EE\big[\|\mu\|_{L^p ( [0, T]; B^{\beta}_{2,2}(\RR,
		e(l)))}^p\big] = \int_0^T \EE\big[\|\mu(t)
		\|_{B^\beta_{2,2}(e(l))}^p\big] \ud t \lesssim \int_0^T
		t^{(-\beta-1 - 3\varepsilon)\frac{p}{2}} \ud t,
	\]
	and now it suffices to note that there exists $\varepsilon > 0$ with
	$(-\beta-1 - 3\varepsilon)\frac{p}{2} > -1$ if and only if $p <
	2/(\beta+1)$.
\end{proof}

\begin{corollary}
	In the setting of Proposition~\ref{lem:existence_of_density} we have
	almost surely $\sqrt{\mu} \in L^2([0,T] ; L^2(\RR,e(l)))$ for all $T >
	0$ and $l \in \RR$.
\end{corollary}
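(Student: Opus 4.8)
The plan is to estimate the weighted $L^2$-norm of $\sqrt{\mu(t)}$ — which, since $\mu(t)$ is a positive measure admitting an $L^2$-density, is just a weighted total mass of $\mu(t)$ — by the weighted $L^2$-norm of $\mu(t)$ itself, at the cost of slightly enlarging the exponential weight; the latter norm is then integrable in time by Lemma~\ref{lem:existence_of_density}.

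In detail, fix $\beta \in (0, 1/2)$ and $p \in [1, 2/(\beta{+}1))$, so that Lemma~\ref{lem:existence_of_density} applies. For $\beta > 0$ one has the elementary embedding $B^\beta_{2,2}(\RR, e(l)) \hookrightarrow L^2(\RR, e(l))$ (sum the Littlewood--Paley blocks and apply Cauchy--Schwarz in the frequency index, using $\sum_{j \ge -1} 2^{-2j\beta} < \infty$), so Lemma~\ref{lem:existence_of_density} yields $\mu \in L^p([0,T]; L^2(\RR, e(l)))$ almost surely; in particular $\mu(t)$ has an $L^2(\RR, e(l))$-density $\mu(t,\cdot)$ for a.e.\ $t$, and $\sqrt{\mu(t,\cdot)}$ is a well-defined measurable function. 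Now fix $l' \in \RR$ and choose any $l < 2l'$. Since $\mu(t) \ge 0$, Cauchy--Schwarz in the space variable gives
\[
  \big\| \sqrt{\mu(t)} \big\|_{L^2(\RR, e(l'))}^2 = \int_\RR \mu(t,x)\, e^{-2l'|x|^\sigma}\, \ud x \le \| \mu(t) \|_{L^2(\RR, e(l))} \Big( \int_\RR e^{2(l - 2l')|x|^\sigma}\, \ud x \Big)^{1/2},
\]
and the last integral is finite because $l - 2l' < 0$ and $\sigma > 0$. Integrating over $t \in [0,T]$, using Hölder's inequality in time (and $L^p([0,T]) \hookrightarrow L^1([0,T])$) together with the embedding above,
\[
  \int_0^T \big\| \sqrt{\mu(t)} \big\|_{L^2(\RR, e(l'))}^2\, \ud t \lesssim T^{1-1/p}\, \| \mu \|_{L^p([0,T];\, B^\beta_{2,2}(\RR, e(l)))},
\]
with implicit constant depending on $l, l', \sigma$, which is finite almost surely by Lemma~\ref{lem:existence_of_density}. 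Hence $\sqrt{\mu} \in L^2([0,T]; L^2(\RR, e(l')))$ almost surely, for each fixed $T$ and $l'$.

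To obtain one almost sure event on which this holds simultaneously for all $T > 0$ and $l' \in \RR$, I would intersect the above events over the countable family $T \in \NN$, $l' \in \{-1, -2, \dots\}$: since $[0,T]$ grows with $T$ and $\| \sqrt{\mu(t)} \|_{L^2(\RR, e(l'))}$ grows as $l'$ decreases, every prescribed pair $(T,l')$ is dominated by a member of the family, and the intersection still has full probability. The argument is routine; the only points that need a little attention are that $\mu(t)$ genuinely possesses a density (which is built into Lemma~\ref{lem:existence_of_density} once $\beta \ge 0$) and the bookkeeping of the exponential weights in the Cauchy--Schwarz step, where one must strictly enlarge the weight parameter — harmless, since the conclusion is claimed for all weights.
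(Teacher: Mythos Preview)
Your proof is correct. The paper states the corollary without proof, presumably because once Lemma~\ref{lem:existence_of_density} supplies a density the identity
\[
  \big\|\sqrt{\mu(t)}\big\|_{L^2(\RR,e(l))}^2 \;=\; \int_\RR \mu(t,x)\,e(2l)(x)\,\ud x \;=\; \langle \mu(t), e(2l)\rangle
\]
reduces the claim to $\int_0^T \langle \mu(t), e(2l)\rangle\,\ud t < \infty$, and this follows at once from the first-moment relation $\EE[\langle\mu(t),\varphi\rangle] = T_t\varphi(0)$ (extended to $\varphi = e(2l)$ by monotone convergence, the right-hand side being uniformly bounded on $[0,T]$ by Proposition~\ref{prop:convergence_PAM}). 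Your Cauchy--Schwarz step bounding $\langle \mu(t), e(2l')\rangle$ by $\|\mu(t)\|_{L^2(\RR,e(l))}$ is a slightly longer route but has the merit of staying entirely within the section, invoking only the preceding Lemma~\ref{lem:existence_of_density} rather than the moment estimates of Lemma~\ref{lem:moments_estimate}. The weight bookkeeping and the countable-intersection argument for simultaneity in $(T,l)$ are handled correctly.
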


\begin{proof}[Proof of Theorem \ref{thm:rsbm_SPDE}] 
We follow the approach of Konno and Shiga \cite{KonnoShiga1988}. 
Applying Corollary~\ref{cor:convergence_total_law} for $\kappa \in (0,1/2]$ or Proposition~\ref{prop:convergence_mixiture_dawson} for $\kappa > 1/2$, we obtain an SBM in static random environment $\mu_p$, which is a process on \( (\Omega^p \times \DD([0, T]; \mM(\RR)), \mF, \PP^p \ltimes
\PP^{\omega^p}),\) with $\mF$ being the product sigma algebra. Enlarging the
probability space, we can moreover assume that the process is defined on
\((\Omega^p \times \bar{\Omega}, \mF^p \otimes \bar{\mF}, \PP^p \ltimes
\bar{\PP}^{\omega^p})\)
such that the probability space $( \bar{\Omega}, \bar{\mF}, \bar{\PP})$ supports
a space-time white noise $\bar{\xi}$ which is independent of \(\xi\). More
precisely, we are given a map 
\( \overline{\xi} : \Omega^p \times \overline{\Omega} \to \mS^\prime ( \RR^d
\times [0,T])  \)
which has the law of space-time white noise and does not depend on \(\Omega^p\),
i.e. \( \overline{\xi}(\omega^p, \overline{\omega}) = \overline{\xi}(
\overline{\omega})\).

For $\omega^p \in \Omega^p$ let $\{ \mF^{\omega^p}_t\}_{t \in [0, T]}$ be the
usual augmentation of the (random) filtration generated by $\mu(\omega^p,
\cdot)$ and $\bar{\xi}$. For almost all $\omega^p \in \Omega^p$ the collection
of martingales \(t \mapsto L^{\varphi}(\omega^p, t)\) for \(t \in [0, T], \
\varphi \in \mD_{\mH^{\omega^p}}\) defines a (random) worthy orthogonal
martingale measure $M(\omega^p, \ud t ,\ud x)$ in the sense of \cite{Walsh1986},
with quadratic variation $Q(A\times B \times [s,t]) = \int_s^t \mu(r)(A\cap B) \ud r$
for all Borel sets $A,B \subset \RR$ (first we define $Q(\varphi \times \psi
\times [s,t]) = \int_s^t \langle \mu(r), \varphi \psi \rangle \ud r$ for
$\varphi, \psi \in \mD_{\mH^{\omega^p}}$, then we use
Lemma~\ref{lem:existence_of_density} with $p = 1$ and $\beta \in (0,1/2)$ to
extend the quadratic variation and the martingales to indicator functions of
Borel sets). We can thus build a space-time white noise $\tilde{\xi}$ by
defining
for $\varphi \in L^2([0, T]\times \RR)$:
\begin{align*}
  \int_{[0,T]\times \RR} \tilde{\xi}(\omega^p, \ud s , \ud x) \varphi(s,x) : = &
  \int_{[0,T]\times \RR}\frac{M(\omega^p, \ud s , \ud x)
  \varphi(s,x)}{\sqrt{\mu(\omega^p, s,x)}} 1_{\{\mu(\omega^p, s,x)>0\}} \\
  & + \int_{[0,T]\times \RR} \bar{\xi}(\ud s , \ud x ) \varphi(s,x)
  1_{\{\mu(\omega^p, s,x) = 0\}}.  
\end{align*} 
By taking conditional expectations with respect to $\xi^p$ we see that
$\tilde{\xi}$ and $\xi^p$ are independent, and by definition the SBM in static random environment solves the SPDE~\eqref{eqn:spde_rsbm_formal_with_kappa}.

Conversely, it is straightforward to see that any solution to the SPDE is a SBM in static random environment of parameter \(\nu= \kappa/2\).  Uniqueness in law of the latter then implies
uniqueness in law of the solution to the SPDE.  \end{proof}

\subsection{Persistence}

In this section we study the persistence of the SBM in static random
environment $\mu_p$ and we prove Theorem~\ref{thm:persistence}, i.e. that
$\mu_p$ is super-exponentially persistent. For the proof we rely on the related
work \cite{Rosati2019kRSBM} which constructs, for integer $L>0$, a killed SBM in
static random environment $\mu^L_p$, in which particles are killed once they
leave the box $(-L/2,L/2)^d$. The processes $\mu^L_p$ are coupled with $\mu_p$ so that
almost surely $\mu^L_p \le \mu_p$ for all \(L\). In particular, the
following result holds.

\begin{lemma}\label{lem:killed_rsbm_construction}

  Let \(\bar{\mu}_{p}\) be an rSBM associated to a random environment
  \(\{\xi^{n}_{p}\}_{n\in \NN}\) satisfying Assumption \ref{assu:noise}. There
  exists a probability space of the form  \( (\Omega^{p} \times \DD([0, {+}
  \infty) ; \mM(\RR^{d})), \mF^{p}, \PP^{p} \ltimes \PP^{\omega^{p}})\)
  supporting a rSBM \(\mu_{p}\) such that \(\mu_{p} =
  \overline{\mu}_{p}\) in distribution. Moreover \(\Omega^{p}\) supports a
  spatial white noise \(\xi_{p}\) and there exists a null set \(N_{0} \subseteq
  \Omega^{p}\) such that:
  
  \begin{enumerate}
    \item For all \(\omega \in N_{0}^{c}\) and \(L \in 2 \NN\) the random Anderson
      Hamiltonian associated to \(\xi_{p}\) with Dirichlet boundary conditions
      on \(( {-} L /2,L/2)^{d}\), \( \mH^{\omega^{p}}_{\mf{d}, L}\), on the
      domain \(\mD_{\mH^{\omega^{p}}_{\mf{d}, L}}\) is well defined (cf.
      \cite{Chouk2019}). Moreover,
      \(\mD_{\mH^{\omega^{p}}_{\mf{d}, L}} \subseteq C^{\vt}( ({-} L/2,
      L/2)^{d})\) for any \(\vt <2 {-} d/2\). Finally the operator has discrete
      spectrum. If \(\lambda(\omega^{p},
      L) \geq 0\) is the largest eigenvalue of \(\mH^{\omega^{p}}_{\mf{d},L}\),
      then the associated eigenfunction $e_{\lambda(\omega^p, L)}$ satisfies
      $e_{\lambda(\omega^p, L)}(x) > 0$ for all $x \in (-\frac{L}{2},
      \frac{L}{2})^d$.

    \item There exist random variables \(\{\mu_{p}^{L}\}_{L \in 2 \NN}\)
      with values in \(\DD([0,\infty); \mM(\RR^{d}))\) 
      satisfying \(\mu_{p}^{L}(\omega^{p}, t) \leq \mu_{p}^{L
      {+} 2}( \omega^{p}, t) \leq \dots \leq \mu_{p}(\omega^{p}, t)\) and
      \(\mu_{p}^{L}(0) = \delta_{0}\). Moreover, for
      all \(\omega \in N_{0}^{c}\) and \(\varphi \in
      \mD_{\mH^{\omega^{p}}_{\mf{d}, L}}\):
      \[ K^{\varphi}_{L}(\omega^{p}, t) = \langle \mu_{p}^{L}(t), \varphi \rangle
      {-} \langle \mu_{p}( \omega^{p}, 0), \varphi \rangle {-} \int_{0}^{t} \ud
       r \langle \mu(r), \mH^{\omega^{p}}_{\mf{d}, L} \varphi \rangle, \ \ t \geq 0\]
      is a continuous centered martingale (w.r.t. the filtration generated by
      \(\mu^{L}_{p}(\omega^{p}, \cdot)\)) with quadratic
      variation $\langle K^{\varphi}_{L} \rangle_{t} = 2 \nu \int_{0}^{t} \ud r \langle
      \mu(r), \varphi^{2} \rangle$.

  \end{enumerate}

\end{lemma}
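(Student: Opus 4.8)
The plan is to assemble this statement from the scaling-limit machinery of Sections~\ref{sectn:pam} and~\ref{sectn:convergence}, the two companion works \cite{Chouk2019} and \cite{Rosati2019kRSBM}, and a Skorohod representation argument that puts all the relevant objects on one probability space. First I would invoke \cite{Chouk2019} to construct, for each fixed $L \in 2\NN$ and for a typical realization of the spatial white noise $\xi_p$ (after the logarithmic renormalization of Assumption~\ref{assu:renormalisation}), the Dirichlet Anderson Hamiltonian $\mH^{\omega^p}_{\mf{d},L}$ on $({-}L/2,L/2)^d$, whose domain embeds into $C^\vt(({-}L/2,L/2)^d)$ for every $\vt < 2{-}d/2$ by the Schauder estimates there. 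Since the box is bounded the resolvent is compact, so the spectrum is discrete; the associated semigroup is positivity improving (being a limit of the positivity-improving discrete semigroups, or by the parabolic maximum principle as for the classical Dirichlet Laplacian), so the Krein--Rutman theorem yields a simple largest eigenvalue $\lambda(\omega^p,L)\ge 0$ whose eigenfunction can be chosen strictly positive on the open box. The exceptional set of \cite{Chouk2019} at level $L$ is contained in a null set; I would take the union over $L\in 2\NN$, still a null set, and adjoin the null set of Lemma~\ref{lem:renormalisation}, to obtain $N_0$.

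Next I would set up the monotone coupling at the discrete level. For $n\in\NN$ I run the free branching random walk $u^n_p$ of Definition~\ref{def:of_u_n_and_mu_n}, and for $L\in 2\NN$ let $u^{n,L}_p$ be obtained by additionally killing any particle (together with its descendants) as soon as it leaves $({-}L/2,L/2)^d$; this is the killed walk analyzed in \cite{Rosati2019kRSBM}. By construction $u^{n,L}_p\le u^{n,L+2}_p\le\cdots\le u^n_p$ pathwise, hence $\mu^{n,L}_p\le\mu^{n,L+2}_p\le\cdots\le\mu^n_p$ as measures, while $\mu^{n,L}_p(0)=\delta_0$ since the origin lies in every box. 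Each component is tight: $\mu^n_p$ by Corollary~\ref{cor:tightness_of_mu_n}, and each $\mu^{n,L}_p$ by the same argument carried out inside the box, as in \cite{Rosati2019kRSBM}. Hence the family $(\xi^n_p,\mu^n_p,(\mu^{n,L}_p)_{L\in 2\NN})$ is tight on the product of $\mS_\omega'(\RR^d)$, $\DD([0,\infty);\mM(\RR^d))$ and countably many copies of $\DD([0,\infty);\mM(\RR^d))$; it therefore converges in distribution along a subsequence, and by the Skorohod representation theorem I would realize the limit $(\xi_p,\mu_p,(\mu^L_p)_{L\in 2\NN})$ almost surely on a single space of the product form $\Omega^p\times\DD([0,\infty);\mM(\RR^d))$, with $\Omega^p$ carrying $\xi_p$ and the enhancement needed for Assumption~\ref{assu:renormalisation}. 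Almost sure convergence preserves the ordering $\mu^L_p\le\mu^{L+2}_p\le\cdots\le\mu_p$ and the initial value $\delta_0$.

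To conclude I would identify the limits. Lemma~\ref{lem:rSBM-uniqueness-equivalence} shows that $\mu_p$ has the law of $\overline{\mu}_p$. For each $L$, the discrete identity analogous to \eqref{eqn:mild-martingale-pb} inside the box, together with the convergence $\mH^{n,L}\to\mH^{\omega^p}_{\mf{d},L}$ with the matching constants $c_n$ established in \cite{Chouk2019, Rosati2019kRSBM}, yields, exactly as in the proof of Lemma~\ref{lem:convergence_martingale_problem}, that $K^\varphi_L$ is a continuous centered martingale with the quadratic variation stated in item~(2), for every $\varphi\in\mD_{\mH^{\omega^p}_{\mf{d},L}}$. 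The main obstacle is precisely this joint passage to the limit: one has to keep $\xi_p$, $\mu_p$ and the entire sequence $(\mu^L_p)_{L}$ coupled while simultaneously identifying, for each of the countably many $L$, the limiting martingale problem against the box Hamiltonian $\mH^{\omega^p}_{\mf{d},L}$. I expect this to be handled by a diagonal extraction over $L$ combined with the convergence of the discrete Dirichlet Hamiltonians provided by the companion papers; everything else is a routine adaptation of the arguments already developed above.
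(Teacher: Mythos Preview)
Your proposal is correct and is essentially a sketch of the construction carried out in the companion works \cite{Chouk2019} and \cite{Rosati2019kRSBM}; the paper's own proof of this lemma consists precisely of deferring point~(1) to \cite{Chouk2019} and \cite[Lemma~2.4]{Rosati2019kRSBM} and point~(2) to \cite[Corollary~3.9]{Rosati2019kRSBM}. In other words, you have reconstructed the outline of what those references prove---the Dirichlet Hamiltonian and its Krein--Rutman spectral theory on the one hand, and the discrete killing coupling, joint tightness, Skorohod representation, and passage to the limit in the martingale problem on the other---whereas the paper simply cites them.
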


\begin{proof}
  For the first point see \cite{Chouk2019} and \cite[Lemma
  2.4]{Rosati2019kRSBM}.  The second statement is proved in
  \cite[Corollary 3.9]{Rosati2019kRSBM} 
\end{proof}
 
Analogously to the previous section we denote with \(t \mapsto
T^{\mf{d}}_{t}\) the semigroup associated to \(\mH^{\omega^{p}}_{\mf{d}, L}\)
for some fixed \(L, \omega^{p}\) which will be clear from the context.
Now we shall prove that given a nonzero positive $\varphi \in
C^\infty_c(\RR^d)$ and $\lambda >0$, for almost all $\omega^p$ there exists $L =
L(\omega^p)$ with
\begin{equation}\label{eqn:persistence-condition}
	\PP^{\omega^p}\big( \lim_{t \to \infty} e^{-t\lambda}\langle \mu^L_p(\omega^p,t,\cdot), \varphi \rangle = \infty \big) > 0.
\end{equation}
This implies Theorem~\ref{thm:persistence}.

The reason for working with $\mu^L_p$ is that the spectrum of the Anderson
Hamiltonian on $(-L/2,L/2)^d$ is discrete, and its
largest eigenvalue almost surely becomes bigger than $\lambda$ for $L \to
\infty$. Given this information,~\eqref{eqn:persistence-condition} follows from a simple martingale
convergence argument, see Corollary~\ref{cor:persistence_rsbm} below.

\begin{remark} 

  For simplicity we only treat the case of (killed) rSBM
  with parameter $\nu \in (0, 1/2]$. For $\nu>1/2$ we need to use the constructions of
  Section~\ref{sectn:mixing_dawson_watanabe}, after which we
  can follow the
  same arguments to show persistence.

\end{remark}

Let us write $\lambda(\omega^p, L)$ for the largest eigenvalue of the Anderson Hamiltonian $\mH_{\mathfrak d,L}^{\omega^p}$ with Dirichlet boundary
conditions on $(-L/2,L/2)^d$.

\begin{lemma}\label{lem:eigenvalue-LLN} 
  There exist $c_1, c_2>0$ such that for almost all
  $\omega^p \in \Omega^p$:
  \begin{itemize}
	  \item[(i)] In $d=1$ (by \cite[Lemmata 2.3 and 4.1]{Chen2014}):
		  \[
			  \lim_{L \to {+} \infty} \frac{\lambda(\omega^p, L)}{\log (L)^{2/3}} = c_1.
		  \]
	  \item[(ii)] In $d=2$ (by \cite[Theorem 10.1]{Chouk2019}):
		  \[
			  \lim_{L \to {+}\infty} \frac{\lambda(\omega^p, L)}{\log(L)} = c_2.
		  \]
  \end{itemize}
\end{lemma}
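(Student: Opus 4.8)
The statement is explicitly attributed to external references: part (i) to \cite[Lemmata 2.3 and 4.1]{Chen2014}, and part (ii) to \cite[Theorem 10.1]{Chouk2019}. So the plan is essentially to \emph{quote} these results, after checking that their hypotheses match our setup, and to reconcile any differences of normalization. Let me sketch this.

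\textbf{Setup reconciliation.} First I would recall that under Assumption~\ref{assu:noise}, by Lemma~\ref{lem:renormalisation}, for almost every $\omega^p$ the environment $\xi_p(\omega^p,\cdot)$ is a realization of space white noise on $\RR^d$ (in $d=1$ this requires no renormalization; in $d=2$ the relevant renormalized objects exist, and $\mH^{\omega^p}_{\mf d, L}$ is the renormalized Anderson Hamiltonian with Dirichlet boundary conditions on the box $(-L/2,L/2)^d$, whose construction and spectral properties are provided by \cite{Chouk2019} and recalled in Lemma~\ref{lem:killed_rsbm_construction}(1)). In particular, for almost every $\omega^p$, for every $L \in 2\NN$ the operator $\mH^{\omega^p}_{\mf d, L}$ has discrete spectrum bounded from above, so $\lambda(\omega^p, L) := \sup \operatorname{spec}(\mH^{\omega^p}_{\mf d, L})$ is a well-defined finite real number (nonnegative by the conventions of Lemma~\ref{lem:killed_rsbm_construction}).

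\textbf{Dimension $d=1$.} Here I would invoke \cite[Lemmata 2.3 and 4.1]{Chen2014}, which establish the almost-sure asymptotics of the top eigenvalue of the one-dimensional continuum Anderson Hamiltonian $\Delta + \xi$ with white-noise potential restricted to an interval of growing length, namely $\lambda(\omega^p,L) \sim c_1 (\log L)^{2/3}$ as $L \to \infty$. One subtlety to address: the cited paper may use a slightly different scaling of the interval (e.g.\ $[0,L]$ versus $(-L/2,L/2)$) or a different sign/scaling convention for the noise; but since white noise is stationary and the leading asymptotics $(\log L)^{2/3}$ is insensitive to a bounded shift or reflection of the interval, the constant $c_1>0$ is unchanged, and I would note this briefly. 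This gives (i).

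\textbf{Dimension $d=2$.} Here I would cite \cite[Theorem 10.1]{Chouk2019} directly: that theorem gives, for almost every realization of the renormalized two-dimensional Anderson Hamiltonian on a box of side $L$ with Dirichlet boundary conditions, the asymptotics $\lambda(\omega^p, L) \sim c_2 \log L$ with $c_2>0$. Since the Hamiltonian $\mH^{\omega^p}_{\mf d, L}$ appearing here is precisely the one constructed in \cite{Chouk2019} (as recorded in Lemma~\ref{lem:killed_rsbm_construction}(1)), there is nothing further to check, and this gives (ii).

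\textbf{Main obstacle.} There is no genuine mathematical obstacle — the lemma is a bookkeeping statement that imports known eigenvalue asymptotics. The only point requiring care is the first one above: verifying that the box geometry, the sign convention ($\mH = \Delta + \xi$ with top eigenvalue versus $-\Delta - \xi$ with bottom eigenvalue), and the white-noise normalization ($\EE[\Phi^2]=1$) used in \cite{Chen2014} and \cite{Chouk2019} match ours closely enough that the same positive constants $c_1, c_2$ appear; in any case, the precise values of $c_1,c_2$ are irrelevant for the persistence application (Corollary~\ref{cor:persistence_rsbm}), where all that matters is $\lambda(\omega^p,L) \to +\infty$ almost surely as $L \to \infty$.
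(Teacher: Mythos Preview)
Your proposal is correct and matches the paper's approach: the paper provides no proof of this lemma at all, simply embedding the citations to \cite{Chen2014} and \cite{Chouk2019} directly in the statement and treating the result as an import. Your additional remarks on reconciling box geometry, sign conventions, and noise normalization are reasonable due-diligence observations, but the paper does not spell them out.
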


%

%


\begin{corollary}\label{cor:persistence_rsbm} 
Let $d \le 2$ and $\lambda >0$ and let $\mu_p$ be an SBM in static random
environment, coupled for all $L \in 2\NN$ to a killed SBM in static random
environment $\mu_p^L$ on $[-\frac{L}{2},\tfrac{L}{2}]^d$ with $\mu_p^L \le
\mu_p$ (as described in Lemma \ref{lem:killed_rsbm_construction}). For
almost all $\omega^p \in \Omega^p$a there exists an $L_0(\omega^p)>0$ such that
for all $L\geq L_0(\omega^p)$ the killed SBM $\mu_{p}^L(\omega^p, \cdot)$
satisfies~\eqref{eqn:persistence-condition}. In particular, for almost all
$\omega^p \in \Omega^p$ the process $\mu_p(\omega^p, \cdot)$ is
super-exponentially persistent.  
\end{corollary}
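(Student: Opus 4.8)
The plan is to derive~\eqref{eqn:persistence-condition} from Lemmas~\ref{lem:killed_rsbm_construction} and~\ref{lem:eigenvalue-LLN} by a supercritical-branching argument carried out on a large box. I fix a nonzero positive $\varphi\in C_c^\infty(\RR^d)$ and $\lambda>0$. Since Lemma~\ref{lem:eigenvalue-LLN} gives $\lambda(\omega^p,L)\to+\infty$ as $L\to\infty$ through $L\in 2\NN$ for a.a.\ $\omega^p$, there is $L_0(\omega^p)\in 2\NN$ such that $\lambda_0:=\lambda(\omega^p,L)>\lambda$ and $\supp\varphi\subset(-\tfrac L2,\tfrac L2)^d$ for every even $L\ge L_0(\omega^p)$. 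By the monotone coupling $\mu_p^{L}\le\mu_p^{L'}\le\mu_p$ for $L\le L'$ from Lemma~\ref{lem:killed_rsbm_construction}(2) it suffices to prove~\eqref{eqn:persistence-condition} for $L=L_0(\omega^p)$: the versions for larger $L$ and for $\mu_p$ then follow, since $\langle\mu_p^{L_0}(t),\varphi\rangle\le\langle\mu_p^{L}(t),\varphi\rangle\le\langle\mu_p(t),\varphi\rangle$ makes the relevant events increasing.

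\emph{Step 1 (eigenfunction martingale).} Let $e_L>0$ be the top eigenfunction of $\mH^{\omega^p}_{\mf{d},L}$ from Lemma~\ref{lem:killed_rsbm_construction}(1); then $e_L\in\mD_{\mH^{\omega^p}_{\mf{d},L}}$ and $\mH^{\omega^p}_{\mf{d},L}e_L=\lambda_0 e_L$. Feeding $e_L$ into Lemma~\ref{lem:killed_rsbm_construction}(2), the mass $Y_t:=\langle\mu_p^L(t),e_L\rangle$ satisfies $Y_t=e_L(0)+\lambda_0\int_0^t Y_r\,\ud r+K_L^{e_L}(t)$ with $\langle K_L^{e_L}\rangle_t=2\nu\int_0^t\langle\mu_p^L(r),e_L^2\rangle\,\ud r$, so by Itô's formula $Z_t:=e^{-\lambda_0 t}Y_t$ is a nonnegative continuous local martingale with $Z_0=e_L(0)>0$ and $\langle Z\rangle_t=2\nu\int_0^t e^{-2\lambda_0 r}\langle\mu_p^L(r),e_L^2\rangle\,\ud r$. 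From $e_L^2\le\|e_L\|_{L^\infty}e_L$ and $\EE[Y_r]=e_L(0)e^{\lambda_0 r}$ one gets $\EE[\langle Z\rangle_\infty]\le 2\nu\|e_L\|_{L^\infty}e_L(0)/\lambda_0<\infty$; hence $Z$ is a uniformly integrable martingale, $Z_t\to Z_\infty$ a.s., $\EE[Z_\infty]=e_L(0)>0$, and in particular $\PP^{\omega^p}(Z_\infty>0)>0$.

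\emph{Step 2 (equidistribution and conclusion).} It remains to show that $\{Z_\infty>0\}$ also forces $e^{-\lambda_0 t}\langle\mu_p^L(t),\varphi\rangle$ to stay bounded away from $0$, even though $\varphi$ lives on a proper sub-box of $(-\tfrac L2,\tfrac L2)^d$ where it is not pointwise comparable to $e_L$. I would use two inputs from the spectral theory of the Dirichlet Anderson Hamiltonian (\cite{Chouk2019,Rosati2019kRSBM}): the semigroup $T^{\mf{d}}_\cdot$ is positivity improving, so by the Krein--Rutman theorem $\lambda_0$ is a simple eigenvalue with a spectral gap, whence $\|e^{-u\lambda_0}T^{\mf{d}}_u\varphi-c_\varphi e_L\|_{L^\infty}\lesssim e^{-gu}$ with $c_\varphi:=\langle\varphi,e_L\rangle_{L^2}/\|e_L\|_{L^2}^2>0$ and some $g>0$; and $\|T^{\mf{d}}_u\|_{L^\infty\to L^\infty}\lesssim(1\wedge u)^{-\kappa}e^{\lambda_0 u}$. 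Conditioning at time $t/2$ and using the mild martingale characterization of $\mu_p^L$ built from $T^{\mf{d}}_\cdot$ (obtained from Lemma~\ref{lem:killed_rsbm_construction} exactly as in Section~\ref{sectn:convergence}), and writing $W_s:=e^{-\lambda_0 s}\langle\mu_p^L(s),\mathbf 1\rangle$ (which is $L^2$-bounded in $s$, hence grows at most polynomially along $s\in\NN$ a.s.), one finds $\EE[e^{-\lambda_0 t}\langle\mu_p^L(t),\varphi\rangle\mid\mF_{t/2}]=c_\varphi Z_{t/2}+\mathrm{O}(e^{-gt/4}W_{t/2})$, while the quadratic variation of the mild martingale on $[t/2,t]$ bounds $\mathrm{Var}(e^{-\lambda_0 t}\langle\mu_p^L(t),\varphi\rangle\mid\mF_{t/2})\lesssim e^{-\lambda_0 t/2}W_{t/2}$. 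A conditional Chebyshev inequality combined with a conditional Borel--Cantelli argument then shows that, on $\{Z_\infty>\varepsilon\}$ for each fixed $\varepsilon>0$, one has $e^{-\lambda_0 t}\langle\mu_p^L(t),\varphi\rangle\ge c_\varphi\varepsilon/4$ for all large $t$ (the passage from $t\in\NN$ to arbitrary $t$ being handled by the maximal inequality for the mild martingale). Since $\lambda_0>\lambda$ and $\PP^{\omega^p}(Z_\infty>\varepsilon)>0$ for $\varepsilon$ small, on that event $e^{-\lambda t}\langle\mu_p^L(t),\varphi\rangle=e^{(\lambda_0-\lambda)t}e^{-\lambda_0 t}\langle\mu_p^L(t),\varphi\rangle\to\infty$, which is~\eqref{eqn:persistence-condition}; by Step~1 this transfers to $\mu_p$, so $\mu_p$ is super-exponentially persistent.

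I expect Step~2 to be the main obstacle: the eigenfunction martingale of Step~1 only controls the mass averaged against $e_L$, and upgrading this to control of the mass tested against an arbitrary compactly supported $\varphi$ requires the equidistribution (spectral-gap / intrinsic-ultracontractivity) properties of the Dirichlet Anderson Hamiltonian, together with a careful second-moment estimate for the branching fluctuations through the quadratic variation of the mild martingale. The remaining ingredients — the martingale computations of Step~1 and the conditional Borel--Cantelli bookkeeping, including the lattice-to-continuum reduction — are routine.
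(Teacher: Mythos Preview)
Your Step~1 is essentially identical to the paper's argument: the eigenfunction martingale $Z_t=e^{-\lambda_0 t}\langle\mu^L(t),e_L\rangle$ is shown to be $L^2$-bounded (via the same estimate $e_L^2\le\|e_L\|_\infty e_L$) and hence converges to a limit $Z_\infty$ with $\EE[Z_\infty]=e_L(0)>0$.

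Your Step~2 is correct in spirit but takes a longer route than the paper. The paper's approach (Lemma~\ref{lem:persistence-auxiliary}) is to decompose $\varphi=\langle e_1,\varphi\rangle e_1+\psi$ with $\psi\perp e_1$ in $L^2$, and then show directly that
\[
\EE\big[|e^{-\lambda_1 t}\langle\mu^L(t),\psi\rangle|^2\big]\longrightarrow 0
\]
by expanding the second moment through the mild martingale identity $\EE[\langle\mu^L(t),\psi\rangle^2]\lesssim |T^{\mf d}_t\psi|^2(0)+\int_0^t T^{\mf d}_r[(T^{\mf d}_{t-r}\psi)^2](0)\,\ud r$, and then feeding in the spectral gap bound $\|T^{\mf d}_t\psi\|_{L^2}\le e^{\lambda_2 t}\|\psi\|_{L^2}$ together with the smoothing estimate $\|T^{\mf d}_1 f\|_{\mC^\vt}\lesssim\|f\|_{L^2}$ to pass from $L^2$ to pointwise values. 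This yields the $L^2$-convergence $e^{-\lambda_1 t}\langle\mu^L(t),\varphi\rangle\to\langle e_1,\varphi\rangle Z_\infty$ in one stroke, with no conditioning, no Borel--Cantelli, and no control on the total mass $W_t$ needed. Your route---conditioning at $t/2$, invoking intrinsic ultracontractivity for the conditional mean, bounding the conditional variance by $e^{-\lambda_0 t/2}W_{t/2}$, and running a conditional Borel--Cantelli plus a maximal inequality to pass from integer to continuous times---is workable and has the minor advantage of delivering almost-sure (rather than $L^2$) control directly, but it carries more moving parts (in particular you must separately establish that $W_t$ is $L^2$-bounded, which is itself a computation of the same flavour as Lemma~\ref{lem:persistence-auxiliary}). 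The key analytic input---the spectral gap for $\mH^{\omega^p}_{\mf d,L}$ combined with regularisation of $T^{\mf d}$---is the same in both arguments; the paper just applies it once to $\psi$ rather than packaging it as ultracontractivity.
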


\begin{proof} 
  In view of Lemma~\ref{lem:eigenvalue-LLN},
  for almost all $\omega^p \in \Omega^p$ we can choose $L_0(\omega^p)$ such that
  the largest eigenvalue of the Anderson Hamiltonian $\lambda(\omega^p, L) $ is
  bigger than $\lambda$ for all $L \geq L_0(\omega^p)$. Now we fix $\omega^p$
  such that the above holds true and thus drop the index $p$ (i.e.: we will use
  a purely deterministic argument). We also fix some $L \geq L_0(\omega^p)$ and
  write $\lambda_1$ instead of $\lambda(\omega^p, L)$ for the largest
  eigenvalue. Finally, let $e_1$ be the strictly positive eigenfunction with
  $\|e_1\|_{L^2((-\frac{L}{2},\frac{L}{2})^d)}=1$ associated to $\lambda_1$. By
  Lemma~\ref{lem:killed_rsbm_construction} we find for $0 \le s < t$:
  \[
	  \EE[\langle \mu^L(t), e_1\rangle |\mF_s] = \langle \mu^L(s),
	T_{t-s}^{\mathfrak d} e_1 \rangle = \langle \mu^L(s), e^{(t-s)\lambda_1}
	e_1 \rangle,
  \]
  and thus the process $E(t) = \langle \mu^L(t), e^{-\lambda_1 t} e_{1} \rangle$,
  $t \ge 0$, is a martingale. Moreover, the variance of this martingale is
  bounded uniformly in $t$. Indeed:
    \[ \EE\big[ |E(t) {-} E(0)|^2\big] \simeq \int_0^t \ud r \ T^{\mfd}_r
    ((e^{-\lambda_1 r} e_{1})^2) (0) \lesssim \int_0^t  \ud r \ e^{-\lambda_1
  r} \lesssim 1,\]
where we used that by
    Lemma \ref{lem:killed_rsbm_construction} we have \(e_{1} \in
    \mC^{\vt}( ({-} \frac{L}{2}, \frac{L}{2} )^{d})\) for some admissible \(\vt> 0\), and therefore
    \begin{align*}   T^{\mfd}_r ((e^{-\lambda_1 r} e_{1})^2) (0) & \leq \| e_{1}
      \|_{\infty} e^{{-} \lambda_{1}r} T_{r}^{\mf{d}}( e^{{-} \lambda_{1}r} e_{1}) (0) \\
      & = \| e_{1}
  \|_{\infty} e^{{-} \lambda_{1}r} e_1(0) \lesssim e^{{-} \lambda_{1} r}. 
  \end{align*} 
It follows that $E(t)$ converges almost surely and in $L^2$ to a random variable
$E(\infty) \geq 0$ as $t\to \infty$, and since $\EE[E(\infty)] = E(0) = e_1(0) >
0$ we know that $E(\infty)$ is strictly positive with positive probability. For
$\varphi \geq 0$
nonzero with support in $[{-}L/2, L/2]^d$ we show in Lemma~\ref{lem:persistence-auxiliary} that:

\begin{equation}\label{eqn:proof_survival_phi}
  e^{-\lambda_1 t}\langle \mu^L(t), \varphi \rangle \to \langle e_{1}, \varphi
  \rangle E(\infty) , \ \ \text{ as } t \to \infty, \ \  \text{ in } \
  L^2(\PP^{\omega^p})
\end{equation}
so that we get from the strict positivity of $e_1$ and from the fact that $\lambda_1 > \lambda$
\[\PP\big( \lim_{t \to \infty} e^{-\lambda t} \langle \mu^L(t), \varphi \rangle = \infty 
\big) \geq \PP(E(\infty) > 0) >0.\]
This completes the proof.
\end{proof}

\begin{lemma}\label{lem:persistence-auxiliary}
	In the setting of Corollary~\ref{cor:persistence_rsbm}, let $\varphi \in \mC^\vt_{\mf d}$ and let \(\psi = \varphi -
	\langle e_{1}, \varphi \rangle e_{1}\). Then
\begin{equation}\label{eqn:proof_survival_second}
  \lim_{t \to \infty} \EE^{\omega^p} \Big[ |e^{- \lambda_{1} t} \langle \mu^{L}_p (\omega^p, t), \psi \rangle|^{2}
  \Big] = 0. 
\end{equation}
\end{lemma}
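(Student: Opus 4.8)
The plan is to reduce the statement to two deterministic decay estimates for the Dirichlet semigroup $(T^{\mf{d}}_t)_{t\ge 0}$ generated by $\mH^{\omega^p}_{\mf{d},L}$, using that $\psi$ is orthogonal to the top eigenfunction. Throughout, fix $L\ge L_0(\omega^p)$ and $\omega^p\in N_0^c$ as in Corollary~\ref{cor:persistence_rsbm}, drop the index $p$, write $\lambda_1 = \lambda(\omega^p,L) > \lambda > 0$ for the largest eigenvalue and $e_1$ for the associated strictly positive $L^2$-normalized eigenfunction, so that $\psi = \varphi - \langle e_1,\varphi\rangle e_1 \perp e_1$ in $L^2$.

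First I would upgrade part (2) of Lemma~\ref{lem:killed_rsbm_construction} to a mild martingale formulation for $\mu^L$: by the same arguments used in Lemma~\ref{lem:implications_rsbm_definition} to pass between the stationary and the mild martingale problems, together with the density/approximation argument of Lemma~\ref{lem:existence_of_density} extending the class of admissible test functions, for every $g\in\mC^\vt_{\mf{d}}$ the process $s\mapsto \langle\mu^L(s),T^{\mf{d}}_{t-s}g\rangle - (T^{\mf{d}}_t g)(0)$, $s\in[0,t]$, is a continuous square-integrable martingale started at $0$ with quadratic variation $2\nu\int_0^s\langle\mu^L(r),(T^{\mf{d}}_{t-r}g)^2\rangle\,\ud r$; in particular $\EE[\langle\mu^L(r),h\rangle] = (T^{\mf{d}}_r h)(0)$ for $h\in\mC^\vt_{\mf{d}}$. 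Taking $g=\psi$ and computing second moments, the martingale at time $t$ being uncorrelated with its deterministic starting value $(T^{\mf{d}}_t\psi)(0)$, one obtains
\[
	\EE\big[|e^{-\lambda_1 t}\langle\mu^L(t),\psi\rangle|^2\big] = e^{-2\lambda_1 t}\big((T^{\mf{d}}_t\psi)(0)\big)^2 + 2\nu\, e^{-2\lambda_1 t}\!\int_0^t \EE\big[\langle\mu^L(r),(T^{\mf{d}}_{t-r}\psi)^2\rangle\big]\,\ud r,
\]
so it remains to show that both terms vanish as $t\to\infty$, and we may assume $t\ge 1$.

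The key input is a spectral gap. Since $\mH^{\omega^p}_{\mf{d},L}$ is self-adjoint with discrete spectrum bounded above by $\lambda_1$ and its semigroup is positivity improving (which is also why $e_1>0$, cf.~\cite{Chouk2019}), $\lambda_1$ is a simple eigenvalue, so $\lambda_2 := \sup(\sigma(\mH^{\omega^p}_{\mf{d},L})\setminus\{\lambda_1\}) < \lambda_1$. As $\{e_1\}^\perp$ is invariant under the self-adjoint semigroup and $\psi\perp e_1$, we have $\|T^{\mf{d}}_s\psi\|_{L^2}\le e^{s\lambda_2}\|\psi\|_{L^2}$ for all $s\ge 0$; combined with ultracontractivity ($\|T^{\mf{d}}_{1/2}\|_{L^2\to L^\infty}<\infty$) and the boundedness of $s\mapsto\|T^{\mf{d}}_s\psi\|_{L^\infty}$ on $[0,1]$ (using $\psi\in\mC^\vt_{\mf{d}}\hookrightarrow L^\infty$) this yields $\|T^{\mf{d}}_s\psi\|_{L^\infty}\lesssim e^{s\lambda_2}$ uniformly in $s\ge 0$, and likewise $\|T^{\mf{d}}_s h\|_{L^\infty}\lesssim e^{s\lambda_1}\|h\|_{L^\infty}$ for $s\ge 1$ (the box being bounded, $L^\infty\subset L^2$). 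Now, since $T^{\mf{d}}_s$ is positivity preserving, $(T^{\mf{d}}_{t-r}\psi)^2 \le \|T^{\mf{d}}_{t-r}\psi\|_{L^\infty}\,T^{\mf{d}}_{t-r}|\psi|$, and the mean formula gives $\EE[\langle\mu^L(r),(T^{\mf{d}}_{t-r}\psi)^2\rangle]\le\|T^{\mf{d}}_{t-r}\psi\|_{L^\infty}(T^{\mf{d}}_t|\psi|)(0)\lesssim e^{(t-r)\lambda_2}e^{t\lambda_1}$. Integrating, $2\nu\, e^{-2\lambda_1 t}\int_0^t\EE[\langle\mu^L(r),(T^{\mf{d}}_{t-r}\psi)^2\rangle]\,\ud r \lesssim e^{-\lambda_1 t}\int_0^t e^{u\lambda_2}\,\ud u \lesssim e^{-(\lambda_1-\lambda_2)t} + t\,e^{-\lambda_1 t}\to 0$ since $\lambda_1>0$ and $\lambda_1>\lambda_2$, while $e^{-2\lambda_1 t}((T^{\mf{d}}_t\psi)(0))^2\le e^{-2\lambda_1 t}\|T^{\mf{d}}_t\psi\|_{L^\infty}^2\lesssim e^{-2(\lambda_1-\lambda_2)t}\to 0$. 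This proves \eqref{eqn:proof_survival_second}.

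The computation itself is routine; I expect the real work to be in assembling the properties of the Dirichlet Anderson Hamiltonian $\mH^{\omega^p}_{\mf{d},L}$ — self-adjointness, discreteness of the spectrum, simplicity of the top eigenvalue (Perron--Frobenius) hence the strict gap $\lambda_2<\lambda_1$, positivity preservation of $(T^{\mf{d}}_s)_{s\ge 0}$, and ultracontractivity at a fixed time — all of which should follow from \cite{Chouk2019, Rosati2019kRSBM}, and in the somewhat tedious verification that the mild martingale problem of Lemma~\ref{lem:killed_rsbm_construction} extends to time-dependent test functions of the form $T^{\mf{d}}_{t-s}\psi$ and to the class $\mC^\vt_{\mf{d}}$ and its squares, exactly parallel to Lemmata~\ref{lem:implications_rsbm_definition} and~\ref{lem:existence_of_density}.
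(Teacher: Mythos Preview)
Your argument is correct and follows essentially the same route as the paper: both derive the second-moment identity from the mild martingale problem for $\mu^L$ and then control $T^{\mf d}_r[(T^{\mf d}_{t-r}\psi)^2](0)$ using the spectral gap $\lambda_2<\lambda_1$ (Krein--Rutman/Perron--Frobenius) together with an $L^2\!\to\!L^\infty$ smoothing (ultracontractivity) of $T^{\mf d}_1$. The only difference is cosmetic: the paper splits the time integral into three regimes and bounds each piece directly, whereas your inequality $(T^{\mf d}_{t-r}\psi)^2 \le \|T^{\mf d}_{t-r}\psi\|_{L^\infty}\,T^{\mf d}_{t-r}|\psi|$ (from positivity preservation) lets you collapse $T^{\mf d}_r T^{\mf d}_{t-r}|\psi|(0)=T^{\mf d}_t|\psi|(0)$ and avoid the split --- a neat simplification, same ingredients.
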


\begin{proof}
As before we omit the subscript $p$ from the notation, as well as the dependence on the realization $\omega^p$ of the noise. Using the martingale $\langle \mu^L(s), T^{\mf{d}}_{t-s} \psi \rangle$, we get
\begin{equation}\label{eq:persistence-auxiliary-pr}
  \EE \Big[ |\langle \mu^{L} (t), \psi \rangle|^{2} \Big]
    \lesssim | T^{\mf{d}}_{t} ( \psi) |^{2} (0) +
    \int_{0}^{t} \ud r \ T^{\mf{d}}_{r} \big[ (
    T^{\mf{d}}_{t - r}\psi)^{2} \big] (0).
\end{equation}
Let \(\lambda_{2}< \lambda_{1}\) be the second eigenvalue of the
Anderson Hamiltonian (the strict inequality is a consequence of the Krein-Rutman
theorem, cf. \cite[Lemma 2.4]{Rosati2019kRSBM}).  The main idea is to leverage that:
  \[ \| T^{\mf{d}}_{t} \psi \|_{L^{2}} \leq e^{\lambda_{2}t} \| \psi
  \|_{L^{2}},\]
since \(\psi\) is orthogonal to the first eigenfunction. The only subtlety is
that of course the value of a function in $0$ is not controlled by its $L^2$
norm. To go from $L^2$ to a space of continuous functions, we use that for all
\(\vt\) as in Equation~\eqref{eqn:condition_parameters_PAM} and sufficiently
close to \(1\):
  \begin{align*} \| T^{\mf{d}}_{1} f \|_{ \mC^{\vt}_{\mf{d}}} & \lesssim \|
  T^{\mf{d}}_{2/3} f  \|_{\mC^{\vt - \frac d 2}_{\mf{d}}} \lesssim \|
T^{\mf{d}}_{2/3} f \|_{\mC^{\vt}_{\mf{d}, 2}} \\
& \lesssim \| T^{\mf{d}}_{1/3} f \|_{\mC^{\vt- \frac d 2}_{\mf{d}, 2}} \lesssim
\| T^{\mf{d}}_{1/3} f \|_{\mC^{\vt}_{2}} \lesssim \| f \|_{L^{2}},
\end{align*}
  in view of the regularizing properties of the semigroup $T^{\mf{d}}$ (which hold with the
    same parameters as in Proposition~\ref{prop:convergence_PAM}, cf.
  \cite[Theorem 2.3]{Rosati2019kRSBM}, see also the same article for the
definition of Besov spaces with Dirichlet boundary conditions, for all current
purposes identical to the classical spaces) and by Besov embedding theorems.

Let us consider the second term in~\eqref{eq:persistence-auxiliary-pr}, for $t
\ge 2$. With the previous estimates, we bound it as follows:
\begin{align*}
\int\limits_{0}^{1} & \ud r \ T^{\mf{d}}_{r} \big[ (
    T^{\mf{d}}_{t - r} \psi)^{2} \big] (0) + \int\limits_{1}^{t} \ud r \ T^{\mf{d}}_{r} \big[ (
    T^{\mf{d}}_{t - r}\psi)^{2} \big] (0) \\
    & \lesssim \int\limits_0^1 \ud r \ \| T_{t-r}^{\mf{d}}
    \psi\|_{\mC^\vt_{\mf{d}}}^2 + \int\limits_1^{t} \ud r \  \|T^{\mf
    d}_{r-1}(T^{\mf{d}}_{t-r} \psi)^2\|_{L^2} \\
    & \lesssim \int\limits_0^1 \ud r \  \| T_{t-r-1}^{\mf{d}} \psi\|_{L^2}^2 +
    \int_{1}^{t} \ud r \ e^{\lambda_{1}(r - 1)} \| (T^{\mf{d}}_{t-r}
    \psi)^{2} \|_{L^{2}} \\
    & \lesssim \int\limits_0^1 \ud r \ \| T_{t-r-1}^{\mf{d}} \psi\|_{L^2}^2 +
    \int\limits_1^{t-1} \ud r \ e^{\lambda_1 (r-1)} \|T^{\mf{d}}_{t-r-1}
    \psi\|_{L^2}^2 + \int\limits_{t-1}^t \ud r \ e^{\lambda_1 (r-1)}
    \|\psi\|_{\mC^{\vartheta}_{\mf{d}}}^2 \\
    & \lesssim \int\limits_0^t \ud r \ e^{2\lambda_2(t-r) + \lambda_1 r} \lesssim  e^{2\lambda_2 t} (1 + e^{(\lambda_1 - 2\lambda_2)t}+t) \lesssim (e^{2\lambda_2 t} + e^{\lambda_1 t})(1+t),
\end{align*}
where we used that for any $\lambda\in\RR$ one can bound
\(\int_0^te^{\lambda s} \ud s \leq \frac{1}{|\lambda|}(1+e^{\lambda t} + t).\)
Plugging this estimate into~\eqref{eq:persistence-auxiliary-pr}, we obtain
\begin{align*}
 \EE \Big[ |e^{- \lambda_{1} t} \langle \mu^{L} (t), \psi \rangle|^{2} \Big]
    & \lesssim e^{- 2 \lambda_{1} t} e^{2\lambda_2 (t-1)} +
    e^{-2 \lambda_{1} t} \big(e^{2\lambda_2 t} + e^{\lambda_1 t} \big)(1+t)\\
    & \lesssim e^{- \lambda_1 t} + e^{-2(\lambda_1 - \lambda_2)t}(1+t).
  \end{align*}
  This proves \eqref{eqn:proof_survival_second}.
\end{proof}

\begin{remark} The connection of extinction or persistence of a branching
  particle system to the largest eigenvalue
  of the associated Hamiltonian is reminiscent of conditions appearing in the
  theory of multi-type Galton-Watson processes: See for example \cite[Section
  2.7]{Harris2002}. The martingale argument in our proof can be traced
  back at least to Everett and Ulam, as explained in \cite[Theorem 7b]{Harris1950}.
\end{remark}

\appendix\addtocontents{toc}{\protect\setcounter{tocdepth}{-1}}
\section{Construction of the Markov Process}\label{sectn:construction_markov_process}

This section is dedicated to a rigorous construction of the BRWRE. For
simplicity and without loss of generality we will work with $n = 1$. Since the
space $\NN_0^{\ZZ^d}$ is harder to deal with and we do not need it, we consider
the countable subspace $E = \big( \NN_0^{\ZZ^d} \big)_0$ of functions
$\eta\colon \ZZ^d \to \NN_0$ with $\eta(x) = 0,$ except for finitely many $x \in
\ZZ^d$. We endow $E$ with the distance \( d(\eta, \eta') = \sum_{ x \in \ZZ^d}
|\eta(x) {-} \eta'(x)|,\)
under which $E$ is a discrete and hence locally compact separable
metric space. Recall the notations from Section \ref{sectn:model}. Below we will construct ``semidirect product measures'' of the form $\PP^p \ltimes \PP^{\omega^p}$ on $\Omega^p \times \DD([0, +\infty); \RR)$, by which we mean that there exists a Markov kernel $\kappa$ such that for $A \subset \mF^p, B \subset \mB(\DD([0, +\infty); \RR))$:
\begin{equation}\label{eqn:definition_semidirect_product_measure}
\PP^p \ltimes \PP^{\omega^p} (A \times B) = \int_A \kappa(\omega^p, B) \ud \PP^p(\omega^p)
\end{equation}

\begin{lemma} Assume that for any $\omega^p \in \Omega^p$ the potential
  $\xi_p(\omega^p)$ is uniformly bounded and consider $\pi \in E$. There exists
  a unique probability measure $\PP_{\pi}$ on $\Omega = \Omega^p \times \DD([0,
  {+}\infty); E)$ endowed with the product sigma algebra, such that $\PP_{\pi}$
  is of the form $\PP^p  \ltimes \PP^{\omega^p}_{\pi}$, with
  $\PP_\pi^{\omega^p}$ being the unique measure on $\DD([0, {+}\infty); E)$
  under which the canonical process $u$ is a Markov jump process with $u(0) =
  \pi$ whose generator is given by $\mL^{\omega^p}\colon \mD(\mL^{\omega^p}) \to
  C_b(E)$, with
  \begin{equation*}\label{eqn:defn_generator_appendix} \begin{aligned}
      \mL^{\omega^{p}}&(F)(\eta) \\
      & = \sum_{x \in \ZZ^d} \eta_x \  \cdot \Big[ \Delta_{x} F(\eta) +
      (\xi_p)_{+}(\omega^p, x) d^{+}_{x} F( \eta) +
      (\xi_p)_{-}(\omega^p, x) d^{{-}}_{x}F(\eta) \Big],
  \end{aligned}
  \end{equation*} 
  where the domain $\mD(\mL^{\omega^p})$ is the set of functions $F \in C_b(E)$
  such that the right-hand side  lies in $C_b(E)$.  
\end{lemma}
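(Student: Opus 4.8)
The plan is to fix a realization $\omega^p \in \Omega^p$, construct $\PP^{\omega^p}_\pi$ as the law of the \emph{minimal} (Feller) jump process on $\DD([0,+\infty);E)$ associated with the $Q$-matrix read off from $\mL^{\omega^p}$, verify that this process is non-explosive, and only then assemble $\PP_\pi = \PP^p \ltimes \PP^{\omega^p}_\pi$. For fixed $\omega^p$ put $b = b(\omega^p) = \sup_{x\in\ZD}|\xi_p(\omega^p,x)|$, which is finite by hypothesis. The jump rates are $q(\eta,\eta^{x\mapsto y}) = \eta_x$ for $y\sim x$, $q(\eta,\eta^{x+}) = \eta_x(\xi_p)_+(\omega^p,x)$ and $q(\eta,\eta^{x-}) = \eta_x(\xi_p)_-(\omega^p,x)$, so the total exit rate $q(\eta) = \sum_{x\in\ZD}\eta_x\big(2d + |\xi_p(\omega^p,x)|\big) \le (2d+b)\sum_{x}\eta_x$ is finite for every $\eta\in E$. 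Since $E$ is countable and all exit rates are finite, the standard jump-chain/holding-time recipe (see e.g. \cite{EthierKurtz1986} or any reference on continuous-time Markov chains) defines a Markov jump process on $[0,\zeta)$, where $\zeta = \lim_k\tau_k$ is the explosion time and $\tau_k$ the $k$-th jump time, together with a candidate law $\PP^{\omega^p}_\pi$.

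The one substantial point is to show $\PP^{\omega^p}_\pi(\zeta = +\infty) = 1$. I would do this with the Lyapunov function $V(\eta) = \sum_{x\in\ZD}\eta(x)(1+|x|)$, which has two relevant properties. First, each sublevel set $\{V\le k\}$ is a \emph{finite} subset of $E$: if $V(\eta)\le k$ then $\eta$ is supported in $\{|x|\le k\}$ and carries at most $k$ particles. Second, since a nearest-neighbour step changes $|x|$ by at most $1$ and a killing event only decreases $V$, a direct computation gives the linear drift bound $\mL^{\omega^p}V(\eta) \le 2d\sum_x\eta(x) + b\sum_x\eta(x)(1+|x|) \le (2d+b)\,V(\eta)$. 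By the classical non-explosion criterion for jump processes admitting a norm-like Lyapunov function with an affine drift bound (and morally because the total particle number is dominated by a Yule process of parameter $b$, whose mean $(\sum_x\pi(x))e^{bt}$ stays finite), we obtain $\zeta = +\infty$ $\PP^{\omega^p}_\pi$-almost surely, so $\PP^{\omega^p}_\pi$ is a genuine probability measure on $\DD([0,+\infty);E)$.

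Non-explosion delivers the remaining claims simultaneously. A non-explosive $Q$-process with a stable and conservative $Q$-matrix is unique, so $\PP^{\omega^p}_\pi$ is uniquely determined; equivalently, it is the unique solution started at $\pi$ of the martingale problem for $(\mL^{\omega^p},\mD(\mL^{\omega^p}))$, the point being that for $F\in C_b(E)$ with $\mL^{\omega^p}F\in C_b(E)$ the bound $\EE^{\omega^p}_\pi\int_0^t|\mL^{\omega^p}F(u(s))|\,\ud s \le t\|\mL^{\omega^p}F\|_\infty$ makes Dynkin's formula immediate, which both proves the asserted martingale property and identifies $\mL^{\omega^p}$ on that domain as the generator. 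It remains to produce $\PP_\pi$. For fixed $\eta,\eta'\in E$ the rate $q^{\omega^p}(\eta,\eta')$ depends on $\xi_p(\omega^p,\cdot)$ through finitely many coordinates, hence measurably on $\omega^p$; propagating this through the construction and extending from cylinder sets by a monotone-class argument shows that $\omega^p\mapsto\PP^{\omega^p}_\pi(B)$ is measurable for every $B\in\mB(\DD([0,+\infty);E))$, i.e. $\omega^p\mapsto\PP^{\omega^p}_\pi$ is a Markov kernel. Then \eqref{eqn:definition_semidirect_product_measure} defines a unique probability measure $\PP_\pi$ on $\Omega^p\times\DD([0,+\infty);E)$ with the product $\sigma$-algebra, and any measure of the prescribed form $\PP^p\ltimes\PP^{\omega^p}_\pi$ must coincide with it because each $\PP^{\omega^p}_\pi$ is unique.

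I expect the non-explosion step to be the main obstacle: it is exactly there that one exploits both the boundedness of the potential (which makes the per-particle birth rate bounded and hence the drift of $V$ only linear, ruling out finite-time blow-up of the particle number) and the nearest-neighbour jump structure (which keeps the spatially weighted $V$ norm-like while still obeying a linear drift bound). The remaining ingredients — the minimal-process construction, uniqueness of the honest $Q$-process, Dynkin's formula, and the kernel/semidirect-product bookkeeping — are standard.
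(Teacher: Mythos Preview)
Your proof is correct and follows the same overall scheme as the paper: fix $\omega^p$, build the minimal jump process, rule out explosion using boundedness of $\xi_p$, then assemble the semidirect product via measurability in $\omega^p$. The two proofs diverge in the technical execution of the last two steps.

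For non-explosion, the paper uses the Ethier--Kurtz criterion $\sum_k 1/\lambda(Y_k) = +\infty$ on the embedded chain: since each jump increases the total mass $\sum_x Y_k(x)$ by at most one, $\lambda(Y_k) \le (2d+b)(c+k)$ and the harmonic series diverges. This is shorter than your Lyapunov argument and uses only the total particle count, so your spatially weighted $V(\eta) = \sum_x \eta(x)(1+|x|)$ is more than is needed here (though it does have the pleasant side effect of encoding compact support directly). For measurability of $\omega^p \mapsto \PP^{\omega^p}_\pi$, you propagate measurability through the jump-chain construction and close by a monotone-class argument; the paper instead proves \emph{continuity} in $\xi_p$ via an explicit Poisson-thinning coupling at a common dominating rate $K > \|\xi_p\|_\infty \vee \|\tilde\xi_p\|_\infty$. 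Your route is more bookkeeping-heavy but avoids constructing a coupling; the paper's route gives a stronger conclusion (continuity rather than mere measurability) essentially for free.
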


\begin{proof} 
  The construction for fixed $\omega^p \in
\Omega^p$ is classical. Indeed, the generator
has the form of \cite[(4.2.1)]{EthierKurtz1986}, with \(\lambda(\eta) = \sum_{x \in \ZZ^d }\eta_x(2d
{+} |\xi_p|(\omega^p, x)),\) and we only need to rule out explosions by
verifying that almost surely \(\sum_{k \in \NN} \frac{1}{\lambda(Y_k)} ={+}
\infty\), where \(Y\) is the associated discrete time Markov chain.
This is the case, since $\xi_p$ is bounded and thus
\[ \sum_{k \in \NN} \frac{1}{\lambda(Y_k)} \gtrsim \sum_{k \in \NN}
\frac{1}{\sum_x Y_k(x)} \ge \sum_{k \in \NN}\frac{1}{c {+} k} = {+} \infty\]
with $c = \sum_x \pi(x).$ It follows via classical calculations that
$\mL^{\omega^p}$ is the generator associated to the process $u$.  This allows us
to define
for fixed $\omega^p$ the law $\kappa(\omega^p, \cdot)$ of our process on $\DD([0,
{+} \infty); E)$. To construct the measure $\PP_{\pi}$ we have to show that $\kappa$ is a
Markov kernel, which amounts to proving measurability in the $\omega^p$
coordinate. But $\kappa$ depends continuously on $\xi_p$, which we can
verify by coupling the processes for $\xi_p$ and $\tilde \xi_p$
through a construction based on Poisson jumps at rate $K > \| \xi_p\|_\infty, \|
\tilde \xi_p\|_\infty$ and then rejecting the jumps if an independent uniform
$[0,K]$ variable is not in $[0,|\xi_p(x)|]$ respectively in $[0,|\tilde
\xi_p(x)|]$. Since $\xi_p$ is measurable in $\omega^p$, also $\kappa$ is
measurable in $\omega^p$.
\end{proof}

Next, we extend the construction to potentials of sub-polynomial growth: 

\begin{lemma}\label{lem:appendix_construction_markov_process} Let $\xi_p(\omega^p)\in \bigcap_{a>0} L^{\infty}(\ZZ^d, p(a))$
  for all $\omega^p \in \Omega^p$ and consider $\pi \in E$. There exists
  a unique probability measure $\PP_{\pi} = \PP^p  \ltimes \PP^{\omega^p}_{\pi}$ on $\Omega = \Omega^p \times \DD([0,
  {+}\infty); E)$ endowed with the product sigma algebra, where
  $\PP_\pi^{\omega^p}$ is the unique measure on $\DD([0, {+}\infty); E)$
  under which the canonical process $u$ is a Markov jump process with $u(0) =
  \pi$ and with generator $\mL^{\omega^p}$ and $\mD(\mL^{\omega^p})$ defined as in the
  previous lemma.  
\end{lemma}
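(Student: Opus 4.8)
The plan is to reduce to the preceding lemma by truncating the potential and then removing the truncation by a localisation argument. For $N \in \NN$ set $\xi_p^N(\omega^p, \cdot) := \xi_p(\omega^p, \cdot)\,\mathbf 1_{\{|x| \le N\}}$, which is uniformly bounded because $\xi_p(\omega^p, \cdot) \in \bigcap_{a>0} L^\infty(\ZZ^d, p(a))$. By the preceding lemma this yields, for each $N$, a Markov kernel $\omega^p \mapsto \PP^{\omega^p, N}_\pi$ under which the canonical process $u^N$ is the Markov jump process started at $\pi$ with generator $\mL^{\omega^p, N}$ (that is, $\mL^{\omega^p}$ with $\xi_p$ replaced by $\xi_p^N$). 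I would realise all the $u^N$, $N \in \NN$, simultaneously on one probability space by a graphical construction driven by a common family of Poisson clocks — one for each nearest-neighbour move and one each for branching and killing at every site, run at the relevant particle-dependent rates — where $u^N$ simply disregards the branching/killing clocks at sites outside $\{|x| \le N\}$. Writing $\tau_N := \inf\{t : \supp u^N(t) \not\subseteq \{|x| \le N\}\}$, the equality $\xi_p^N = \xi_p^M$ on $\{|x| \le N\}$ forces $u^N$ and $u^M$ (for $M \ge N$) to use the same clocks before that box is left, so $u^N(t) = u^M(t)$ for $t < \tau_N$ and $\tau_N \le \tau_M$. Then $\tau_\infty := \lim_N \tau_N$ and $u(t) := u^N(t)$ for $t < \tau_N$ define a càdlàg $E$-valued process on $[0, \tau_\infty)$.

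The crux — and the step I expect to be the main obstacle — is to prove that $\tau_\infty = \infty$ almost surely, i.e. that the truncation radius can really be removed. The geometric observation is that each transition of $u^N$ is either a nearest-neighbour displacement of one particle, which enlarges the radius of the support by at most $1$, or a branching/killing event, which moves nothing; hence, with $\supp \pi \subseteq \{|x| \le R_0\}$ and $J^N(t)$ the number of transitions of $u^N$ on $[0,t]$, one has $\{\tau_N \le T\} \subseteq \{J^N(T) > N - R_0\}$. The compensator of the counting process $J^N$ is $\int_0^\cdot \sum_x u^N(s,x)\big(2d + |\xi_p^N(x)|\big)\,\ud s$, so $\EE^{\omega^p, N}_\pi[J^N(T)] = \int_0^T \sum_x (2d + |\xi_p^N(x)|)\,w^N(s,x)\,\ud s$ with $w^N(s,x) := \EE^{\omega^p, N}_\pi[u^N(s,x)]$, and the first-moment (many-to-one) identity says that $w^N$ solves the discrete PAM $\partial_s w^N = \Delta^1 w^N + \xi_p^N w^N$, $w^N(0) = \pi$. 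Feynman--Kac together with $\xi_p^N \le (\xi_p)_+ \le \lambda_a(1 + |\cdot|)^a$ (fix $a \in (0,1)$ and set $\lambda_a := \|\xi_p(\omega^p,\cdot)\|_{L^\infty(\ZZ^d, p(a))}$) then gives $w^N(s,x) \lesssim \EE_x\big[\exp\big(\lambda_a s\,(1 + \max_{r \le s}|W_r|)^a\big)\mathbf 1_{\{W_s \in \supp \pi\}}\big]$ for the rate-$2d$ simple random walk $W$; for large $|x|$ the indicator forces $W$ to travel a distance $\ge |x| - R_0$ in a Poisson($2ds$) number of steps, so this bound decays like $e^{-|x|\log|x| + O(|x|)}$ uniformly on $[0,T]$ and in $N$, which beats the weight $2d + \lambda_a(1+|x|)^a$ precisely because $a < 1$ — this is exactly where sub-polynomial (not merely polynomial) growth of $\xi_p$ is used. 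Summing over $x$ gives $\EE^{\omega^p, N}_\pi[J^N(T)] \le C(T)$ uniformly in $N$, so by Markov's inequality $\PP^{\omega^p, N}_\pi(\tau_N \le T) \le C(T)/(N - R_0) \to 0$; since $\{\tau_\infty \le T\} \subseteq \{\tau_N \le T\}$ for every $N$, this yields $\tau_\infty = \infty$ a.s.

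It then remains to identify the limit and to establish uniqueness and measurability. Once $\tau_\infty = \infty$, the process $u$ is a.s. defined on all of $[0,\infty)$; call $\PP^{\omega^p}_\pi$ its law on $\DD([0,\infty);E)$. For (local) $F$ with $\mL^{\omega^p}F$ bounded one has $\mL^{\omega^p}F = \mL^{\omega^p, N}F$ on configurations supported in $\{|x| \le N\}$, so the martingales $t \mapsto F(u^N(t)) - \int_0^t \mL^{\omega^p, N}F(u^N(s))\,\ud s$, stopped at $\tau_N$ and using $u = u^N$ on $[0, \tau_N)$, pass to the limit as $N \to \infty$ (with $F$ and $\mL^{\omega^p}F$ bounded and $\tau_N \to \infty$) and show that $u$ solves the martingale problem for $\mL^{\omega^p}$; uniqueness follows in the same way, since any jump process started at $\pi$ with generator $\mL^{\omega^p}$, stopped at its first exit of $\{|x| \le N\}$, has the law of the corresponding stopped process for $\mL^{\omega^p, N}$, which the preceding lemma pins down, and then one lets $N \to \infty$. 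Finally, for a bounded continuous $G$ on $\DD([0,\infty); E)$ depending only on the path up to time $T$, $\EE^{\omega^p}_\pi[G] = \lim_N \EE^{\omega^p, N}_\pi[G\,\mathbf 1_{\{\tau_N > T\}}]$, and each term is measurable in $\omega^p$ because $\omega^p \mapsto \PP^{\omega^p, N}_\pi$ is a Markov kernel and $\{\tau_N > T\}$ is a path-functional event; hence $\omega^p \mapsto \PP^{\omega^p}_\pi$ is measurable, and $\PP_\pi := \PP^p \ltimes \PP^{\omega^p}_\pi$, defined through \eqref{eqn:definition_semidirect_product_measure}, is the desired measure.
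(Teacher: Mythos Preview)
Your argument is correct but takes a genuinely different route from the paper. The paper truncates the \emph{values} of the potential, setting $\xi_p^k = (\xi_p \wedge k) \vee (-k)$, and then argues via tightness of the approximating sequence $\{u^k\}$ (referring back to the moment estimates of Lemma~\ref{lem:one_dimensional_tightness} and Corollary~\ref{cor:tightness_of_mu_n}, applied with $n$ fixed and $k$ varying) together with a spatial exit-time consistency: for $k > \max_{Q(R)} |\xi_p|$ the law of $u^k$ on $\{\tau_R^k > T\}$ no longer depends on $k$, and tightness ensures these events have mass close to one. You instead truncate the potential \emph{spatially}, couple all approximations almost surely through a graphical construction, and prove non-explosion directly by bounding the expected number of transitions via the Feynman--Kac representation of the first-moment PAM and a random-walk large-deviation estimate. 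Your approach is more self-contained (no forward references to the moment machinery of Section~\ref{sectn:rSBM}) and yields an explicit pathwise limit, whereas the paper's proof is shorter by outsourcing the hard estimate to results already needed elsewhere. Both localisation ideas ultimately hinge on the same mechanism---particles cannot escape to infinity fast enough to feel the unbounded part of the potential---but you make this quantitative through the $e^{-|x|\log|x|}$ random-walk tail beating the $(1+|x|)^a$ growth for $a<1$, while the paper hides it inside the Schauder estimates underlying the tightness.
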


\begin{proof} 

Let us fix $\omega^p \in \Omega^p$. Consider the Markov jump processes $u^k$
started in $\pi$ with generator $\mL^{\omega^p, k}$ associated to $\xi_p^k(x) = (\xi_p(x)\wedge k)\vee ({-}k)$ whose existence
follows from the previous result. The sequence $\{ u^k \}_{k \in \NN}$  is tight
(this follows as in Lemma~\ref{lem:one_dimensional_tightness} and
Corollary~\ref{cor:tightness_of_mu_n}, keeping $n$ fixed but letting $k$ vary)
and converges weakly to a Markov process $u$. 
Indeed, for $k,R \in \NN$ let $\tau^k_R$ be the first time with
$\supp(u^k(\tau^k_R)) \not\subset Q(R)$, where $Q(R)$ is the square of radius
$R$ around the origin, and let $\tau_R$ be the corresponding exit time for $u$.
Then we get for all $k > \max_{x \in Q(R)} |\xi_p(x)|$, for all $T>0$, and all
$F \in C_b(\DD([0,T];E))$:
\[
	\EE^{\omega_p}_{\pi}[F((u^k(t))_{t \in [0,T]}) 1_{\{\tau^k_R \le T\}}]= \EE^{\omega_p}_{\pi}[F((u(t))_{t
	\in [0,T]}) 1_{\{\tau_R \le T\}}],
\]
where we used that the exit time $\tau_R$ is continuous because $E$ is a
discrete space. Moreover, from the tightness of $\{ u^k \}_{k \in \NN}$ it
follows that for all $\varepsilon > 0$ and $T>0$ there exists $R \in \NN$ with
$\sup_k \PP(\tau^k_R \le T) < \varepsilon$. This proves the uniqueness in law and that
$u$ is the limit (rather than subsequential limit) of $\{ u^k \}_{k \in \NN}$.
Similarly we get the Markov property of $u$ from the Markov property of the $\{
u^k \}_{k \in \NN}$ and from the convergence of the transition functions.

 It remains to verify that $\mL^{\omega^p}$ is the generator of $u$. But for
 large enough $R$ we have $\PP^{\omega^p}_{\pi} (\tau_R \le h) = O(h^2)$ as $h
 \to 0^+$, because on the event $\{\tau_R \le h\}$ at least two transitions must
 have happened (recall that $\pi$ is compactly supported). 
We can thus compute for any $F \in C_b(E)$: $$ \EE^{\omega^p}_{\pi} \big[
F(u(h))\big] = \EE^{\omega^p}_{\pi} \big[ F(u^k(h))\big] + O(h^2).  $$ The
result on the generator then follows from the previous lemma. As before, we now
have constructed a collection of
probability measures $\kappa(\omega^p, \cdot)$ as the limit of the Markov
kernels $\kappa^k(\omega^p, \cdot).$ Since measurability is preserved when
passing to the limit, this concludes the proof.
\end{proof} 

\section{Some Estimates for the Random Noise}

In this section we prove parts of Lemma~\ref{lem:renormalisation}, i.e. that a
random environment satisfying Assumption~\ref{assu:noise} gives rise to a
deterministic environment satisfying Assumption~\ref{assu:renormalisation}.

\begin{lemma}\label{lem:mean_noise} 
Let $a, \varepsilon,q > 0$ and $b> d/2$.  Under Assumption \ref{assu:noise} we
have
\[
	\sup_n \bigg[ \EE
	\|n^{-d/2}(\xi^n_p)_{+}\|_{\mC^{{-}\varepsilon}(\ZZ^d_n, p(a))}^q  + \EE
      \| n^{-d/2}(\xi^n_p)_+\|_{L^2(\znd, p(b))}^2 \bigg] < {+} \infty,
\]
and the same holds if we replace $(\xi^n_p)_{+}$ with $|\xi^n_p|$.
Furthermore, for $\nu = \EE[\Phi_+],$ the following convergences hold true in
distribution in $\mC^{{-}\varepsilon}(\RR^d, p(a))$: 
  \begin{align*} \mathcal{E}^n n^{-d/2}(\xi^n_p)_+ \longrightarrow \nu, \qquad
  \mathcal{E}^n n^{-d/2}|\xi^n_p|  \longrightarrow 2\nu.  \end{align*}
 \end{lemma}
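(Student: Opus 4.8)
The plan is to split off the mean of $n^{-d/2}(\xi^n_p)_+$ and to control the centered remainder by a moment estimate in a Besov space of high integrability. Write $\zeta^n := n^{-d/2}(\xi^n_p)_+$; by Assumption~\ref{assu:noise}, for each fixed $n$ the variables $\{\zeta^n(x)\}_{x\in\znd}$ are i.i.d.\ with the law of $\Phi_+$, so all their moments are finite and independent of $n$, and their common mean is $\EE[\Phi_+]=\nu$ (which is $\ge 0$, and $>0$ unless $\Phi\equiv 0$, since $\EE\Phi=0$). The $L^2$-bound is then immediate: for $b>d/2$,
\[
	\EE\big[\|\zeta^n\|_{L^2(\znd,p(b))}^2\big]=\EE[\Phi_+^2]\,n^{-d}\!\!\sum_{x\in\znd}(1+|x|)^{-2b}\lesssim \EE[\Phi_+^2]\int_{\RR^d}(1+|x|)^{-2b}\ud x<\infty
\]
uniformly in $n$; everything below works verbatim for $n^{-d/2}|\xi^n_p|$, whose marginals have the law of $|\Phi|$ and mean $\EE|\Phi|=2\EE[\Phi_+]=2\nu$ (again using $\EE\Phi=0$). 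Next I would write $\zeta^n=\nu+R^n$ with $R^n:=\zeta^n-\nu$, a centered i.i.d.\ field on $\znd$ with moments of every order bounded uniformly in $n$. The constant $\nu$ lives in the single Littlewood--Paley block $j=-1$, so $\|\nu\|_{\mC^{-\ve}(\znd,p(a))}\lesssim\nu$ uniformly in $n$ and $\mE^n\nu\to\nu$ in $\mC^{-\ve}(\RR^d,p(a))$. Hence both remaining assertions follow once
\begin{equation}\label{eq:rosen-fluct}
	\lim_{n\to\infty}\EE\big[\|R^n\|_{\mC^{-\ve}(\znd,p(a))}^q\big]=0,
\end{equation}
because~\eqref{eq:rosen-fluct} yields $\sup_n\EE\big[\|\zeta^n\|_{\mC^{-\ve}(\znd,p(a))}^q\big]<\infty$, and, by the uniform boundedness of the extension operator $\mE^n$ (\cite[Lemma~2.24]{MartinPerkowski2017}), also $\mE^n\zeta^n=\mE^n\nu+\mE^n R^n\to\nu$ in $\mC^{-\ve}(\RR^d,p(a))$ in $L^q$ and a fortiori in distribution.

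For~\eqref{eq:rosen-fluct} the idea is to estimate $R^n$ in $B^{s}_{m,m}(\znd,p(a))$ for $m$ large and invoke the (uniform in $n$) embedding $B^{s}_{m,m}(\znd,p(a))\hookrightarrow\mC^{s-d/m}(\znd,p(a))$. Since $\mC^{-\ve'}\hookrightarrow\mC^{-\ve}$ for $\ve'\le\ve$ one may assume $\ve<d/2$; then fix $m\ge q$ with $m>d/\ve$ and $am>d$, and set $s:=-\ve+d/m\in({-}d/2,0)$. After the embedding and Jensen's inequality it remains to bound
\[
	\EE\big[\|R^n\|_{B^{s}_{m,m}(\znd,p(a))}^m\big]=\sum_{j\le j_n}2^{jsm}\,n^{-d}\!\!\sum_{x\in\znd}p(a)(x)^m\,\EE\big[|\Delta^n_jR^n(x)|^m\big].
\]

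Writing $\Delta^n_jR^n(x)=n^{-d}\sum_y K^n_j(x-y)R^n(y)$ as a sum of independent centered variables, using the standard kernel bounds $\|K^n_j\|_{L^1(\znd)}\lesssim1$, $\|K^n_j\|_{L^2(\znd)}^2\lesssim2^{jd}$, $\|K^n_j\|_{L^m(\znd)}^m\lesssim2^{j(m-1)d}$ (with enough spatial decay to absorb $p(a)$ into a comparable weight), and Rosenthal's inequality for sums of independent centered random variables, one obtains, uniformly in $x$ and $n$ and with constant depending only on $m$ and the moments of $\Phi_+$,
\[
	\EE\big[|\Delta^n_jR^n(x)|^m\big]\lesssim\big(n^{-d}2^{jd}\big)^{m/2}+n^{-(m-1)d}2^{j(m-1)d}.
\]
Summing against $2^{jsm}$ over $j\le j_n$ and using $2^{j_n}\asymp n$ together with $s+d/2>0$ and $s+d-d/m=d-\ve>0$, so that both geometric sums are governed by their top term, one finds $\EE\big[\|R^n\|_{B^{s}_{m,m}(\znd,p(a))}^m\big]\lesssim n^{d-m\ve}$, which tends to $0$ since $m>d/\ve$; the block $j=-1$ is handled by the same Rosenthal bound and contributes only $O(n^{-dm/2})$. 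This gives~\eqref{eq:rosen-fluct} and completes the argument.

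I expect the only delicate point to be this last accounting of powers of $n$: estimating $R^n$ directly in $\mC^{-\ve}$ would give mere boundedness, and it is the interplay of the high integrability $m$, the cancellation coming from centering, and the fact that the lattice carries only $j_n\asymp\log_2 n$ frequency scales that upgrades the bound to the decay $n^{d-m\ve}\to0$. All the required ingredients --- the lattice Littlewood--Paley kernel estimates, the weighted Besov embeddings on $\znd$, and the boundedness of $\mE^n$ --- are available in \cite{MartinPerkowski2017}, and the computation above is precisely the one underlying the convergence of $\xi^n_p$ itself to a white noise (Lemma~\ref{lem:convergence_1d_white_noise} for $d=1$, \cite[Lemmata~5.3 and~5.5]{MartinPerkowski2017} for $d=2$), here applied to the centered field $n^{d/2}R^n$.
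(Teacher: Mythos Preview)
Your proof is correct and more detailed than the paper's. The paper obtains the uniform bound more directly: since the marginals of $\zeta^n=n^{-d/2}(\xi^n_p)_+$ are exactly $\Phi_+$ for every $n$, one simply computes $\EE[\|\zeta^n\|_{L^{q'}(\znd,p(a))}^{q'}]\lesssim\EE[|\Phi|^{q'}]\int(1+|y|)^{-aq'}\ud y$ for $aq'>d$ and then applies Besov embedding into $\mC^{-\ve}$, with no need to center or invoke Rosenthal's inequality. For the convergence the paper only sketches that, by spatial independence, testing $\mE^n\zeta^n-\nu$ against any $\varphi$ with compactly supported Fourier transform produces a quantity of order $n^{-d}$, which together with tightness yields convergence in distribution.

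Your route---splitting off the mean and proving the quantitative decay $\EE[\|R^n\|_{\mC^{-\ve}}^q]\lesssim n^{d-m\ve}\to 0$ via Rosenthal in $B^s_{m,m}$---is exactly the argument behind the white-noise convergence in \cite[Lemmata~5.3,~5.5]{MartinPerkowski2017} and Lemma~\ref{lem:convergence_1d_white_noise}, transported to the bounded field $R^n$. It buys you convergence in $L^q(\PP)$ rather than merely in distribution, at the cost of a longer computation; the paper's shortcut suffices because here the field is uniformly bounded in law and one only needs the weaker statement.
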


\begin{proof}

 We prove the result only for $(\xi^n_p)_{+}$, since then we can treat
 $(\xi^n_p)_-$ by considering $-\xi^n_p$ ($-\Phi$ is still a centered
 random variable). Now note that we can rewrite \(\EE[\| n^{-d/2}(\xi^n_p)_+
 \|_{L^q(\znd,p(a))}^q]\) as 
 \begin{align*}
	\sum_{x \in \znd} n^{-d} \EE[|n^{-d/2}(\xi^n_p)_+|^q (x) ] |p(a)(x)|^q
	\lesssim \EE[|\Phi|^q] \int_{\RR^d} (1+|y|)^{-aq} \ud y,
 \end{align*}
 which is finite whenever $aq>d$. From here the uniform bound on the
 expectations follows by Besov embedding.

 Convergence to \(\nu\) is then a consequence of the spatial independence
 of the noise \(\xi^{n}\), since it is easy to see that \(\EE \big[
 \langle \mE^{n} (\xi^{n}_{p})_{+} {-} \nu, \varphi \rangle \big] =
 O(n^{{-}d})\) for all \(\varphi\) with compactly supported Fourier transform.
\end{proof}

The following result is a simpler variant
of~\cite[Lemma~5.5]{MartinPerkowski2017} for the case $d=1$, hence we omit the
proof.
\begin{lemma}\label{lem:convergence_1d_white_noise} Fix $\xi^n$ satisfying
Assumption \ref{assu:noise}, $d = 1$, $a,q >0$ and $\alpha< 2{-}d/2$. We have:
  \[
	  \sup_n \EE\big[ \|\xi^n_p \|_{\mC^{\alpha{-}2}(\znd, p(a))}^q \big] <
	{+}\infty, \qquad \mE^n \xi^n_p \to \xi_p,
  \]
where $\xi_p$ is a white noise on $\RR$ and the convergence holds in
distribution in $\mC^{\alpha{-}2}(\RR^d, p(a)).$ \end{lemma}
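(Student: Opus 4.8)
The plan is to follow the proof of \cite[Lemma~5.5]{MartinPerkowski2017}, which simplifies substantially in $d=1$ since white noise is more regular and no renormalisation or second order objects appear. Two ingredients suffice: a uniform-in-$n$ moment bound in a slightly smoother weighted space, which yields both the claimed estimate and tightness of $\{\mE^n\xi^n_p\}$; and the central limit theorem, which identifies all subsequential limits as white noise.

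\emph{The moment bound.} Write $\xi^n_p(x) = n^{d/2}\Phi_x$ with $\{\Phi_x\}_{x\in\znd}$ i.i.d.\ copies of $\Phi$, so that $\Delta^n_j\xi^n_p(x) = n^{-d/2}\sum_{y\in\znd}K^n_j(x{-}y)\Phi_y$, with $K^n_j=\mF_n^{-1}(\vr_j)$ for $j<j_n$ and with the truncated Fourier multiplier for $j=j_n$. Since $\EE[\Phi]=0$ and $\Phi$ has moments of every order, Rosenthal's inequality gives for all $q\ge 2$
\[
\EE\big[|\Delta^n_j\xi^n_p(x)|^q\big] \lesssim_q \Big(n^{-d}\sum_y|K^n_j(x{-}y)|^2\Big)^{q/2} + n^{-dq/2}\sum_y|K^n_j(x{-}y)|^q.
\]
Parseval on $\TT^d_n$ bounds the first bracket by $\int_{\TT^d_n}|\vr_j|^2\,\ud k\lesssim 2^{jd}$, while the elementary kernel estimates $\|K^n_j\|_{L^q(\znd)}\lesssim 2^{jd(1-1/q)}$ combined with $2^j\lesssim n$ for $j\le j_n$ bound the second term by $2^{jdq/2}$ as well; hence $\sup_n\sup_x\EE[|\Delta^n_j\xi^n_p(x)|^q]\lesssim 2^{jdq/2}$. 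Fixing $q$ and $\bar\alpha\in(\alpha,2-d/2)$, $\bar a\in(0,a)$, and choosing $\bar q\ge q$ so large that $\bar a\bar q>d$ and $(\bar\alpha-2+d/2)+d/\bar q<0$, and using $n^{-d}\sum_{x\in\znd}p(\bar a)(x)^{\bar q}\lesssim 1$, one obtains $\sup_n\EE\big[\|\xi^n_p\|_{B^{\bar\alpha-2+d/\bar q}_{\bar q,\bar q}(\znd,p(\bar a))}^{\bar q}\big]\lesssim\sum_j 2^{j\bar q((\bar\alpha-2+d/2)+d/\bar q)}<\infty$; the $n$-uniform Besov embeddings $B^{\bar\alpha-2+d/\bar q}_{\bar q,\bar q}\hookrightarrow\mC^{\bar\alpha-2}\hookrightarrow\mC^{\alpha-2}$ together with the inclusion $L^{\bar q}\subseteq L^q$ of moment spaces give the claimed bound $\sup_n\EE[\|\xi^n_p\|^q_{\mC^{\alpha-2}(\znd,p(a))}]<\infty$.

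\emph{Convergence in distribution.} Applying the extension operator bound \cite[Lemma~2.24]{MartinPerkowski2017} to the previous estimate (in the version with $\bar\alpha,\bar a$) yields $\sup_n\EE[\|\mE^n\xi^n_p\|^q_{\mC^{\bar\alpha-2}(\RR^d,p(\bar a))}]<\infty$, so that — the embedding $\mC^{\bar\alpha-2}(\RR^d,p(\bar a))\hookrightarrow\mC^{\alpha-2}(\RR^d,p(a))$ being compact — Markov's inequality gives tightness of $\{\mE^n\xi^n_p\}$ in $\mC^{\alpha-2}(\RR^d,p(a))$. Working in the separable closed subspace generated by $C_c^\infty(\RR^d)$, which contains both $\mE^n\xi^n_p$ and white noise since $\alpha-2<-d/2$ (cf.\ the proof of Lemma~\ref{lem:renormalisation}), a subsequential limit $\eta$ is determined by the joint laws of the pairings $(\langle\eta,f_1\rangle,\dots,\langle\eta,f_k\rangle)$, $f_i\in C_c^\infty(\RR^d)$, because these functionals are continuous and a countable subfamily separates points. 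By Cram\'er--Wold and the Lyapunov central limit theorem applied to the weighted i.i.d.\ sums $\langle\xi^n_p,g\rangle_n=n^{-d/2}\sum_x\Phi_x g(x)$ (using $\EE[\Phi]=0$, $\EE[\Phi^2]=1$ and $\EE|\Phi|^{2+\delta}<\infty$; this is the content of the Remark following Assumption~\ref{assu:noise}), and using that $\langle\mE^n\xi^n_p,f\rangle-\langle\xi^n_p,f\rangle_n\to 0$, those vectors converge to centered Gaussians with covariance $(\int f_i f_j)_{i,j}$. Hence $\eta$ is space white noise, subsequential limits are unique, and $\mE^n\xi^n_p\to\xi_p$ in distribution.

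\emph{Main obstacle.} The crux is the uniform-in-$n$ bound $\EE[|\Delta^n_j\xi^n_p(x)|^q]\lesssim 2^{jdq/2}$ for every $q$: it requires combining a sharp moment inequality for weighted sums of i.i.d.\ variables (Rosenthal, using that $\Phi$ has all moments) with the $L^p$ estimates on the lattice Littlewood--Paley kernels $K^n_j$, and in particular one must exploit $2^j\lesssim n$ to handle the top block $j=j_n$, whose Fourier multiplier is supported near the corners of the discrete torus $\TT^d_n$. The remaining steps are soft.
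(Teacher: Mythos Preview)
Your proposal is correct and follows precisely the approach the paper indicates: the paper omits the proof entirely, stating only that it is a simpler variant of \cite[Lemma~5.5]{MartinPerkowski2017} in $d=1$, which is exactly what you carry out. The Rosenthal--Parseval moment bound plus Besov embedding for uniform control and tightness, followed by the central limit theorem for identification of the limit, is the intended argument.
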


\section{Moment Estimates}

Here we derive uniform bounds for the moments of the processes $\{\mu^n\}_{n\in
\NN}$. As a convention, in the following we will write $\EE$ and $\PP$ for the
expectation and the probability under the distribution of $u^n$. For different
initial conditions $\eta \in E$  we will write $\EE_{\eta}, \PP_{\eta}$. 

\begin{lemma}\label{lem:moments_estimate}

  Fix \(q, T>0\). For all $n \in \NN$, consider the process $\{\mu^n(t)\}_{t \ge 0}$  as in
  Definition~\ref{def:of_u_n_and_mu_n}. Consider then $\varphi^n\colon \znd
  \to \RR$ with $\varphi^n \ge 0$, $\varphi^n = \varphi \vert_{\znd}$ with $\varphi \in
  \mC^2(\RR^d, e(l))$ for some $l \in \RR$. Then
  \[
	  \sup_{n} \sup_{t \in [0, T]} \EE \big[ |\mu^n(t)(\varphi^n)|^q \big] <
	{+} \infty.
  \]
If for all $\ve>0$ there exists an $l \in \RR$
  such that $ \sup_n \| \varphi^n \|_{\mC^{-\ve}(\RR^d, e(l))} <{+} \infty $, we
  can bound for all $\gamma \in (0,1)$: 
  \[ \sup_{n} \sup_{t \in
  [0, T]} t^{\gamma} \EE \big[ |\mu^n(t)(\varphi^n)|^q \big] < {+} \infty.  \]
\end{lemma}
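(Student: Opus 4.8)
The plan is to combine the mild martingale problem of Remark~\ref{rem:mild_martingale_problem_discrete} with a version of the Burkholder--Davis--Gundy inequality and an induction on $q$ along the geometric sequence $q \in \{2,4,8,\dots\}$; intermediate exponents then follow from monotonicity of $L^q$-norms. It is convenient to prove simultaneously a slightly more flexible statement that already incorporates the blow-up: for every $\ve>0$ there is $l\in\RR$ and $\gamma_0=\gamma_0(\ve)>0$, with $\gamma_0\to 0$ as $\ve\to 0$, such that for all nonnegative $\zeta^n\colon\znd\to\RR$ one has
\[
  \sup_n \sup_{t\le T} t^{q\gamma_0}\,\EE\big[|\mu^n(t)(\zeta^n)|^q\big] \lesssim \Big(\sup_n \|\zeta^n\|_{\mC^{-\ve}(\znd,e(l))}\Big)^q .
\]
The first claim of the lemma is recovered by taking $\zeta^n=\varphi^n$ and observing that, since $\varphi\in\mC^2(\RR^d,e(l))$, Proposition~\ref{prop:convergence_PAM} gives $s\mapsto T^n_s\varphi^n$ bounded in $\mC^\vartheta(\znd,e(\hat l))$ \emph{uniformly} in $s\in[0,T]$ and in $n$ (here $\zeta=2$, so the blow-up exponent is $(\vartheta-2)_+/2=0$, and we use Lemma~\ref{lem:restriction_to_lattice} to justify restricting $\varphi$ to the lattice); the second claim is the flexible statement itself.

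For the base case $q=2$ I would use $\mu^n_0=\delta_0$, so by Remark~\ref{rem:mild_martingale_problem_discrete} the process $s\mapsto M^{n,\zeta^n}_t(s)=\mu^n_s(T^n_{t-s}\zeta^n)-T^n_t\zeta^n(0)$ is a centered martingale and
\[
  \EE\big[|\mu^n_t(\zeta^n)|^2\big] = |T^n_t\zeta^n(0)|^2 + \int_0^t T^n_r\Big(n^{-\vr}|\nabla^n T^n_{t-r}\zeta^n|^2 + n^{-\vr}|\xi^n_e|(T^n_{t-r}\zeta^n)^2\Big)(0)\,\ud r .
\]
The term $T^n_t\zeta^n(0)$ is bounded by $t^{-\ve/2}\|\zeta^n\|_{\mC^{-\ve}}$ by the Schauder estimates inside Proposition~\ref{prop:convergence_PAM}; the integral is treated exactly as in Step~1 of the proof of Lemma~\ref{lem:one_dimensional_tightness}: the factor $n^{-\vr}$ buys back regularity (Lemma~\ref{lem:conv_to_zero}), $\nabla^n$ costs regularity (Lemma~\ref{lem:discrete_derivative}), $n^{-\vr}|\xi^n_e|$ is bounded in $\mC^{-\ve}(\znd,p(a))$ uniformly in $n$ for $\vr\ge d/2$ by Assumption~\ref{assu:renormalisation}(ii) (together with $n^{-\vr}c_n\to 0$, which lets us dominate $|\xi^n_e|$ by $|\xi^n|+c_n$), and the para- and resonant product bounds of Lemma~\ref{lem:weighted_paraproduct_estimates} show the bracketed function is bounded in $\mC^{-\ve}(\znd,e(k)p(a))$ with a time singularity at most $(t-r)^{-(\vartheta+2\ve)}$; applying $T^n_r$, evaluating at $0$ (another $r^{-\ve/2}$) and integrating yields a finite bound, the worst power of $t$ coming from the first term. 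For the inductive step one runs the same decomposition and applies Burkholder--Davis--Gundy (for càdlàg martingales with jumps; the jump contribution is lower order because individual jumps of $M^{n,\zeta^n}_t$ are $O(n^{-\vr})$ in the relevant weighted norm) to get
\[
  \EE\big[|\mu^n_t(\zeta^n)|^q\big] \lesssim |T^n_t\zeta^n(0)|^q + \EE\big[\langle M^{n,\zeta^n}_t\rangle_t^{q/2}\big],
  \qquad
  \langle M^{n,\zeta^n}_t\rangle_t = \int_0^t \mu^n_r\big(\psi^n_{t-r}\big)\,\ud r ,
\]
where $\psi^n_s = n^{-\vr}|\nabla^n T^n_s\zeta^n|^2 + n^{-\vr}|\xi^n_e|(T^n_s\zeta^n)^2$. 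At fixed $n$ this $\psi^n_s$ is an honest nonnegative lattice function, and — the crucial point — by the same estimates as above it is bounded in $\mC^{-\ve'}(\znd,e(k)p(a))$ \emph{uniformly in $n$}, with $\|\psi^n_s\|_{\mC^{-\ve'}}\lesssim s^{-(\vartheta+2\ve)}\|\zeta^n\|_{\mC^{-\ve}}^2$. Thus $\psi^n_{t-r}$ is an admissible test object for the flexible statement at exponent $q/2$, and invoking the induction hypothesis together with Minkowski's integral inequality (when $q/2\ge 1$) or Jensen's inequality (when $q/2<1$, bounding directly by $(\int_0^t T^n_r\psi^n_{t-r}(0)\,\ud r)^{q/2}$) gives
\[
  \EE\big[\langle M^{n,\zeta^n}_t\rangle_t^{q/2}\big] \lesssim \Big(\int_0^t r^{-\gamma_0}(t-r)^{-(\vartheta+2\ve)}\,\ud r\Big)^{q/2}\!\!\!\|\zeta^n\|_{\mC^{-\ve}}^q,
\]
which is finite as soon as $\vartheta,\ve$ are small enough that $\vartheta+2\ve<1$ and $\gamma_0<1$; combined with the bound on $|T^n_t\zeta^n(0)|^q$ this closes the induction.

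The main obstacle is precisely the distribution-valued factor $|\xi^n_e|$, of negative Hölder regularity, inside the quadratic variation: one cannot estimate $\mu^n_r(\psi^n_{t-r})$ pointwise uniformly in $n$, so the argument must re-enter the mild martingale representation at each level of the recursion, which is what forces the doubling induction and the necessity of carrying the sharper statement that tolerates a singularity at $t=0$ rather than only the $\mC^2$ version. A secondary, purely technical point is bookkeeping of weights — replacing $e(l)$ by $e(\hat l\!\ge\! l+T)$ and absorbing the $p(a)$ factors produced by the noise estimates into the exponential weights — and checking, as in the tightness proof, that every power of $r$ and of $t-r$ appearing in the iterated integrals stays strictly above $-1$.
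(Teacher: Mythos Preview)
Your approach is genuinely different from the paper's, and largely sound, but one step needs more care.

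\textbf{What the paper does instead.} The paper exploits the particle structure: since $u^n$ is a superposition of $\lfloor n^\vr\rfloor$ \emph{independent} single-particle systems $u^n_1,\dots,u^n_{\lfloor n^\vr\rfloor}$, Rosenthal's inequality reduces the $q$-th moment of $\mu^n_t(\varphi^n)$ to the single-particle moments $n^{\vr(1-p)}\EE_{1_{\{x\}}}[(u^n_1(t),\varphi^n)^p]=:m^{p,n}_{\varphi^n}(t,x)$. These satisfy Kolmogorov's backward equation
\[
  \partial_t m^{p,n}_{\varphi^n}=\mH^n m^{p,n}_{\varphi^n}+n^{-\vr}(\xi^n_e)_+\sum_{i=1}^{p-1}\binom{p}{i}m^{i,n}_{\varphi^n}m^{p-i,n}_{\varphi^n},
\]
a hierarchy that can be solved inductively using exactly the Schauder estimates of Proposition~\ref{prop:convergence_PAM}. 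This sidesteps BDG entirely and never confronts the quadratic-variation bookkeeping.

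\textbf{What you do, and what each buys.} Your route stays at the level of $\mu^n$, uses the mild martingale representation, and recurses via $\langle M\rangle_t=\int_0^t\mu^n_r(\psi^n_{t-r})\,\ud r$; the key observation that $\psi^n_{t-r}$ is again an admissible test function (nonnegative, uniformly in $\mC^{-\ve}$ with integrable time singularity) is exactly right. This is more robust in that it does not use the underlying particle independence and would carry over to situations where no Rosenthal-type decomposition is available. The paper's approach, by contrast, is cleaner because the single-particle moment hierarchy is a linear PDE at each level and avoids any martingale inequalities beyond $q=2$.

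\textbf{The gap.} Your BDG step is the weak point. For the pure-jump martingale $M^{n,\zeta^n}_t$, BDG controls $\EE[|M|^q]$ by $\EE\big[[M]_t^{q/2}\big]$, not $\EE\big[\langle M\rangle_t^{q/2}\big]$. Passing from $[M]$ to $\langle M\rangle$ for $q>2$ requires control on the jump sizes, and here the jumps are $\pm n^{-\vr}T^n_{t-s}\zeta^n(X_s)$: random (through the particle position $X_s$), carrying the exponential weight, and---in the $\mC^{-\ve}$ case---blowing up as $s\uparrow t$. Your parenthetical ``the jump contribution is lower order because individual jumps are $O(n^{-\vr})$'' hides exactly this difficulty. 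One can close the gap, for instance via a predictable-bracket version of BDG valid for martingales with bounded jumps (at the cost of an additive term $\EE[\sup_s|\Delta M_s|^q]$ that then has to be estimated using an a priori bound on weighted occupation, itself requiring the moment bounds you are proving), or by first establishing the result for bounded $\zeta^n$ and extending by approximation. Either way this needs an explicit argument; as written it is a genuine gap, not just missing bookkeeping.
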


\begin{proof} 
  We prove the second estimate, since the first estimate is similar but easier
  (Lemma~\ref{lem:restriction_to_lattice} below controls $\| \varphi^n
  \|_{\mC^\vartheta(\znd, e(l))}$ for all $\vartheta < 2$ in that case). Also,
  we assume without loss of generality that $q \ge 2$. As usual, we use the
  convention of freely increasing the value of $l$ in the exponential weight.
  Let us start by recalling that $\EE\big[ \mu^n(t)(\varphi^n) \big]
  =T^n_{t}\varphi^n (0)$. Moreover, via the assumption on the regularity,
    Proposition~\ref{prop:convergence_PAM} and Equation
    \eqref{eqn:loose_time_space_regularity} from
    Lemma~\ref{lem:weighted_paraproduct_estimates} guarantees that for any
    $\gamma \in (0,1)$ there exists a $ \delta= \delta(\gamma,q)>0$ such that
  \[\sup_n \| t
  \mapsto T^n_t \varphi^n\|_{\mL^{\gamma/q, \delta}(\znd, e(l))} < {+}
\infty.\]
By the triangle inequality it thus suffices to prove that for any $\gamma>0$: 
\[ \sup_{n} \sup_{t \in [0, T]} t^{\gamma} \EE \big[
  |\mu^n(t)(\varphi^n){-}  T^n_{t}\varphi^n (0)|^q \big] < {+} \infty.  \]
  
  Note that we can interpret the particle system $u^n$ as the superposition of
  $\fn$ independent particle systems, each started with one particle in zero; we
  write $u^n = u^n_1 + \dots + u^n_{\fn}$. To lighten the notation we assume
  that $n^{\vr} \in\NN$. We then apply Rosenthal's inequality,
  \cite[Theorem~2.9]{Petrov1995} (recall that $q\ge 2$) and obtain (with
  \( (f, g) = \sum_{x \in \znd} f(x)g(x)\)):
  \begin{align*} 
    &\EE \big[ |\mu^n(t)(\varphi^n){-} T^n_{t}\varphi^n (0)|^q \big]  = \EE
    \bigg[ \bigg| \sum_{k=1}^{n^\vr} \big[ n^{-\vr}(  u^n_k(t), \varphi^n ) {-}
    n^{-\vr}T^n_{t}\varphi^n (0) \big] \bigg|^q \bigg] \\ 
    &\lesssim n^{-\vr q} \sum_{k=1}^{n^\vr}  \EE\big[ |(  u^n_k(t), \varphi^n)
    {-} T^n_{t}\varphi^n (0)|^q\big] \\
    & \qquad + n^{-\vr q} \bigg( \sum_{k=1}^{n^\vr}
    \EE\big[ |(  u^n_k(t), \varphi^n ) {-} T^n_{t}\varphi^n (0)|^2\big]
    \bigg)^{\frac{q}{2}} \\ 
    & \lesssim n^{-\vr (q-1)} \EE\big[ |(  u^n_1(t), \varphi^n )|^q \big] +
    \big(n^{-\vr} \EE\big[ |(  u^n_1(t), \varphi^n ) |^2\big]\big)^{q/2} \\
   & \qquad + n^{-\frac{\vr q}{2}} t^{-\gamma} \| t \mapsto T^n_t \varphi^n\|_{\mL^{\gamma/q,
    \delta}(\znd, e(l))}^q
  \end{align*}
  for the same $\delta > 0$ and $l \in \RR$ as above.  The two scaled
  expectations are of the same form, in the second term we simply have $q=2$. To
  control them, we define for $p \in \NN$ the map 
    \[
    m^{p,n}_{\varphi^n}(t,x) = n^{\vr(1- p)}\EE_{1_{\{x\}}} \big[ | (u^n_1(t),
  \varphi^n)|^p \big].  \]
  As a consequence of Kolmogorov's backward equation each $m^{p,n}_{\varphi^n}$
  solves the discrete PDE (see also Equation~(2.4) in
  \cite{AlbeverioBogachevEtAl2000}):
   \begin{align*}
	\partial_t m^{p, n}_{\varphi^n}(t,x) = \mH^n m^{p, n}_{\varphi^n}(t,x) +
	n^{-\vr} (\xi^n_e)_+(x) \sum_{i = 1}^{p-1} \binom{p}{i} m^{i,
	n}_{\varphi^n}(t,x) m^{p{-}i,  n}_{\varphi^n}(t,x),
  \end{align*}
  with initial condition $m^{p, n}_{\varphi^n}(0,x) =
  n^{\vr(1-p)}|\varphi^n(x)|^p$. We claim that this equation has a unique
  (paracontrolled in $d=2$) solution $m^{p, n}_{\varphi^n}$, such that for all
  $\gamma > 0$ there exists $\delta = \delta(\gamma,p) > 0$ with
  $\sup_n \|m^{n,p}_{\varphi^n} \|_{\mL^{\gamma, \delta}(\znd, e(l))} < \infty$.
  Once this is shown, the proof is complete. We proceed by induction over $p$.
  For $p=1$ we simply have $m^{n,1}_{\varphi^n}(t,x) = T^n_t \varphi^n(x)$. For
  $p \ge 2$ we use that by Lemma~\ref{lem:conv_to_zero} we have
  $\|n^{\vr(1-p)}|\varphi^n(x)|^p\|_{\mC^{\kappa}(\znd, e(l))} \to 0$ for some
  $\kappa>0$ and we assume that the induction hypothesis holds for all $p'<p$.
  Since it suffices to prove the bound for small $\gamma>0$, we may assume also
  that $\kappa>\gamma$. We choose then $\gamma' < \gamma$ such that for some
  $\delta(\gamma', p)>0$:
  \[
    \sup_n \bigg\| \sum_{i = 1}^{p-1} m^{i, n}_{\varphi^n} m^{p{-}i,
    n}_{\varphi^n}\bigg\|_{\mM^{\gamma'}\mC^{\delta(\gamma', p)}(\znd, e(l))} < {+}
    \infty.  
  \]
  Since by Assumption \ref{assu:renormalisation}
  $\|n^{-\vr}(\xi^n_e)_+\|_{\mC^{-\varepsilon}(\znd,p(a))}$ is uniformly bounded
  in $n$ for all $\ve, a >0,$ the above bound is sufficient to control the
  product: 
  \[ 
    \sup_n \bigg\| n^{-\vr}(\xi^n_e)_+\sum_{i = 1}^{p-1} m^{i, n}_{\varphi^n}
    m^{p{-}i,n}_{\varphi^n}\bigg\|_{\mM^{\gamma'} \mC^{-\ve}(\znd, e(l))} < {+}
    \infty.
  \]
  Now the claimed bound for $m^{n,p}_{\varphi^n}$ follows from an application of
  Proposition~\ref{prop:convergence_PAM}. For non-integer $q$ we simply use
  interpolation between the bounds for $p < q < p'$ with $p,p' \in \NN$.
\end{proof}

\section{Some Estimates in Besov Spaces}

Here we prove some results concerning discrete and continuous Besov spaces.
First, we show that restricting a function to the lattice preserves its
regularity.

\begin{lemma}\label{lem:restriction_to_lattice} 
  Let $\varphi \in \mC^{\alpha}(\RR^d )$ for $\alpha \in \RR_{> 0}\setminus
  \NN$. Then $\varphi\vert_{\znd} \in \mC^{\alpha}(\znd)$ and $$ \sup_{n \in
  \NN} \| \varphi\vert_{\znd} \|_{\mC^{\alpha}(\znd)} \lesssim \| \varphi
  \|_{\mC^{\alpha}(\RR^d)}. $$ For the extension of $\varphi\vert_{\znd}$ we
  have $\mE^n(\varphi\vert_{\znd}) \to \varphi$ in $\mC^{\beta}(\RR^d)$ for all
  $\beta < \alpha.$
\end{lemma}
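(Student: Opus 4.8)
The statement is a ``sampling preserves Besov regularity'' result, and the natural route is via the characterisation of Besov spaces through the behaviour of the heat semigroup (or, equivalently, through finite differences), exploited already in \cite{MartinPerkowski2017}. First I would record that a function $\varphi \in \mC^\alpha(\RR^d)$ with $\alpha \in \RR_{>0}\setminus\NN$ is, after possibly adjusting on a null set, a bounded H\"older-continuous function whose derivatives up to order $\lfloor \alpha \rfloor$ are $(\alpha - \lfloor\alpha\rfloor)$-H\"older, with the relevant semi-norms controlled by $\|\varphi\|_{\mC^\alpha(\RR^d)}$. The discrete Besov norm $\|\varphi|_{\znd}\|_{\mC^\alpha(\znd)}$ is built from the discrete Littlewood--Paley blocks $\Delta^n_j$, which are truncated at scale $j_n \sim \log_2 n$; since the aliasing introduced by the discrete Fourier transform only affects frequencies of order $n$, one can compare $\Delta^n_j (\varphi|_{\znd})$ with the sampled values of $\Delta_j \varphi$ up to an error that is summable against $2^{j\alpha}$ uniformly in $n$. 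Concretely, I would invoke the comparison estimates between $\Delta_j$ and $\Delta^n_j$ developed in \cite[Section~2]{MartinPerkowski2017} (the analogues of their Lemmata relating discrete and continuous Paley blocks and the extension operator $\mE^n$), which give $\|\Delta^n_j(\varphi|_{\znd})\|_{L^\infty(\znd)} \lesssim 2^{-j\alpha}\|\varphi\|_{\mC^\alpha(\RR^d)}$ for all $j \le j_n$, with a constant independent of $n$. Taking the supremum over $j$ yields the first claim.

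For the convergence statement $\mE^n(\varphi|_{\znd}) \to \varphi$ in $\mC^\beta(\RR^d)$ for $\beta < \alpha$, I would argue by interpolation plus a direct low-frequency estimate. The uniform bound just established gives that $\{\mE^n(\varphi|_{\znd})\}_n$ is bounded in $\mC^\alpha(\RR^d)$ (using the boundedness of $\mE^n$ from \cite[Lemma~2.24]{MartinPerkowski2017}). On the other hand, for each fixed frequency band $j$, the block $\Delta_j \mE^n(\varphi|_{\znd})$ converges to $\Delta_j\varphi$ in $L^\infty$ as $n \to \infty$ — this is again a consequence of the aliasing error vanishing once $j_n \to \infty$, i.e.\ the discretisation error at a fixed scale $2^j$ tends to zero like a power of $2^j/n$. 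Combining the vanishing of finitely many low-frequency blocks with the uniform $\mC^\alpha$ bound to control the high-frequency tail (which is $\lesssim 2^{-J(\alpha-\beta)}$ in the $\mC^\beta$ norm when we cut at scale $J$), a standard $\varepsilon$-argument gives convergence in $\mC^\beta(\RR^d)$ for every $\beta < \alpha$.

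The main obstacle, and the only genuinely technical point, is the careful bookkeeping of the aliasing error in the discrete Fourier transform: the discrete Paley block $\Delta^n_j(\varphi|_{\znd})$ is not simply the restriction of $\Delta_j\varphi$, because sampling on $\znd$ folds frequencies outside the torus $\TT^d_n$ back into it. One must show that this folded contribution, coming from the part of $\hat\varphi$ supported at frequencies $\gtrsim n$, is negligible; this uses precisely that $\varphi$ has positive H\"older regularity (so $\hat\varphi$ decays, in the appropriate Besov sense, at high frequencies) together with the fact that $\supp(\chi),\supp(\vr) \subset (-1/2,1/2)^d$, which ensures the blocks $\Delta^n_j$ for $j < j_n$ ``see'' only frequencies well inside $\TT^d_n$. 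Since all of these comparison estimates are established in \cite{MartinPerkowski2017} (this lemma is stated there essentially as a tool), I would phrase the proof as an application of those results rather than reproving them; the one-line summary is: \emph{restriction to $\znd$ commutes with Littlewood--Paley projection up to an error that is controlled by the continuous $\mC^\alpha$ norm and vanishes as $n \to \infty$ at each fixed scale.}
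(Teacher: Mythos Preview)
Your overall strategy---split into low blocks $j < j_n$ and the top block $j = j_n$, then get convergence by interpolation---matches the paper's, but you have misidentified where the work lies, and this creates a real gap.

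For $j < j_n$ there is in fact \emph{no} aliasing error to track: since $\supp(\vr_j) \subset n(-\tfrac12,\tfrac12)^d$, the discrete block $\Delta^n_j(\varphi|_{\znd})(x)$ equals the continuous block $\Delta_j\varphi(x)$ exactly for $x \in \znd$, because the discrete and continuous convolutions with $K_j$ coincide on lattice points. So the bound $\|\Delta^n_j(\varphi|_{\znd})\|_{L^\infty(\znd)} \le \|\Delta_j\varphi\|_{L^\infty(\RR^d)} \lesssim 2^{-j\alpha}\|\varphi\|_{\mC^\alpha}$ is immediate. Your discussion of ``folded contributions'' and ``aliasing error summable against $2^{j\alpha}$'' is unnecessary here.

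The actual content of the lemma is entirely in the top block $j = j_n$, and this is precisely the step you propose to outsource to \cite{MartinPerkowski2017}. That deferral does not work: the needed estimate is not available there as a black box---this lemma is proving it. The paper's argument is concrete and short: writing $\psi_n = \mF_n^{-1}\chi(2^{-j_n}\cdot) = \mF_{\RR^d}^{-1}\chi(2^{-j_n}\cdot)$, one has $\Delta^n_{j_n}\varphi^n(x) = \varphi(x) - (\psi_n \ast \varphi)(x)$. The key observation is that $\psi_n$ has total mass $1$ and vanishing moments against every monomial of positive degree (because $\chi \equiv 1$ near the origin, so $\mF_{\RR^d} M$ is supported at $0$ where $\chi(2^{-j_n}\cdot)\equiv 1$). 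One can therefore subtract the Taylor polynomial of $\varphi$ at $x$ up to order $\lfloor\alpha\rfloor$ inside the convolution for free, and the H\"older remainder estimate $|y|^\alpha\|\varphi\|_{\mC^\alpha}$ (valid since $\alpha \notin \NN$) plus a scaling argument gives $\|\Delta^n_{j_n}\varphi^n\|_{L^\infty(\znd)} \lesssim 2^{-j_n\alpha}\|\varphi\|_{\mC^\alpha}$. This vanishing-moments-plus-Taylor step is the heart of the proof, and your plan does not supply it.

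Your treatment of the convergence $\mE^n(\varphi|_{\znd}) \to \varphi$ in $\mC^\beta$ is fine; the paper simply says ``by interpolation,'' and your $\varepsilon$-argument spells this out.
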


\begin{proof} 
  
  Let us call $\varphi^n = \varphi \vert_{\znd}.$ We have to estimate 
  $\| \Delta^n_{j} \varphi^n \|_{L^{\infty}(\znd)}$, and for that purpose we
  consider the cases $j < j_n$ and $j = j_n$ separately. In the first case we
  have \(\Delta^n_j \varphi^n (x)= K_j * \varphi (x) = \Delta_j
  \varphi(x)\) for $x \in \znd$ because, as $\supp(\varrho_j) \subset n({-}1/2, 1/2)^d$, the
  discrete and the continuous convolutions coincide. Therefore: 
  \[ 
    \| \Delta_{j}^{n} \varphi \|_{L^{\infty}( \znd)} \leq \| \Delta_{j} \varphi
    \|_{ L^{\infty}(\RR^{d})} \leq 2^{j \alpha} \| \varphi \|_{\mC^{\alpha}}. 
  \]

  For $j = j_n$ we have $\varrho^n_{j_n}(\cdot) = 1{-}\chi(2^{{-}j_n}\cdot )$,
where $\chi \in \mS_{\omega}$ is one of the two functions generating the dyadic
partition of unity, a symmetric smooth function such that $\chi = 1$ in a ball
around the origin. By construction we have $\varrho^n_{j_n}(x) \equiv 1$ for $x$
near the boundary of $n(-1/2,1/2)^d$, and therefore $\supp(\chi(2^{{-}j_n}\cdot
)) \subset n(-1/2,1/2)^d$. Let us define $\psi_n = \mF_n^{-1}
\chi(2^{{-}j_n}\cdot ) = \mF_{\RR^d}^{-1} \chi(2^{{-}j_n}\cdot )$. Then
\[ 
  \sum_{x \in \znd} n^{-d} \psi_n(x) = \mF_n \psi_n(0) = \chi(2^{-j_n} \cdot 0)
  = 1,
\]
and for every monomial $M$ of strictly positive degree we have, since $\psi_n$
is an even function,
\[ 
  \sum_{x \in \znd} n^{-d} \psi_n(x) M(x) =(\psi_n\ast M)(0) = \mF_{\RR^d}^{-1}(
  \chi(2^{-j_n}\cdot) \mF_{\RR^d} M)(0) = M(0) = 0,
\]
where we used that the Fourier transform of a polynomial is supported in $0$. 
Thus for $x \in \znd$ we get \(\Delta^n_{j_n} \varphi^n (x) = \varphi(x) {-}
(\psi_n \ast_n \varphi)(x)\), that is:
\begin{align*}
  \varphi(x) {-} (\psi_n \ast_n \varphi)(x) = {-}
  \psi_n \ast_n \bigg( \varphi(\cdot)-\varphi(x)-\!\!\! \sum_{1\le |k| \le \lfloor
  \alpha \rfloor}\frac{1}{k!} \partial^k \varphi(x)  (\cdot{-}x)^{k} \bigg)(x),
\end{align*}
with the usual multi-index notation and where as above we could replace the discrete
convolution $\ast_n$ with a convolution on $\RR^d$. Moreover, since $\varphi \in
\mC^{\alpha}(\RR^d)$ and $\alpha > 0$ is not an integer, we can  estimate
\[
	\bigg\|\varphi(\cdot)- \!\!\! \sum_{0 \le |k| \le \lfloor \alpha
	\rfloor}\frac{1}{k!} \partial^k \varphi(x) (\cdot{-}x)^{\otimes
	k}\bigg\|_{L^{\infty}(\RR^d)} \lesssim
	|y|^{\alpha}\|\varphi\|_{\mC^{\alpha}(\RR^d)},
\]
and from here the estimate for the convolution holds by a scaling argument. The
convergence then follows by interpolation.
\end{proof}

The following result shows that multiplying a function on $\znd$ by
$n^{-\kappa}$ for some $\kappa>0$ gains regularity and gives convergence to zero
under a uniform bound for the norm.

\begin{lemma}\label{lem:conv_to_zero} Consider $z \in \weights$ and $p \in [1,
  \infty], \alpha \in \RR$ and a sequence of functions $f^n \in
  \mC^{\alpha}_p(\ZZ^d_n, z)$ with uniformly bounded norm: $$ \sup_n \| f^n
  \|_{\mC^{\alpha}_p(\ZZ^d_n, z)} < + \infty.  $$ Then for any $\kappa > 0$ the
  sequence $n^{-\kappa}f^n$ is bounded in $\mC^{\alpha{+}\kappa}_p(\znd, z)$: 
  \[
  \sup_n \| n^{-\kappa} f^n \|_{\mC^{\alpha+\kappa}_p(\znd, z)} \lesssim \sup_n
\| f^n\|_{\mC^{\alpha}_p(\znd, z)} \]
and $n^{-\kappa} \mE^n f^n$ converges to
zero in $\mC^{\beta}_p (\RD, z)$ for any $\beta < \alpha + \kappa.$
\end{lemma}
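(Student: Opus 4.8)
\textit{Proof plan.} The plan is to exploit the one structural feature that distinguishes the lattice Besov spaces from their continuous counterparts: on $\znd$ the Littlewood--Paley sum only runs over frequencies $j\le j_n$, and $2^{j_n}$ is comparable to $n$. Concretely, I would first record that there is a constant $C_0$, depending only on the chosen dyadic partition of unity, such that $2^{j_n}\le C_0\,n$ for all $n\in\NN$. This is immediate from the assumption $\supp(\vr)\subset(-1/2,1/2)^d$: since $\supp(\vr_j)=2^j\supp(\vr)$, the inclusion $\supp(\vr_j)\subseteq n[-1/2,1/2]^d$ holds whenever $2^j$ is at most a fixed multiple of $n$, and $j_n$ is by definition the first index for which the inclusion fails.

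With this in hand the a priori bound is purely arithmetic. For every $j\le j_n$ one has $2^{j\kappa}\le 2^{j_n\kappa}\le C_0^\kappa n^\kappa$, so, using that the weight $z$ enters only through the inner $L^p(\znd,z)$ norm,
\[
  \| n^{-\kappa} f^n \|_{\mC^{\alpha+\kappa}_p(\znd,z)}
  = n^{-\kappa}\sup_{j\le j_n} 2^{j\kappa}\,2^{j\alpha}\,\| \Delta^n_j f^n \|_{L^p(\znd,z)}
  \le C_0^\kappa \sup_{j\le j_n} 2^{j\alpha}\,\| \Delta^n_j f^n \|_{L^p(\znd,z)}
  = C_0^\kappa \| f^n \|_{\mC^{\alpha}_p(\znd,z)}.
\]
Taking suprema over $n$ gives the first assertion.

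For the convergence, I would first note the trivial estimate $\| n^{-\kappa} f^n \|_{\mC^{\alpha}_p(\znd,z)} = n^{-\kappa}\| f^n \|_{\mC^{\alpha}_p(\znd,z)}\to 0$, the last factor being uniformly bounded by hypothesis, and then transfer to $\RR^d$ using that $\mE^n$ is bounded on the weighted Besov spaces uniformly in $n$ (\cite[Lemma~2.24]{MartinPerkowski2017}). This yields that $g_n:=n^{-\kappa}\mE^n f^n$ is bounded in $\mC^{\alpha+\kappa}_p(\RR^d,z)$ while $\|g_n\|_{\mC^{\alpha}_p(\RR^d,z)}\to 0$. The conclusion for general $\beta<\alpha+\kappa$ then follows by the elementary interpolation inequality
\[
  \|g\|_{\mC^\beta_p(\RR^d,z)}\le \|g\|_{\mC^{\alpha}_p(\RR^d,z)}^{\,1-\theta}\,\|g\|_{\mC^{\alpha+\kappa}_p(\RR^d,z)}^{\,\theta},
  \qquad \theta=\tfrac{(\beta-\alpha)_+}{\kappa}\in[0,1),
\]
which is obtained by applying the decomposition $2^{j\beta}=(2^{j\alpha})^{1-\theta}(2^{j(\alpha+\kappa)})^{\theta}$ blockwise before taking the supremum in $j$, together with the embedding $\mC^{\alpha}_p\hookrightarrow\mC^\beta_p$ when $\beta\le\alpha$; in either case the right-hand side tends to $0$.

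There is no genuine obstacle here. The only points deserving a little care are the first step --- pinning down $2^{j_n}\le C_0 n$ with a constant uniform in $n$, i.e. the precise interplay between the truncation index $j_n$ and the lattice scale --- and checking that the cited boundedness of $\mE^n$ is indeed uniform in $n$ on the target space $\mC^{\alpha+\kappa}_p$; both are routine consequences of the framework of \cite{MartinPerkowski2017}.
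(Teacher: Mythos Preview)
Your proof is correct and follows essentially the same approach as the paper: both hinge on the single observation that the lattice Littlewood--Paley decomposition is truncated at $j_n$ with $2^{j_n}\simeq n$, so that $2^{j\kappa}n^{-\kappa}\lesssim 1$ for all $j\le j_n$. The paper's proof is more terse, writing directly $2^{j(\alpha+\kappa-\ve)}n^{-\kappa}\lesssim 2^{j\alpha}n^{-\ve}$ for $j\le j_n$ (which amounts to interpolating at the discrete level and then applying $\mE^n$ once), whereas you apply $\mE^n$ first and interpolate on $\RR^d$; these are equivalent reorderings of the same arithmetic.
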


\begin{proof} 
  By definition, we only encounter Littlewood-Paley blocks up to an order
  $j_n \simeq \log_2(n).$ Hence $2^{j(\alpha {+} \kappa {-} \ve)}n^{-\kappa} \lesssim 2^{j\alpha}n^{- \ve}$ for $j \le j_n$ and $\ve \ge 0$, from where the claim follows.
\end{proof}

Now we study the action of discrete gradients. We write $\mC^{\alpha}_p(\znd, z ;
\RR^d)$ for the space of maps $\varphi\colon \znd \to \RR^d$ such that each
component lies in $\mC^{\alpha}_p(\znd,z)$ with the naturally induced norm. 

\begin{lemma}[\cite{MartinPerkowski2017}, Lemma~3.4]\label{lem:discrete_derivative}
  The discrete gradient $(\nabla^n \varphi)_i(x) = n(\varphi(x{+}\frac{e_i}{n}){-}\varphi(x))$ for
  $i = 1, \ldots, d$ (with $\{ e_i \}_{i}$ the standard basis in $\RR^d$) and
  the discrete Laplacian $\Delta^n \varphi (x) = n^2 \sum_{i = 1}^d
  (\varphi(x{+}\frac{e_i}{n}){-}2\varphi(x) {+} \varphi(x{-}\frac{e_i}{n}))$
  satisfy:
  \begin{align*}
    \| \nabla^n \varphi \|_{\mC^{\alpha{-}1}_p(\znd, z; \RR^d)} \lesssim \| \varphi
    \|_{\mC^{\alpha}_p(\znd, z)}, \qquad \| \Delta^n
    \varphi\|_{\mC^{\alpha{-}2}_p(\znd, z)} \lesssim \| \varphi\|_{\mC^{\alpha}_p(\znd, z)},
  \end{align*}
  for all $\alpha \in \RR$ and $p \in [1,\infty]$, where both estimates hold uniformly in $n \in \NN$.
\end{lemma}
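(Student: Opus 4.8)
The plan is to treat $\nabla^n$ and $\Delta^n$ as Fourier multipliers on the lattice and to run the standard Littlewood--Paley argument, taking care that all constants stay uniform in $n$. First I would record that for $\varphi\colon\znd\to\RR$ the $i$-th component of $\nabla^n\varphi$ equals $\mF_n^{-1}\big(m^n_i\,\mF_n\varphi\big)$ with symbol $m^n_i(k)=n\big(e^{2\pi\iota k_i/n}-1\big)$ on $k\in\TT^d_n$, while $\Delta^n\varphi=\mF_n^{-1}\big(\ell^n\,\mF_n\varphi\big)$ with $\ell^n(k)=-4n^2\sum_{i=1}^d\sin^2(\pi k_i/n)$. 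The observation that makes the discrete and continuous theories match is that the blocks $\Delta^n_j$ only probe frequencies $|k|\lesssim 2^{j_n}$ with $2^{j_n}\lesssim n$, and $\supp\rho_j\subset n(-1/2,1/2)^d$, so on the annulus $\{|k|\simeq 2^j\}$ (and on the truncated frequency support of the top block $1-\sum_{-1\le i<j_n}\rho_i$, where $|k|\simeq 2^{j_n}\simeq n$) the quantities $k_i/n$ stay in a compact subset of $(-1/2,1/2)$. Consequently $m^n_i$ and $\ell^n$ obey on these annuli exactly the same scaling estimates as the continuous symbols $2\pi\iota k_i$ and $-4\pi^2|k|^2$, namely
\[
  \sup_{k\in\supp\rho_j}\big|\partial^\gamma m^n_i(k)\big|\lesssim 2^{j(1-|\gamma|)},\qquad
  \sup_{k\in\supp\rho_j}\big|\partial^\gamma \ell^n(k)\big|\lesssim 2^{j(2-|\gamma|)},
\]
for all multi-indices $\gamma$ up to a fixed order and uniformly in $j\le j_n$ and $n\in\NN$.

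Granted these bounds, I would proceed as in the continuous case. Fix a fattened cutoff $\tilde\rho_j$ with $\tilde\rho_j\equiv 1$ on $\supp\rho_j$ and $\supp\tilde\rho_j\subset\supp\rho_{j-1}\cup\supp\rho_j\cup\supp\rho_{j+1}$, and write $\Delta^n_j(\nabla^n\varphi)_i=2^j\,\kappa^n_j*_n\tilde\Delta^n_j\varphi$, where $\kappa^n_j=2^{-j}\mF_n^{-1}(\rho_j m^n_i)$ and $\tilde\Delta^n_j=\mF_n^{-1}(\tilde\rho_j\mF_n\,\cdot)$. The rescaled symbol $2^{-j}\rho_j(2^j\,\cdot)\,m^n_i(2^j\,\cdot)$ has, by the estimates above, Schwartz seminorms bounded uniformly in $j$ and $n$, so the discrete Fourier-multiplier estimate of \cite{MartinPerkowski2017} (the lattice analogue of the fact that a compactly supported symbol with bounded seminorms has an integrable, rapidly decaying kernel) yields a uniform weighted Young bound $\sup_{j\le j_n,\,n}\|\kappa^n_j\|_{L^1_z(\znd)}<\infty$, where $\|\cdot\|_{L^1_z}$ is the $L^1$ norm against the weight ratio controlled by $z\in\weights$. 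Hence $\|\Delta^n_j(\nabla^n\varphi)_i\|_{L^p(\znd,z)}\lesssim 2^j\|\tilde\Delta^n_j\varphi\|_{L^p(\znd,z)}\lesssim 2^j\sum_{|i'-j|\le 1}\|\Delta^n_{i'}\varphi\|_{L^p(\znd,z)}$; multiplying by $2^{j(\alpha-1)}$, taking the supremum over $j\le j_n$ and over $i=1,\dots,d$, gives $\|\nabla^n\varphi\|_{\mC^{\alpha-1}_p(\znd,z;\RR^d)}\lesssim\|\varphi\|_{\mC^{\alpha}_p(\znd,z)}$. The bound for $\Delta^n$ is identical, with $m^n_i$ replaced by $\ell^n$ and the gain $2^j$ by $2^{2j}$.

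The one genuinely delicate point is the interaction of the multiplier kernels with the admissible weight $z$: one must check that the rescaled kernels $2^{jd}\kappa^n_j(2^j\,\cdot)$ have $L^1$-norm weighted by the ratio $z(x+y)/z(x)$ bounded uniformly in $n$ and $j$, which uses the submultiplicativity and polynomial growth built into the definition of $\weights$ together with the uniform rapid decay of these kernels. Everything else is bookkeeping; in particular the top block $j=j_n$ needs no separate treatment, since the symbol estimates above hold verbatim on its truncated frequency support. As this argument is carried out in full in \cite{MartinPerkowski2017}, in the paper we simply cite that reference.
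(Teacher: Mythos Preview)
Your proposal is correct and takes essentially the same approach as the paper: the paper's proof simply cites \cite[Lemma~3.4]{MartinPerkowski2017} for $\Delta^n$ and remarks that the gradient case is analogous but easier, and what you have sketched is precisely the Fourier-multiplier argument carried out there. Your handling of the symbols $m^n_i$, $\ell^n$, the uniform scaling estimates on the dyadic annuli (including the top block), and the weighted Young bound via the admissible-weight machinery all match the argument in that reference.
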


\begin{proof}
	For $\Delta^n$ this is shown in \cite[Lemma~3.4]{MartinPerkowski2017}. The
	argument for the gradient $\nabla^n$ is essentially the same but
	slightly easier.  
\end{proof}

\bibliographystyle{alphaurl} 


\begin{thebibliography}{GMPV10}

\bibitem[ABMY00]{AlbeverioBogachevEtAl2000}
S.~Albeverio, L.~V. Bogachev, S.~A. Molchanov, and E.~B. Yarovaya.
\newblock Annealed moment {L}yapunov exponents for a branching random walk in a
  homogeneous random branching environment.
\newblock {\em Markov Process. Related Fields}, 6(4):473--516, 2000.

\bibitem[AC15]{Allez2015}
R.~Allez and K.~Chouk.
\newblock The continuous {A}nderson hamiltonian in dimension two.
\newblock {\em arXiv preprint arXiv:1511.02718}, 2015.

\bibitem[BGK09]{BartschGantertKochler2009}
C.~Bartsch, N.~Gantert, and M.~Kochler.
\newblock Survival and growth of a branching random walk in random environment.
\newblock {\em Markov Process. Related Fields}, 15(4):525--548, 2009.

\bibitem[Che56]{Chentsov1956Weak}
N.~Chentsov.
\newblock Weak convergence of stochastic processes whose trajectories have no
  discontinuities of the second kind and the “heuristic” approach to the
  kolmogorov-smirnov tests.
\newblock {\em Theory of Probability \& Its Applications}, 1(1):140--144, 1956.

\bibitem[Che14]{Chen2014}
X.~Chen.
\newblock Quenched asymptotics for {B}rownian motion in generalized {G}aussian
  potential.
\newblock {\em Ann. Probab.}, 42(2):576--622, 2014.
\newblock \href {https://doi.org/10.1214/12-AOP830}
  {\path{doi:10.1214/12-AOP830}}.

\bibitem[Cri04]{Crisan2004Superbrown}
D.~Crisan.
\newblock Superprocesses in a {B}rownian environment.
\newblock {\em Proc. R. Soc. Lond. Ser. A Math. Phys. Eng. Sci.},
  460(2041):243--270, 2004.
\newblock Stochastic analysis with applications to mathematical finance.
\newblock \href {https://doi.org/10.1098/rspa.2003.1242}
  {\path{doi:10.1098/rspa.2003.1242}}.

\bibitem[CT18]{Corwin2018}
I.~Corwin and L.C. Tsai.
\newblock {SPDE} limit of weakly inhomogeneous {ASEP}.
\newblock {\em arXiv preprint arXiv:1806.09682}, 2018.

\bibitem[CT19]{Chakraborty2018}
P.~Chakraborty and S.~Tindel.
\newblock Rough differential equations with power type nonlinearities.
\newblock {\em Stochastic Process. Appl.}, 129(5):1533--1555, 2019.
\newblock \href {https://doi.org/10.1016/j.spa.2018.05.010}
  {\path{doi:10.1016/j.spa.2018.05.010}}.

\bibitem[CvZ19]{Chouk2019}
K.~Chouk and W.~van Zuijlen.
\newblock Asymptotics of the eigenvalues of the {A}nderson {H}amiltonian with
  white noise potential in two dimensions.
\newblock {\em arXiv preprint arXiv:1907.01352}, 2019.

\bibitem[DMS93]{DawsonMaisonneuve1993SaintFlour}
D.~A. Dawson, B.~Maisonneuve, and J.~Spencer.
\newblock {\em \'{E}cole d'\'{E}t\'{e} de {P}robabilit\'{e}s de {S}aint-{F}lour
  {XXI}---1991}, volume 1541 of {\em Lecture Notes in Mathematics}.
\newblock Springer-Verlag, Berlin, 1993.
\newblock Papers from the school held in Saint-Flour, August 18--September 4,
  1991, Edited by P. L. Hennequin.
\newblock \href {https://doi.org/10.1007/BFb0084189}
  {\path{doi:10.1007/BFb0084189}}.

\bibitem[EK86]{EthierKurtz1986}
S.~N. Ethier and T.~G. Kurtz.
\newblock {\em Markov processes}.
\newblock Wiley Series in Probability and Mathematical Statistics: Probability
  and Mathematical Statistics. John Wiley \& Sons, Inc., New York, 1986.
\newblock Characterization and convergence.
\newblock \href {https://doi.org/10.1002/9780470316658}
  {\path{doi:10.1002/9780470316658}}.

\bibitem[Eth00]{Etheridge2000}
A.~M. Etheridge.
\newblock {\em An introduction to superprocesses}, volume~20 of {\em University
  Lecture Series}.
\newblock American Mathematical Society, Providence, RI, 2000.
\newblock \href {https://doi.org/10.1090/ulect/020}
  {\path{doi:10.1090/ulect/020}}.

\bibitem[FN77]{Fukushima1976}
M.~Fukushima and S.~Nakao.
\newblock On spectra of the {S}chr\"{o}dinger operator with a white {G}aussian
  noise potential.
\newblock {\em Z. Wahrscheinlichkeitstheorie und Verw. Gebiete},
  37(3):267--274, 1976/77.
\newblock \href {https://doi.org/10.1007/BF00537493}
  {\path{doi:10.1007/BF00537493}}.

\bibitem[GIP15]{GubinelliImkellerPerkowski2015}
M.~Gubinelli, P.~Imkeller, and N.~Perkowski.
\newblock Paracontrolled distributions and singular {PDE}s.
\newblock {\em Forum Math. Pi}, 3:e6, 75, 2015.
\newblock \href {https://doi.org/10.1017/fmp.2015.2}
  {\path{doi:10.1017/fmp.2015.2}}.

\bibitem[GKS13]{GunKonigSekulovic2013}
O.~G\"{u}n, W.~K\"{o}nig, and O.~Sekulovi\'{c}.
\newblock Moment asymptotics for branching random walks in random environment.
\newblock {\em Electron. J. Probab.}, 18:no. 63, 18, 2013.
\newblock \href {https://doi.org/10.1214/ejp.v18-2212}
  {\path{doi:10.1214/ejp.v18-2212}}.

\bibitem[GM90]{GaertnerMolchanov1990}
J.~G\"{a}rtner and S.~A. Molchanov.
\newblock Parabolic problems for the {A}nderson model. {I}. {I}ntermittency and
  related topics.
\newblock {\em Comm. Math. Phys.}, 132(3):613--655, 1990.
\newblock URL: \url{http://projecteuclid.org/euclid.cmp/1104201232}.

\bibitem[GMPV10]{GantertMullerPopov2010}
N.~Gantert, S.~M\"{u}ller, S.~Popov, and M.~Vachkovskaia.
\newblock Survival of branching random walks in random environment.
\newblock {\em J. Theoret. Probab.}, 23(4):1002--1014, 2010.
\newblock \href {https://doi.org/10.1007/s10959-009-0227-5}
  {\path{doi:10.1007/s10959-009-0227-5}}.

\bibitem[GP17]{Gubinelli2017KPZ}
M.~Gubinelli and N.~Perkowski.
\newblock K{PZ} reloaded.
\newblock {\em Comm. Math. Phys.}, 349(1):165--269, 2017.
\newblock \href {https://doi.org/10.1007/s00220-016-2788-3}
  {\path{doi:10.1007/s00220-016-2788-3}}.

\bibitem[GUZ20]{Gubinelli2018Semilinear}
M.~Gubinelli, B.~Ugurcan, and I.~Zachhuber.
\newblock Semilinear evolution equations for the {A}nderson {H}amiltonian in
  two and three dimensions.
\newblock {\em Stoch. Partial Differ. Equ. Anal. Comput.}, 8(1):82--149, 2020.
\newblock \href {https://doi.org/10.1007/s40072-019-00143-9}
  {\path{doi:10.1007/s40072-019-00143-9}}.

\bibitem[Hai14]{Hairer2014}
M.~Hairer.
\newblock A theory of regularity structures.
\newblock {\em Invent. Math.}, 198(2):269--504, 2014.
\newblock \href {https://doi.org/10.1007/s00222-014-0505-4}
  {\path{doi:10.1007/s00222-014-0505-4}}.

\bibitem[Har51]{Harris1950}
T.~E. Harris.
\newblock Some mathematical models for branching processes.
\newblock In {\em Proceedings of the {S}econd {B}erkeley {S}ymposium on
  {M}athematical {S}tatistics and {P}robability, 1950}, pages 305--328.
  University of California Press, Berkeley and Los Angeles, 1951.

\bibitem[Har02]{Harris2002}
T.~E. Harris.
\newblock {\em The theory of branching processes}.
\newblock Dover Phoenix Editions. Dover Publications, Inc., Mineola, NY, 2002.
\newblock Corrected reprint of the 1963 original [Springer, Berlin; MR0163361
  (29 \#664)].

\bibitem[HL15]{Hairer2015Simple}
M.~Hairer and C.~Labb\'{e}.
\newblock A simple construction of the continuum parabolic {A}nderson model on
  {${\bf R}^2$}.
\newblock {\em Electron. Commun. Probab.}, 20:no. 43, 11, 2015.
\newblock \href {https://doi.org/10.1214/ECP.v20-4038}
  {\path{doi:10.1214/ECP.v20-4038}}.

\bibitem[HL18]{Hairer2018Multiplicative}
M.~Hairer and C.~Labb\'{e}.
\newblock Multiplicative stochastic heat equations on the whole space.
\newblock {\em J. Eur. Math. Soc. (JEMS)}, 20(4):1005--1054, 2018.
\newblock \href {https://doi.org/10.4171/JEMS/781}
  {\path{doi:10.4171/JEMS/781}}.

\bibitem[K{\"o}n16]{Konig2016}
W.~K{\"o}nig.
\newblock {\em The parabolic {A}nderson model}.
\newblock Pathways in Mathematics. Birkh\"{a}user/Springer, [Cham], 2016.
\newblock Random walk in random potential.
\newblock \href {https://doi.org/10.1007/978-3-319-33596-4}
  {\path{doi:10.1007/978-3-319-33596-4}}.

\bibitem[KS88]{KonnoShiga1988}
N.~Konno and T.~Shiga.
\newblock Stochastic partial differential equations for some measure-valued
  diffusions.
\newblock {\em Probability Theory and Related Fields}, 79(2):201--225, Sep
  1988.
\newblock \href {https://doi.org/10.1007/BF00320919}
  {\path{doi:10.1007/BF00320919}}.

\bibitem[Lab19]{Labbe2018}
C.~Labb\'{e}.
\newblock The continuous {A}nderson {H}amiltonian in {$d\leq 3$}.
\newblock {\em J. Funct. Anal.}, 277(9):3187--3235, 2019.
\newblock \href {https://doi.org/10.1016/j.jfa.2019.05.027}
  {\path{doi:10.1016/j.jfa.2019.05.027}}.

\bibitem[MP19]{MartinPerkowski2017}
J.~Martin and N.~Perkowski.
\newblock Paracontrolled distributions on {B}ravais lattices and weak
  universality of the 2d parabolic {A}nderson model.
\newblock {\em Ann. Inst. Henri Poincar\'{e} Probab. Stat.}, 55(4):2058--2110,
  2019.
\newblock \href {https://doi.org/10.1214/18-AIHP942}
  {\path{doi:10.1214/18-AIHP942}}.

\bibitem[MX07]{MytnikXiong2007LocalExtinction}
L.~Mytnik and J.~Xiong.
\newblock Local extinction for superprocesses in random environments.
\newblock {\em Electron. J. Probab.}, 12:no. 50, 1349--1378, 2007.
\newblock \href {https://doi.org/10.1214/EJP.v12-457}
  {\path{doi:10.1214/EJP.v12-457}}.

\bibitem[Myt96]{Mytnik1996}
L.~Mytnik.
\newblock Superprocesses in random environments.
\newblock {\em Ann. Probab.}, 24(4):1953--1978, 1996.
\newblock \href {https://doi.org/10.1214/aop/1041903212}
  {\path{doi:10.1214/aop/1041903212}}.

\bibitem[Pet95]{Petrov1995}
V.~V. Petrov.
\newblock {\em Limit theorems of probability theory}, volume~4 of {\em Oxford
  Studies in Probability}.
\newblock The Clarendon Press, Oxford University Press, New York, 1995.
\newblock Sequences of independent random variables, Oxford Science
  Publications.

\bibitem[Rei89]{Reimers1989}
M.~Reimers.
\newblock One-dimensional stochastic partial differential equations and the
  branching measure diffusion.
\newblock {\em Probab. Theory Related Fields}, 81(3):319--340, 1989.
\newblock \href {https://doi.org/10.1007/BF00340057}
  {\path{doi:10.1007/BF00340057}}.

\bibitem[Ros20]{Rosati2019kRSBM}
T.~C. Rosati.
\newblock Killed rough super-{B}rownian motion.
\newblock {\em Electron. Commun. Probab.}, 25:Paper No. 44, 12, 2020.
\newblock \href {https://doi.org/10.1214/20-ecp319}
  {\path{doi:10.1214/20-ecp319}}.

\bibitem[Wal86]{Walsh1986}
J.~B. Walsh.
\newblock An introduction to stochastic partial differential equations.
\newblock In P.~L. Hennequin, editor, {\em {\'E}cole d'{\'E}t{\'e} de
  Probabilit{\'e}s de Saint Flour XIV - 1984}, pages 265--439, Berlin,
  Heidelberg, 1986. Springer Berlin Heidelberg.

\bibitem[ZMRS87]{ZeldocivhMolchanovRuzmauikinSokolov1987}
Y.~B. Zel'dovich, S.~A. Molchanov, A.~A. Ruzma{\u{\i}}kin, and D.~D. Sokolov.
\newblock Intermittency in random media.
\newblock {\em Uspekhi Fiz. Nauk}, 152(1):3--32, 1987.
\newblock \href {https://doi.org/10.1070/PU1987v030n05ABEH002867}
  {\path{doi:10.1070/PU1987v030n05ABEH002867}}.

\end{thebibliography}

\end{document}